\newcommand{\+}{\nobreakdash-}
\newcommand{\tim}{\rightthreetimes}
\theoremstyle{plain}
\newtheorem{theo}{Theorem}[section]
\newtheorem{lemma}[theo]{Lemma}
\newtheorem{propo}[theo]{Proposition}
\newtheorem{coro}[theo]{Corollary}
\theoremstyle{definition}
\newtheorem{defi}[theo]{Definition}
\newtheorem{rem}[theo]{Remark}
\newtheorem{exam}[theo]{Example}
\newtheorem{exams}[theo]{Examples}
\newcommand\Mono{\operatorname{Mono}}
\newcommand\Epi{\operatorname{Epi}}
\newcommand\Mod{\operatorname{Mod}}
\newcommand\Discr{\operatorname{Discr}}
\newcommand\Contra{\operatorname{Contra}}
\newcommand\Separ{\operatorname{Separ}}
\newcommand\Flat{\operatorname{Flat}}
\newcommand\Cotors{\operatorname{Cotors}}
\newcommand\Vfl{\operatorname{Vfl}}
\newcommand\Ctadj{\operatorname{Ctadj}}
\newcommand{\lMono}{\text{-}\!\Mono}
\newcommand{\lEpi}{\text{-}\!\Epi}
\newcommand{\lMod}{\text{-}\!\Mod}
\newcommand{\Modr}{\Mod\!\text{-}}
\newcommand{\Discrd}{\Discr\!\text{-}}
\newcommand{\lContra}{\text{-}\!\Contra}
\newcommand{\lSepar}{\text{-}\!\Separ}
\newcommand{\lFlat}{\text{-}\!\Flat}
\newcommand{\lCotors}{\text{-}\!\Cotors}
\newcommand{\lVfl}{\text{-}\!\Vfl}
\newcommand{\lCtadj}{\text{-}\!\Ctadj}
\newcommand\op{\operatorname{op}}
\newcommand\id{\operatorname{id}}
\newcommand\Ext{\operatorname{Ext}}
\newcommand\Tor{\operatorname{Tor}}
\newcommand\Ctrtor{\operatorname{Ctrtor}}
\newcommand\filt{\operatorname{filt}}
\newcommand\cof{\operatorname{cof}}
\newcommand\cell{\operatorname{cell}}
\newcommand\Set{\operatorname{\bf Set}}
\newcommand\Ab{\operatorname{\bf Ab}}
\newcommand\colim{\operatorname{colim}}
\newcommand\coker{\operatorname{coker}}
\newcommand\integers{\mathbb {Z}}
\newcommand\rationals{\mathbb {Q}}
\newcommand\ca{\mathcal {A}}
\newcommand\cb{\mathcal {B}}
\newcommand\cc{\mathcal {C}}
\newcommand\cf{\mathcal {F}}
\newcommand\cg{\mathcal {G}}
\newcommand\ch{\mathcal {H}}
\newcommand\ci{\mathcal {I}}
\newcommand\ck{\mathcal {K}}
\newcommand\cl{\mathcal {L}}
\newcommand\cm{\mathcal {M}}
\newcommand\cn{\mathcal {N}}
\newcommand\crr{\mathcal {R}}
\newcommand\cs{\mathcal {S}}
\newcommand\ct{\mathcal {T}}
\newcommand\cp{\mathcal {P}}
\newcommand\cx{\mathcal {X}}
\newcommand\bt{\mathbb {T}}
\newcommand\fa{\mathfrak {A}}
\newcommand\fr{\mathfrak {R}}
\newcommand\fii{\mathfrak {I}}
\newcommand\fu{\mathfrak {U}}
\newcommand\fp{\mathfrak {P}}
\newcommand\fj{\mathfrak {J}}
\newcommand\ff{\mathfrak {F}}
\newcommand\fq{\mathfrak {Q}}
\newcommand\ft{\mathfrak {T}}
\newcommand\fs{\mathfrak {S}}
\newcommand\fv{\mathfrak {V}}
\newcommand\fg{\mathfrak {G}}
\newcommand\fk{\mathfrak {K}}
\newcommand{\Fun}{\operatorname{Fun}}
\newcommand{\Lex}{\operatorname{Lex}}
\newcommand{\Rex}{\operatorname{Rex}}
\newcommand{\Fadd}{\operatorname{Fadd}}
\newcommand{\Ex}{\operatorname{Ex}}
\newcommand{\CT}{\operatorname{CT}}
\newcommand{\PL}{\operatorname{PL}}
\date{April~21, 2017}
\begin{document}

\title[Cotorsion theories in locally presentable abelian categories]
{Covers, envelopes, and cotorsion theories \\ in locally presentable
abelian categories \\ and contramodule categories}

\author[L. Positselski and J. Rosick\'{y}]
{L. Positselski and J. Rosick\'{y}}

\thanks{Supported by the Grant Agency of the Czech Republic under
the grant P201/12/G028.  The first-named author's research in Israel
is supported by the ISF grant~\#\,446/15} 

\address{
\newline L. Positselski\newline
Department of Mathematics\newline
Faculty of Natural Sciences, University of Haifa\newline
Mount Carmel, Haifa 31905, Israel -- and --\newline
Institute for Information Transmission Problems\newline
Bolshoy Karetny per.~19, Moscow 127051, Russia -- and -- \newline
Laboratory of Algebraic Geometry\newline
National Research Univerity Higher School of Economics\newline
Vavilova 7, Moscow 117312, Russia\newline
positselski@yandex.ru\newline
\newline J. Rosick\'{y}\newline
Department of Mathematics and Statistics\newline
Masaryk University, Faculty of Sciences\newline
Kotl\'{a}\v{r}sk\'{a} 2, 611 37 Brno, Czech Republic\newline
rosicky@math.muni.cz
}

\begin{abstract}
We prove general results about completeness of cotorsion theories
and existence of covers and envelopes in locally presentable abelian
categories, extending the well-established theory for module
categories and Grothendieck categories.
These results are then applied to the categories of contramodules
over topological rings, which provide examples and counterexamples.
\end{abstract} 

\keywords{}
\subjclass{}

\maketitle

\section{Introduction}

\subsection{{}}
An abelian category with enough projective objects is determined by
(can be recovered from) its full subcategory of projective objects.
An explicit construction providing the recovery procedure is described,
in the nonadditive context, in the paper~\cite{CV} (see also~\cite{TH}),
where it is called ``the exact completion of a weakly left exact
category'', and in the additive context, in
the paper~\cite[Lemma~1]{K}, where it is called ``the category of
coherent functors''.
In fact, the exact completion in the additive context is contained
already in~\cite{F}, as it is explained in~\cite{RV}.

Hence, in particular, any abelian category with arbitrary coproducts
and a single small (finitely generated, finitely presented) projective
generator $P$ is equivalent to the category of right modules over
the ring of endomorphisms of~$P$.
More generally, an abelian category with arbitrary coproducts and a set
of small projective generators is equivalent to the category of
contravariant additive functors from its full subcategory formed by
these generators into the category of abelian groups (right modules
over the big ring of morphisms between the generators).

Now let $\ck$ be an abelian category with a single, not necessarily
small projective generator~$P$; suppose that all coproducts of copies
of $P$ exist in~$\ck$.
Then, for the same reasons, the category $\ck$ is determined by
the functor $\bt$ assigning to a set $S$ the set $\ck(P,P^{(S)})$ of
all morphisms from $P$ to the $S$-indexed coproduct of copies of~$P$.
The functor $\bt$ is endowed with a natural structure of a monad on
the category of sets; the full subcategory of coproducts of copies of
$P$ in $\ck$ is the category of free algebras over this monad; and
the whole category $\ck$ is equivalent to the category of all algebras
over the monad~$\bt$~\cite{V0}.
The functor $\ck(P,-)$ assigns to an object of $\ck$ the underlying
set of the corresponding $\bt$-algebra.

Conversely, the category of algebras over a monad $\bt$ on the category
of sets is abelian whenever it is additive.
Given a monad $\bt:\Set\to\Set$, elements of the set $\bt(S)$
can be viewed as $S$-ary operations on the underlying sets of algebras
over $\bt$, as the datum of an element $t\in \bt(S)$ allows to assign to
any map of sets $f:S\to A$ from $S$ into a $\bt$-algebra $A$ with
the structure map $a:\bt(A)\to A$ the element $a(\bt(f)(t))\in A$.
The category of algebras over a monad $\bt$ is additive/abelian if and
only if there are elements (``operations'') $x+y\in \bt(\{x,y\})$, \
$-x\in \bt(\{x\})$, and $0\in \bt(\varnothing)$ in $\bt$ satisfying
the obvious equations of compatibility with each other and the equations
of commutativity with all the other operations in~$\bt$ \cite[\S10]{W}.
We will call such a monad $\bt:\Set\to\Set$ \textit{additive}.

Furthermore, let $\kappa$ be a regular cardinal.
The object $P\in\ck$ is called \textit{abstractly $\kappa$-small} if any
morphism $P\to P^{(S)}$ factors through the coproduct of copies of $P$
indexed by a subset in $S$ of cardinality smaller than~$\kappa$
(cf.~\cite{V}).
In this case the functor $\bt:\Set\to\Set$ preserves $\kappa$-filtered
colimits, the object $P$ is $\kappa$-presentable, and the category $\ck$
is locally $\kappa$-presentable~\cite{AR}.
Conversely, if the category $\ck$ is locally presentable, then
the object $P$ is $\kappa$-presentable for some~$\kappa$, and hence
abstractly $\kappa$-small.
Monads preserving filtered colimits on the category of sets correspond
to finitary algebraic theories~\cite[Section~4.6]{Bo}; the categories
of algebras over such monads are called the categories of models of
algebraic theories in~\cite{L,ARV}.
These are what are called the algebraic monads or ``generalized rings''
in~\cite{D}. 
The categories of algebras over monads on $\Set$ preserving
$\kappa$-filtered colimits were studied in~\cite{W}; we will call
them the \textit{categories of models of $\kappa$-ary algebraic
theories}.

\subsection{{}}
Suppose now that the natural map from the coproduct of any family of
projective objects in $\ck$ to their product is a monomorphism;
equivalently, it suffices that the natural map $P^{(S)}\to P^S$ from
the coproduct of any number of copies of the object $P$ to their
product be a monomorphism.
Then the set $\bt(S)=\ck(P,P^{(S)})$ of all morphisms $P\to P^{(S)}$
in $\ck$ maps injectively into the set $\ck(P,P^S)$.
The latter set is bijective to the $S$-indexed product of copies of
the set of all morphisms $P\to P$ in~$\ck$.
Denote the associative ring opposite to the ring $\ck(P,P)$ by~$R$.

So, for any set $S$, the set $\bt(S)$ is a subset in $R^S$.
The $S$-ary operation in (the underlying set of) a $\bt$-algebra $A$
corresponding to an element of $\bt(S)$ can be now interpreted as
an operation of infinite summation of $S$-indexed families of elements
of $A$ with the coefficients belonging to~$R$.
The $S$-indexed families of coefficients in $R$ for which such
an infinite summation operation exists in $\bt$ (i.~e.,
the $S$-indexed families of elements in $R$ belonging to the subset
$\bt(S)\subseteq R^S$) we will call \textit{$\bt$-admissible}.
Since we assume $\ck$ to be an abelian category ($\bt$ to be an additive
monad), we have $\bt(S)=R^S$ for any finite set $S$, that is any finite
family of elements in $R$ is $\bt$-admissible.

The datum of an abelian category $\ck$ with a projective generator $P$
such that arbitrary coproducts of copies of $P$ exist in $\ck$ and map
monomorphically into the respective products (or an additive monad
$\bt:\Set\to\Set$ such that $\bt(S)\subseteq\prod_{s\in S}\bt(\{s\})$
for all sets $S$) is equivalent to the following set of data:

\begin{itemize}
\item a associative ring $R$ with unit;
\item for any set $S$, a subset (in fact, an $R$-$R$-subbimodule)
of ``$\bt$-admissible families of coefficients'' $\bt(S)\subseteq R^S$;
and
\item the map (in fact, an $R$-$R$-bimodule morphism) of ``sum of
the coefficients in an admissible family'' $\sum_S: \bt(S)\to R$,
\end{itemize}

which has to satisfy the following conditions:

\begin{enumerate}
\renewcommand\theenumi{\roman{enumi}}
\item all the finite families of coefficients are admissible, i.~e.,
$\bt(S)=R^S$ for a finite set $S$; the map $\sum_S$ is
the conventional finite sum in the ring $R$ in this case;
\item any subfamily of an admissible family of coefficients is
admissible, i.~e., for any subset $S'\subseteq S$, the projection
map $R^S\to R^{S'}$ takes $\bt(S)$ into $\bt(S')$;
\item given a map of sets $U\to S$, for any family of
coefficients $t\in \bt(U)$ the family of sums
$S\ni s\mapsto \sum_{u\in f^{-1}(s)}t(u)$ belongs to $\bt(S)$, and
the iterated sum is equal to the sum of the original family,
$\sum_{s\in S}\sum_{u\in f^{-1}(s)}t(u)=\sum_{u\in U}t(u)$;
\item for any family of coefficients $r\in \bt(S)$ and any family of
families of coefficients $t_s\in \bt(f^{-1}(s))$ defined for all $s\in S$,
the family $U\ni u\mapsto r(f(u))t_{f(u)}(u)$ belongs to $\bt(U)$, and
$\sum_{s\in S} r(s)\sum_{u\in f^{-1}(s)}t_s(u)=
\sum_{u\in U}r(f(u))t_{f(u)}(u)$. 
\end{enumerate}

Given a monad $\bt:\Set\to\Set$, the maps $\sum_S:\bt(S)\to R=\bt(\{*\})$
are obtained by applying the functor $\bt$ to the projection maps
$S\to\{*\}$.
Given a ring $R$ with the subsets $\bt(S)\subseteq R^S$ and the maps
$\sum_S:\bt(S)\to R$, the functoriality maps $\bt(f):\bt(U)\to \bt(S)$
are defined in terms of the maps $\sum_S$, and the monad multiplication
maps $\bt(\bt(U))\to \bt(U)$ are defined in terms of the multiplication
operation in the ring $R$ together with the maps $\sum_S$, using 
the above axioms; while the monad unit maps $U\to \bt(U)$ are defined
in terms of the unit element of the ring~$R$.

In particular, it follows from the conditions~(i\+-iii) that no nonzero
element of $R$ can occur in an admissible family of coefficients
more than a finite number of times.
Hence, given a family of coefficients $t\in \bt(S)$, the cardinality
of the set of all $s\in S$ for which $t(s)\ne0$ cannot exceed
the cardinality of~$R$.
Thus any abelian category $\ck$ with a projective generator $P$ such
that the natural maps $P^{(S)}\to P^S$ are monomorphisms for all sets
$S$ is locally $\kappa$-presentable, where $\kappa$ is the successor
cardinal of the cardinality of the set $R=\ck(P,P)$.

A natural class of examples of monads $\bt$ or abelian categories $\ck$
as above is related to topological rings.
Let $\fr$ be a complete, separated topological ring where open right
ideals form a base of neighborhoods of zero.
For any set $S$, set $\bt(S)=\fr[[S]]$ to be the set of all formal linear
combinations of elements of $S$ with the coefficients in $\fr$ forming
a family converging to zero in the topology of $\fr$; in other words,
a family of coefficients $t\in\fr^S$ belongs to $\bt(S)$ if and only if
for any neighborhood of zero $\fu\subseteq\fr$ the set of all $s\in S$
for which $t(s)\notin\fu$ is finite.
The summation map $\sum_S:\fr[[S]]\to\fr$ is defined as the infinite sum
of the coefficients taken with respect to the topology of $\fr$, i.~e.,
the limit of finite partial sums in~$\fr$.
The abelian category of algebras/modules over the monad
$S\mapsto\fr[[S]]$ is called the category of \textit{left contramodules}
over topological ring~$\fr$.

Contramodules were introduced originally by Eilenberg and
Moore~\cite{EM} in the case of coalgebras over commutative rings
as module-like objects dual-analogous to, but different from,
the more familiar comodules.
They were subsequently studied by Barr~\cite{Ba}.
The definition of a contramodule was extended to topological rings
in~\cite[Appendices~A and~D]{P} and~\cite{P2}
(see~\cite{P4} for an overview).

We do not know whether all additive monads $\bt:\Set\to\Set$
such that $\bt(S)\subseteq\prod_{s\in S}\bt(\{s\})$ for all sets $S$
come from topological rings.
One can try to define a topology on the ring $R=\bt(\{*\})$ by calling
a subset $V\subseteq R$ a neighborhood of zero whenever for any
$t\in \bt(S)$ one has $t(s)\in V$ for all but a finite number of
elements $s\in S$; but it is not clear, e.~g., why does this topology
need to be separated.

Nevertheless, the categories of contramodules over topological rings
provide a large class of locally presentable abelian categories with
enough projective objects, which play a role in homological algebra,
commutative algebra, and representation theory.
Apparently, it was precisely the (perceived) lack of relevant examples
that discouraged the study of locally presentable abelian categories
in general, and abelian categories of models of $\kappa$-ary
algebraic theories in particular, for many years after
the monographs~\cite{W} and~\cite{AR} appeared in print.
The present work, motivated originally by the first-named author's
interest in the possibilities of applying contemporary set-theoretic
techniques of homological algebra and category theory to
the categories of contramodules, aims to begin filling this gap.

\subsection{{}}
Having discussed the place of contramodule categories in the general
category theory, let us now say a few words about covers, envelopes,
and cotorsion theories.
The classical \textit{flat cover conjecture} has two approaches to
its solution developed in the literature.
In fact, the paper~\cite{BBE}, where this conjecture was first proved,
contains two proofs.
The proof of Bican and El~Bashir, based on the approach suggested by
Bican and Torrecillas~\cite{BT}, was subsequently generalized by
El~Bashir in~\cite{B} to the claim that, for any accessible full
subcategory $\ca$ closed under coproducts and directed colimits in
a Grothendieck abelian category $\ck$, any object of $\ck$ has
an $\ca$-cover.
In this paper, we generalize this result even further by replacing
a Grothendieck category with a locally presentable
(or ``$\kappa$-Grothendieck'') abelian category.

The second proof of the flat cover conjecture, due to Enochs, was
based on the result of Eklof and Trlifaj~\cite{ET} claiming that any
cotorsion theory generated by a set of objects in the category of
modules over a ring (or, slightly more generally, in a Grothendieck
abelian category with enough projective objects) is complete.
As an accessible full subcategory $\ca$ closed under coproducts and
directed colimits does not have to be closed under extensions in $\ck$,
while a deconstructible class of objects containing the projectives
and closed under direct summands (the left part $\cf$ of
a cotorsion theory $(\cf,\cc)$ generated by a set $\cs$) does not
have to be closed under directed colimits, the approaches of
Bican--El~Bashir and Eklof--Trlifaj complement each other nicely,
neither of the two results being a particular case of the other one.

In fact, it is easy to observe that the Eklof--Trlifaj theorem does not
require projective objects and can be stated for arbitrary Grothendieck 
abelian categories $\ck$ as follows.

\begin{theo} \label{eklof-trlifaj}
 Let $\cs$ be a set of objects of $\ck$, and let $\cc=\cs^\perp$ and
$\cf={}^\perp\cc$ be the corresponding\/ $\Ext^1_{\ck}$-orthogonal
classes.
 Let\/ $\filt(\cs)$ denote the class of all transfinitely iterated
extensions, in the sense of the directed colimit, of objects from $\cs$
in~$\ck$.
 Suppose that every object of $\ck$ is a quotient object of
an object of\/ $\filt(\cs)$.
 Then $(\cf,\cc)$ is a complete cotorsion theory in $\ck$, that is
any object $K$ of $\ck$ can be included into short exact sequences
\begin{gather*}
0\longrightarrow K\longrightarrow C\longrightarrow F'\longrightarrow0 \\
0\longrightarrow C'\longrightarrow F\longrightarrow K\longrightarrow0
\end{gather*}
with $C$, $C'\in\cc$ and $F'$, $F\in\cf$.
 Furthermore, the class $\cf$ consists precisely of all the direct
summands of objects from\/ $\filt(\cs)$.
\end{theo}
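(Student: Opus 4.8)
The plan is to transcribe the classical argument of Eklof and Trlifaj, taking care to use only transfinite extensions, arbitrary coproducts, exactness of directed colimits, and the set-valuedness of $\Ext^1_{\ck}$ --- all of which are available in a Grothendieck abelian category --- while avoiding projective objects entirely.

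\emph{Step 1 (Eklof's lemma).}
First I would record that $\cf={}^\perp\cc$ is closed under transfinite extensions: if $M$ is the colimit of a continuous chain $(M_\alpha)_{\alpha<\mu}$ of subobjects with $M_0\in\cf$ and $M_{\alpha+1}/M_\alpha\in\cf$ for all $\alpha$, then $M\in\cf$. This goes exactly as for modules: given $C\in\cc$ and an extension $0\to C\to E\to M\to 0$, one builds compatible splittings $s_\alpha\colon M_\alpha\to E$ over $M$ by transfinite induction, the successor step extending $s_\alpha$ to $s_{\alpha+1}$ by means of $\Ext^1_{\ck}(M_{\alpha+1}/M_\alpha,C)=0$, and the limit steps using the universal property of the colimit (this is where exactness of directed colimits and continuity of the chain enter). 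Since $\cs\subseteq\cf$ trivially, this gives $\filt(\cs)\subseteq\cf$; moreover $\cf$ is obviously closed under direct summands since $\Ext^1_{\ck}(-,C)$ is additive, and $\cc=\cf^\perp$ (again because $\cs\subseteq\cf$), so that $(\cf,\cc)$ is a cotorsion pair to begin with.

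\emph{Step 2 (the two short exact sequences).}
For an arbitrary object $N$ I would form the ``universal extension'' chain $N=N_0\subseteq N_1\subseteq\cdots$ in which $N_{\alpha+1}$ is the middle term of a short exact sequence
\[
0\longrightarrow N_\alpha\longrightarrow N_{\alpha+1}\longrightarrow
\coprod_{S\in\cs}S^{(\Ext^1_{\ck}(S,N_\alpha))}\longrightarrow 0
\]
whose class in $\Ext^1_{\ck}\bigl(\coprod_{S\in\cs}S^{(\Ext^1_{\ck}(S,N_\alpha))},\,N_\alpha\bigr)=\prod_{S\in\cs}\Ext^1_{\ck}(S,N_\alpha)^{\Ext^1_{\ck}(S,N_\alpha)}$ is the ``diagonal'' element, its $S$-component being the identity map of the set $\Ext^1_{\ck}(S,N_\alpha)$; at limit stages take the union (directed colimit). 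Each class $\xi\in\Ext^1_{\ck}(S,N_\alpha)$ is the pullback of the displayed extension along the corresponding coproduct injection, while the displayed extension becomes split after pushout along its own subobject inclusion $N_\alpha\to N_{\alpha+1}$; since pushout and pullback commute, $\xi$ maps to $0$ in $\Ext^1_{\ck}(S,N_{\alpha+1})$. Hence, stopping the chain at a sufficiently large regular cardinal $\lambda$ (Step 3), one gets $\Ext^1_{\ck}(S,N_\lambda)=0$ for all $S\in\cs$, that is $N_\lambda\in\cc$, while $N_\lambda/N$ carries a continuous filtration with successive quotients $N_{\alpha+1}/N_\alpha\in\filt(\cs)$, so that $N_\lambda/N\in\filt(\cs)\subseteq\cf$. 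Taking $N=K$ gives the first sequence $0\to K\to C\to F'\to 0$. For the second, use the hypothesis to pick an epimorphism $L\twoheadrightarrow K$ with $L\in\filt(\cs)$, let $C_0$ be its kernel, apply the construction to $N=C_0$ to obtain $0\to C_0\to W\to V\to 0$ with $W\in\cc$ and $V\in\filt(\cs)$, and form the pushout $F:=L\sqcup_{C_0}W$. The two resulting short exact sequences $0\to L\to F\to V\to 0$ --- which shows $F\in\filt(\cs)\subseteq\cf$, since $F$ has a subobject $L\in\filt(\cs)$ with quotient $F/L\cong V\in\filt(\cs)$ and $\filt$ of $\filt$ is $\filt$ --- and $0\to W\to F\to K\to 0$ give the second sequence with $C'=W\in\cc$ and $F\in\cf$. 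Finally, for the last assertion ``$\supseteq$'' is Step 1, and conversely, for $F\in\cf$ the sequence $0\to W\to F'\to F\to 0$ just produced (with $F'\in\filt(\cs)$, $W\in\cc$) satisfies $\Ext^1_{\ck}(F,W)=0$, hence splits, exhibiting $F$ as a direct summand of an object of $\filt(\cs)$.

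\emph{Step 3 (the main point).}
The one place requiring genuine care beyond the module case is the choice of $\lambda$ and the identity $\Ext^1_{\ck}(S,N_\lambda)=\colim_{\alpha<\lambda}\Ext^1_{\ck}(S,N_\alpha)=0$. Since a Grothendieck category is locally presentable, there is a regular cardinal $\lambda$ for which $\ck$ is locally $\lambda$-presentable and every $S\in\cs$ (of which there are only a set) is $\lambda$-presentable; enlarging $\lambda$ if necessary, one wants the functors $\Ext^1_{\ck}(S,-)$, $S\in\cs$, to preserve $\lambda$-directed colimits of monomorphisms --- the analogue of the fact that in $\Mod R$ the functor $\Ext^1_R(S,-)$ commutes with $\lambda$-directed unions whenever $|R|,|S|<\lambda$, proved there via a $\lambda$-small projective presentation of $S$ whose syzygy is $\lambda$-generated. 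Granting this, each class in $\Ext^1_{\ck}(S,N_\alpha)$ is already zero at stage $\alpha+1$, so the colimit over $\alpha<\lambda$ vanishes and therefore so does $\Ext^1_{\ck}(S,N_\lambda)$; one also uses that $\ck$, being well-powered, has set-valued $\Ext^1_{\ck}$, so that the coproducts above are legitimate. Proving this continuity of $\Ext^1_{\ck}(S,-)$ in the absence of projective objects --- most plausibly by a cardinality estimate carried out inside the locally presentable category $\ck$ --- is the step I expect to be the main obstacle; everything else is a direct transcription of the module-theoretic proof.
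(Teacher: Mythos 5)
Your outline is the classical Eklof--Trlifaj route (universal extension tower, Eklof's lemma, Salce's pushout trick), whereas the paper never runs that tower: it derives this statement from the locally presentable version (Corollary~\ref{cor3.5}, Theorem~\ref{precise-et-loc-pres}) by applying the small object argument to the set $\ci$ of $\cs$-monomorphisms with $\lambda$-presentable codomains, obtaining the weak factorization system $(\cf\lMono,\cc\lEpi)$ and reading off both short exact sequences from the factorizations of $K\to0$ and $0\to K$; the Grothendieck hypothesis enters only through the existence of enough injectives, which puts every object inside an object of $\cc$. Steps 1 and 2 of your argument are sound (Step 1 is the paper's Lemma~\ref{eklof-lemma}, and your Salce-type construction of the second sequence, including $\filt(\filt(\cs))=\filt(\cs)$, matches Lemma~\ref{filt(s)-mono}(d) and the second half of Theorem~\ref{precise-et-loc-pres}).

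The one genuine gap is exactly the one you flag in Step 3: you never prove that every class in $\Ext^1_\ck(S,N_\lambda)$ comes from some stage $\alpha<\lambda$, and without that the tower proves nothing. The gap is fillable, and the paper's Lemma~\ref{presentable-codomains} is precisely the missing ingredient: choose $\lambda$ regular so that $\ck$ is locally $\lambda$-presentable, every $S\in\cs$ is $\lambda$-presentable, and (enlarging $\lambda$ once) every subobject of a $\lambda$-presentable object is again $\lambda$-presentable. Then any extension $0\to N_\lambda\to E\to S\to0$ is, by that lemma, a pushout of an extension $0\to K_i\to L_i\to S\to0$ with $L_i$ (hence $K_i$) $\lambda$-presentable, along a morphism $K_i\to N_\lambda$; since $N_\lambda=\colim_{\alpha<\lambda}N_\alpha$ is a $\lambda$-filtered colimit and $K_i$ is $\lambda$-presentable, this morphism factors through some $N_\alpha$, so the class lies in the image of $\Ext^1_\ck(S,N_\alpha)$ and is killed at stage $\alpha+1$. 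Note that the full continuity statement $\Ext^1_\ck(S,\colim N_\alpha)=\colim\Ext^1_\ck(S,N_\alpha)$ is neither needed nor the right thing to aim for; only this surjectivity onto the colimit of a continuous chain of monomorphisms is used. The paper's own proof packages the same smallness once and for all inside the small object argument and thereby avoids ever mentioning $\Ext^1$-continuity; either way, once this point is supplied your transcription goes through.
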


The assertion of Theorem~\ref{eklof-trlifaj} in the form stated above
is \textit{not} true for locally presentable abelian categories.
The simplest contramodule categories with nonexact functors of filtered
colimits, such as the categories of contramodules over the topological
rings of $p$-adic integers $\integers_p$ or of the formal power series
$k[[z]]$ in one variable $z$ over a field~$k$, allow one to demonstrate
counterexamples by making an obvious ``bad'' choice of a generating
object or set of objects~$\cs$.
Nevertheless, our version of the Eklof--Trlifaj theorem applicable to
any locally presentable abelian category is surprisingly close to
the formulation above, the only essential difference is that one has
to explicitly require the object of $\ck$ to be a subobject of
an object from $\cc$ (in addition to it being a quotient object of
an object from $\cf$ or $\filt(\cs)$).
This condition holds automatically in Grothendieck categories, which
always have enough injective objects; but locally presentable
abelian categories may contain no injectives. 

An alternative approach is to work with generating sets $\cs$ or
classes $\cf$ consisting of objects which have been shown to behave
well with respect to filtered colimits.
This is particularly important for our construction of $\cc$-envelopes
in locally presentable abelian categories, which is based entirely on
the assumption of preservation of $\cf$-monomorphisms by directed
colimits, at least in the comma-categories $K\downarrow\ck$.
So we have to engage in a homological study of \textit{flat
contramodules} over topological rings in order to prove that
the derived functors of filtered colimits in contramodule categories
vanish on diagrams of flat contramodules.
In the end, our results on covers, envelopes, and cotorsion theories
in contramodule categories include
\begin{itemize}
\item existence of $\ca$-covers for any accessible full subcategory
$\ca$ closed under coproducts and directed colimits;
\item in particular, existence of flat covers;
\item completeness of the flat cotorsion theory, that is existence
of special flat precovers and special cotorsion preenvelopes;
\item existence of cotorsion envelopes;
\item completeness of any cotorsion theory $(\cf,\cc)$ generated
by a set $\cs$ of flat contramodules;
\item completeness of any cotorsion theory $(\cf,\cc)$ generated
by a set of contramodules $\cs$ such that every contramodule is
a subcontramodule of a contramodule from~$\cc$.
\end{itemize}

\subsection{{}}
 We discuss $\kappa$-Grothendieck abelian categories and develop
our version of the Bican--El~Bashir approach to covers
in Section~\ref{precovers-secn}.
 Our version of the Eklof--Trlifaj theorem is considered in
Section~\ref{cotorsion-and-envelopes}, with
further details following in Section~\ref{cotorsion-further}.
 Counterexamples showing that Theorem~\ref{eklof-trlifaj} does not
hold in locally presentable abelian categories are also demonstrated
in Section~\ref{cotorsion-further}, and envelopes are discussed
in Section~\ref{cotorsion-and-envelopes}.
 We recall the basic definitions and constructions related to
contramodules in Section~\ref{contramodule-basics}.
 Flat contramodules are studied in Section~\ref{flat-contramodules-secn}.
 Cotorsion theories in contramodule categories are constructed in
Section~\ref{cotorsion-in-contramodules}.

\subsection*{Acknowledgment}
 The authors are grateful to J.~Trlifaj and J.~\v St'ov\'\i\v cek
for helpful discussions.
 The first-named author wishes to express his gratitude to Masaryk
University in Brno, Charles University in Prague, and Eduard \v Cech
Institute for Algebra, Geometry and Physics for their hospitality,
which made it possible for this work to appear.

\section{$\kappa$-Grothendieck categories, Precovers and Covers}
\label{precovers-secn}

Recall that a \textit{set of generators} of a category $\ck$ is a set
$\cg$ of objects of $\ck$ such that any object $K$ of $\ck$ is a quotient
of a coproduct of objects from $\cg$. This is clearly equivalent to
the condition that for any two distinct morphisms $f_1,f_2:K\to L$ there
is a morphism $g:G\to K$ with $G\in\cg$ such that $f_1g\neq f_2g$.

\begin{defi}\label{def2.1} 
Let $\ck$ be a cocomplete abelian category with a set of generators
and $\kappa$ a regular cardinal. We say that $\ck$ is
\textit{$\kappa$-Grothendieck} if $\kappa$-filtered colimits
are exact in~$\ck$.
\end{defi}

Let $\ck$ be a category and $\lambda$ a regular cardinal.
An object $K$ in $\ck$ is called \textit{$\lambda$-presentable} if
the functor $\ck(K,-):\ck\to\Set$ preserves $\lambda$-filtered colimits.
A category $\ck$ is called \textit{$\lambda$-accessible} if 
$\lambda$-filtered colimits exist in $\ck$ and there is a set $\cg$ of
$\lambda$-presentable objects such that every object of $\ck$ is
a $\lambda$-filtered colimit of objects from~$\cg$.
A functor $F:\ck\to\cl$ is \textit{$\lambda$-accessible} if
$\ck$ and $\cl$ are $\lambda$-accessible categories and $F$ preserves
$\lambda$-filtered colimits~\cite{AR,MP}.

A category or functor is \textit{accessible} if it is
$\lambda$-accessible for some regular cardinal~$\lambda$.
A full subcategory $\ck\subseteq\cl$ is \textit{accessibly embedded} if
there exists a cardinal $\lambda$ such that $\ck$ is closed under
$\lambda$-filtered colimits in~$\cl$.
A category is \textit{locally $\lambda$-presentable} if it is
$\lambda$-accessible and cocomplete.
A category is \textit{locally presentable} if it is locally
$\lambda$-presentable for some regular cardinal~$\lambda$.

In a locally $\kappa$-presentable category, $\kappa$-filtered colimits
commute with $\kappa$-small limits \cite[Proposition~1.59]{AR},
so locally $\kappa$-presentable abelian categories are
$\kappa$-Grothendieck.

\begin{theo}\label{th2.2} Any $\kappa$-Grothendieck category
is locally presentable.
\end{theo}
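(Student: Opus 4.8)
The plan is to show that $\ck$ is locally $\lambda$-presentable for a sufficiently large regular cardinal $\lambda\ge\kappa$, by verifying the standard recognition criterion~\cite{AR}: a cocomplete category is locally $\lambda$-presentable as soon as it admits a \emph{set} of $\lambda$-presentable objects such that every object is a $\lambda$-filtered colimit of objects from that set. First I would replace the generating set $\cg$ by the single object $G=\coprod_{g\in\cg}g$ (the coproduct exists, as $\ck$ is cocomplete), so that every object of $\ck$ is a quotient of some $G^{(S)}$, and observe that $S\mapsto G^{(S)}$ is left adjoint to $\ck(G,-)\colon\ck\to\Set$ and hence preserves all colimits.

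The place where $\kappa$-Grothendieck-ness enters is this: since $\kappa$-filtered colimits are exact, taking the image of a morphism into a fixed object commutes with $\kappa$-filtered colimits. For any regular $\lambda\ge\kappa$, writing $S$ as the $\lambda$-directed union $\colim_T T$ of its subsets $T$ with $|T|<\lambda$ gives $G^{(S)}=\colim_T G^{(T)}$; applying this together with the previous remark to an epimorphism $G^{(S)}\twoheadrightarrow X$ shows that every object $X$ is the $\lambda$-filtered colimit of its ``$\lambda$-generated'' subobjects $X_T:=\operatorname{im}\bigl(G^{(T)}\to X\bigr)$, $|T|<\lambda$. Applied to the subobjects of any $X$, this already makes $\ck$ well-powered: the $\lambda$-generated subobjects of $X$ form a set (being the values of the image-operation on the set $\ck(G^{(T)},X)=\ck(G,X)^{T}$, hence a set by Replacement), and every subobject of $X$ is the union of those among them that it contains, so $\operatorname{Sub}(X)$ embeds into a power set. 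Being abelian, $\ck$ is then co-well-powered too, so for each cardinal $\mu$ the quotient objects of $G^{(\mu)}$ form a set.

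It remains to choose $\lambda$. I would take $\lambda$ regular, $\ge\kappa$, and large relative to the cardinality of the (small) ring $\ck(G,G)$, with enough closure (say $\nu^{<\kappa}<\lambda$ for all $\nu<\lambda$), so that: (a) $G$ is $\lambda$-presentable, and therefore so is each $G^{(\mu)}$ with $\mu<\lambda$, because $\lambda$-filtered colimits commute with $\lambda$-small products in $\Set$; and (b) every $\lambda$-generated object $G^{(\mu)}/K$, $\mu<\lambda$, is in fact $\lambda$-presentable, because the kernel $K$ can then be taken $\lambda$-generated too --- one bounds the size of $K$ and the number of morphisms from coproducts of copies of $G$ needed to cover it, using that $\ck(G,-)$ is faithful with values in modules over the fixed ring $\ck(G,G)$, iterating the previous paragraph and closing up under the finitely many operations involved. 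Granting (a)--(b), the set $\ca$ of all quotient objects of the $G^{(\mu)}$ with $\mu<\lambda$ consists of $\lambda$-presentable objects, every object of $\ck$ is the $\lambda$-filtered colimit of its subobjects lying in $\ca$, and the recognition criterion finishes the proof.

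The hard part is (a)--(b), and really already (a): establishing that the generator becomes presentable once $\lambda$ is large, and then that $\lambda$ can be chosen uniformly so that $\lambda$-generated objects are genuinely $\lambda$-presentable. In a Grothendieck category this comes for free from Gabriel--Popescu --- such a category is a reflective, accessibly embedded localization of a module category, hence locally presentable --- but that route is unavailable here, since a general $\kappa$-Grothendieck abelian category (for instance a category of contramodules) need not be a localization of a module category. So (a)--(b) must be done directly, by cardinal arithmetic of L\"owenheim--Skolem type; this is precisely where the exactness of $\kappa$-filtered colimits (which legitimizes the ``image'' computations above) and the faithfulness of $\ck(G,-)$ carry the load.
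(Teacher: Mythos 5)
Your reduction of the theorem to claims (a)--(b) is where the proof stops being a proof: you explicitly defer the $\lambda$-presentability of the generator $G$ (and of the $\lambda$-generated objects) to an unspecified ``cardinal arithmetic of L\"owenheim--Skolem type,'' and that deferred step \emph{is} the theorem. Exactness of $\kappa$-filtered colimits gives you, as you correctly note, that images commute with such colimits, that every object is the $\lambda$-filtered union of its $\lambda$-generated subobjects, and that $\ck$ is well-powered; but none of this bounds the ``arity'' of $G$, i.e.\ produces a $\lambda$ for which $\ck(G,-)$ preserves $\lambda$-filtered colimits. A morphism $G\to\colim_i X_i$ into a $\lambda$-filtered colimit has no a priori reason to factor through a stage, and faithfulness of $\ck(G,-)$ into $\ck(G,G)$-modules does not supply one: the obstruction is exactly that $\ck(G,-)$ need not preserve the relevant colimits, which is what you are trying to prove. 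Even in the Grothendieck case ($\kappa=\omega$) this step is not done by direct cardinal counting; it is done by Gabriel--Popescu, i.e.\ by first embedding the category as a nice localization of a known locally presentable category.

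That embedding is in fact available here, and it is how the paper proceeds: by Vitale's generalization of Gabriel--Popescu \cite{V}, a cocomplete abelian category with a set of generators in which $\kappa$-filtered colimits are exact is a localization, with finite-limit-preserving reflector, of the (locally $\kappa$-presentable, abelian) category $\cm$ of models of a $\kappa$-ary algebraic theory built from the generators; by Borceux--Kelly \cite{BK} such a localization is closed under $\alpha$-filtered colimits in $\cm$ for some regular $\alpha\geq\kappa$, hence is accessible and, being cocomplete, locally $\alpha$-presentable. So the structural route you dismissed as ``unavailable'' (because plain Gabriel--Popescu does not apply) is precisely the route that works, in its algebraic-theories form; without it, or without an actual execution of your step (a), the proposal has a genuine gap at its central point.
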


\begin{proof}
Following~\cite[Remark~2.2]{V}, any cocomplete abelian category $\ck$
with a set of generators such that $\kappa$-filtered colimits commute
with $\nu$-small limits in $\ck$ for some regular cardinals
$\nu\leq\kappa$ is a $\nu$-localization of the category of models $\cm$
of a certain $\kappa$-ary algebraic theory.
(In fact, the category $\cm$ is also abelian.)
This means that $\ck$ is a reflective full subcategory of $\cm$
such that the left adjoint functor to the inclusion of $\ck$ to $\cm$
preserves $\nu$-small limits.
In particular, $\cm$ is locally $\kappa$-presentable, $\ck$ is
a localization of $\cm$ and, following~\cite[Proposition~6.7]{BK},
$\ck$ is closed under $\alpha$-filtered colimits in $\cm$ for some
regular cardinal $\alpha\geq\kappa$.
Thus $\ck$ is locally $\alpha$-presentable
(see~\cite[Remark~1.20 and Theorem~1.39]{AR}). 
\end{proof}

\begin{lemma}\label{lq1.1}
Let $\ck$ be a locally $\lambda$-presentable category, $\gamma$ be
the number of morphisms between (non-isomorphic) $\lambda$-presentable
objects of $\ck$ and $\kappa\trianglerighteq\lambda$, $\kappa>\gamma$
be a regular cardinal. Then a $\kappa$-presentable object of $\ck$
cannot have an increasing chain of subobjects of length~$\kappa$.
\end{lemma}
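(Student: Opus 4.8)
The plan is to argue by contradiction: use the $\kappa$\+presentability of $K$ to bound the number of morphisms into $K$ from $\lambda$\+presentable objects, and then use a strictly increasing $\kappa$\+chain of subobjects of $K$ to manufacture more such morphisms than the bound allows. Fix once and for all a set of representatives $A$ of the isomorphism classes of $\lambda$\+presentable objects of $\ck$; since each of these objects has at least an identity morphism, there are at most $\gamma<\kappa$ of them, and $|\ck(A,B)|\le\gamma$ for any two of them.

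So suppose $K$ is $\kappa$\+presentable and $(K_i\hookrightarrow K)_{i<\kappa}$ is a chain of subobjects in which each induced monomorphism $m_{i,i+1}\colon K_i\to K_{i+1}$ fails to be an isomorphism. The first step is to extract, for every $i<\kappa$, a $\lambda$\+presentable object $G_i$ and a morphism $g_i\colon G_i\to K_{i+1}$ that does not factor through $m_{i,i+1}$. Indeed, write $K_{i+1}$ as a $\lambda$\+filtered colimit of $\lambda$\+presentable objects; if every structure morphism of this colimit factored through the monomorphism $m_{i,i+1}$, then — those factorizations being unique, hence compatible with the diagram — one would obtain $s\colon K_{i+1}\to K_i$ with $m_{i,i+1}s=\id$, and then $sm_{i,i+1}=\id$ because $m_{i,i+1}$ is monic, contradicting that $m_{i,i+1}$ is not an isomorphism. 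Composing with $K_{i+1}\hookrightarrow K$ gives $f_i\colon G_i\to K$; writing $N_i=\operatorname{Im}(f_i)$ for its image subobject and cancelling the monomorphisms $K_j\hookrightarrow K$, one checks that $N_i\subseteq K_j$ if and only if $j>i$. In particular the $f_i$ are pairwise distinct even when regarded only as elements of $\coprod_A\ck(A,K)$ (distinct because their images in $K$ are), so $i\mapsto f_i$ is an injection of $\kappa$ into $\coprod_A\ck(A,K)$.

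The second step is to bound that coproduct. Since $\lambda\trianglelefteq\kappa$ and $K$ is $\kappa$\+presentable, $K$ is a $\lambda$\+filtered colimit $\colim_{j\in\cj}D(j)$ of $\lambda$\+presentable objects over a category $\cj$ with fewer than $\kappa$ morphisms — the standard description of $\kappa$\+presentable objects in a $\lambda$\+accessible category (see \cite{AR,MP}). As $\ck(A,-)$ preserves $\lambda$\+filtered colimits for $\lambda$\+presentable $A$, we get $\ck(A,K)=\colim_{j\in\cj}\ck(A,D(j))$, hence $|\ck(A,K)|\le|\mathrm{Mor}\,\cj|\cdot\gamma<\kappa$. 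Since there are at most $\gamma<\kappa$ isomorphism classes of $\lambda$\+presentable objects and $\kappa$ is regular, $\bigl|\coprod_A\ck(A,K)\bigr|<\kappa$, contradicting the injection of the previous paragraph; this proves the lemma.

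The main obstacle is the witness\+extraction step: turning the mere properness of $K_i\hookrightarrow K_{i+1}$ into an honest morphism from a $\lambda$\+presentable object that detects that properness is exactly where local $\lambda$\+presentability is used — both the existence of enough $\lambda$\+presentable objects and the rigidity (uniqueness) of factorizations through a monomorphism. A secondary point that deserves care is the cited description of $\kappa$\+presentable objects as $\kappa$\+small $\lambda$\+filtered colimits of $\lambda$\+presentable ones, which is precisely where the hypothesis $\kappa\trianglerighteq\lambda$, rather than merely $\kappa\ge\lambda$, enters.
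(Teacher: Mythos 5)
Your proof is correct and takes essentially the same route as the paper's: both arguments reduce the statement to the observation that, since $K$ is a $\kappa$-small $\lambda$-filtered colimit of $\lambda$-presentable objects (this is where $\kappa\trianglerighteq\lambda$ enters, via \cite[Proposition~2.3.11]{MP}), the set $\coprod_A\ck(A,K)$ has cardinality $<\kappa$, while a strictly increasing $\kappa$-chain of subobjects would produce $\kappa$ distinct elements of it. The only difference is presentational: the paper packages the witness-extraction step into the canonical fully faithful embedding $E:\ck\to\Set^{\cx^{\op}}$ (a proper inclusion of subobjects yields a proper inclusion of subfunctors, hence a new element of $\coprod_A EK(A)$), whereas you extract the witnesses $f_i:G_i\to K$ by hand from a $\lambda$-filtered colimit presentation of $K_{i+1}$.
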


\begin{proof}
Let $\ck$ be a locally $\lambda$-presentable category and $\cx$ be its
representative full subcategory of $\lambda$-presentable objects.
Consider the canonical full embedding $E:\ck\to\Set^{\cx^{\op}}$ sending
$K$ to $\ck(-,K)$ restricted on $\cx$. This functor sends any increasing
chain of subobjects of $K$ to the increasing chain of subobjects of~$EK$.
Since any $\kappa$-presentable object of $\ck$ is a $\kappa$-small
$\lambda$-filtered colimit of $\lambda$-presentable objects for
$\kappa\trianglerighteq\lambda$ (see~\cite[Proposition~2.3.11]{MP}), 
and $E$ preserves $\lambda$-filtered colimits and sends 
$\lambda$-presentable objects to finitely presentable ones, $E$ sends 
$\kappa$-presentable objects to $\kappa$-presentable ones. 
The forgetful functor $\Set^{\cx^{\op}}\to\Set^X$, where $X$ is the set of
objects of $\cx$, preserves $\kappa$-presentable objects when
$\kappa>\gamma$.
This implies that a $\kappa$-presentable object in $\ck$ cannot have
an increasing chain of subobjects of length~$\kappa$.
In fact, such a chain would yield a similar chain of subobjects of~$EK$.
Applying the forgetful functor, we would get an increasing chain of
subsets of the disjoint union of $EK(A)$ over the objects $A$ of~$\cx$.
Since $EK$ is $\kappa$-presentable, all the sets $EK(A)$ have
cardinality~$<\kappa$ and thus this chain cannot have length~$\kappa$.
\end{proof}

Recall that a morphism $f:K\to L$ is a \textit{$\lambda$-pure
epimorphism} if any morphism $A\to L$ with $A$ $\lambda$-presentable
factorizes through~$f$. In a locally $\lambda$-presentable category
any $\lambda$-pure epimorphism is an epimorphism.

\begin{defi}\label{def2.3}
We say that a category $\ck$ \textit{has enough $\lambda$-pure quotients}
if for each object $K$ there is, up to isomorphism, only a set of
morphisms $f:L\to K$ such that $f=h\cdot g$, \ $g$ $\lambda$-pure
epimorphism implies that $g$ is an isomorphism.
\end{defi}

Clearly, if $\ck$ has enough $\lambda$-pure quotients then it has enough
$\mu$-pure quotients for any $\mu\geq\lambda$.

In an abelian locally $\lambda$-presentable category, $\lambda$-pure
epimorphisms are precisely cokernels of $\lambda$-pure monomorphisms
(see~\cite{AR1}). Recall that a monomorphism $f:K\to L$ is $\lambda$-pure
if in every commutative square
$$
\xymatrix{
K\ar [r]^{f}& L\\
A\ar [u]^{u}\ar [r]_{g}& B\ar [u]_{v}
}
$$
with $A$ and $B$ $\lambda$-presentable the morphism $u$ factors
through~$g$. 

Thus an abelian locally $\lambda$-presentable category has enough
$\lambda$-pure quotients if and only if for each object $K$ there is,
up to isomorphism,  only a set of morphisms $f:L\to K$ such that
$fg=0$ and $g$ $\lambda$-pure monomorphism implies that $g=0$.
Following~\cite[Theorem 2.1]{B}, any Grothendieck category has enough
$\lambda$-pure quotients for some regular cardinal~$\lambda$.
As observed in~\cite{R}, this is valid in any abelian locally
presentable category:

\begin{theo}\label{th2.4} Any additive locally presentable category has 
enough $\lambda$-pure quotients for some regular cardinal~$\lambda$.
\qed
\end{theo}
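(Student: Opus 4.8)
The plan is to fix a suitable regular cardinal $\lambda$ and bound, uniformly in an object $K$, the presentation rank of the domain of every morphism into $K$ that is ``reduced'' in the relevant sense; since a locally presentable category is well-powered and its hom-sets are sets, this yields the claim. By the reformulation recorded just before the statement, in an abelian locally $\lambda$-presentable category the property of having enough $\lambda$-pure quotients says exactly that for every $K$ there is, up to isomorphism, only a set of morphisms $f\colon L\to K$ whose kernel contains no nonzero $\lambda$-pure subobject of $L$; call such an $f$ \emph{$\lambda$-reduced over $K$}, and call a subobject $N$ of $L$ \emph{$\lambda$-superfluous} if it contains no nonzero $\lambda$-pure subobject of $L$. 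Fix $\lambda_0$ with $\ck$ locally $\lambda_0$-presentable and let $\gamma$ be as in Lemma~\ref{lq1.1}; choose $\lambda\sgeq\lambda_0$ regular with $\lambda>\gamma$ and large enough that the standard calculus of $\lambda$-purity in locally presentable categories is available --- $\lambda$-pure monomorphisms are closed under composition, under $\lambda$-directed colimits and under left and right cancellation against $\lambda$-pure monomorphisms; an extension, resp.\ a quotient, of a $\lambda$-pure subobject by a $\lambda$-pure subobject is $\lambda$-pure; every nonzero object has a nonzero $\lambda$-presentable $\lambda$-pure subobject and is the $\lambda$-directed union of all of its $\lambda$-presentable $\lambda$-pure subobjects; and $\lambda$-pure closures of subobjects exist (see \cite{AR,AR1}).

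Now fix a $\lambda$-reduced $f\colon L\to K$, put $N:=\ker f$ --- so $N$ is $\lambda$-superfluous in $L$, and $L/N\cong\operatorname{im}f$ is a subobject of $K$ --- and let $\mu$ be a regular cardinal with $\mu\sgeq\lambda$, $\mu>\gamma$, and $\mu$ at least the presentation rank of $K$; thus $\mu$ depends only on $K$, $\lambda$ and $\ck$. It suffices to prove $L$ is $\mu$-presentable: then all admissible $L$ lie, up to isomorphism, in one set of objects, and each $\ck(L,K)$ is a set. The argument is by contradiction with Lemma~\ref{lq1.1} and rests on two claims. \emph{Claim (a).} If $L$ is not $\mu$-presentable, then $L$ has a strictly increasing chain $(L_\alpha)_{\alpha<\mu}$ of $\lambda$-pure subobjects. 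One builds it by transfinite recursion: at a successor stage, enlarge the current $\lambda$-pure term $L_\beta$ by taking the preimage of a nonzero $\lambda$-presentable $\lambda$-pure subobject of the nonzero quotient $L/L_\beta$ (a $\lambda$-pure extension of the $\lambda$-pure $L_\beta$); at a limit stage of cofinality $\sgeq\lambda$ take the union (a $\lambda$-directed union of $\lambda$-pure subobjects); at a limit stage of cofinality $<\lambda$ take the $\lambda$-pure closure of the union; each term so obtained is a proper subobject of $L$ because $L$ is not $\mu$-presentable, so the recursion runs for $\mu$ steps. \emph{Claim (b).} If $N$ is $\lambda$-superfluous in $L$ and $M$ is a $\lambda$-pure subobject of $L$, then $(N+M)/M$ is $\lambda$-superfluous in $L/M$. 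For this, a $\lambda$-pure subobject of $L/M$ lying inside $(N+M)/M$ is of the form $P/M$ with $M\leq P\leq N+M$; then $P$ is $\lambda$-pure in $L$ as an extension of the $\lambda$-pure $M$ by the $\lambda$-pure $P/M$, by modularity $P=(P\cap N)+M$, and the cancellation and transitivity properties of $\lambda$-purity force the subobject $P\cap N$ of the $\lambda$-superfluous $N$ into $M$, so $P=M$ and $P/M=0$.

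Granting these, assume $L$ is not $\mu$-presentable and take the chain of Claim~(a). The images $\bar L_\alpha:=(L_\alpha+N)/N$ form a weakly increasing chain of subobjects of $L/N$, hence of $K$, of length $\mu$. As $K$ is $\mu$-presentable with $\mu\sgeq\lambda_0$ and $\mu>\gamma$, Lemma~\ref{lq1.1} forbids a strictly increasing chain of subobjects of $K$ of length $\mu$, so by regularity of $\mu$ the chain $(\bar L_\alpha)$ is constant on a final segment $[\alpha_0,\mu)$, i.e.\ $L_\alpha+N=L_{\alpha_0}+N$ for all $\alpha\sgeq\alpha_0$. Then $L':=\bigcup_{\alpha_0\leq\alpha<\mu}L_\alpha$ is a $\lambda$-pure subobject of $L$ (a $\lambda$-directed union of $\lambda$-pure subobjects) with $L'\supsetneq L_{\alpha_0}$ and $L'\leq L_{\alpha_0}+N$; hence $L'/L_{\alpha_0}$ is a nonzero $\lambda$-pure subobject of $L/L_{\alpha_0}$ contained in $(N+L_{\alpha_0})/L_{\alpha_0}$, contradicting Claim~(b) with $M=L_{\alpha_0}$. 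So $L$ is $\mu$-presentable, and the theorem follows.

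The technical heart --- and the step I expect to be the main obstacle --- is Claims (a) and (b): that the lattice of $\lambda$-pure subobjects of an object is rich enough to carry the long-chain construction, and sufficiently ``radical-like'' to be inherited by quotients modulo $\lambda$-pure subobjects. Both depend on the structure theory of $\lambda$-purity in locally presentable categories, and making them fully rigorous at this generality is where the real work lies. A possibly smoother alternative is to use, as in the proof of Theorem~\ref{th2.2}, a presentation of $\ck$ as a nice localization of the category $\cm$ of models of a $\kappa$-ary algebraic theory, prove the statement in the concrete category $\cm$ by the explicit ``the $\lambda$-pure closure of a $\lambda$-small subset is $\lambda$-small'' argument that generalizes El~Bashir's proof in the Grothendieck case \cite{B}, and descend along the reflection --- the care then needed being in matching $\lambda$-reducedness and $\lambda$-pure quotients between $\cm$ and $\ck$.
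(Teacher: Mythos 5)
The paper does not actually prove Theorem~\ref{th2.4}: it is quoted from \cite{R}, with \cite[Corollary~1]{K2} cited for an argument reducing it to the case of module categories over rings with several objects, so your proposal has to be judged on its own terms. On those terms it has a fatal gap: Claim~(b) is false, already for ordinary purity of finite abelian groups. Take $L=\integers/p\oplus\integers/p^3$, \ $N=\langle(1,p)\rangle\cong\integers/p^2$, and $M=0\oplus\integers/p^3$. The subgroup $N$ contains no nonzero pure subgroup of $L$: its only nonzero proper subgroup is generated by $p\cdot(1,p)=(0,p^2)$, an element of order~$p$ and height~$2$ (an order-$p$ pure subgroup needs a height-$0$ generator), while $N$ itself is not pure because $(0,p^2)\in N\cap p^2L$ but $p^2N=0$. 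The subgroup $M$ is a direct summand, hence pure and proper, and $N+M=L$ since $(1,p)-p\cdot(0,1)=(1,0)$. So $(N+M)/M=L/M\cong\integers/p$ is a nonzero pure subobject of $L/M$ lying inside the image of the superfluous~$N$, and Claim~(b) fails. The flaw in your argument for it is the last step: from $P=(P\cap N)+M$ being $\lambda$-pure you can conclude nothing about $P\cap N$, because the intersection of a $\lambda$-pure subobject with an arbitrary subobject is not $\lambda$-pure (here $P=L$ and $P\cap N=N$ is not pure), so superfluity of $N$ is never actually invoked. Since the contradiction at the end of your main argument is exactly an instance of Claim~(b) (with $M=L_{\alpha_0}$, $P=L'$), the proof collapses: the data ``$L'$ and $L_{\alpha_0}$ pure, $L_{\alpha_0}\varsubsetneq L'\leq L_{\alpha_0}+N$, \ $N$ superfluous'' is consistent, so a long chain of pure subobjects whose images in $K$ stabilize does not by itself force a pure subobject into $\ker f$.

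There are smaller problems too --- ``$\lambda$-pure closures of subobjects exist'' is false as stated (minimal pure oversubobjects do not exist even for modules; what is true, and suffices at limit stages of small cofinality, is that a $<\mu$-presentable subobject is contained in a $<\mu$-presentable $\lambda$-pure one) --- but those are patchable. The unpatchable point is that superfluity is not inherited by quotients modulo pure subobjects. The known proofs establish instead the quantitatively stronger statement that if $L/N$ is small and $L$ is large, then $N$ contains a nonzero $\lambda$-pure subobject of $L$ \emph{with small quotient}; this is done by an element-wise closure argument as in \cite[Lemma~1]{BBE} and \cite[Theorem~2.1]{B}, valid in module categories, and then transported to arbitrary locally presentable additive categories in \cite[Corollary~1]{K2}. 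Your closing ``smoother alternative'' --- realize $\ck$ inside a category of models of an algebraic theory and run El~Bashir's argument there --- is essentially that route, and it is the one the paper points to; that is where the proof should be carried out.
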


An argument deducing Theorem~\ref{th2.4} from its particular case when
the category in question is the category of modules over a ring with
several objects (hence a Grothendieck category) can be found
in~\cite[Corollary~1]{K2}.

Recall that a full subcategory $\ca$ of a category $\ck$ is called
\textit{weakly coreflective} if each object $K$ in $\ck$ has
a  weak coreflection, i.e., a morphism $c_K : K^\ast \to K$ where
$K^\ast$ is in $\ca$ such that every morphism $f:A \to K$ with $A$
in $\ca$ factorizes  (not necessarily uniquely) through~$c_K$. Every
weakly coreflective subcategory closed under retracts is closed
under coproducts in~$\ck$. The following result was proved
in~\cite[Proposition~2.3]{R}.

\begin{propo}\label{prop2.5}
Let $\ck$ be a locally presentable category having enough $\lambda$-pure 
quotients for some regular cardinal~$\lambda$. Let $\ca$ be
an accessible full subcategory of $\ck$ which is closed under coproducts
and directed colimits. Then $\ca$ is weakly coreflective in~$\ck$. \qed 
\end{propo}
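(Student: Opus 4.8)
The plan is to prove that for every object $K$ of $\ck$ the comma category $\ca\downarrow K$ --- objects $(A,f)$ with $A\in\ca$ and $f\colon A\to K$, morphisms $(A,f)\to(A',f')$ the maps $u\colon A\to A'$ in $\ca$ with $f'u=f$ --- has a weakly terminal object, which is precisely a weak coreflection of $K$ into $\ca$. Since $\ca$ is closed under coproducts in $\ck$, so is $\ca\downarrow K$, so it suffices to exhibit a \emph{weakly terminal set} $\cw_K$ in $\ca\downarrow K$: if $\cw_K=\{(B_i,g_i)\}_{i\in I}$ is a set of objects such that every object of $\ca\downarrow K$ admits a morphism to some member, then $K^{\ast}:=\coprod_{i\in I}B_i$ with the induced map $c\colon K^{\ast}\to K$ lies in $\ca$ and is a weak coreflection, because a morphism $u\colon A\to B_i$ over $K$ composed with the coproduct injection $B_i\to K^{\ast}$ factors $f$ through~$c$.

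First I would normalize the cardinal: using that $\ca$ is accessible and closed under directed colimits in $\ck$, that $\ck$ has enough $\lambda$-pure quotients for some $\lambda$ by hypothesis, and the standard sharpening of accessibility (\cite{AR,MP}), fix a regular cardinal, again denoted $\lambda$, such that $\ca$ is $\lambda$-accessible, closed under $\lambda$-filtered colimits in $\ck$, with its $\lambda$-presentable objects being exactly the $\lambda$-presentable objects of $\ck$ that lie in $\ca$, and $\ck$ has enough $\lambda$-pure quotients. Let $\cs$ be a representative small set of $\lambda$-presentable objects of $\ca$, closed under $\lambda$-small coproducts; then every object of $\ca$ is a $\lambda$-filtered colimit of a diagram valued in $\cs$.

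Next I would use purity to bound the data. By Definition~\ref{def2.3}, for each $K$ there is only a set $\cm_K$, up to isomorphism over $K$, of morphisms $m\colon L\to K$ such that every factorization $m=m'g$ with $g$ a $\lambda$-pure epimorphism forces $g$ to be an isomorphism. A transfinite-refinement (Zorn) argument, starting from $(\id_A,f)$ and repeatedly composing with $\lambda$-pure epimorphisms, forming colimits of the intermediate objects at limit stages, and invoking the closure properties of $\lambda$-pure epimorphisms, shows that every morphism $f\colon A\to K$ factors as $A\xrightarrow{q}L\xrightarrow{m}K$ with $q$ a $\lambda$-pure epimorphism and $m\in\cm_K$. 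I would then associate to each $m\colon L\to K$ in $\cm_K$ an object $Q_L\in\ca$ and a morphism $p_L\colon Q_L\to L$ with the property that every $\lambda$-pure epimorphism $A\to L$ with $A\in\ca$ factors through $p_L$ by a morphism of $\ca$. Granting this, $\cw_K:=\{(Q_L,\,m\,p_L)\colon m\in\cm_K\}$ is a weakly terminal set in $\ca\downarrow K$: given $(A,f)$, write $f=m\,q$ as above and factor $q=p_L\,\psi$ to obtain $\psi\colon(A,f)\to(Q_L,m\,p_L)$.

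The construction of $(Q_L,p_L)$ is the heart of the matter and, I expect, the main obstacle. The naive candidate $Q_L^{(0)}:=\coprod_{(S,v)}S$, the coproduct over all $S\in\cs$ together with a morphism $v\colon S\to L$, does have the feature that the induced map $Q_L^{(0)}\to L$ is a $\lambda$-pure epimorphism (a morphism $P\to L$ with $P$ $\lambda$-presentable lifts along the given $\lambda$-pure epimorphism $q\colon A\to L$ and then factors through the canonical $\lambda$-filtered diagram on $A$ with values in $\cs$, hence through some $S\to L$); but an arbitrary $\lambda$-pure epimorphism $q\colon A\to L$ with $A\in\ca$ need not factor through it, since the liftings of the maps $A_d\to A\to L$ do not assemble into a cocone over the canonical diagram on $A$, and $Q_L^{(0)}\to L$ need not split. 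This is exactly where the hypothesis that $\ca$ is closed under \emph{directed} colimits (and not merely accessible with coproducts) is indispensable: one instead builds $Q_L$ by a transfinite iteration $Q_L^{(0)}\to Q_L^{(1)}\to\cdots$ in $\ca$, at each successor stage forming a further coproduct over $Q_L^{(\alpha)}$ that adjoins the small data needed to make the not-yet-factored maps factor and to force parallel pairs that should be coequalized to agree one stage later, and taking directed colimits (which remain in $\ca$) at limit stages. Verifying that this iteration stabilizes at some ordinal $\theta$ --- whose length is bounded in terms of the set $\cm_K$ and of $\lambda$-presentability --- and that the directed colimit there resolves all the accumulated factorization and coequalization demands is the technical core; once it is established, $Q_L:=Q_L^{(\theta)}$ with the induced $p_L$ has the required property and the argument is complete.
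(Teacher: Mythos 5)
First, note that the paper does not prove this proposition itself: it is quoted from \cite[Proposition~2.3]{R}, so the comparison is with that argument. Your reduction to a weakly terminal set in $\ca\downarrow K$ via closure under coproducts, and the factorization of every $f\colon A\to K$ as a $\lambda$-pure epimorphism followed by a member of the set $\cm_K$, both match the actual proof. But the step you yourself flag as ``the heart of the matter'' --- the construction of $(Q_L,p_L)$ --- is a genuine gap, and the transfinite iteration you propose for it cannot be made to work: the morphisms you need to factor are $\lambda$-pure epimorphisms $A\to L$ with $A$ ranging over a proper class of arbitrarily large objects of $\ca$, whereas a small-object-style iteration only ever produces factorizations of morphisms out of a \emph{set} of small objects; moreover, ``forcing parallel pairs to be coequalized one stage later'' would require forming coequalizers, under which $\ca$ is not assumed to be closed. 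No amount of adjoining ``small data'' at successor stages assembles a cocone out of a large object~$A$.

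The missing idea is that the object $L$ itself already lies in $\ca$, so that one may take $Q_L=L$ and $p_L=\id_L$; the weakly terminal set is then simply $\{(L,m): m\in\cm_K,\ L\in\ca\}$, and its coproduct is the weak coreflection. Concretely: after the usual uniformization, choose $\mu\geq\lambda$ so that $\ck$ is locally $\mu$-presentable with enough $\mu$-pure quotients, $\ca$ is $\mu$-accessible and closed under $\mu$-filtered colimits in $\ck$, and the $\mu$-presentable objects of $\ca$ are $\mu$-presentable in $\ck$. If $g\colon A\to L$ is a $\mu$-pure epimorphism with $A\in\ca$, then every morphism $P\to L$ from a $\mu$-presentable object of $\ck$ lifts along $g$ to $A$ and then factors through some $\mu$-presentable $Q\in\ca$ occurring in the canonical diagram of $A$; hence the full subcategory of the canonical $\mu$-filtered diagram $\ck_\mu\downarrow L$ spanned by the pairs whose domain lies in $\ca$ is cofinal. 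Therefore $L$ is a filtered colimit of objects of $\ca$, and closure under directed colimits gives $L\in\ca$. This closure of $\ca$ under $\mu$-pure quotients is exactly where accessibility and closure under directed colimits enter the proof, and it collapses your ``technical core'' to a triviality.
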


In the module-theoretic terminology, $\ca$ being weakly coreflective
means that any object of $\ck$ has an $\ca$-precover. If, in addition,
any morphism $h:K^\ast\to K^\ast$ with $c_Kh=c_K$ is an isomorphism,
we say that $c_K:K^\ast\to K$ is an \textit{$\ca$-cover}.
The category-theoretic terminology is that $\ca$ is \textit{stably
weakly coreflective} (see~\cite{R}).

The following theorem is a state-of-the-art version
of~\cite[Theorem~3.1]{E}.

\begin{theo}\label{th2.6}
Let $\ck$ be a locally presentable category and $\ca$ its weakly
coreflective full subcategory which is closed under directed colimits.
Then $\ca$ is stably weakly coreflective.
\end{theo}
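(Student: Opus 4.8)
The plan is to upgrade a weak coreflection $c_K\colon K^\ast\to K$ to a cover by the standard "Wakamatsu-type" fixed-point trick, adapted to the locally presentable setting. The obstacle to an immediate argument is the usual one: given a weak coreflection, one repeatedly composes it with endomorphisms $h\colon K^\ast\to K^\ast$ satisfying $c_Kh=c_K$ until the process stabilizes, but a priori this could require a transfinite chain of arbitrary length, so one needs a bound. The key observation, which I would establish first, is that the objects $K^\ast$ can be chosen from a fixed set (up to isomorphism): since $\ca$ is accessible and closed under directed colimits, it is $\mu$-accessible for some $\mu$, and one may take $K^\ast$ to be a $\mu$-filtered colimit of $\mu$-presentable objects of $\ca$; moreover, by Lemma~\ref{lq1.1} applied in $\ck$ (enlarging $\mu$ past the relevant $\gamma$ and past a regular cardinal $\lambda$ witnessing local presentability), each $\mu$-presentable object of $\ca$ has no increasing chain of subobjects of length $\mu$. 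This will cap the length of any strictly decreasing chain of "images" inside $K^\ast$.

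Concretely, I would argue as follows. Fix $K$ and a weak coreflection $c_K\colon K^\ast\to K$ with $K^\ast$ chosen $\mu$-presentable-by-filtered-colimit as above. Consider the poset (or rather a transfinite sequence) built by the following construction: set $L_0=K^\ast$, $e_0=c_K$; given $e_\beta\colon L_\beta\to K$ with $L_\beta\in\ca$, if $e_\beta$ is already a cover we stop; otherwise there is an endomorphism $h_\beta$ of $L_\beta$ with $e_\beta h_\beta=e_\beta$ but $h_\beta$ not an isomorphism, and we pass to $L_{\beta+1}=\operatorname{im}(h_\beta)$ (the image taken in $\ck$, which lies in $\ca$ because... — see below) with $e_{\beta+1}$ the restriction of $e_\beta$; at limit stages we take the directed colimit, which stays in $\ca$ by hypothesis, and which still admits a map to $K$ compatible with the system. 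The crucial point is that each $L_{\beta+1}\to L_\beta$ is a proper subobject (since $h_\beta$ is not an isomorphism and $K^\ast\in\ca$ can be taken with enough $\lambda$-pure quotients in the ambient category, or more simply because a non-iso endomorphism that is idempotent-up-to-the-map-to-$K$ has proper image — here one may first replace $h_\beta$ by a suitable power or use that $\ca$ is closed under retracts to assume the relevant map is split). One then gets a strictly decreasing chain of subobjects of $K^\ast$, which by the chain-length bound from Lemma~\ref{lq1.1} must terminate at some ordinal $<\mu$; the resulting $e\colon L\to K$ is then an $\ca$-cover, and composing with the weak coreflection data shows it is also a weak coreflection of $K$, so $\ca$ is stably weakly coreflective.

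The main obstacle I anticipate is the bookkeeping needed to guarantee two things simultaneously: (a) that every $L_\beta$ genuinely remains in $\ca$ — images in $\ck$ of maps between objects of $\ca$ need not lie in $\ca$, so one likely has to work instead with the "essential image" construction of El~Bashir / the factorization through a subobject that is itself a directed colimit of $\mu$-presentables in $\ca$, i.e. retract-type arguments rather than naive images; and (b) that the chain is \emph{strictly} decreasing, for which one uses that a weak coreflection can be trimmed so that $c_Kh=c_K$ with $h$ non-iso forces $\operatorname{im}(h)\subsetneq K^\ast$ — the cleanest route is to first reduce, using closure of $\ca$ under retracts and the directed-colimit hypothesis, to the case where $K^\ast$ has no proper retracts mapping onto $K$ via $c_K$, making any such $h$ automatically have proper image. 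Once the chain is strictly decreasing and bounded in length, termination is immediate, and the fixed endpoint is by construction both a weak coreflection and a cover. I would present the argument in exactly this order: (1) choose $K^\ast$ with size control via accessibility of $\ca$ and Lemma~\ref{lq1.1}; (2) set up the transfinite trimming sequence staying inside $\ca$; (3) prove strict decrease; (4) invoke the chain bound to stop; (5) verify the endpoint is a cover and still a weak coreflection.
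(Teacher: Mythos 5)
Your overall strategy (control the size of $K^\ast$ via accessibility and Lemma~\ref{lq1.1}, then run a transfinite trimming process that must terminate) has the right general shape, and the use of Lemma~\ref{lq1.1} to cap chain lengths is exactly what the paper does. But there is a genuine gap at your step (3), the strict decrease. An endomorphism $h$ of $K^\ast$ with $c_Kh=c_K$ that is not an isomorphism may perfectly well be an epimorphism that fails to be a monomorphism; in that case $\operatorname{im}(h)=K^\ast$ and your chain of images does not move at all. Neither of your proposed repairs addresses this: replacing $h$ by a power preserves ``epi but not mono,'' and the reduction to the case where $K^\ast$ has no proper retract mapping onto $K$ is itself a minimality statement whose proof would require precisely the kind of transfinite argument you are trying to set up. This non-injective case is the actual hard part of the theorem, and the paper's proof is structured around it: instead of a decreasing chain of images it builds a forward smooth chain $(t_{ij}:T_i\to T_j)_{i<j\le\kappa}$ with $T_{i+1}=T_i$, choosing $t_{i,i+1}$ to be a non-monomorphism satisfying $c_it_{i,i+1}=c_i$ when one exists and a non-isomorphism otherwise; it then shows, via stabilization of the kernel pairs of the maps $t_{j\kappa}$ (using commutation of $\kappa$-directed colimits with finite limits together with the subobject-chain bound), that the ordinals $i$ with $t_{i\kappa}$ a monomorphism are cofinal in~$\kappa$. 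This yields an increasing chain of subobjects of $T_\kappa$ of length $\kappa$, which is transported into $T_0=K^\ast$ along a monomorphism $h:T_\kappa\to T_0$ supplied by the weak coreflection property, contradicting Lemma~\ref{lq1.1}.

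Two further unresolved points in your write-up. First, $L_{\beta+1}=\operatorname{im}(h_\beta)$ need not lie in $\ca$, since $\ca$ is only assumed closed under directed colimits; you flag this but do not fix it, whereas the paper's chain avoids images entirely, so that every chain object is either equal to a previous one or a directed colimit of previous ones, hence stays in $\ca$. Second, your limit stages are inconsistent with the decreasing-subobject picture: a decreasing chain of subobjects with inclusion transition maps is an inverse system, whose limit is an intersection that $\ca$ is not assumed to contain; if you instead take the colimit along the corestrictions $L_\beta\to L_{\beta+1}$, the colimit does stay in $\ca$, but it is no longer visibly a subobject of $K^\ast$, and the claim of a strictly decreasing chain of subobjects of $K^\ast$ breaks down exactly where you need the length bound.
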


\begin{proof}
Let $\ck$ be a locally $\lambda$-presentable category.
Since $\lambda$-filtered colimits commute with finite limits,
the squaring functor $-^2:\ck\to\ck$ (sending $K$ to $K\times K$)
preserves $\lambda$-filtered colimits. Thus it preserves
$\mu$-presentable objects for some $\mu\geq\lambda$
(see~\cite[2.19]{AR}).
Let $K$ be in $\ck$ and assume that $K$ does not have a stable weak
coreflection to $\ca$. Let $c_K:K^\ast\to K$ be a weak coreflection.
There is a regular cardinal $\kappa$ such that
$\kappa\vartriangleright\lambda$, $\kappa\geq\mu$ and $\kappa>\gamma$
for which $K^\ast$ is $\kappa$-presentable (see~\cite[2.13(6)]{AR}).
Then the squaring functor preserves $\kappa$-presentable objects,
and a $\kappa$-presentable object cannot have an increasing chain
of subobjects of length~$\kappa$ (see Lemma~\ref{lq1.1}).

We will form the smooth chain $(t_{ij}:T_i\to T_j)_{i<j\leq\kappa}$
and a compatible cocone $c_i:T_i\to K$ starting with $T_0=K^\ast$ and
$c_0=c_K$. Having $c_i:T_i\to K$, we put $T_{i+1}=T_i$ and $c_{i+1}=c_i$.
If there is $f:T_i\to T_i$ with $c_if=c_i$ which is not a monomorphism,
we put $t_{i,i+1}=f$. If all morphisms $f:T_i\to T_i$ with $c_if=c_i$ are
monomorphisms, we take $t_{i,i+1}$ with $c_it_{i,i+1}=c_i$ which is not
an isomorphism. Since $c_i:T_i\to K$ is not a stable weak coreflection, 
such $t_{i,i+1}$ exists.
All objects $T_i$, $i<\kappa$ are $\kappa$-presentable.

Let us show that the ordinals $i<\kappa$ such that $t_{i\kappa}$ is
a monomorphism are cofinal in~$\kappa$. Consider kernel pairs
$s_j,r_j:S_j\to T_j\times T_j$ of $t_{j\kappa}$ and kernel pairs
$s_{ji},r_{ji}:S_{ji}\to T_j\times T_j$ of $t_{ji}$, $j<i$. Since
$\kappa$-directed colimits commute with finite limits, we get that
$S_j$ is the $\kappa$-directed colimit of $S_{ji}$, $j<i<\kappa$.
Since $T_j\times T_j$ does not contain an increasing chain of subobjects
of length $\kappa$, there is $j<i<\kappa$ such that $S_j=S_{ji}$.
We form a smooth sequence $i_0<i_1<\dots i_k<\dots$, $k\leq\lambda$
by putting $i_0=j$ and $S_{i_{k+1}}=S_{i_k,i_{k+1}}$. Since
$\lambda$-directed colimits commute with $\lambda$-small limits, we have
$$
S_{i_\lambda} = \colim_{k<\lambda} S_{i_k} = \colim_{k<\lambda}
S_{i_k,i_{k+1}}= T_{i_\lambda}.
$$
Hence $t_{i_\lambda,\kappa}$ is a monomorphism.

If $t_{i\kappa}$ is a monomorphism then $t_{ij}$ is a monomorphism for
all $i<j\leq\kappa$. In particular, $t_{i,i+1}$ is a monomorphism and,
following the construction, it is not an isomorphism. Moreover,
if $t_{ij}$ is an isomorphism for some $i<j\leq\kappa$, then $t_{i+1,j}$
is not a monomorphism and $t_{ij}^{-1}t_{i+1,j}$ is not a monomorphism,
which contradicts the condition that all morphisms $f:T_i\to T_i$
with $c_if=c_i$ are monomorphisms (we recall that $T_{i+1}=T_i$
and $c_{i+1}=c_i$). Thus $t_{ij}$ cannot be an isomorphism.
We have shown that $T_\kappa$ contains an increasing chain of subobjects
of length~$\kappa$.

Since $c_0: T_0\to K$ is a weak coreflection, there is a morphism
$h: T_\kappa \to T_0$ such that $c_0h=c_\kappa$. Once again, if
$t_{i\kappa}$ is a monomorphism then, following the construction,
all morphisms $f: T_i \to T_i$ such that $c_if=c_i$ are monomorphisms.
Hence $ht_{i\kappa}$ is a monomorphism because we have a monomorphism
$$
T_i \xrightarrow{\ ht_{i\kappa}\ } T_0 \xrightarrow{\ t_{0i}\ } T_i\,.
$$
Consequently, $h$ is a monomorphism and we get an increasing chain of 
subobjects of $T_0$ of length $\kappa$, which is a contradiction.
\end{proof}

\begin{coro}\label{cor2.7}
Let $\ck$ be an locally presentable abelian category and $\ca$ its
accessible full subcategory closed under coproducts and directed
colimits.  Then $\ca$ is stably weakly coreflective. \qed
\end{coro}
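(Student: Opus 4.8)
The plan is to obtain Corollary~\ref{cor2.7} as an immediate consequence of Theorems~\ref{th2.4} and~\ref{th2.6} together with Proposition~\ref{prop2.5}; no genuinely new argument is needed beyond checking that the hypotheses chain together correctly.

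First I would note that, since $\ck$ is a locally presentable abelian category, it is in particular an additive locally presentable category, so Theorem~\ref{th2.4} applies and furnishes a regular cardinal $\lambda$ for which $\ck$ has enough $\lambda$-pure quotients. Next I would invoke Proposition~\ref{prop2.5}: by hypothesis $\ca$ is an accessible full subcategory of $\ck$ closed under coproducts and under directed colimits, and we have just seen that $\ck$ has enough $\lambda$-pure quotients, so the proposition yields that $\ca$ is weakly coreflective in~$\ck$.

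Finally, $\ca$ is now a weakly coreflective full subcategory of the locally presentable category $\ck$, and it remains closed under directed colimits by hypothesis; hence Theorem~\ref{th2.6} applies and shows that $\ca$ is stably weakly coreflective, which is exactly the assertion of the corollary.

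I expect no real obstacle in the corollary itself: the substance resides in the three results being quoted --- in particular in the smooth-chain construction behind Theorem~\ref{th2.6} and in the reduction to categories of modules over a ring with several objects behind Theorem~\ref{th2.4} --- while the only care required here is to confirm that ``accessible full subcategory closed under coproducts and directed colimits'' is precisely the input required by Proposition~\ref{prop2.5}, and that its output, namely weak coreflectivity, together with the standing hypothesis of closure under directed colimits, is precisely the input required by Theorem~\ref{th2.6}.
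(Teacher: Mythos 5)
Your proposal is correct and coincides with the paper's own proof, which reads simply ``Combine Theorem~\ref{th2.4} and Proposition~\ref{prop2.5} with Theorem~\ref{th2.6}.'' The chain of hypotheses you check is exactly the intended one: Theorem~\ref{th2.4} supplies enough $\lambda$-pure quotients, Proposition~\ref{prop2.5} then gives weak coreflectivity, and Theorem~\ref{th2.6} upgrades this to stable weak coreflectivity using closure under directed colimits.
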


\begin{proof}
Combine Theorem~\ref{th2.4} and Proposition~\ref{prop2.5} with
Theorem~\ref{th2.6}.
\end{proof}

\begin{rem}\label{re2.8}
Assuming Vop\v enka's principle, any full subcategory of a locally
presentable category closed under directed colimits is accessible
(see~\cite[Theorem~6.17]{AR}). Thus any full subcategory of
an locally presentable abelian category closed under coproducts and
directed colimits is stably weakly coreflective.
\end{rem}

\section{Cotorsion theories, Special preenvelopes and Envelopes}
\label{cotorsion-and-envelopes}

 Given two classes of morphisms $\cl$ and $\crr$ in a category $\ck$,
we denote by $\cl^\square$ the class of all morphisms in $\ck$ having
the right lifting property with respect to every morphism from $\cl$,
and by  ${}^\square\crr$ the class of all morphisms having the left
lifting property with respect to every morphism from~$\crr$.
 We also denote by $\cl^\triangle$ the class of all objects in $\ck$
that are injective to every morphism from $\cl$, and by
${}^\triangle\crr$ the class of all objects that are projective
to every morphism in~$\crr$ (see~\cite[Definition~2.1]{R1}).

 A pair of classes of morphisms $(\cl,\crr)$ in a category $\ck$ is
called a \textit{weak factorization system} if $\crr=\cl^\square$,
$\cl={}^\square\crr$, and every morphism $f$ in $\ck$ can be
decomposed as $f=rl$ with $l\in\cl$ and $r\in\crr$
\cite[Definition~2.2]{R1}.
 The notion of a weak factorization system can be viewed as
``a half of'' a model structure: in a (closed) model category,
the pairs of classes of morphisms (cofibrations, trivial fibrations)
and (trivial cofibrations, fibrations) are weak factorization systems.

Let $\ck$ be an abelian category and $\ca$, $\cb$ its full subcategories.
Denote by $\ca^\perp$ the class of all objects $B\in\ck$ such that
$\Ext_{\ck}^1(A,B)=0$ for all $A\in\ca$.
Similarly, the class ${}^\perp\cb$ consists of all objects $A\in\ck$
such that $\Ext_{\ck}^1(A,B)=0$ for all $B\in\cb$.
The class $\ca^\perp$ is always closed under products in $\ck$ and
the class ${}^\perp\cb$ is closed under coproducts.

An \textit{$\ca$-monomorphism} is defined as a monomorphism in $\ck$
whose cokernel belongs to~$\ca$. The class of all $\ca$-monomorphisms
will be denoted by $\ca\lMono$.
Dually, the class of all epimorphisms whose kernel is in $\cb$ is
denoted as $\cb\lEpi$ and these epimorphisms are called
\textit{$\cb$-epimorphisms}.
Like in~\cite[Remark~4.3]{R1}, an object $B$ is injective to all
$\ca$-monomorphisms if and only if $\Ext^1_{\ck}(A,B)=0$ for all
$A\in\ca$. Thus $\ca\lMono^\triangle=\ca^\perp$ and
${}^\triangle\cb\lEpi={}^\perp\cb$.

\begin{lemma}\label{le3.1} Let $\ck$ be an abelian category and
$\ca$ a full subcategory of $\ck$ such that any object of $\ck$
is a quotient of an object from $\ca$. Then $g$ belongs to
$\ca\lMono^\square$ if and only if $g$ is an epimorphism whose kernel
is in $\ca^\perp$.
\end{lemma}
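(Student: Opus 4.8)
The plan is to prove both implications by testing the right lifting property against a small, explicitly constructed family of $\ca$\+monomorphisms: the maps $0\to A$ with $A\in\ca$, and, given a short exact sequence $0\to C\to X\xrightarrow{g}Y\to0$ together with an extension $0\to C\to E\to A\to 0$ of an object $A\in\ca$, the monomorphism $X\to E\cup_C X$ obtained by pushout, whose cokernel is again~$A$.

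\emph{The ``only if'' direction.} Suppose $g\colon X\to Y$ lies in $\ca\lMono^\square$. First I would check that $g$ is an epimorphism: by hypothesis there is an epimorphism $p\colon A\to Y$ with $A\in\ca$, and lifting the commutative square with top arrow $0\to X$ and bottom arrow $p$ against the $\ca$\+monomorphism $0\to A$ produces $h\colon A\to X$ with $gh=p$, whence $g$ is epi. Now set $C=\ker g$. To see $C\in\ca^\perp$, take any $A\in\ca$ and any extension $0\to C\to E\to A\to 0$, form the pushout $P=E\cup_C X$, observe that the induced map $i\colon X\to P$ is a monomorphism with cokernel $A$, hence an $\ca$\+monomorphism, and note that $g\colon X\to Y$ and $0\colon E\to Y$ agree on $C$ (because $g|_C=0$) and therefore glue to a map $\beta\colon P\to Y$ with $\beta i=g$. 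Lifting the square $(\id_X,\beta)$ against $i$ yields $j\colon P\to X$ with $ji=\id_X$; then $j$ precomposed with $E\to P$ has composite $0$ into $Y$, so it factors through $\ker g=C$ as a retraction of $C\to E$, and the extension splits. Thus $\Ext^1_{\ck}(A,C)=0$ for all $A\in\ca$, i.e.\ $C\in\ca^\perp$.

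\emph{The ``if'' direction.} Suppose $g$ is an epimorphism with $C:=\ker g\in\ca^\perp$, let $i\colon K\to L$ be an $\ca$\+monomorphism with cokernel $A\in\ca$, and consider a square with top arrow $u\colon K\to X$ and bottom arrow $v\colon L\to Y$, so $gu=vi$. Form the pullback $Z=L\times_Y X$; then the projection $Z\to L$ is an epimorphism with kernel $C$, and the square induces a monomorphism $\tilde u\colon K\to Z$ lying over~$i$. The key observation is that lifts $L\to X$ of the original square correspond bijectively to sections of $Z\to L$ that restrict to $\tilde u$ on $K$ (this is just the universal property of the pullback). To build such a section, pass to $Z'=Z/\tilde u(K)$: since $\tilde u(K)\cap C=0$ inside $Z$, the sequence $0\to C\to Z'\to A\to 0$ is exact, and it splits because $\Ext^1_{\ck}(A,C)=0$; lifting the resulting retraction $Z'\to C$ along $Z\to Z'$ gives a retraction $Z\to C$ vanishing on $\tilde u(K)$, which is precisely the section we need. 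Composing it with $Z\to X$ produces the required lift, so $g\in\ca\lMono^\square$.

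\emph{Where the work is.} The routine ingredients are that pushouts of monomorphisms are monomorphisms and pullbacks of epimorphisms are epimorphisms, with the expected (co)kernels, together with the dictionary between ``lift of a square'' and ``section of the pullback over~$K$''. The one genuinely load-bearing step, and the place I would be most careful, is the pushout construction in the forward direction: one must verify that $X\to E\cup_C X$ really is an $\ca$\+monomorphism and that the comparison map out of the pushout into $Y$ exists --- the latter being exactly where $C=\ker g$ is used. Everything else is diagram chasing in an abelian category.
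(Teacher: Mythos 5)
Your proof is correct and follows essentially the same route as the paper: the paper proves the epimorphism claim exactly as you do (lifting against $0\to A$ for an epimorphism $A\to Y$ with $A\in\ca$) and outsources the other two ingredients to citations --- the kernel-in-$\ca^\perp$ claim to \cite[Lemma~4.4]{R1} and the converse to \cite[Lemma~1 in Section~9.1]{P} --- and the arguments found there are the standard pushout-and-split / pullback-and-split diagram chases you have written out in full. The only cosmetic imprecision is at the very end, where the retraction $Z\to C$ vanishing on $\tilde u(K)$ is not literally the section of $Z\to L$ but yields one via the complementary idempotent $\id_Z-\iota r$; this is routine and does not affect correctness.
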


\begin{proof}
Let $g:C\to D$ belong to $\ca\lMono^\square$. Following the proof
of \cite[Lemma~4.4]{R1}, the kernel of $g$ belongs to $\ca^\perp$. 
There is $A\in\ca$ and an epimorphism $v:A\to D$. Consider the square
$$
\xymatrix{
0\ar [r] \ar [d] & C\ar [d]^{g}\\
A\ar [r]_{v}&D
}
$$
Since $0\to A$ is an $\ca$-monomorphism, there is $t:A\to C$ such that
$gt=v$. Thus $g$ is an epimorphism.  

The converse is~\cite[Lemma~1 in Section~9.1]{P}.
\end{proof}

 Given a class of morphisms $\cl$ in a cocomplete category $\ck$,
let $\cof(\cl)$ denote the closure of $\cl$ with respect to pushouts,
transfinite compositions, and retracts.
 For any class of morphisms $\crr$ in $\ck$, one has
$\cof({}^\square\crr)={}^\square\crr$.

\begin{lemma}\label{le3.2}
Let $\cb$ be a full subcategory of a cocomplete abelian category $\ck$
such that any object of $\ck$ is a subobject of an object from~$\cb$. 
Then $\cof({}^\perp\cb\lMono)={}^\perp\cb\lMono$.
\end{lemma}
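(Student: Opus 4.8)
The plan is to realize the class ${}^\perp\cb\lMono$ as a right-lifting-orthogonal class ${}^\square\crr$ and then apply the formal fact, recalled just before the statement, that $\cof({}^\square\crr)={}^\square\crr$ for every class of morphisms $\crr$ in a cocomplete category. The class to take is $\crr=\cb\lEpi$, so the whole matter reduces to proving the identity
\[
{}^\square(\cb\lEpi)={}^\perp\cb\lMono .
\]
This is precisely the assertion of Lemma~\ref{le3.1} read in the opposite category, and that is how I would establish it.

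In detail: the category $\ck^{\op}$ is abelian, which is all that Lemma~\ref{le3.1} requires of the ambient category, and the hypothesis that every object of $\ck$ is a subobject of an object of $\cb$ says exactly that every object of $\ck^{\op}$ is a quotient of an object of the full subcategory $\cb^{\op}\subseteq\ck^{\op}$. Applying Lemma~\ref{le3.1} in $\ck^{\op}$ with $\ca=\cb^{\op}$, a morphism $g$ of $\ck^{\op}$ belongs to $(\cb^{\op}\lMono)^\square$ if and only if $g$ is an epimorphism of $\ck^{\op}$ whose kernel, computed in $\ck^{\op}$, belongs to $(\cb^{\op})^\perp$. I would then translate each clause back into $\ck$. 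A $\cb^{\op}$-monomorphism in $\ck^{\op}$ is the opposite of a morphism of $\ck$ that is an epimorphism with kernel in $\cb$, i.e.\ of a morphism of $\cb\lEpi$; and having the right lifting property in $\ck^{\op}$ against a family of morphisms is the same as having the left lifting property in $\ck$ against the opposite family. Hence $(\cb^{\op}\lMono)^\square$ in $\ck^{\op}$ corresponds to ${}^\square(\cb\lEpi)$ in $\ck$. An epimorphism of $\ck^{\op}$ with kernel $N^{\op}$ is the opposite of a monomorphism of $\ck$ with cokernel $N$. And, since $\Ext^1_{\ck^{\op}}(A^{\op},B^{\op})\cong\Ext^1_{\ck}(B,A)$ for all $A,B\in\ck$, the object $B^{\op}$ lies in $(\cb^{\op})^\perp$ in $\ck^{\op}$ if and only if $B$ lies in ${}^\perp\cb$ in $\ck$. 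Assembling these correspondences yields ${}^\square(\cb\lEpi)={}^\perp\cb\lMono$.

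Once this identity is available, the conclusion is immediate: applying $\cof({}^\square\crr)={}^\square\crr$ with $\crr=\cb\lEpi$ gives
\[
\cof({}^\perp\cb\lMono)=\cof\bigl({}^\square(\cb\lEpi)\bigr)={}^\square(\cb\lEpi)={}^\perp\cb\lMono .
\]
I expect the only real obstacle to be the dualization bookkeeping in the middle paragraph — matching the operation $(-)^\square$ and the classes $\cb\lEpi$ and ${}^\perp\cb$ with their counterparts in $\ck^{\op}$ correctly — but no idea beyond Lemma~\ref{le3.1} and the closure properties of $\cof$ is needed. One could alternatively check directly that ${}^\perp\cb\lMono$ is closed under pushouts, transfinite compositions, and retracts, but the transfinite-composition step would then need an Eklof-type lemma, which passing to $\ck^{\op}$ sidesteps.
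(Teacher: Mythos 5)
Your proof is correct and follows exactly the paper's route: the paper's entire proof is the one line ``following the dual of Lemma~\ref{le3.1}, ${}^\perp\cb\lMono={}^\square(\cb\lEpi)$,'' combined with the fact $\cof({}^\square\crr)={}^\square\crr$ recalled just before the statement. Your middle paragraph merely makes the dualization bookkeeping explicit, and it is carried out correctly.
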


\begin{proof}
Following the dual of Lemma~\ref{le3.1},
${}^\perp\cb\lMono$ = ${}^\square(\cb\lEpi)$. 
\end{proof}
 
Recall that a pair of classes of morphisms $(\cf,\cc)$ in an abelian 
category $\ck$ is called a \textit{cotorsion theory} if $\cc=\cf^\perp$
and $\cf={}^\perp\cc$. A cotorsion theory is called \textit{complete}
if any object $K$ has a \textit{special $\cf$-precover}, that is
a $\cc$-epimorphism $F\to K$ with $F\in\cf$, and a \textit{special
$\cc$-preenvelope}, i.~e., an $\cf$-monomorphism $K\to C$ with $C\in\cc$.

Since the objects of $\cf$ are projective with respect to
$\cc$-epimorphisms, any special $\cf$-precover of an object $K$
is an $\cf$-precover, that is a weak coreflection of $K$ to~$\cf$.
Dually, any special $\cc$-preenvelope is a $\cc$-preenvelope, i.~e.,
a weak reflection to~$\cc$ (see also~\cite[Propositions~2.1.3-4]{Xu}).

\begin{lemma}\label{le3.3} Let $(\cf,\cc)$ be a cotorsion theory in
an abelian category $\ck$ such that any object of $\ck$ is a quotient
of an object from $\cf$ and a subobject of an object from $\cc$.
Then $\cf\lMono^\square=\cc\lEpi$ and\/
${}^\square \cc\lEpi=\cf\lMono$.
\end{lemma}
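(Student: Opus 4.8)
The plan is to obtain both identities as immediate consequences of Lemma~\ref{le3.1} and its dual, once one observes that the two hypotheses imposed on the cotorsion theory $(\cf,\cc)$ are precisely the hypotheses under which those two lemmas apply, and that $\cf^\perp=\cc$ and ${}^\perp\cc=\cf$ hold by the very definition of a cotorsion theory.

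For the equality $\cf\lMono^\square=\cc\lEpi$, I would apply Lemma~\ref{le3.1} with $\ca=\cf$. The assumption that every object of $\ck$ is a quotient of an object of $\cf$ is exactly what that lemma requires, so it yields that $g\in\cf\lMono^\square$ if and only if $g$ is an epimorphism whose kernel lies in $\cf^\perp$. Since $(\cf,\cc)$ is a cotorsion theory, $\cf^\perp=\cc$, and an epimorphism with kernel in $\cc$ is by definition a $\cc$-epimorphism; hence $\cf\lMono^\square=\cc\lEpi$.

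For the equality ${}^\square\cc\lEpi=\cf\lMono$, I would dualize. The dual of Lemma~\ref{le3.1} — already invoked in the proof of Lemma~\ref{le3.2} — says that, whenever every object of $\ck$ is a subobject of an object of a full subcategory $\cb$, a morphism $g$ lies in ${}^\square(\cb\lEpi)$ if and only if $g$ is a monomorphism whose cokernel lies in ${}^\perp\cb$. I would apply this with $\cb=\cc$, which is legitimate because every object of $\ck$ is assumed to be a subobject of an object of $\cc$; then $g\in{}^\square\cc\lEpi$ if and only if $g$ is a monomorphism with cokernel in ${}^\perp\cc=\cf$, i.e.\ an $\cf$-monomorphism, whence ${}^\square\cc\lEpi=\cf\lMono$.

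There is no serious obstacle here: the substantive work has already been done in Lemma~\ref{le3.1}, and the argument reduces to checking that its hypotheses (and those of its dual) are met and to translating ``$\cf^\perp$'' and ``${}^\perp\cc$'' back into ``$\cc$'' and ``$\cf$''. The only point deserving a moment's care is that one is entitled to apply the dual of Lemma~\ref{le3.1} in an abelian category, but this is routine and is in any case used already in the proof of Lemma~\ref{le3.2}.
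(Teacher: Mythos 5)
Your proposal is correct and coincides with the paper's own proof, which reads simply ``It follows from Lemma~\ref{le3.1} and its dual.'' You have spelled out exactly the intended application: Lemma~\ref{le3.1} with $\ca=\cf$ (using $\cf^\perp=\cc$) gives the first equality, and its dual with $\cb=\cc$ (using ${}^\perp\cc=\cf$) gives the second.
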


\begin{proof}
It follows from Lemma~\ref{le3.1} and its dual.
\end{proof}

A cotorsion theory $(\cf,\cc)$ is \textit{generated} by a set $\cs$ if
$\cc=\cs^\perp$.

\begin{lemma} \label{presentable-codomains}
Let $\ck$ be a locally $\lambda$-presentable abelian category
and $\cs$ be a set of $\lambda$-presentable objects in~$\ck$.
Then any $\cs$-monomorphism in $\ck$ is a pushout of
an $\cs$-monomorphism with a $\lambda$-presentable codomain.
\end{lemma}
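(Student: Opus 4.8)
Let $f\colon K\to L$ be an $\cs$-monomorphism, and let $0\to K\xrightarrow{f}L\xrightarrow{p}S\to 0$ be the associated short exact sequence with $S\in\cs$; in particular $S$ is $\lambda$-presentable. Since $\ck$ is locally $\lambda$-presentable, I would write $L=\colim_{j\in J}L_j$ as a $\lambda$-filtered colimit of $\lambda$-presentable objects $L_j$, with colimit morphisms $\ell_j\colon L_j\to L$, and for each $j$ let $I_j\subseteq S$ denote the image of $p\ell_j\colon L_j\to S$. These $I_j$ form a $\lambda$-filtered system of subobjects of $S$. Because $\ck$ is $\lambda$-Grothendieck (being locally $\lambda$-presentable and abelian), $\lambda$-filtered colimits are exact, so the canonical morphism $\colim_j I_j\to S$ is a monomorphism; it is also an epimorphism, because $p$ (an epimorphism) factors through it via $L=\colim_j L_j\twoheadrightarrow\colim_j I_j$. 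Hence $S=\colim_j I_j$, and as $S$ is $\lambda$-presentable the identity of $S$ factors through some $I_{j_0}\hookrightarrow S$, forcing $I_{j_0}=S$. Thus $q:=p\ell_{j_0}\colon L_{j_0}\to S$ is an epimorphism; put $L'=L_{j_0}$ and $\ell=\ell_{j_0}$.

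Next I would let $g\colon K'\to L'$ be the kernel of $q$. Then $g$ is a monomorphism with $\coker(g)\cong\operatorname{im}(q)=S\in\cs$, so $g$ is an $\cs$-monomorphism whose codomain $L'$ is $\lambda$-presentable. Since $p\ell g=qg=0$, the composite $\ell g$ factors through the kernel $f$ of $p$, yielding $u\colon K'\to K$ with $fu=\ell g$; this gives a commutative square with horizontal arrows $g$, $f$ and vertical arrows $u$, $\ell$. As $q$ and $p$ are the cokernels of $g$ and $f$ and $p\ell=q$, this square extends on the right by $\id_S$ to a morphism of short exact sequences $(0\to K'\to L'\to S\to 0)\to(0\to K\to L\to S\to 0)$.

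It remains to check that the square is a pushout. For this I would form the pushout $\bar L$ of $g$ along $u$. The structural morphism $K\to\bar L$ is a monomorphism (a pushout of the monomorphism $g$ in an abelian category), and $\coker(K\to\bar L)\cong\coker(g)=S$, since the cokernels of parallel sides of a pushout square agree; so we obtain a short exact sequence $0\to K\to\bar L\to S\to 0$. The universal property of $\bar L$ produces the canonical morphism $\bar L\to L$, and, by construction, this is a morphism of short exact sequences which is the identity on $K$ and on $S$; the short five lemma then shows $\bar L\to L$ is an isomorphism, which is precisely the statement that the original square is a pushout. (Alternatively, one verifies directly that $K'\xrightarrow{(g,\,-u)}L'\oplus K\xrightarrow{(\ell,\,f)}L\to0$ is exact.) This exhibits $f$ as a pushout of the $\cs$-monomorphism $g$ with $\lambda$-presentable codomain $L'$. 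The one step I expect to require genuine care is the extraction of a single index $j_0$ with $p\ell_{j_0}$ an epimorphism: this is where local $\lambda$-presentability of $\ck$, exactness of $\lambda$-filtered colimits, and $\lambda$-presentability of the object $S\in\cs$ all enter together; everything afterwards is routine abelian-category bookkeeping.
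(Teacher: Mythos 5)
Your proof is correct and follows essentially the same route as the paper's: write $L$ as a $\lambda$-filtered colimit of $\lambda$-presentable objects, use exactness of $\lambda$-filtered colimits and $\lambda$-presentability of $S$ to find an index $j_0$ with $L_{j_0}\to S$ epic, take the kernel, and verify the resulting square is a pushout. The only differences are cosmetic --- you work with the images $I_j$ where the paper works with the isomorphic quotients $L_i/K_i$, and you invoke the short five lemma where the paper does the equivalent diagram chase showing the kernel and cokernel of $\bar L\to L$ vanish.
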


\begin{proof}
 Let $K\to L$ be a monomorphism with a cokernel $S\in\cs$.
 Let the object $L$ be the colimit of a $\lambda$-filtered diagram
of $\lambda$-presentable objects~$L_i$.
 Denote by $K_i$ the kernels of the compositions $L_i\to L\to S$.
 Since $\lambda$-filtered colimits commute with kernels in $\ck$,
 we have $K=\colim K_i$.
 Hence $S=\colim(L_i/K_i)$.
 Since the object $S$ is $\lambda$-presentable, we can conclude
that there exists~$i$ for which the morphism $L_i/K_i\to S$ is
a split epimorphism.
 Thus the morphism $L_i\to S$ is an epimorphism.
 We have constructed a commutative diagram
$$
\xymatrix{
0\ar[r] & K \ar[r] & L \ar[r] & S \ar[r] &0\\
0\ar[r] & K_i \ar[r]\ar[u] & L_i \ar[u]\ar[ur]
}
$$
where the two horizontal sequences are exact.
 It follows that the morphism $K\to L$ is a pushout of
the morphism $K_i\to L_i$.

 Indeed, let $K\to L'$ be the pushout of $K_i\to L_i$ by $K_i\to K$;
then there is the induced morphism $L'\to L$ and a commutative diagram
$$
\xymatrix{
0 \ar[r] & K \ar[r] \ar[dr] & L \ar[r] & S \ar[r] & 0 \\
& & L' \ar[u] \ar[ur]
}
$$
of a morphism between two short exact sequences with the same
subobject and the same quotient object.
A simple diagram chase now shows that both the kernel and
the cokernel of the morphism $L\to L'$ vanish.
Since the category $\ck$ is abelian, we can conclude that
$L\to L'$ is an isomorphism.
\end{proof}

\begin{propo}\label{prop3.4}
Let $\ck$ be a locally presentable abelian category and $(\cf,\cc)$
a cotorsion theory generated by a set. Suppose that any object of
$\ck$ is a quotient of an object from $\cf$ and a subobject of
an object from~$\cc$. Then $(\cf\lMono,\,\cc\lEpi)$ is a weak
factorization system.
\end{propo}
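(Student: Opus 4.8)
The plan is to read the two lifting-property conditions off Lemma~\ref{le3.3} and to produce the required factorizations by a small object argument based on the generating set~$\cs$. Since $\cc=\cs^{\perp}$, every $S\in\cs$ satisfies $\Ext^{1}_{\ck}(S,C)=0$ for all $C\in\cc$, so $\cs\subseteq{}^{\perp}\cc=\cf$ and hence $\cs\lMono\subseteq\cf\lMono$. By Lemma~\ref{le3.3} we have $\cf\lMono={}^{\square}(\cc\lEpi)$, and a class of this form is closed under $\cof$ and under composition; therefore $\cof(\cs\lMono)\subseteq\cf\lMono$. This is what will put the left factors of our factorizations into $\cf\lMono$.

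First I would fix a regular cardinal $\lambda$ with $\ck$ locally $\lambda$-presentable and all objects of $\cs$ $\lambda$-presentable, and let $\ci$ be the (essentially small) set of $\cs$-monomorphisms with $\lambda$-presentable codomain. By Lemma~\ref{presentable-codomains} every $\cs$-monomorphism is a pushout of a morphism in $\ci$, so $\cof(\ci)=\cof(\cs\lMono)$ and consequently $\ci^{\square}=\cof(\ci)^{\square}=(\cs\lMono)^{\square}$. The key local fact I then need is that every $g\in(\cs\lMono)^{\square}$ has $\ker g\in\cc$. For this, given $S\in\cs$ and an extension $0\to\ker g\to E\to S\to 0$, I would push out the kernel inclusion $\ker g\hookrightarrow\dom g$ along $\ker g\to E$ to obtain an $\cs$-monomorphism $u:\dom g\to P$ together with a morphism $w:P\to\cod g$ (glued from $g$ and the zero map $E\to\cod g$) for which $g=wu$; the lifting property of $g$ against $u$ then yields a retraction $P\to\dom g$ whose restriction to $E$ lands in $\ker g$ and splits the given extension. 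Hence $\Ext^{1}_{\ck}(S,\ker g)=0$ for all $S\in\cs$, i.e.\ $\ker g\in\cs^{\perp}=\cc$.

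To factor an arbitrary morphism $f:X\to Y$ I would \emph{not} apply the small object argument to $f$ directly, because its right factor need not be an epimorphism and so need not lie in $\cc\lEpi$. Instead, using the hypothesis, I would choose an epimorphism $\pi:F\to Y$ with $F\in\cf$ and write $f=\rho\iota$, where $\iota:X\to X\oplus F$ is the coproduct injection --- an $\cf$-monomorphism, since its cokernel is $F$ --- and $\rho=(f,\pi):X\oplus F\to Y$ is an epimorphism. Now apply the small object argument (available because every object of the locally presentable category $\ck$ is presentable, hence small~\cite{AR}) to $\rho$, obtaining $\rho=rl$ with $l\in\cof(\ci)$ and $r\in\ci^{\square}$. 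Then $r$ is an epimorphism because $\rho=rl$ is, and $\ker r\in\cc$ by the previous paragraph, so $r\in\cc\lEpi$; meanwhile $l\in\cof(\ci)=\cof(\cs\lMono)\subseteq\cf\lMono$, and since $\cf\lMono={}^{\square}(\cc\lEpi)$ is closed under composition, $l\iota\in\cf\lMono$. Thus $f=r\,(l\iota)$ is the desired factorization, and together with the identities $\cf\lMono^{\square}=\cc\lEpi$ and ${}^{\square}(\cc\lEpi)=\cf\lMono$ from Lemma~\ref{le3.3} this shows that $(\cf\lMono,\cc\lEpi)$ is a weak factorization system.

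The hard part is precisely this last factorization step, and the two standing hypotheses are there to make it work: ``every object is a subobject of an object of $\cc$'' is what Lemma~\ref{le3.3} requires, while ``every object is a quotient of an object of $\cf$'' is exactly what powers the split-monomorphism reduction to epimorphisms and thereby guarantees that the right factor produced by the small object argument is a genuine $\cc$-epimorphism.
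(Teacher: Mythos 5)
Your proof is correct, and it departs from the paper's exactly at the step you single out as the hard part. The paper attacks the epimorphism problem on the side of the generating set: it enlarges $\cs_0$ by adding, for each $\lambda$-presentable $X$, an object $A_X\in\cf$ mapping onto $X$, so that every object of $\ck$ becomes a quotient of a coproduct of objects of the enlarged set $\cs$; lifting against the morphisms $0\to A_i$ then forces every member of $\ci^\square$ to be an epimorphism, so that $\ci^\square=\cc\lEpi$ on the nose, and the conclusion is read off from the cofibrantly generated weak factorization system $(\cof(\ci),\,\ci^\square)$ together with Lemma~\ref{le3.3}. You instead leave $\cs$ alone and repair the factorization itself: precomposing $f$ with the split monomorphism $X\to X\oplus F$ reduces to factoring an epimorphism, whose right factor under the small object argument is an epimorphism for free. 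Both arguments are sound and rest on the same ingredients (Lemma~\ref{le3.3}, Lemma~\ref{presentable-codomains}, the kernel computation, the small object argument). What the paper's route buys is that $(\cf\lMono,\,\cc\lEpi)$ is exhibited as generated by the single set $\ci$, and the enlarged generating set with the quotient-of-a-coproduct property is reused later (e.g.\ in the proof of Corollary~\ref{accessible-if-preserved-by-kappa-directed}); with your choice of $\ci$ one only gets $\ci^\square=(\cs\lMono)^\square$, which may properly contain $\cc\lEpi$ since its members need not be epimorphisms, so your $\ci$ need not cofibrantly generate the weak factorization system. What your route buys is that no modification of the generating set is required and the three defining conditions of a weak factorization system are checked directly.
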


\begin{proof}
We can assume that $\ck$ is locally $\lambda$-presentable and any object
from a generating set $\cs_0$ of $(\cf,\cc)$ is $\lambda$-presentable. 
Any $\lambda$-presentable object $X$ is a quotient of an object $A_X$
from $\cf$. Add to the set $\cs_0$ the objects $A_X$ for $X$
$\lambda$-presentable. 
We get the set $\cs$ which generates $(\cf,\cc)$ and has the property
that any object of $\ck$ is a quotient object of a coproduct of 
objects from~$\cs$.

Let the set $\ci$ consist of all $\cs$-monomorphisms having
$\lambda$-presentable codomains.
We have $\ci^\square\supseteq\cf\lMono^\square=\cc\lEpi$.
Any $\cs_0$-monomorphism in $\ck$ is a pushout of a morphism from $\ci$
(see Lemma~\ref{presentable-codomains}), so
$\ci^\square\subseteq\cs_0\lMono^\square$.
Following the proof of Lemma~\ref{le3.1}, the kernel of any $g:C\to D$
from $\ci^\square$ belongs to~$\cc$.

Let us show that $g$ is an epimorphism. There is an epimorphism
$t:\coprod_{i\in I}X_i\to D$ where $X_i$, $i\in I$ are 
$\lambda$-presentable. There are epimorphisms $v_i:A_i\to X_i$ where
$A_i\in\cs$. Let $d_i:A_i\to C$ be diagonals in
$$
\xymatrix{
0\ar [r] \ar [d] & C\ar [d]^{g}\\
A_i\ar [r]_{t_iv_i}&D
}
$$
where $t_i:X_i\to D$ is the composition of $t$ with the coproduct
injection. Let $d:\coprod A_i\to C$ be induced by $d_i$. Then
$gd=tv$ and, since $v=\coprod_{i\in I}v_i$ is an epimorphism, $g$ is
an epimorphism. Hence $\ci^\square=\cc\lEpi$. Thus
$(\cf\lMono,\,\cc\lEpi)$ is a weak factorization system
(see~\cite[Proposition~1.3]{Be} or~\cite[Theorem~2.5]{R1}).
\end{proof}

\begin{coro}\label{cor3.5}
Let $\ck$ be a locally presentable abelian category and $(\cf,\cc)$
a cotorsion theory generated by a set. Suppose that any object of
$\ck$ is a quotient of an object from $\cf$ and a subobject of
an object from~$\cc$. Then $(\cf,\cc)$ is complete.
\end{coro}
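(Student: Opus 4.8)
The plan is to deduce completeness directly from the weak factorization system established in Proposition~\ref{prop3.4}, so that there is essentially nothing left to do beyond unwinding definitions. First I would recall that, by the hypotheses, Proposition~\ref{prop3.4} applies and gives that $(\cf\lMono,\,\cc\lEpi)$ is a weak factorization system in $\ck$; in particular every morphism in $\ck$ factors as an $\cf$-monomorphism followed by a $\cc$-epimorphism. Now fix an object $K$ of $\ck$. To produce a special $\cf$-precover, I would apply the factorization to the morphism $0\to K$: this yields a decomposition $0\to F\to K$ where $0\to F$ is an $\cf$-monomorphism and $F\to K$ is a $\cc$-epimorphism. An $\cf$-monomorphism with source $0$ is precisely a monomorphism $0\to F$ whose cokernel $F$ lies in $\cf$, so $F\in\cf$; and a $\cc$-epimorphism $F\to K$ is by definition an epimorphism whose kernel lies in $\cc$. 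Thus $F\to K$ is exactly a special $\cf$-precover of $K$, and the associated short exact sequence is $0\to C'\to F\to K\to 0$ with $C'\in\cc$ and $F\in\cf$.

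Dually, to produce a special $\cc$-preenvelope I would apply the factorization to the morphism $K\to 0$, obtaining $K\to C\to 0$ with $K\to C$ in $\cf\lMono$ and $C\to 0$ in $\cc\lEpi$. A $\cc$-epimorphism onto $0$ is an epimorphism $C\to 0$ whose kernel $C$ lies in $\cc$, so $C\in\cc$; and $K\to C$ being an $\cf$-monomorphism means it is a monomorphism with cokernel in $\cf$. Hence $K\to C$ is a special $\cc$-preenvelope, fitting into the short exact sequence $0\to K\to C\to F'\to 0$ with $C\in\cc$ and $F'\in\cf$. Having produced both a special $\cf$-precover and a special $\cc$-preenvelope for an arbitrary object $K$, we conclude by definition that the cotorsion theory $(\cf,\cc)$ is complete.

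I do not expect any genuine obstacle here: all the real work has been absorbed into Proposition~\ref{prop3.4} (and, behind it, Lemma~\ref{presentable-codomains} and the small-object-type argument invoked there). The only points requiring a moment's care are the identifications ``$\cf$-monomorphism out of $0$'' $=$ ``monomorphism with cokernel in $\cf$'' and ``$\cc$-epimorphism into $0$'' $=$ ``epimorphism with kernel in $\cc$'', together with the observation that the factorization system furnished by Proposition~\ref{prop3.4} really does factor \emph{every} morphism, so in particular the canonical morphisms $0\to K$ and $K\to 0$. With those in hand the corollary is immediate.
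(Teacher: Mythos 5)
Your proposal is correct and is precisely the argument the paper intends: Corollary~\ref{cor3.5} is stated without proof as an immediate consequence of Proposition~\ref{prop3.4}, obtained exactly by factoring $0\to K$ and $K\to0$ in the weak factorization system $(\cf\lMono,\,\cc\lEpi)$ and reading off the special $\cf$-precover and special $\cc$-preenvelope. Nothing is missing.
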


\begin{coro}\label{cor3.6} Let $\ck$ be a locally presentable abelian
category and $(\cf,\cc)$ a complete cotorsion theory in $\ck$ such
that $\cf$ is closed under directed colimits. Then $\cf$ is stably
weakly coreflective in~$\ck$.
\end{coro}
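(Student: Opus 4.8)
The plan is to reduce the statement to Theorem~\ref{th2.6}. That theorem asserts that in a locally presentable category any weakly coreflective full subcategory which is closed under directed colimits is automatically stably weakly coreflective. Since $\ck$ is assumed locally presentable and $\cf$ is assumed closed under directed colimits, the only remaining thing to verify is that $\cf$ is weakly coreflective in $\ck$, i.e.\ that every object of $\ck$ admits an $\cf$-precover.

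First I would unwind completeness of the cotorsion theory $(\cf,\cc)$: by definition, every object $K$ of $\ck$ fits into a short exact sequence $0\to C'\to F\to K\to 0$ with $F\in\cf$ and $C'\in\cc$, so the epimorphism $F\to K$ is a special $\cf$-precover. As already recorded in the paragraph preceding Lemma~\ref{le3.3}, the objects of $\cf$ are projective with respect to $\cc$-epimorphisms, because $\cf={}^\perp\cc$; hence any morphism $A\to K$ with $A\in\cf$ lifts along $F\to K$, so the special $\cf$-precover $F\to K$ is in particular an $\cf$-precover, that is a weak coreflection of $K$ to $\cf$. Thus $\cf$ is a weakly coreflective full subcategory of~$\ck$.

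Finally I would apply Theorem~\ref{th2.6} with $\ca=\cf$: the category $\ck$ is locally presentable, $\cf$ is weakly coreflective by the previous step, and $\cf$ is closed under directed colimits by hypothesis, so $\cf$ is stably weakly coreflective in~$\ck$, which is the assertion to be proved.

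There is essentially no obstacle: the corollary is a direct combination of the definition of a complete cotorsion theory with Theorem~\ref{th2.6}. Note that one cannot route the argument through Corollary~\ref{cor2.7} instead, since $\cf$ need not be accessible; it is precisely the point of Theorem~\ref{th2.6} that weak coreflectivity (which we get for free from completeness) suffices in place of accessibility. The only mildly delicate point is the passage from ``special $\cf$-precover'' to ``$\cf$-precover'', and this uses only $\cf={}^\perp\cc$, which is already part of the setup.
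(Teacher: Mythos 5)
Your proposal is correct and matches the paper's argument: the paper's proof simply states that the corollary is a particular case of Theorem~\ref{th2.6}, relying implicitly on the observation (recorded before Lemma~\ref{le3.3}) that special $\cf$-precovers coming from completeness are weak coreflections. You have merely spelled out the details the paper leaves tacit.
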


\begin{proof}
This is a particular case of Theorem~\ref{th2.6}.
\end{proof}

\begin{rem}\label{re3.7}
Since any object has a special $\cf$-precover and an $\cf$-cover 
is a retract of any $\cf$-precover, an $\cf$-cover from
Corollary~\ref{cor3.6} is special (see also the proof of
Wakamatsu's lemma~\cite[Lemma~2.1.1]{Xu}).
\end{rem}

\begin{lemma}  \label{wfs-coreflection}
Let $(\cl,\crr)$ be a weak factorization system in a category $\ck$
and $K$ an object in~$\ck$.
Let $\cm$ be the full subcategory of the comma-category
$K\downarrow\ck$ consisting of the morphisms $K\to X$ belonging to~$\cl$.
Then $\cm$ is weakly coreflective in $K\downarrow\ck$.
\end{lemma}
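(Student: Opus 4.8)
The plan is to obtain the weak coreflection directly from the factorization axiom of the weak factorization system. Given an object of $K\downarrow\ck$, that is a morphism $y\colon K\to Y$, I would first factor $y$ as $y=p\,l$ with $l\colon K\to X$ in $\cl$ and $p\colon X\to Y$ in $\crr$. Then $(K\xrightarrow{l}X)$ is an object of $\cm$, since $l\in\cl$, and $p$ is a morphism $(K\xrightarrow{l}X)\to(K\xrightarrow{y}Y)$ in the comma category because $p\,l=y$. I claim this morphism is a weak coreflection of $(K\xrightarrow{y}Y)$ into $\cm$.

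The next step is to check the weak universal property. Let $(K\xrightarrow{l'}X')$ be an arbitrary object of $\cm$, so $l'\in\cl$, and let $f\colon X'\to Y$ be a morphism of $K\downarrow\ck$ to $(K\xrightarrow{y}Y)$, i.e.\ $f\,l'=y$. Since $p\,l=y=f\,l'$, the square with sides $l'$ (left), $p$ (right), $l$ (top) and $f$ (bottom) commutes; as $l'\in\cl={}^\square\crr$ and $p\in\crr$, this square admits a diagonal filler $g\colon X'\to X$ satisfying $g\,l'=l$ and $p\,g=f$. The identity $g\,l'=l$ says precisely that $g$ is a morphism $(K\xrightarrow{l'}X')\to(K\xrightarrow{l}X)$ in $K\downarrow\ck$, while $p\,g=f$ says that composing $g$ with the structure morphism $p$ recovers $f$. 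Hence $f$ factors through the chosen morphism, not necessarily uniquely, which is exactly the definition of a weak coreflection.

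I expect no genuine obstacle here: once the diagram is set up, the result is an immediate translation of the lifting property defining $(\cl,\crr)$ into comma-category language. The only points requiring care are keeping track of which morphisms lie over $K$ and orienting the lifting problem correctly, with the $\cl$-morphism $l'$ of the source object on the left and the $\crr$-morphism $p$ on the right. No cocompleteness, local presentability, or abelian hypotheses on $\ck$ are needed.
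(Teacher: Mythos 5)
Your proof is correct and coincides with the paper's own argument: factor the structure morphism $K\to Y$ as an $\cl$-morphism followed by an $\crr$-morphism, and then use the lifting property of the $\cl$-morphism of an arbitrary object of $\cm$ against the $\crr$-part of the factorization to produce the required (non-unique) factorization. No changes are needed.
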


\begin{proof}
 Decompose a morphism $f:K\to X$ as $f=rl$, where $l:K\to Z$ belongs
to $\cl$ and $r:Z\to X$ belongs to~$\crr$.
 Let $m:K\to Y$ be an object of $\cm$ and $t:Y\to X$ be a morphism
from $m$ to $f$ in~$K\downarrow\ck$.
 So we have a commutative square
$$
\xymatrix{
K\ar [r]^{l} \ar [d]_{m} & Z\ar [d]^{r}\\
Y\ar [r]_{t}&X
}
$$
 According to the lifting property of $m\in\cl$ with respect to
$r\in\crr$, there exists a morphism $h:Y\to Z$ in $\ck$ such that
$l=gm$ and $t=rh$.
 We have proved that $r:l\to f$ is a weak coreflection of $f$
to $\cm$ in $K\downarrow\ck$.
\end{proof}

We say that $\cf$ is \textit{strongly closed under directed colimits}
in $\ck$ if $\cf$-monomorphisms are closed under directed colimits in
any comma-category $K\downarrow\ck$. It implies that $\cf$ is closed
under directed colimits. In a Grothendieck category, the two concepts
are equivalent.

The following corollary is a version of~\cite[Theorems~2.2.6
and~3.4.3\+-4]{Xu} that works in locally presentable abelian categories.

\begin{coro}\label{cor3.9} Let $\ck$ be a locally presentable abelian
category and $(\cf,\cc)$ a complete cotorsion theory in $\ck$
such that $\cf$ is strongly closed under directed colimits.
Then $\cc$ is stably weakly reflective in~$\ck$.
\end{coro}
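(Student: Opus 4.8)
The plan is to produce, for each object $K$ of $\ck$, a $\cc$-preenvelope which is also a $\cc$-envelope, i.e.\ a morphism $K\to C$ with $C\in\cc$ that is a weak reflection and has no nontrivial automorphism over $K$. Completeness of $(\cf,\cc)$ already guarantees a special $\cc$-preenvelope, hence (as recorded before Lemma~\ref{le3.3}) a $\cc$-preenvelope; what remains is to pass from a preenvelope to an envelope, and the natural tool is the dual of the machinery of Section~\ref{precovers-secn}, in particular Theorem~\ref{th2.6}. So the first step is to recognize the correct comma-category setup: by Proposition~\ref{prop3.4} (and its hypotheses, which are inherited since $(\cf,\cc)$ is complete, hence every object is a quotient of an object of $\cf$ and a subobject of an object of $\cc$), $(\cf\lMono,\,\cc\lEpi)$ is a weak factorization system in $\ck$, and by Lemma~\ref{le3.3} we have ${}^\square\cc\lEpi=\cf\lMono$. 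Then Lemma~\ref{wfs-coreflection} applied to this weak factorization system shows that, for a fixed $K$, the full subcategory $\cm\subseteq K\downarrow\ck$ consisting of the $\cf$-monomorphisms out of $K$ is weakly coreflective in $K\downarrow\ck$.

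Next I would verify the hypotheses of Theorem~\ref{th2.6} for the category $K\downarrow\ck$ and the subcategory $\cm$. The comma-category $K\downarrow\ck$ of a locally presentable category under an object is again locally presentable (this is standard, e.g.\ \cite{AR}), so that requirement is met. The crucial point is that $\cm$ is \emph{closed under directed colimits} in $K\downarrow\ck$: a directed colimit in $K\downarrow\ck$ is computed on underlying objects in $\ck$, and the hypothesis that $\cf$ is \emph{strongly closed under directed colimits} says precisely that a directed colimit of $\cf$-monomorphisms out of $K$ is again an $\cf$-monomorphism out of $K$ — that is, it lands in $\cm$. Having checked weak coreflectivity and closure under directed colimits, Theorem~\ref{th2.6} yields that $\cm$ is \emph{stably} weakly coreflective in $K\downarrow\ck$.

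Finally I would translate ``stably weakly coreflective'' back into the statement that $\cc$ is stably weakly reflective in $\ck$. Unwinding Lemma~\ref{wfs-coreflection}: the weak coreflection of the morphism $(K\xrightarrow{\,f\,}X)$ to $\cm$ was constructed by factoring $f=rl$ with $l:K\to Z$ in $\cf\lMono$ and $r:Z\to X$ in $\cc\lEpi$, the coreflection being $r:l\to f$. Stability means this can be chosen so that every endomorphism of the object $l$ in $K\downarrow\ck$ compatible with the coreflection is an isomorphism; concretely, any $h:Z\to Z$ with $lh=l$ is invertible once we pick the stable coreflection. Taking $X$ to be an object of $\cc$ containing $K$ (which exists by hypothesis, since $K$ is a subobject of some $C_0\in\cc$) we get $Z\in\cc$ because $\cc$ is closed under the relevant extension — indeed $Z$ sits in $0\to K\to Z\to (Z/K)\to 0$ with $Z/K\in\cf$, wait, rather $Z$ is the middle term and the cokernel of $l$ lies in $\cf$; but $\cc=\cf^\perp$ is closed under cokernels of $\cf\lMono$ maps with source in $\cc$? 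That is not automatic, so instead one argues directly: $Z$ fits in $0\to Z\to X\to (X/Z)\to 0$ where $Z\to X$ is a $\cc$-epimorphism, i.e.\ $X/Z$... no — a cleaner route is that $\cc\lEpi$-maps with codomain in $\cc$ have domain in $\cc$, which follows from $\cc$ being closed under extensions (the kernel of $r$ lies in $\cc$ by Lemma~\ref{le3.1}, and $X\in\cc$, so $Z\in\cc$). Hence $l:K\to Z$ is a $\cc$-preenvelope, and $\cf\lMono$-maps are exactly the $\cc$-preenvelope-type monomorphisms; stability of the coreflection then says exactly that $K$ has a $\cc$-\emph{envelope}. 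The main obstacle I anticipate is the bookkeeping in this last translation step — making sure that ``stable weak coreflection in $K\downarrow\ck$ onto $\cm$'' genuinely unwinds to ``$\cc$-envelope of $K$'' and that the object produced lies in $\cc$ — whereas the application of Theorem~\ref{th2.6} itself is essentially formal once strong closure under directed colimits is in hand.
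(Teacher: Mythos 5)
Your overall architecture --- establish the weak factorization system, apply Lemma~\ref{wfs-coreflection} and then Theorem~\ref{th2.6} to the subcategory $\cm$ of $\cf$\+monomorphisms in $K\downarrow\ck$, and translate back --- is the same as the paper's, but two steps do not go through as written. First, Proposition~\ref{prop3.4} cannot be invoked: besides the condition that every object is a quotient of an object of $\cf$ and a subobject of an object of $\cc$ (which completeness does give), it assumes that $(\cf,\cc)$ is \emph{generated by a set}, which is not a hypothesis of Corollary~\ref{cor3.9} and does not follow from completeness. The fact you want is nevertheless true: for any complete cotorsion theory $(\cf\lMono,\,\cc\lEpi)$ is a weak factorization system by \cite[Proposition~5.4]{H}, as the Remark following Corollary~\ref{cor3.9} records; alternatively one can avoid the full factorization axiom altogether, as the paper does, by factoring only the single morphism $K\to0$ (that factorization \emph{is} the special $\cc$\+preenvelope) and using Lemma~\ref{le3.3} for the lifting property.

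Second, and more substantively, your final translation applies the stable weak coreflection to the object $K\to C_0$ of $K\downarrow\ck$ with $C_0\in\cc$. The stability you then obtain says only that every $h:Z\to Z$ with $hl=l$ \emph{and} $rh=r$ is an isomorphism: an endomorphism ``compatible with the coreflection'' of the object $(K\to C_0)$ must commute with the structure morphism $r:Z\to C_0$, and your ``concretely, any $h$ with $hl=l$ is invertible'' silently drops this condition. Without dropping it legitimately, the envelope condition does not follow. The remedy is to take the object $K\to0$ instead (this is what the paper does, delegating the bookkeeping to \cite[Proposition~3.5]{R}): then the condition $rh=r$ is vacuous, so stability over $0$ is literally the $\cc$\+envelope condition, and moreover $Z\in\cc$ comes for free, since $Z\to0$ being a $\cc$\+epimorphism means precisely that $Z=\ker(Z\to0)$ lies in $\cc$ --- no extension argument is needed. (One then sees that the envelope is special because it is a retract in $K\downarrow\ck$ of the special $\cc$\+preenvelope; cf.\ Remark~\ref{re3.10}.) With these two corrections your argument coincides with the paper's proof.
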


\begin{proof}
Following~\cite[Proposition~3.5]{R}, it suffices to show that for every
object $K$ in $\ck$ the morphism $K\to0$ has a stable weak coreflection
to the full subcategory $\cm$ of $K\downarrow\ck$ consisting of
$\cf$-monomorphisms.  According to Lemma~\ref{wfs-coreflection}, any
special $\cc$-preenvelope of $K$ is a weak coreflection of $K\to0$
to~$\cm$.  Following Theorem~\ref{th2.6}, $K\to0$ has a stable weak
coreflection to~$\cm$.
\end{proof}

\begin{rem}\label{re3.10}
Since any object has a special $\cc$-preenvelope and a $\cc$-envelope 
is a retract of any $\cc$-preenvelope, a $\cc$-envelope from
Corollary~\ref{cor3.9} is special (see also the proof
of~\cite[Lemma~2.1.2]{Xu}). 
\end{rem}

\begin{rem}
Following~\cite[Proposition~5.4]{H}, $(\cf\lMono,\,\cc\lEpi)$ is
a weak factorization system for any complete cotorsion theory
$(\cf,\cc)$ in an abelian category~$\ck$.
So, for any object $K$ in $\ck$ the subcategory $\cm$ is weakly
coreflective in $K\downarrow\ck$ by Lemma~\ref{wfs-coreflection}.
When $\ck$ is locally presentable and $\cf$ is strongly closed under
directed colimits, Theorem~\ref{th2.6} allows to conclude that $\cm$
is stably weakly coreflective.
Applying~\cite[Proposition~3.5]{R}, one proves, in the assumptions
of Corollary~\ref{cor3.9}, that $(\cf\lMono,\,\cc\lEpi)$ is
a stable weak factorization system in~$\ck$.
\end{rem}

\section{Further details on Cotorsion theories}
\label{cotorsion-further}

 The following counterexamples show that Theorem~\ref{eklof-trlifaj}
does not hold for locally presentable/$\kappa$-Grothendieck abelian
categories in general (see Corollary~\ref{cor3.5},
Theorem~\ref{precise-et-loc-pres}, or Proposition~\ref{et-sc-dc} for
formulations applicable to such categories).

\begin{exams} \label{ext-0-1-orthogonal-category}
(1) Let $\ck$ be an abelian category and $\ct\subseteq\ck$ be a class of
objects of projective dimension at most~$1$, that is $\Ext^n_\ck(T,K)=0$
for all objects $T\in\ct$ and $K\in\ck$ and all $n\ge2$.
Denote by $\cp=\ct^{\perp_{0,1}}\subseteq\ck$ the full subcategory formed
by all objects $P$ in $\ck$ such that $\Ext^n_\ck(T,P)=0$ for all
$T\in\ct$ and $n\ge0$.

Clearly, $\cp$ is closed under products and extensions in~$\ck$.
Now let $f:P\to Q$ be a morphism in $\cp$; denote its image by~$Z$.
Then for any $T\in\ct$ one has $\ck(T,Z)=0$, since $Z$ is
a subobject of $Q$, and $\Ext^1_\ck(T,Z)=0$, since $Z$ is
a quotient of~$P$.
Thus $Z$ is in $\cp$, and it follows that the kernel and cokernel
of~$f$ also belong to~$\cp$.
We have proven that $\cp$ is an abelian category and the inclusion
$\cp\to\ck$ is an exact functor.

\medskip

(2) Assume that $\ck$ is a locally $\lambda$-presentable abelian
category and $\ct$ is a set of $\lambda$-presentable objects.
 Let $\mu$ be a regular cardinal such that all subobjects of
$\lambda$-presentable objects in $\ck$ are $\mu$-presentable.
 Then one can see from Lemma~\ref{presentable-codomains} that
$\cp$ is closed under $\mu$-filtered colimits in~$\ck$.
 Following~\cite[Theorem and Corollary~2.48]{AR}, the category
$\cp$ is locally $\mu$-presentable and reflective in~$\ck$.

\medskip

(3) Let $R$ be a commutative ring, $\ck=R\lMod$ the category of
$R$-modules, $I\subset R$ an ideal, and $r_i\in I$ a set of generators
of~$I$.
 Let the set of $R$-modules $\ct$ consist of the localizations
$R[r_i^{-1}]$ of the ring $R$ with respect to its elements~$r_i$.
 Then the subcategory $\cp=\ct^{\perp_{0,1}}\subset R\lMod$ is a locally
$\aleph_1$-presentable abelian category with enough projective objects.
 In fact, its projective generator $P$ can be obtained as
the reflection of the free $R$-module $R\in\ck$ to~$\cp$.

 When the ideal $I$ is generated by a single nonzero-dividing element
$r\in R$, the functor $\ck\to\cp$ left adjoint to the inclusion can be
computed as $\Ext^1_R(R[r^{-1}]/R,-)$.
 When $R$ is a Noetherian ring, or $I=(r)$ with $r$ a nonzero-divisor,
the $R$-module $P$ can be computed as the $I$-adic completion
$\lim_m R/I^m$ of~$R$.
 The full subcategory $\cp\subseteq\ck$ does not depend on the choice of
a set of generators $r_i$ of the ideal~$I$, but only on the ideal itself.
 It is called the category of \textit{$I$-contramodule $R$-modules}
in~\cite{P5}.

\medskip

(4) Now let $I=(r)$ be the principal ideal generated by a noninvertible
nonzero-dividing element $r$ and $\ct$ be the set consisting
of a single $R$-module $T=R[r^{-1}]$.
 Let the set of objects $\cs$ consist of the projective generator
$P\in\cp$ and the $R$-module $S=R/rR$; since $r$ acts by zero in $S$ and
invertibly in $T$, one has $\Ext^n_R(T,S)=0$ for all $n\ge0$,
so $S\in\cp$.
 Let $\cf$, $\cc\subseteq\cp$ be the corresponding
$\Ext^1$-orthogonal classes, $\cc=\cs^\perp$ and $\cf={}^\perp\cc$,
in the abelian category~$\cp$.

 Note that the functors $\Ext^1_R$ and $\Ext^1_\cp$ agree on $\cp$,
as $\cp$ is closed under extensions in $R\lMod$.
 For any $R$-module $M$, one has $\Ext^1_R(S,M)=M/rM$.
 So $\Ext^1_R(S,M)=0$ means $M=rM$, which implies surjectivity of
the natural map $\ck(T,M)\to M$.
 For $M\in\cp$, this is only possible when $M=0$.
 We have shown that $\cc=0$ (in particular, there are no nonzero
injective objects in~$\cp$).
 Hence $\cf=\cp$.

 Consequently, nonzero objects of $\cp$ (such as, e.~g., $S$ or
$P=\lim_m R/r^mR$) cannot have special $\cc$-preenvelopes and
the assertion of Theorem~\ref{eklof-trlifaj} fails for
the locally $\aleph_1$-presentable abelian category $\cp$ with
the set of objects $\cs=\{S,P\}$.
\end{exams}

 The following lemma is a more precise formulation of the assertions
contained in (the proof of) Lemma~\ref{le3.1}.

\begin{lemma} \label{perp-and-square}
 Let $\ck$ be an abelian category and $\ca$ a class of objects
in~$\ck$. Then \par
\textup{(a)} the class of morphisms $\ca^\perp\lEpi$ is contained in
$\ca\lMono^\square$; \par
\textup{(b)} the kernel of any morphism from $\ca\lMono^\square$
belongs to $\ca^\perp$; \par
\textup{(c)} any morphism $X\to Y$ in $\ck$ which has right lifting
property with respect to all morphisms $0\to A$, \ $A\in\ca$ and
whose codomain $Y$ is a quotient of an object from $\ca$ is
an epimorphism. \qed
\end{lemma}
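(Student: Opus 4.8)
Lemma~\ref{perp-and-square} packages three elementary facts; I would prove them essentially independently, following the arguments already extracted from the proof of Lemma~\ref{le3.1}. For part~(a), let $g\colon C\to D$ be a morphism in $\ca^\perp\lEpi$, so $g$ is an epimorphism with kernel $B\in\ca^\perp$, and let a commutative square be given with left-hand side an $\ca$-monomorphism $\iota\colon K\to L$ (cokernel $A\in\ca$) and right-hand side $g$. I would form the pullback $P=L\times_D C$ and observe that the induced morphism $P\to L$ is an epimorphism with kernel isomorphic to $B\in\ca^\perp$; the given square then amounts to a morphism $K\to P$ over $L$, i.e.\ a splitting over $K$ of the short exact sequence $0\to B\to P\to L\to 0$ pulled back along $\iota$. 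Pushing out this sequence along $L\to A$ (or rather, pushing out the sub-extension over $K$) produces an element of $\Ext^1_\ck(A,B)$ whose vanishing, guaranteed by $B\in\ca^\perp$, yields the desired diagonal lift $L\to P\to C$. (Alternatively, and perhaps more cleanly, one invokes the already-established identity $\ca\lMono^\triangle=\ca^\perp$ together with the standard fact that a morphism with codomain having right lifting against $0\to A$ and kernel injective to all $\ca$-monos lifts against every $\ca$-mono — but I prefer to keep the Ext-computation explicit.)

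For part~(b), the argument is exactly the one cited from \cite[Lemma~4.4]{R1} and reproduced in the proof of Lemma~\ref{le3.1}: given $g\colon C\to D$ in $\ca\lMono^\square$ with kernel $B$, and any $A\in\ca$, a class in $\Ext^1_\ck(A,B)$ is represented by a short exact sequence $0\to B\to E\to A\to 0$; since $0\to A$ is an $\ca$-monomorphism, the composite $B\to C$ extends along $E\to A$? — more precisely, one maps $E$ into the pullback picture so that lifting $B\hookrightarrow C=\ker g$-data against $g$ forces the extension to split, whence $\Ext^1_\ck(A,B)=0$ and $B\in\ca^\perp$. I would simply write this out as the dual/variant of the computation in part~(a).

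For part~(c), the argument is the short diagram chase already given in the proof of Lemma~\ref{le3.1}: let $g\colon X\to Y$ have right lifting property against all $0\to A$ with $A\in\ca$, and pick an epimorphism $v\colon A\to Y$ with $A\in\ca$. The square with top edge $0\to X$, bottom edge $v\colon A\to Y$, left edge $0\to A$ and right edge $g$ commutes trivially, so there is a diagonal $t\colon A\to X$ with $gt=v$; since $v$ is an epimorphism, so is $g$. This needs no hypothesis beyond the stated ones.

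**Main obstacle.** None of the three parts is hard; the only place requiring care is part~(a), where one must correctly identify the relevant $\Ext^1$ group and check that the pullback/pushout manipulations produce precisely the obstruction class that $B\in\ca^\perp$ kills, rather than some unrelated group. This is the standard ``lifting against an epimorphism with cotorsion-type kernel'' lemma, so I expect it to go through routinely once the pullback diagram is set up; the risk is purely bookkeeping about which extension is being split over which subobject.
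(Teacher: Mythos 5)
Your proposal is correct and follows essentially the same route as the paper, which states this lemma as a repackaging of the proof of Lemma~\ref{le3.1} and defers parts (a) and (b) to the standard arguments in the cited references (\cite[Lemma~1 in Section~9.1]{P} and \cite[Lemma~4.4]{R1}) while part~(c) is exactly the diagram chase written out there. Your only loose phrase is ``pushing out this sequence along $L\to A$'' in part~(a) --- the correct operation, which you then gesture at, is to quotient $P$ by the image of the partial section over $K$, obtaining an extension of $A$ by $B$ whose splitting yields a retraction $P\to B$ and hence a section $L\to P$ restricting to the given one --- but you identify this bookkeeping issue yourself and the argument goes through.
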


\begin{defi} \label{t.i.e}
Let $\alpha$ be an ordinal and $S_i$ a family of objects in a cocomplete
abelian category $\ck$ indexed by the ordinals $i<\alpha$.
We say that an object $F$ in $\ck$ is a \textit{transfinitely iterated
extension} of the objects $S_i$ (in the sense of the directed colimit)
if there exists a smooth chain $(f_{ij}:F_i\to F_j)_{i<j\le\alpha}$ in
$\ck$ such that $F_0=0$, \ $F_\alpha=F$, and $f_{i,i+1}$ is a monomorphism
with the cokernel isomorphic to $S_i$ for every $i<\alpha$.
\end{defi}

 Note that no exactness condition on the directed colimits is imposed in
Definition~\ref{t.i.e}.
 If $F$ is a transfinitely iterated extension of $S_i$, $i<\alpha$
in $\ck$ and an ordinal $\beta<\alpha$ is given, then one can claim that
$F_\beta$ is a transfinitely iterated extension of $S_j$, $j<\beta$ and
the cokernel $F_{\geq\beta}$ of the morphism $f_{\beta\alpha}$ is 
a transfinitely iterated extension of $S_k$, \ 
$\beta\leq k<\alpha$, so there is an exact sequence
$$
F_\beta \xrightarrow{\ f_{\beta\alpha}\ } F \xrightarrow{\ \ \ \ }
F_{\geq\beta}\xrightarrow{\ \ \ \ }0.
$$
 Furthermore, $F$ is a transfinitely iterated extension of the sequence
of objects $F_{\beta}$, $S_\beta$, $S_{\beta+1}$,~\dots, $S_i$,~\dots,
where $i<\alpha$.
 But the morphism $f_{\beta\alpha}$ does not need to be a monomorphism, 
as the following examples illustrate.

\begin{exams} \label{vanishing-t.i.e}
 Let $R$ be a commutative ring and $r\in R$ be a nonzero-dividing
element; set $S=R/rR$.
 Then in the abelian category $\cp$ from
Examples~\ref{ext-0-1-orthogonal-category}(3\+-4) the zero object is
a transfinitely iterated extension of the sequence of objects
$S$, $S$, $S$,~\dots{} indexed by the nonnegative integers~$n$.
 Indeed, set $F_n=R/r^nR\in\cp$, and let $f_{ij}:F_i\to F_j$ be
the monomorphism taking $1+r^iR$ to $r^{j-i}+r^jR$.
 Clearly, $\coker(f_{i,i+1})=S$.
 We claim that $\colim_n F_n=0$ in~$\cp$.

 Indeed, let $f_n:F_n\to F$ be a compatible cocone in $\cp$; then it
can be also viewed as a compatible cocone in $R\lMod$.
 The colimit of the diagram $(f_{ij}:F_i\to F_j)$ in the category
of $R$-modules is $R[r^{-1}]/R$, so the morphisms~$f_n$ factor
through an $R$-module morphism $f:R[r^{-1}]/R\to F$.
 Now any morphism $R[r^{-1}]\to F$ in $R\lMod$ vanishes by
the definition of the category $\cp$, hence $f=0$ and
consequently $f_n=0$ for all~$n$.

 Similarly one can show that the zero object is also a transfinitely
iterated extension of the sequence of objects $P$, $S$, $S$,
$S$,~\dots{} in~$\cp$, where $P=\lim_mR/r^mR$.
 It suffices to set $F_n=P$ and take $f_{ij}$ to be the morphisms of
multiplication with~$r^{j-i}$.
 Then the colimit of the diagram $(f_{ij}:F_i\to F_j)$ in $R\lMod$ is
the $R[r^{-1}]$-module $R[r^{-1}]\otimes_RP$, whose reflection to
$\cp$ vanishes, so $\colim_n F_n=0$ in~$\cp$.
\end{exams}

 These counterexamples explain why Eklof and Trlifaj's proof of their
theorem~\cite[Theorems~2 and~10]{ET} fails to work in locally
presentable abelian categories in general.
 The following lemma says, on the other hand, that the assertion
of Eklof's lemma~\cite[Lemma~1]{ET} is valid for any transfinitely
iterated extension of objects in a cocomplete abelian category.

\begin{lemma} \label{eklof-lemma}
 Let $\ck$ be a cocomplete abelian category and $\cb$ a class of objects
in~$\ck$.  Then the class of objects ${}^\perp\cb\subseteq\ck$ is
closed under transfinitely iterated extensions.
\end{lemma}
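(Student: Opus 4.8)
Let $F$ be a transfinitely iterated extension of objects $S_i$, $i<\alpha$, from ${}^\perp\cb$, with smooth chain $(f_{ij}:F_i\to F_j)_{i<j\le\alpha}$, $F_0=0$, $F_\alpha=F$, and $\coker(f_{i,i+1})\cong S_i\in{}^\perp\cb$. We must show $F\in{}^\perp\cb$, i.e. $\Ext^1_\ck(F,B)=0$ for every $B\in\cb$. Fix such a $B$. The strategy is the classical Eklof argument, but carried out in the language of the long exact sequence of $\Ext$ rather than of exact sequences of objects, since—as the preceding Examples show—the transition morphisms $f_{\beta\alpha}$ need not be monomorphisms and $F$ need not genuinely ``contain'' the $F_\beta$. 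So I would prove by transfinite induction on $\beta\le\alpha$ the statement: $\Ext^1_\ck(F_\beta,B)=0$ \emph{and}, more importantly, the restriction map $\Ext^1_\ck(F,B)\to\Ext^1_\ck(F_\beta,B)$ is injective (equivalently, every extension of $F$ by $B$ whose pullback along $f_{\beta\alpha}$ splits, already splits). Both clauses together at $\beta=\alpha$ give the result, since $f_{\alpha\alpha}=\id$.

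\textbf{Successor step.} Suppose the claim holds for $\beta$; we treat $\beta+1$. From the short exact sequence $0\to F_\beta\xrightarrow{f_{\beta,\beta+1}}F_{\beta+1}\to S_\beta\to0$ and $\Ext^1_\ck(S_\beta,B)=0$ (as $S_\beta\in{}^\perp\cb$) we get $\Ext^1_\ck(F_{\beta+1},B)\hookrightarrow\Ext^1_\ck(F_\beta,B)=0$, so the first clause holds at $\beta+1$. For the second clause, let $e\in\Ext^1_\ck(F,B)$ pull back to $0$ in $\Ext^1_\ck(F_{\beta+1},B)$; a fortiori it pulls back to $0$ in $\Ext^1_\ck(F_\beta,B)$ via $f_{\beta\alpha}=f_{\beta+1,\alpha}\circ f_{\beta,\beta+1}$, so by the inductive hypothesis $e=0$. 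Thus the induction passes successors essentially for free.

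\textbf{Limit step.} Let $\beta\le\alpha$ be a limit ordinal and assume the claim for all $\gamma<\beta$; then in particular $\Ext^1_\ck(F,B)\to\Ext^1_\ck(F_\gamma,B)$ is injective and $\Ext^1_\ck(F_\gamma,B)=0$ for all $\gamma<\beta$. Since $F_\beta=\colim_{\gamma<\beta}F_\gamma$, the representable functor $\ck(-,B)$ turns this colimit into a limit: $\ck(F_\beta,B)=\lim_{\gamma<\beta}\ck(F_\gamma,B)$, and there is a Milnor-type $\lim^1$ exact sequence in the abelian group valued functors, or—more elementarily and without any derived-limit machinery—one argues directly with cocycles. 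The cleanest route: an extension $0\to B\to E\to F_\beta\to0$ restricts to an extension $E_\gamma$ of $F_\gamma$ by $B$ for each $\gamma<\beta$; each $E_\gamma$ splits by the inductive hypothesis, and the compatibility of the chain lets one choose splittings $s_\gamma:F_\gamma\to E$ coherently along the chain by transfinite recursion (at each successor extend using the splitting of $0\to F_\gamma\to F_{\gamma+1}\to S_\gamma\to0$ composed appropriately; at limits take the colimit of the already-chosen compatible $s_\gamma$'s, using that $F_\beta$ is their colimit). The colimit splitting $s:F_\beta\to E$ shows $E$ splits, hence $\Ext^1_\ck(F_\beta,B)=0$. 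For the injectivity clause at $\beta$: if $e\in\Ext^1_\ck(F,B)$ pulls back to $0$ in $\Ext^1_\ck(F_\beta,B)$, it pulls back to $0$ in each $\Ext^1_\ck(F_\gamma,B)$, $\gamma<\beta$, whence $e=0$ by induction. (Alternatively, observe $\Ext^1_\ck(F,B)\to\Ext^1_\ck(F_\beta,B)$ factors any $\Ext^1_\ck(F,B)\to\Ext^1_\ck(F_\gamma,B)$, so injectivity at some $\gamma<\beta$ already forces injectivity at $\beta$—this even makes the limit-step injectivity clause trivial given any earlier step.)

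\textbf{The main obstacle.} The one genuinely delicate point is the limit step, precisely because the colimits need not be exact in $\ck$—we are only assuming $\ck$ cocomplete abelian, not Grothendieck. One must be careful that ``restricting an extension of $F_\beta$ to $F_\gamma$'' is pullback along $f_{\gamma\beta}$ and that a coherent family of splittings of the $F_\gamma$-restrictions assembles to a splitting of the $F_\beta$-extension; this uses only that $F_\beta=\colim_{\gamma<\beta}F_\gamma$ and the universal property of the colimit, \emph{not} exactness, so it goes through. The recursion constructing the coherent splittings must be set up so that at each stage the chosen splitting extends (is compatible with) all previous ones; this is where one invokes both the successor-level splitting (available since $S_\gamma\in{}^\perp\cb$) and, at limit sub-stages, the colimit property again. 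I would present this recursion carefully but would not belabor the routine diagram chases; the structural content is entirely in the interplay of ``$S_i\in{}^\perp\cb$ kills the obstruction at successors'' and ``$F_\beta$ is an honest colimit so coherent splittings glue''.
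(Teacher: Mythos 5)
The load-bearing part of your argument --- the transfinite induction showing $\Ext^1_\ck(F_\beta,B)=0$ for all $\beta\le\alpha$, with the successor step killed by $\Ext^1_\ck(S_\beta,B)=0$ and the limit step handled by recursively building a compatible family of splittings $s_\gamma:F_\gamma\to E$ and passing to the colimit --- is exactly the paper's proof (which runs Eklof's recursion once, directly on a given extension $0\to B\to K\to F\to 0$, constructing a cocone of liftings $k_i:F_i\to K$ of the $f_{i\alpha}$; your nested recursion at each limit ordinal is the same construction). Two caveats. First, your second clause, injectivity of $\Ext^1_\ck(F,B)\to\Ext^1_\ck(F_\beta,B)$, is circular as an induction: at $\beta=0$ (indeed at any $\beta$ where the first clause holds) the target group vanishes, so injectivity there is literally the statement $\Ext^1_\ck(F,B)=0$ you are trying to prove, and you never establish a base case for it. Fortunately it is also never used --- the first clause at $\beta=\alpha$ already gives the lemma --- so you should simply delete it. Second, the phrase ``extend using the splitting of $0\to F_\gamma\to F_{\gamma+1}\to S_\gamma\to 0$'' is a slip: that sequence need not split, since $\Ext^1_\ck(S_\gamma,F_\gamma)$ need not vanish. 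What you actually need at the successor sub-step is that $\Ext^1_\ck(S_\gamma,B)=0$ makes $\ck(F_{\gamma+1},B)\to\ck(F_\gamma,B)$ surjective, so that a splitting $s'_{\gamma+1}$ of the pulled-back extension (which exists because $\Ext^1_\ck(F_{\gamma+1},B)=0$) can be corrected by a morphism $F_{\gamma+1}\to B$ extending the discrepancy $s'_{\gamma+1}f_{\gamma,\gamma+1}-s_\gamma:F_\gamma\to B$; this is precisely the adjustment $k'_{i+1}\rightsquigarrow k_{i+1}$ in the paper's sketch.
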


\begin{proof}
 This can be easily obtained from the lifting property considerations,
but we will outline a sketch of explicit proof directly comparable to
the argument in~\cite{ET}.
Let $F$ be a transfinitely iterated extension of a family of objects
$S_i\in{}^\perp\cb$, \ $i<\alpha$, and let
$$
 0\xrightarrow{\ \ \ } B\xrightarrow{\ \ \ } K
 \xrightarrow{\ p \ } F\xrightarrow{\ \ \ } 0
$$
be a short exact sequence in $\ck$ with $B\in\cb$.
 In order to show that this exact sequence splits, we proceed by
transfinite induction constructing a compatible cocone
$k_i:F_i\to K$ such that $pk_i=f_{i\alpha}$ for all $i\leq\alpha$.
 On a limit step~$j$, we simply set $k_j=\colim_{i<j}k_i$.
 On a successor step $j=i+1$, the pullback of our short exact sequence
with respect to the morphism $f_{i+1,\alpha}:F_{i+1}\to F$ defines
an extension class in $\Ext_\ck^1(F_{i+1},B)$.
 The argument continues exactly as in~\cite[Lemma~1]{ET}, proving that
this extension class vanishes, that there exists a morphism
$k'_{i+1}:F_{i+1}\to K$ for which $pk'_{i+1}=f_{i+1,\alpha}$, and finally
that the morphism $k'_{i+1}$ can be replaced with another morphism
$k_{i+1}:F_{i+1}\to K$ for which $pk_{i+1}=f_{i+1,\alpha}$ and
$k_{i+1}f_{i,i+1}=k_i$.
\end{proof}

Given a class of objects $\cs$ in a cocomplete abelian category $\ck$,
we denote by $\filt(\cs)$ the class of all transfinitely iterated
extensions of objects from~$\cs$.
Given a class of morphisms $\cl$ in $\ck$, we denote by $\cell(\cl)$
the closure of $\cl$ with respect to pushouts and transfinite
compositions, or, which is the same, the class of all transfinite
compositions of pushouts of morphisms from~$\cl$.
Clearly, any pushout of an $\cs$-monomorphism is
an $\cs$-monomorphism; so the class $\cell(\cs\lMono)$ consists
precisely of the transfinite compositions of $\cs$-monomorphisms.

The next lemma is our version of~\cite[Lemma~2.10 and
Proposition~2.11]{SS} (see also~\cite[Appendix~A]{S}).

\begin{lemma} \label{filt(s)-mono}
 Let $\ck$ be a cocomplete abelian category and $\cs$ a class of objects
in~$\ck$. Then \par
\textup{(a)} any\/ $\filt(\cs)$-monomorphism in $\ck$ belongs to
$\cell(\cs\lMono)$; \par
\textup{(b)} the cokernel of any morphism from $\cell(\cs\lMono)$
belongs to $\filt(\cs)$; \par
\textup{(c)} any morphism from $\cof(\cs\lMono)$ whose domain is
a subobject of an object from $\cs^\perp$ is a monomorphism; \par
\textup{(d)} the class of objects\/ $\filt(\filt(\cs))$ coincides with\/
$\filt(\cs)$.
\end{lemma}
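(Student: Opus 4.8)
\emph{Plan.} I would prove the four parts in order, doing the real work in (a) and then deducing (b), (c), (d) from it together with the lemmas already established. For part (a), let $f\colon X\to Y$ be a monomorphism with $\coker f=F\in\filt(\cs)$; write $p\colon Y\to F$ for the cokernel projection, identify $X$ with $\ker p$, and fix a smooth chain $(g_{ij}\colon F_i\to F_j)_{i<j\le\alpha}$ with $F_0=0$, $F_\alpha=F$, and each $g_{i,i+1}$ a monomorphism with cokernel $S_i\in\cs$. Set $Y_i:=Y\times_F F_i$, the pullback of $p$ along $g_{i\alpha}\colon F_i\to F$; the pullback comparison maps make $(Y_i)_{i\le\alpha}$ a chain with $Y_0=\ker p=X$ and $Y_\alpha=Y$. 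Since in an abelian category the pullback of an epimorphism along any morphism is an epimorphism, and the kernel of $Y_i\to F_i$ is $\ker p\cong X$, each $Y_i$ sits in a short exact sequence $0\to X\to Y_i\to F_i\to 0$ (with $X\to Y_i$ monic because its composite with $Y_i\to Y$ is $f$). Comparing the sequences for $i$ and $i+1$ over $\id_X$ and $g_{i,i+1}$ and applying the snake lemma, $Y_i\to Y_{i+1}$ is a monomorphism with cokernel $\cong S_i$, so every step of the chain is an $\cs$-monomorphism.

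The one substantive point is that the chain $(Y_i)$ is \emph{smooth}, i.e.\ that for each limit ordinal $j\le\alpha$ the canonical map $\colim_{i<j}Y_i\to Y_j$ is an isomorphism. A priori this could fail, as pullbacks need not commute with filtered colimits when the latter are not exact; but all the sequences $0\to X\to Y_i\to F_i\to 0$ have the \emph{same} kernel $X$. Since cokernels commute with colimits and $\colim_{i<j}$ of the constant diagram $X$ is $X$, passing to the colimit gives $\coker\bigl(X\to\colim_{i<j}Y_i\bigr)=\colim_{i<j}F_i=F_j$ (using smoothness of the $F$-chain). The comparison map then fits into a morphism between this short exact sequence and $0\to X\to Y_j\to F_j\to 0$ which is the identity on $X$ and on $F_j$, hence is an isomorphism by the five lemma. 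So $(Y_i)_{i\le\alpha}$ is a smooth chain of $\cs$-monomorphisms from $X$ to $Y$, whence $f$ is a transfinite composition of $\cs$-monomorphisms and lies in $\cell(\cs\lMono)$ (the case $\alpha=0$, where $F=0$ and $f$ is an isomorphism, being trivial).

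For (b): a morphism from $\cell(\cs\lMono)$ is a transfinite composition of $\cs$-monomorphisms along a smooth chain $Z_0\to\cdots\to Z_\beta$; putting $F_i:=\coker(Z_0\to Z_i)$, a routine diagram chase using that $Z_i\to Z_{i+1}$ is monic shows $F_i\to F_{i+1}$ is monic with cokernel $\coker(Z_i\to Z_{i+1})\in\cs$, and $(F_i)$ is smooth because cokernels commute with colimits and $\colim_{i<j}Z_0=Z_0$; hence $\coker(Z_0\to Z_\beta)=F_\beta\in\filt(\cs)$. For (c): an object injective to all morphisms of a class is injective to all morphisms in its closure under pushouts, transfinite compositions and retracts, so $\cs^\perp=\cs\lMono^\triangle=(\cof(\cs\lMono))^\triangle$; thus for $f\colon A\to B$ in $\cof(\cs\lMono)$ with $A\hookrightarrow C\in\cs^\perp$, the monomorphism $A\hookrightarrow C$ extends along $f$ to $B\to C$, and since it factors through $f$, $f$ is monic. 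For (d): $\filt(\cs)\subseteq\filt(\filt(\cs))$ is clear from monotonicity of $\filt$; conversely, given $G\in\filt(\filt(\cs))$ with smooth chain $(G_i)_{i\le\alpha}$, $G_0=0$, $G_\alpha=G$, each step a $\filt(\cs)$-monomorphism, part (a) writes each $G_i\to G_{i+1}$ as a transfinite composition of $\cs$-monomorphisms, and splicing these along the smooth chain $(G_i)$ produces a smooth chain from $0$ to $G$ with $\cs$-monomorphism steps, so $G\in\filt(\cs)$ (equivalently, $G_0\to G_\alpha\in\cell(\cs\lMono)$ by closure under composition, then apply (b)).

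\emph{Main obstacle.} Everything hinges on the smoothness verification in part (a): identifying $\colim_{i<j}(Y\times_F F_i)$ with $Y\times_F F_j$ in a category where filtered colimits need not be exact. The resolution is that the common kernel $X$ of all the pullback sequences, combined with commutation of cokernels with colimits and smoothness of the $F_i$-chain, forces the comparison map to be an isomorphism; the remaining ingredients (exactness of the pullback sequences, the snake-lemma identifications, and the splicing of chains) are routine.
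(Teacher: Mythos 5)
Your proposal is correct and follows essentially the same route as the paper's proof: in (a) you form the pullback chain $Y_i=Y\times_F F_i$ and verify smoothness exactly as the paper does, by exploiting the constant kernel $X$, the commutation of cokernels with colimits, and a comparison of two short exact sequences with the same subobject and quotient; parts (b), (c), (d) likewise match the paper's deductions (your cokernel computation in (b) is the paper's ``pushout along $Z_0\to 0$'' phrasing, and your injectivity argument in (c) is the paper's appeal to the lifting properties of $\cof(\cs\lMono)$ against $\cs^\perp\lEpi$).
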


\begin{proof}
 Part~(a): let $A\to B$ be a monomorphism whose cokernel $F$ is
a transfinitely iterated extension of objects $S_i\in\cs$, \ $i<\alpha$.
 Let $B_i\to F_i$ be the pullback of the epimorphism $B\to F$ with
respect to the morphism $f_{i\alpha}:F_i\to F$.
 Clearly, we have a commutative diagram
$(b_{ij}:B_i\to B_j)_{i<j\le\alpha}$, the morphism $b_{i,i+1}$ is
a monomorphism with the cokernel $S_i$, and it only remains to show that
$(b_{ij})$ is a smooth chain, that is $B_j=\colim_{i<j}B_i$ for all limit
ordinals $j\leq\alpha$.
 For every~$i$, we have a commutative diagram of a morphism of short
exact sequences with the same subobject
$$
\xymatrix{
0 \ar[r] & A \ar[r] \ar[dr] & B \ar[r] & F \ar[r] & 0 \\
& & B_i \ar[r] \ar[u] & F_i \ar[r] \ar[u] & 0
}
$$
Passing to the colimit, we conclude that the morphism
$\colim_{i<j}B_i\to\colim_{i<j}F_i=F_j$ is the cokernel of the morphism
$A\to\colim_{i<j}B_i$, since colimits commute with cokernels.
Furthermore, $A\to\colim_{i<j}B_i$ is a monomorphism,
because the composition $A\to\colim_{i<j}B_i\to B$ is.
Hence we obtain a commutative diagram of a morphism between two short
exact sequences with same subobject and quotient object
$$
\xymatrix{
0 \ar[r] & A \ar[r] \ar[dr] & \colim_{i<j}B_i \ar[r] \ar[d]
& F_j \ar[r] & 0 \\ & & B_j \ar[ur] 
}
$$
implying that $\colim_{i<j}B_i\to B_j$ is an isomorphism.

Part~(b) is a particular case of the assertion that a pushout of
a transfinite composition is a transfinite composition of pushouts,
following from the fact that colimits commute with pushouts.
To prove~(d), one notices that, according to part~(a),
$$
 \filt(\filt(\cs))\lMono \subseteq \cell(\filt(\cs)\lMono)
 \subseteq\cell(\cell(\cs\lMono)) = \cell(\cs\lMono),
$$
so the inclusion $\filt(\filt(\cs)\subseteq\filt(\cs)$ follows
from~(b).
Finally, part~(c) is implied by the dual of
Lemma~\ref{perp-and-square}(c), since morphisms from
$\cof(\cs\lMono)$ have the left lifting property with respect
to morphisms from $\cs^\perp\lEpi$ by Lemma~\ref{perp-and-square}(a).
\end{proof}

\begin{rem}
 Starting from Section~\ref{cotorsion-and-envelopes}, we always
assumed the categories where we considered transfinite compositions
of morphisms or transfinitely iterated extensions of objects to be
cocomplete, just to help the reader feel more confident; but this
assumption was never actually used until now.
 It was always sufficient to say instead that the classes
$\cof(\cl)$, $\cell(\cl)$, and $\filt(\cs)$ are formed using all
the colimits that happen to exist in the category~$\ck$.

 The above proof of Lemma~\ref{filt(s)-mono}(a), however, presumes
existence of $\colim_{i<j}B_i$.
 This can be avoided, too.
 Here is an alternative argument proving directly that
$(b_{ij}:B_i\to B_j)_{i<j}$ is a colimit cocone without any prior
assumption of the existence of the colimit.
 We have to show that for any object $X$ in $\ck$ the induced morphism
of abelian groups $c_X:\ck(B_j,X)\to\lim_{i<j}\ck(B_i,X)$ is
an isomorphism.
 We have the commutative diagram of a morphism of exact sequences of
abelian groups
$$
\xymatrix{
0\ar[r] & \lim_{i<j}\ck(F_j,X) \ar[r] &\lim_{i<j}\ck(B_i,X) \ar[r]
& \ck(A,X) \\ 0\ar[r] & \ck(F_j,X) \ar[r]\ar[u] & \ck(B_j,X)
\ar[r]\ar[u] & \ck(A,X) \ar[u]
}
$$
 The leftmost and the rightmost vertical maps are isomorphisms, hence
it follows from the diagram that the middle vertical map $c_X$ is
injective and a compatible cocone $(x_i:B_i\to X)_{i<j}$ belongs to
the image of $c_X$, i.~e., factors through the cocone $(b_{ij})_{i<j}$,
if and only if the morphism $x_0:A\to X$ factors through $A\to B_j$.

 Now let $h:X\to Y$ be the pushout of the monomorphism $A\to B_j$ by
the morphism~$x_0$.
 Then the composition $hx_0:A\to Y$ factors through $A\to B_j$, hence
the cocone $(hx_i:B_i\to Y)_{i<j}$ belongs to the image of~$c_Y$.
 We have obtained a morphism $y:B_j\to Y$ forming a commutative
diagram with the morphisms $hx_i$ and~$b_{ij}$.
 Let $p:Y\to Z$ be the cokernel of the monomorphism~$h$.
 Then $c_Z(py)=(phx_i)_{i<j}$ is the zero cocone, and since the morphism
$c_Z$ is injective, it follows that $py=0$.
 Thus there is a morphism $x:B_j\to X$ for which $y=hx$, and
one has $xb_{ij}=x_i$, since $hxb_{ij}=yb_{ij}=hx_i$.
 We have proven that the map $c_X$ is also surjective.
\end{rem}

 The following theorem is a more precise formulation of 
the Eklof--Trlifaj theorem for locally presentable abelian
categories (cf.\ Corollary~\ref{cor3.5}; see also
Proposition~\ref{et-sc-dc}).

\begin{theo} \label{precise-et-loc-pres}
 Let $\ck$ be a locally presentable abelian category, $\cs$ a set
of objects in $\ck$, and $(\cf,\cc)$ the cotorsion theory in $\ck$
generated by~$\cs$.  In this situation: \par
\textup{(a)} the class of objects\/ $\filt(\cs)$ is contained
in $\cf$; \par
\textup{(b)} if an object $X$ in $\ck$ is a subobject of an object $B$
from the class $\cc$, then $X$ can be included into a short exact
sequence $0\to X\to C\to F'\to0$ in $\ck$ with $C\in\cc$ and
$F'\in\filt(\cs)$;
\par
\textup{(c)} if an object $Y$ in $\ck$ is a quotient of an object $P$
from\/ $\filt(\cs)$, then $Y$ can be included into a short exact
sequence $0\to C'\to F\to Y\to 0$ in $\ck$ with $C'\in\cc$
and $F\in\filt(\cs)$; \par
\textup{(d)} if an object $G$ from the class $\cf$ is a quotient object
of an object from\/ $\filt(\cs)$, then $G$ is a direct summand of
an object from\/ $\filt(\cs)$.
\end{theo}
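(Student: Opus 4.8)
The plan is to deduce (a) from Eklof's lemma and to obtain (b) and (c) by running the small object argument exactly as in the proof of Proposition~\ref{prop3.4}, and then to get (d) from (c) by splitting an extension. Part~(a) is immediate: since $\cc=\cs^\perp$, every object of $\cs$ lies in ${}^\perp\cc=\cf$, and by Lemma~\ref{eklof-lemma} the class ${}^\perp\cc$ is closed under transfinitely iterated extensions, so $\filt(\cs)\subseteq\cf$. For the remaining parts I would first reduce, as in Proposition~\ref{prop3.4}, to the case in which $\ck$ is locally $\lambda$-presentable and every object of $\cs$ is $\lambda$-presentable (possible because $\cs$ is a set). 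Let $\ci$ be the set of all $\cs$-monomorphisms with $\lambda$-presentable codomain; the domain of such a morphism is a subobject of a $\lambda$-presentable object, hence $\mu$-presentable for a suitable fixed $\mu\geq\lambda$, so $\ci$ permits the small object argument and $(\cof(\ci),\ci^\square)$ is a weak factorization system. By Lemma~\ref{presentable-codomains} one has $\cs\lMono\subseteq\cof(\ci)$, whence $\ci^\square\subseteq\cs\lMono^\square$; in particular, by Lemma~\ref{perp-and-square}(b), the kernel of every morphism from $\ci^\square$ lies in $\cs^\perp=\cc$. I will also use that for $B\in\cc$ one has $(B\to 0)\in\ci^\square$ (a morphism of $\ci$ lifts against $B\to0$ precisely because $\Ext^1_\ck$ of its cokernel, an object of $\cs$, into $B$ vanishes).

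For (b): factor the given monomorphism $X\to B$ as $X\xrightarrow{i}C\xrightarrow{g}B$ with $i\in\cell(\ci)\subseteq\cell(\cs\lMono)$ and $g\in\ci^\square$. Because the domain $X$ of $i$ is a subobject of $B\in\cc=\cs^\perp$, Lemma~\ref{filt(s)-mono}(c) shows that $i$ is a monomorphism, and Lemma~\ref{filt(s)-mono}(b) gives $F':=\coker(i)\in\filt(\cs)$. Moreover $C\in\cc$: since $g$ and $B\to0$ both lie in $\ci^\square$, so does the composite $C\to0$, and therefore $C=\ker(C\to0)\in\cc$. This produces the short exact sequence $0\to X\to C\to F'\to0$.

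For (c): factor the given epimorphism $p\colon P\twoheadrightarrow Y$ as $P\xrightarrow{i}F\xrightarrow{g}Y$ with $i\in\cell(\ci)$ and $g\in\ci^\square$. Then $g$ is an epimorphism (because $gi=p$ is), so $C':=\ker(g)\in\cc$ and $0\to C'\to F\to Y\to0$ is exact. It remains to see that $F\in\filt(\cs)$, and here is the point where some care is needed: the morphism $i$ itself need not be a monomorphism --- this is exactly the phenomenon exhibited in Examples~\ref{vanishing-t.i.e} --- yet the cellular tower $P=F_0\to F_1\to\cdots\to F_\mu=F$ realizing $i\in\cell(\ci)$ has monic successor morphisms with cokernels that are coproducts of objects of $\cs$, hence objects of $\filt(\cs)$, while $F_j=\colim_{k<j}F_k$ at limit stages and $F_0=P\in\filt(\cs)$. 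Thus $F$ is a transfinitely iterated extension of objects of $\filt(\cs)$, so $F\in\filt(\filt(\cs))=\filt(\cs)$ by Lemma~\ref{filt(s)-mono}(d) (alternatively, prepending a cell structure of $P$ exhibits $(0\to F)$ in $\cell(\cs\lMono)$, and Lemma~\ref{filt(s)-mono}(a)--(b) applies).

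For (d): apply (c) with $Y:=G$, which is legitimate since $G$ is a quotient of an object from $\filt(\cs)$; this yields $0\to C'\to F\to G\to0$ with $C'\in\cc$ and $F\in\filt(\cs)$. As $G\in\cf={}^\perp\cc$ and $C'\in\cc$, we have $\Ext^1_\ck(G,C')=0$, so the sequence splits and $G$ is a direct summand of $F\in\filt(\cs)$. The step I expect to be the main obstacle is the one flagged in (c): in a locally presentable (as opposed to Grothendieck) abelian category the transfinite composition $i$ genuinely may fail to be monic, which is what breaks the classical Eklof--Trlifaj proof; the remedy is that the definition of a transfinitely iterated extension constrains only the successor morphisms of the chain, so the cellular structure over the base object $P\in\filt(\cs)$ already delivers the conclusion. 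A secondary technical point is verifying that $\ci$ admits the small object argument, i.e.\ that the domains of its members are presentable.
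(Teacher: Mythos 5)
Your proof is correct and follows essentially the same route as the paper's: part~(a) via Eklof's lemma (Lemma~\ref{eklof-lemma}), and parts~(b)--(d) via the small object argument applied to the set $\ci$ of $\cs$-monomorphisms with $\lambda$-presentable codomains, combined with Lemmas~\ref{presentable-codomains}, \ref{perp-and-square}, and~\ref{filt(s)-mono}. The only divergence is in~(c), where the paper factors $0\to Y$ rather than $P\to Y$, so that $F=\coker(0\to F)\in\filt(\cs)$ is immediate from Lemma~\ref{filt(s)-mono}(b) and the hypothesis that $Y$ is a quotient of an object of $\filt(\cs)$ is used only (via $\ci^\square=\filt(\cs)\lMono^\square$ and Lemma~\ref{perp-and-square}(c)) to see that $F\to Y$ is an epimorphism --- this sidesteps entirely the non-monic transfinite composition issue you flagged; your workaround of prepending a cell structure of $P$ and invoking $\filt(\filt(\cs))=\filt(\cs)$ is nonetheless valid.
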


\begin{proof}
 Part~(a) is Lemma~\ref{eklof-lemma}.
 To prove parts~(b\+-d), choose a regular cardinal $\lambda$ such that
the category $\ck$ is locally $\lambda$-presentable and all objects
in the set $\cs$ are $\lambda$-presentable.
 Let $\ci$ be the set of all $\cs$-monomorphisms with
$\lambda$-presentable codomains in~$\ck$.
 By Lemmas~\ref{presentable-codomains} and~\ref{filt(s)-mono}(a),
we have $\ci^\square=\cs\lMono^\square=\filt(\cs)\lMono^\square$.

 According to Lemma~\ref{perp-and-square}, the class
$\cs\lMono^\square$ consists of morphisms with kernels in $\cc$,
contains all the $\cc$-epimorphisms, and any morphism
from $\cs\lMono^\square$ whose codomain is a quotient of
an object from $\filt(\cs)$ is an epimorphism.
 According to the dual of Lemma~\ref{perp-and-square}, the class
${}^\square(\cs\lMono^\square)$ consists of morphisms with cokernels
in $\cf$ and any morphism from ${}^\square(\cs\lMono^\square)$ whose
domain is a subobject of an object from $\cc$ is a monomorphism.
 Moreover, by Lemma~\ref{filt(s)-mono}(b) the class of morphisms
$\cell(\cs\lMono)\subset{}^\square(\cs\lMono^\square)$ consists
of morphisms with cokernels in $\filt(\cs)$.

 By the small object argument (see~\cite[Proposition~1.3]{Be}
or~\cite[Theorem~2.5]{R1}), any morphism in $\ck$ can be factored
as a composition of a morphism from $\cell(\cs\lMono)$ followed
by a morphism from $\cs\lMono^\square$.
 This implies the assertions~(b\+-c), and part~(d) follows from~(c).
\end{proof}

A class of objects $\cf$ in a cocomplete abelian category $\ck$ is
called \textit{deconstructible} if $\cf=\filt(\cs)$ for some set
of objects $\cs$ in~$\ck$.
The next corollary is our version of~\cite[Theorem~2.13(1) and
Corollary~2.15(1)]{SS}.

\begin{coro}
Any deconstructible class of objects in a locally presentable abelian
category is weakly coreflective.
\end{coro}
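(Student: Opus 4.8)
The plan is to produce an $\cf$-precover of an arbitrary object $K$ as the codomain leg of a small-object-argument factorization, in direct analogy with the proof of Theorem~\ref{precise-et-loc-pres}. Write $\cf=\filt(\cs)$ and put $\cc=\cs^\perp$. First I would choose a regular cardinal $\lambda$ for which $\ck$ is locally $\lambda$-presentable and all objects of $\cs$ are $\lambda$-presentable, and let $\ci$ be the set of all $\cs$-monomorphisms with $\lambda$-presentable codomains. By Lemmas~\ref{presentable-codomains} and~\ref{filt(s)-mono}(a) this set satisfies $\ci^\square=\cs\lMono^\square$, and by the small object argument (exactly as invoked in the proof of Theorem~\ref{precise-et-loc-pres}) every morphism of $\ck$ factors as a morphism from $\cell(\cs\lMono)$ followed by a morphism from $\ci^\square$.

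Next I would apply this factorization to the morphism $0\to K$, obtaining $0\to P\xrightarrow{\,g\,}K$ with $(0\to P)\in\cell(\cs\lMono)$ and $g\in\cs\lMono^\square$. Since $P$ is the cokernel of the morphism $0\to P$, Lemma~\ref{filt(s)-mono}(b) yields $P\in\filt(\cs)=\cf$, so $g$ is the candidate $\cf$-precover of $K$.

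It then remains to verify the weak coreflection property: every morphism $h\colon A\to K$ with $A\in\cf$ should factor through $g$. Here the key observation is that for $A\in\filt(\cs)$ the monomorphism $0\to A$ has cokernel $A\in\filt(\cs)$, so it is a $\filt(\cs)$-monomorphism and hence, by Lemma~\ref{filt(s)-mono}(a), lies in $\cell(\cs\lMono)\subseteq\cof(\cs\lMono)={}^\square(\cs\lMono^\square)={}^\square(\ci^\square)$. Feeding the evident commutative square (top edge $0\to P$, left edge $0\to A$, bottom edge $h$, right edge $g$, which commutes because both composites are the zero map $0\to K$) into this lifting property produces a filler $d\colon A\to P$ with $gd=h$. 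Thus every morphism into $K$ from an object of $\cf$ factors through $g$, so $g$ is an $\cf$-precover and $\cf$ is weakly coreflective.

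I do not anticipate a serious obstacle: the argument is essentially an assembly of the already-established Lemmas~\ref{presentable-codomains} and~\ref{filt(s)-mono} with the small object argument. The only points requiring a little care are insisting that the factorization passes through $\cell(\cs\lMono)$ rather than merely $\cof(\cs\lMono)$, so that Lemma~\ref{filt(s)-mono}(b) applies and $P$ genuinely lands in $\filt(\cs)$ (and not only in the possibly larger class ${}^\perp\cc$), and, symmetrically, recognizing $0\to A$ as a member of $\cell(\cs\lMono)$ for every $A\in\filt(\cs)$, which is precisely Lemma~\ref{filt(s)-mono}(a) applied to a $\filt(\cs)$-monomorphism.
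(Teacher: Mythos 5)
Your proposal is correct and follows essentially the same route as the paper: factor $0\to K$ by the small object argument as a morphism of $\cell(\cs\lMono)$ (whose cokernel, namely the domain of the precover, lies in $\filt(\cs)$ by Lemma~\ref{filt(s)-mono}(b)) followed by a morphism of $\cs\lMono^\square=\filt(\cs)\lMono^\square$, and then obtain the factorization of any $h:A\to K$ with $A\in\filt(\cs)$ from the lifting property against $0\to A$. The paper phrases this last step as the general observation that any morphism in $\ca\lMono^\square$ with domain in $\ca$ is an $\ca$-precover, which is exactly the lifting argument you spell out.
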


\begin{proof}
 For any class of objects $\ca$ in an abelian category $\ck$, any
morphism belonging to $\ca\lMono^\square$ with the domain belonging
to $\ca$ is an $\ca$-precover.
 In particular, let $\ck$ be locally presentable and $\cs$ a set of
objects in~$\ck$.
 Following the proof of Theorem~\ref{precise-et-loc-pres}, for any
object $K$ in $\ck$ the morphism $0\to K$ can be decomposed as
$0\to F\to K$ with the object $F$ from $\filt(\cs)$ and the morphism
$F\to K$ from $\cs\lMono^\square=\filt(\cs)\lMono^\square$.
 Hence $F\to K$ is a $\filt(\cs)$-precover of~$K$.
\end{proof}

\begin{rem}
 According to~\cite[Example~4]{K2}, for any cotorsion theory $(\cf,\cc)$
generated by a set of objects in a locally presentable abelian category
$\ck$, any object $X$ in $\ck$ has a $\cc$-preenvelope.
 If one of such preenvelopes is a monomorphism (or, equivalently, all
of them are), then by Theorem~\ref{precise-et-loc-pres}(b) \
$X$ has a special $\cc$-preenvelope.
\end{rem}

 Assuming Vop\v enka's principle, any full subcategory closed
under products in a locally presentable category is weakly
reflective (see~\cite[Theorem~6.26]{AR}).

\begin{coro} \label{accessible-if-preserved-by-kappa-directed}
Let $\ck$ be a locally presentable abelian category and $(\cf,\cc)$ 
a cotorsion theory generated by a set such that $\cf$ is closed
under $\kappa$-directed colimits for some regular cardinal~$\kappa$.
Suppose that any object of $\ck$ is a quotient of an object from~$\cf$. 
Then $\cf$ is accessible.
\end{coro}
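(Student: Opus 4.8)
The plan is to reduce, using the results of Sections~\ref{cotorsion-and-envelopes}--\ref{cotorsion-further}, to the situation where $\cf$ is exactly the class of direct summands of objects from $\filt(\cs)$ for a conveniently chosen set $\cs$, and then to exhibit $\cf$ as a $\mu$-accessible category by writing every object of $\filt(\cs)$ as a $\mu$-directed colimit of $\mu$-presentable objects that already lie in $\cf$.

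\emph{Reduction.} Enlarge the given generating set exactly as in the proof of Proposition~\ref{prop3.4}: fix $\lambda$ with $\ck$ locally $\lambda$-presentable and the original generators $\lambda$-presentable, and for every $\lambda$-presentable object $X$ choose, using the hypothesis, an object $A_X\in\cf$ with an epimorphism $A_X\to X$; adjoin the (set-indexed) family of all the $A_X$ to the generating set. Since each $A_X$ lies in $\cf={}^\perp\cc$, the enlarged set $\cs$ still generates $(\cf,\cc)$, and now every object of $\ck$ is a quotient of a coproduct of objects of $\cs$, hence — a coproduct of objects of $\cs$ being a transfinitely iterated extension of them — a quotient of an object from $\filt(\cs)$. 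After replacing $\lambda$ by a larger regular cardinal we may assume in addition that every object of $\cs$ is $\lambda$-presentable and that $\lambda\ge\kappa$. Now Theorem~\ref{precise-et-loc-pres}(a) gives $\filt(\cs)\subseteq\cf$, and, since every object of $\cf$ is a quotient of an object from $\filt(\cs)$, Theorem~\ref{precise-et-loc-pres}(d) shows that every object of $\cf$ is a direct summand of an object from $\filt(\cs)$; as $\cf={}^\perp\cc$ is closed under direct summands, $\cf$ is precisely the class of direct summands of objects from $\filt(\cs)$.

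\emph{A $\mu$-directed decomposition.} Choose a regular cardinal $\mu\ge\lambda$ ($\ge\kappa$) large enough that every subobject of a $\lambda$-presentable object of $\ck$ is $\mu$-presentable; this is possible since $\ck$ is well-powered and has, up to isomorphism, only a set of $\lambda$-presentable objects. Let $\ci$ be the set of all $\cs$-monomorphisms with $\lambda$-presentable codomain; by Lemma~\ref{presentable-codomains} every $\cs$-monomorphism is a pushout of a morphism from $\ci$, so $\cell(\ci)=\cell(\cs\lMono)$, and each morphism of $\ci$ has $\mu$-presentable codomain and $\mu$-presentable domain (the latter being a subobject of a $\lambda$-presentable object). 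Given $F\in\filt(\cs)$, the morphism $0\to F$ belongs to $\cell(\cs\lMono)=\cell(\ci)$, so $F$ carries an $\ci$-cell structure. By the standard analysis of cell complexes built from a set of morphisms with presentable domains, the subcomplexes of $F$ involving fewer than $\mu$ cells form, under inclusion, a $\mu$-directed family whose colimit is $F$; each such $F[L]$ fits into a morphism $0\to F[L]$ in $\cell(\ci)\subseteq\cell(\cs\lMono)$, hence $F[L]\in\filt(\cs)\subseteq\cf$ by Lemma~\ref{filt(s)-mono}(b), and $F[L]$ is $\mu$-presentable in $\ck$, being obtained by fewer than $\mu$ pushouts of $\mu$-presentable morphisms. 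Thus, if $\cg$ denotes the (essentially small, hence set-sized) class of $\mu$-presentable objects of $\ck$ that lie in $\cf$, every object of $\filt(\cs)$ is a $\mu$-directed colimit of objects from~$\cg$.

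\emph{Retracts and conclusion.} An arbitrary $G\in\cf$ is a retract of some $F\in\filt(\cs)$; writing $F$ as the $\mu$-directed colimit $\colim_{L}F[L]$ above and invoking the standard fact that a retract of a $\mu$-directed colimit of $\mu$-presentable objects is again a $\mu$-directed colimit of $\mu$-presentable objects (cf.~\cite{AR}), together with the fact that both $\cf$ and the class of $\mu$-presentable objects are closed under retracts, we conclude that $G$ too is a $\mu$-directed colimit of objects from $\cg$. Finally $\cf$ is closed under $\mu$-directed colimits in $\ck$ (as $\mu\ge\kappa$) and these are computed as in $\ck$, so the objects of $\cg$ are $\mu$-presentable \emph{in} $\cf$, the category $\cf$ has $\mu$-directed colimits, and every object of $\cf$ is a $\mu$-directed colimit of objects from the set $\cg$; hence $\cf$ is $\mu$-accessible, in particular accessible.

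\emph{Main obstacle.} The delicate step is the second one. One has to choose $\mu$ so as to absorb the domains of the generating monomorphisms in $\ci$ — these are subobjects of $\lambda$-presentable objects and need not themselves be $\lambda$-presentable — and then carry out, in the present general abelian setting, the standard verification that the $\mu$-small subcomplexes of an $\ci$-cell complex form a $\mu$-directed family cofinal among all subcomplexes with colimit the whole complex, each $\mu$-small subcomplex being simultaneously an object of $\filt(\cs)$ (through Lemma~\ref{filt(s)-mono}(b)) and $\mu$-presentable in $\ck$. The retract bookkeeping of the third step, while routine, must also be done with some care, since $\cf$ can be strictly larger than $\filt(\cs)$.
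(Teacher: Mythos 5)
Your overall strategy coincides with the paper's: enlarge the generating set as in the proof of Proposition~\ref{prop3.4}, identify $\cf$ with the class of $\ci$-cofibrant objects (equivalently, with the direct summands of objects of $\filt(\cs)$), and then present every cell complex as a $\mu$-directed colimit of $\mu$-presentable cellular subobjects. The reduction step is fine. The gap is exactly where you locate the ``delicate step'': the claim that ``by the standard analysis of cell complexes\dots\ the subcomplexes of $F$ involving fewer than $\mu$ cells form, under inclusion, a $\mu$-directed family whose colimit is $F$'' is not a standard fact at this level of generality --- it is precisely the main theorem of the fat small object argument of \cite{MRV}, which is what the paper cites (\cite[Corollary~5.1]{MRV}) at exactly this point. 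The naive subcomplex analysis of a transfinite composition fails in a general locally presentable abelian category: the transition maps $F_i\to F_j$ of a smooth chain presenting an object of $\filt(\cs)$ need not be monomorphisms past limit stages (Examples~\ref{vanishing-t.i.e} show that the whole complex can even collapse to zero), so ``subcomplexes'' cannot be identified with subobjects, the attaching maps of later cells need not factor coherently through the subcomplex spanned by fewer than $\mu$ earlier cells, and the $\mu$-small subcomplexes of the given linear presentation need not form a $\mu$-directed family with colimit~$F$. Making this work requires replacing the linear presentation by a ``good'' $\mu$-directed diagram of cellular subobjects, which is the content of \cite{MRV}; it cannot be waved through as a routine verification.

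A smaller point: in the retract step you need $G$ to be a $\mu$-directed colimit of $\mu$-presentable objects lying in $\cf$, not merely of $\mu$-presentable objects of $\ck$, and the interleaving argument achieving this (together with the fact that $\cf$ has $\mu$-directed colimits) is where the hypothesis that $\cf$ is closed under $\kappa$-directed colimits actually enters; this, too, is part of what \cite[Corollary~5.1]{MRV} packages. So your proof is right in outline and matches the paper's route, but its central step is an appeal to a nontrivial theorem that must be either cited or reproved; the sketch you give for it would not go through as written.
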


\begin{proof}
According to the proof of Proposition~\ref{prop3.4}, one can choose
a generating set $\cs$ in such a way that any object of $\ck$ is
a quotient of a coproduct of objects from~$\cs$.
Then, by the proof of Theorem~\ref{precise-et-loc-pres}, an object
$F$ in $\ck$ is $\ci$-cofibrant, i.~e., the morphism $0\to F$
belongs to $\cof(\ci)$, if and only if $F$ is from~$\cf$.
The desired assertion now follows from~\cite[Corollary~5.1]{MRV}.
\end{proof}

Assuming Vop\v enka's principle, any full subcategory of $\ck$
closed under $\kappa$-directed colimits is accessible, and so is
any full subcategory of $\ck$ closed under $\kappa$-pure subobjects
(see~\cite[Theorem~6.17]{AR}).

\begin{coro} \label{accessible-if-contains-pure-subobjects}
Assume that there is a proper class of almost strongly compact cardinals.
Let $\ck$ be a locally presentable abelian category and $(\cf,\cc)$
a cotorsion theory generated by a set such that $\cf$ is closed under
$\kappa$-pure subobjects in $\ck$ for some regular cardinal~$\kappa$.
Suppose that any object of $\ck$ is a quotient of an object from~$\cf$.
Then the category $\cf$ is accessible and accessibly embedded into~$\ck$.
\end{coro}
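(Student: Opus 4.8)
The plan is to run exactly the reduction used in the proof of Corollary~\ref{accessible-if-preserved-by-kappa-directed}, and then to invoke a large-cardinal version of \cite[Corollary~5.1]{MRV} in place of the ZFC one --- playing, for almost strongly compact cardinals, the role that \cite[Theorem~6.17]{AR} plays for Vop\v enka's principle in the remark just before the corollary.

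First I would fix a regular cardinal $\lambda$ for which $\ck$ is locally $\lambda$-presentable and, following the proof of Proposition~\ref{prop3.4}, pass to a generating set $\cs$ of the cotorsion theory $(\cf,\cc)$ consisting of $\lambda$-presentable objects (enlarging $\lambda$ if necessary so that all members of $\cs$ are $\lambda$-presentable) with the extra property that every object of $\ck$ is a quotient of a coproduct of objects from~$\cs$; this is precisely the step where the hypotheses that $(\cf,\cc)$ is generated by a set and that every object of $\ck$ is a quotient of an object from $\cf$ are used. Let $\ci$ be the set of all $\cs$-monomorphisms with $\lambda$-presentable codomains. By the proof of Theorem~\ref{precise-et-loc-pres} (which combines Lemmas~\ref{presentable-codomains}, \ref{perp-and-square}, and~\ref{filt(s)-mono}), an object $F$ of $\ck$ belongs to $\cf$ if and only if $0\to F$ lies in $\cof(\ci)$, i.e., $F$ is $\ci$-cofibrant. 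So $\cf$ is the class of $\ci$-cofibrant objects for a \emph{set} of morphisms $\ci$, and, by hypothesis, this class is closed under $\kappa$-pure subobjects in $\ck$ for some regular cardinal~$\kappa$.

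The second and essential step is the set-theoretic input: under a proper class of almost strongly compact cardinals, the class of $\ci$-cofibrant objects for a set of morphisms $\ci$ in a locally presentable category, provided it is closed under $\kappa$-pure subobjects for some~$\kappa$, is accessible and accessibly embedded. This is the large-cardinal strengthening of what is used in Corollary~\ref{accessible-if-preserved-by-kappa-directed}, where the class of $\ci$-cofibrant objects was assumed closed under $\kappa$-directed colimits and accessibility then followed in ZFC: here the weaker purity-closure hypothesis is compensated by the large-cardinal assumption, and one moreover obtains accessible embedding. Applied to our $\ci$ and to $\cf$, it gives the conclusion.

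I expect the main obstacle to be pinning down and correctly applying this large-cardinal theorem: verifying that it is genuinely a statement about classes of cofibrant objects (which is why the corollary retains the hypotheses on $\cf$ instead of applying a bare ``closed under $\kappa$-pure subobjects'' theorem directly to $\cf$), and that it delivers accessible \emph{embedding} of $\cf$ into $\ck$ rather than merely abstract accessibility of the category $\cf$. A secondary, harmless point is cardinal bookkeeping: the $\lambda$ used to build $\cs$ and $\ci$, the $\kappa$ from the purity hypothesis, and the cardinal produced by the large-cardinal theorem must be kept mutually coherent, which is unproblematic since local $\lambda$-presentability of $\ck$ and closure of $\cf$ under $\kappa$-pure subobjects persist when $\lambda$ and $\kappa$ are increased.
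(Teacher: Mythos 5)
Your first step coincides exactly with the paper's: reduce, via the proof of Proposition~\ref{prop3.4} and Theorem~\ref{precise-et-loc-pres}, to the statement that $\cf$ is the class of $\ci$-cofibrant objects for a set of morphisms $\ci$ with $\lambda$-presentable domains and codomains, where $\lambda\geq\kappa$ is chosen so that $\ck$ is locally $\lambda$-presentable. The gap is in your ``second and essential step'': you posit, as a black box, a large-cardinal theorem saying that a class of $\ci$-cofibrant objects closed under $\kappa$-pure subobjects is accessible and accessibly embedded, and you guess that it should be a strengthening of~\cite[Corollary~5.1]{MRV}. No such theorem is applied directly in the literature, and the MRV fat-small-object-argument result is not the right template here. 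The actual bridge is different: by~\cite[Proposition~3.4]{R2} there is a $\lambda$-accessible endofunctor $F:\ck\to\ck$ such that $\cf$ is precisely the class of retracts of objects in the image of~$F$; closure of $\cf$ under $\kappa$-pure (hence $\lambda$-pure) subobjects then identifies $\cf$ as the \emph{$\lambda$-pure powerful image} of~$F$; and it is the theory of accessible images --- \cite[Proposition~2.4, Theorem~3.2, and Remark~3.3]{BTR} --- that converts a proper class of almost strongly compact cardinals into accessibility and accessible embeddedness of such pure powerful images.

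So the large-cardinal hypothesis enters through results about images of accessible functors, not through a purity-flavoured variant of the small object argument. Your instinct that the statement ``is genuinely about classes of cofibrant objects'' is only half right: cofibrancy is what lets one invoke~\cite[Proposition~3.4]{R2} to present $\cf$ as a retract-closed image, but from that point on the argument knows nothing about cofibrations and works entirely with powerful images. Without naming this intermediate construction your proof cannot be completed as written, since the theorem you appeal to is exactly what remains to be proved.
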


\begin{proof}
Let $\lambda\geq\kappa$ be a regular cardinal such that $\ck$ is
locally $\lambda$-presentable and there is a set of morphisms $\ci$
with $\lambda$-presentable domains and codomains in $\ck$ such that
$\cf$ is the class of $\ci$-cofibrant objects.
Following~\cite[Proposition~3.4]{R2}, there is a $\lambda$-accessible
functor $F:\ck\to\ck$ such that the class $\cf$ consists precisely of
the retracts of objects in the image of~$F$.
Since $\cf$ is closed under $\kappa$-pure subobjects, we can conclude
that $\cf$ is the $\lambda$-pure powerful image of~$F$.
According to~\cite[Proposition~2.4, Theorem~3.2, and Remark~3.3]{BTR},
$\cf$ is accessible and accessibly embedded.
\end{proof}

\begin{rem}
 Following the proof of Theorem~\ref{precise-et-loc-pres} more closely,
in both Corollaries~\ref{accessible-if-preserved-by-kappa-directed}
and~\ref{accessible-if-contains-pure-subobjects} one can drop
the assumption that every object of $\ck$ is a quotient of an object
of $\cf$, replacing in their assertions the left part $\cf$ of
a cotorsion theory $(\cf,\cc)$ with the class ${}^\oplus\filt(\cs)$
of all direct summands of objects from $\filt(\cs)$, where $\cs$ is
any set of objects in~$\ck$.
 So, in the case of 
Corollary~\ref{accessible-if-preserved-by-kappa-directed}, it is then 
claimed that the full subcategory ${}^\oplus\filt(\cs)$ is accessible
whenever it is closed under $\kappa$-directed colimits in~$\ck$.
 In the case of Corollary~\ref{accessible-if-contains-pure-subobjects}
the assertion is that, assuming a proper class of almost strongly
compact cardinals, ${}^\oplus\filt(\cs)$ is accessible and accessibly
embedded whenever it is closed under $\kappa$-pure subobjects.
\end{rem}

The next lemma is our version of~\cite[Lemma~1 and Proposition~2]{BBE}.

\begin{lemma} \label{deconstructible}
Let $\ck$ be a locally presentable abelian category and $\cf$ a class
of objects in $\ck$ closed under transfinitely iterated extensions,
$\kappa$-pure subobjects, and $\kappa$-pure quotients for some regular
cardinal~$\kappa$.
Assume that the class of all $\cf$-monomorphisms $X\to F$ is closed
under directed colimits in the comma-category $\ck\downarrow F$ for
any $F\in\cf$.  Then the class $\cf$ is deconstructible.
\end{lemma}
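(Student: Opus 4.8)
The plan is to exhibit the deconstructing set explicitly. First I would fix a sufficiently large regular cardinal $\lambda\geq\kappa$ so that $\ck$ is locally $\lambda$-presentable and, in addition, (i)~every subobject of a $\lambda$-presentable object of $\ck$ is again $\lambda$-presentable, and (ii)~every $\lambda$-presentable subobject of any object $M$ of $\ck$ is contained in a $\lambda$-pure subobject of $M$ which is still $\lambda$-presentable. Condition~(i) holds for all large enough $\lambda$ (cf.\ Examples~\ref{ext-0-1-orthogonal-category}(2)), and condition~(ii), the abstract form of the ``small pure closure'' of~\cite{BBE}, is likewise available for a cofinal class of regular cardinals, either from the theory of purity in locally presentable categories (\cite{AR,MP}) or by transporting the classical iterative pure-closure construction along the restricted-Yoneda embedding $E$ of Lemma~\ref{lq1.1} into a category of modules over a ring with several objects. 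Note that since $\lambda\geq\kappa$, every $\lambda$-pure monomorphism is also $\kappa$-pure. Let $\cs$ be a representative set of all $\lambda$-presentable objects of $\ck$ that belong to~$\cf$; up to isomorphism there is only a set of them. I claim that $\cf=\filt(\cs)$. The inclusion $\filt(\cs)\subseteq\cf$ is immediate from $\cs\subseteq\cf$ and the hypothesis that $\cf$ is closed under transfinitely iterated extensions; this is the only place that hypothesis is used.

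The reverse inclusion rests on the following ``step'' observation. Suppose $G\in\cf$ and $Y\subseteq G$ is a $\lambda$-presentable subobject. By~(ii), pick a $\lambda$-presentable subobject $H$ with $Y\subseteq H\subseteq G$ such that $H\to G$ is a $\lambda$-pure, hence $\kappa$-pure, monomorphism. Then $H$ is a $\kappa$-pure subobject of $G$, so $H\in\cf$; and $G/H$ is a $\kappa$-pure quotient of $G$ (recall that $\kappa$-pure epimorphisms are precisely the cokernels of $\kappa$-pure monomorphisms), so $G/H\in\cf$. Thus $H$ is a $\lambda$-presentable object of $\cf$, hence isomorphic to a member of $\cs$, and $H\to G$ is an $\cf$-monomorphism.

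Now fix $F\in\cf$. By~(i), the $\lambda$-presentable subobjects of $F$ form a directed family whose union is $F$ (using that $F$ is a $\lambda$-filtered colimit of $\lambda$-presentable objects, whose images in $F$ are $\lambda$-presentable by~(i) and jointly epimorphic). Enumerate them as $(X_\gamma)_{\gamma<\delta}$ and build, by transfinite recursion, an increasing smooth chain of subobjects $(G_\beta)_{\beta\leq\delta}$ of $F$ with $G_0=0$, keeping as an invariant that $G_\beta\to F$ is an $\cf$-monomorphism (equivalently, $F/G_\beta\in\cf$). At a limit ordinal $\beta$, set $G_\beta=\colim_{\gamma<\beta}G_\gamma$, the colimit taken in~$\ck$; since $(G_\gamma\to F)_{\gamma<\beta}$ is then a directed system of $\cf$-monomorphisms into $F$ and $F\in\cf$, the hypothesis that $\cf$-monomorphisms are closed under directed colimits in the comma-category $\ck\downarrow F$ shows that the induced morphism $G_\beta\to F$ is again an $\cf$-monomorphism, in particular a monomorphism, so $G_\beta$ is a subobject of $F$, the chain stays smooth, and the invariant survives. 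At a successor ordinal $\beta+1$, let $\overline{X_\beta}$ be the image of $X_\beta$ under the epimorphism $F\to F/G_\beta$; being a quotient of $X_\beta$ by a subobject of $X_\beta$, it is $\lambda$-presentable by~(i). Applying the step observation to $\overline{X_\beta}\subseteq F/G_\beta\in\cf$ produces a $\lambda$-presentable $H$ isomorphic to a member of $\cs$, with $\overline{X_\beta}\subseteq H\subseteq F/G_\beta$ and $(F/G_\beta)/H\in\cf$. I then take $G_{\beta+1}$ to be the preimage of $H$ in $F$: this gives $G_\beta\subseteq G_{\beta+1}$ with cokernel isomorphic to $H$ (hence to an object of $\cs$), while $F/G_{\beta+1}\cong(F/G_\beta)/H\in\cf$, so the invariant persists, and $X_\beta\subseteq G_{\beta+1}$ because $\overline{X_\beta}\subseteq H$.

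Since $G_\delta\supseteq X_\gamma$ for every $\gamma<\delta$, one has $G_\delta=F$, so the chain $(G_\beta)_{\beta\leq\delta}$ witnesses $F$ as a transfinitely iterated extension of objects from $\cs$ in the sense of Definition~\ref{t.i.e}; therefore $F\in\filt(\cs)$, and $\cf=\filt(\cs)$ is deconstructible. The step I expect to be the main obstacle is the limit stage of the recursion: it is exactly there that the comma-category hypothesis on $\cf$-monomorphisms is needed, and one must use that colimits in $\ck\downarrow F$ are computed in $\ck$ in order to see that $G_\beta\to F$ remains a monomorphism and that the resulting chain is genuinely smooth. The other point requiring care, rather than a new idea, is the existence of a cardinal $\lambda$ with property~(ii); the safest route there is to reduce to a module category by means of the embedding~$E$ and invoke the classical existence of small pure subobjects.
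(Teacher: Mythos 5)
Your proof is correct and follows essentially the same route as the paper's: both arguments fix a large enough cardinal, take $\cs$ to be the suitably presentable objects of $\cf$, and build a smooth chain of $\cf$-monomorphisms into $F$ by pulling back small $\lambda$-pure subobjects of the successive quotients $F/G_\beta$, using the $\kappa$-pure closure hypotheses at successor steps and the comma-category hypothesis at limit steps. The only differences are cosmetic: the paper terminates via the bound on chains of subobjects of a $\mu$-presentable object (Lemma~\ref{lq1.1}) rather than by enumerating and swallowing all presentable subobjects, and it keeps the presentability rank $\alpha$ of the pure subobjects separate from the purity level $\lambda$ (the two-cardinal form of \cite[Theorem~2.33]{AR}), which is the clean way to secure your condition~(ii) --- your single-cardinal phrasing should be adjusted accordingly, as you yourself anticipate.
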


\begin{proof}
 Let $\lambda\geq\kappa$ be a regular cardinal for which
$\ck$ is locally $\lambda$-presentable.
 Let $\alpha\geq\lambda$ be a regular cardinal such that any
morphism from an $\alpha$-presentable object to an object $K$ in $\ck$
factors through an $\alpha$-presentable $\lambda$-pure subobject of~$K$
(see~\cite[Theorem~2.33]{AR}).
 Then we claim that $\cf=\filt(\cs)$, where $\cs$ is a representative
set of all objects in $\cf$ that are $\alpha$-presentable in~$\ck$.
 Indeed, let an object $F\in\cf$ be $\mu$-presentable in~$\ck$,
where $\mu\trianglerighteq\lambda$ is a regular cardinal greater than
the cardinality $\gamma$ of the set of all morphisms between
$\lambda$-presentable objects in~$\ck$.
 We construct a smooth chain $(f_{ij}:F_i\to F_j)_{i<j<\mu}$ and
a compatible cocone $(f_i:F_i\to F)$ such that $F_i\in\cf$,
the morphisms $f_{i,i+1}$ are $\cs$-monomorphisms, and $f_i$ are
$\cf$-monomorphisms.

 Set $F_0=0$.
 On a successor step~$i+1$, if the morphism $f_i$ is an isomorphism,
put $F_{i+1}=F_i$, \ $f_{i+1}=f_i$, and $f_{i,i+1}=\id$.
 Otherwise, the cokernel $\coker(f_i)$ is nonzero, so there exists
a nonzero morphism from a $\lambda$-presentable object to
$\coker(f_i)$, which consequently factors through a nonzero
$\alpha$-presentable $\lambda$-pure subobject $s_i:S_i\to\coker(f_i)$.
 Since $\cf$ is closed under $\lambda$-pure subobjects, we have
$S_i\in\cs$, and since $\cf$ is closed under $\lambda$-pure quotients,
$\coker(s_i)\in\cf$.
 Let $f_{i+1}:F_{i+1}\to F$ be the pullback of the morphism $s_i$ with
respect to the epimorphism $F\to\coker(f_i)$.
 Then $f_{i+1}$ is a monomorphism with $\coker(f_{i+1})=\coker(s_i)$,
there exists a unique morphism $f_{i,i+1}:F_i\to F_{i+1}$ with
$f_{i+1}f_{i,i+1}=f_i$, and $f_{i,i+1}$ is a monomorphism with
$\coker(f_{i+1})=S_i$.
 Since $F_{i+1}$ is an extension of $S_i$ and $F_i$, and $\cf$ is closed
under extensions, we have $F_{i+1}\in\cf$.
 On a limit step, we use the assumptions that $\cf$ is closed under
transfinitely iterated extensions and the class of all
$\cf$-monomorphisms is closed under directed colimits in
$\ck\downarrow F$.

 Since $F$ cannot have an increasing chain of subobjects of length $\mu$
(see Lemma~\ref{lq1.1}), there exists an ordinal $j<\mu$ such that
$f_j:F_j\to F$ is an isomorphism.
 We have proved that $F$ is a transfinitely iterated extension of
the objects $S_i$, \ $i<j$.
\end{proof}

 The following lemma and proposition provide the most
straightforward extension of the first half of the proof of
the Eklof--Trlifaj theorem~\cite[Theorem~10]{ET} to locally presentable
abelian categories.

\begin{lemma} \label{cell-sc-dc}
 Let $\ck$ be a cocomplete abelian category and $\ch$ a class of objects
closed under extensions in~$\ck$.
 Let $K$ be an object in $\ck$ such that the class of all
$\ch$-monomorphisms $K\to X$ is closed under directed colimits in
$K\downarrow\ck$.
 Then any morphism $K\to X$ belonging to\/ $\cell(\ch\lMono)$
is an $\ch$-monomorphism.
\end{lemma}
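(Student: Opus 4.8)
The plan is to reduce the statement to a transfinite induction along a chain witnessing membership in $\cell(\ch\lMono)$. First I would record, exactly as was noted above for $\cs\lMono$, that in an abelian category every pushout of an $\ch$-monomorphism is again an $\ch$-monomorphism (pushouts preserve both monomorphisms and cokernels); hence $\cell(\ch\lMono)$ is precisely the class of transfinite compositions of $\ch$-monomorphisms. So a morphism $f\colon K\to X$ in $\cell(\ch\lMono)$ is witnessed by a smooth chain $(x_{ij}\colon X_i\to X_j)_{i\le j\le\alpha}$ with $X_0=K$, \ $X_\alpha=X$, \ $f=x_{0\alpha}$, with each $x_{i,i+1}$ an $\ch$-monomorphism, and with $X_j=\colim_{i<j}X_i$ for every limit ordinal $j\le\alpha$.

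The key step is then to prove, by transfinite induction on $i\le\alpha$, that $x_{0i}\colon K\to X_i$ is an $\ch$-monomorphism. For $i=0$ this is $\id_K$, whose cokernel is the zero object, which belongs to $\ch$ (a class closed under extensions contains~$0$; this is in any case automatic in the intended applications, where $\ch$ will be a class of the form ${}^\perp\cb$ or $\filt(\cs)$). On a successor step I would write $x_{0,i+1}=x_{i,i+1}x_{0i}$ as a composite of two $\ch$-monomorphisms: it is a monomorphism, and its cokernel sits in a short exact sequence
\[
0\longrightarrow\coker(x_{0i})\longrightarrow\coker(x_{0,i+1})\longrightarrow\coker(x_{i,i+1})\longrightarrow0,
\]
so it lies in $\ch$ because $\ch$ is closed under extensions. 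On a limit step $j$, the objects $(X_i,x_{0i})$ for $i<j$, with the transition morphisms $x_{ii'}$, form a directed diagram in the comma-category $K\downarrow\ck$ whose colimit is $(X_j,x_{0j})$, its underlying object being $\colim_{i<j}X_i=X_j$ by smoothness of the chain and its structure morphism being the induced map $x_{0j}=x_{ij}x_{0i}$. By the induction hypothesis every $(X_i,x_{0i})$ lies in the full subcategory of $K\downarrow\ck$ consisting of the $\ch$-monomorphisms out of $K$, and by hypothesis this subcategory is closed under directed colimits; hence $x_{0j}$ is an $\ch$-monomorphism. Taking $i=\alpha$ gives the lemma.

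I expect the whole argument to be essentially routine, and the genuinely load-bearing point to be the limit step: in a general cocomplete abelian category directed colimits need not be exact, so the property of being an $\ch$-monomorphism is not a priori preserved by directed colimits of objects of $K\downarrow\ck$, and the hypothesis of the lemma must be invoked precisely there. The only other points to keep straight are the compatibility $x_{ii'}x_{0i}=x_{0i'}$ coming from the chain being a functor, which guarantees that the limit-step diagram really lives in $K\downarrow\ck$ with colimit computed as stated, and the elementary fact that a composite of $\ch$-monomorphisms has a cokernel which is an extension of the two constituent cokernels.
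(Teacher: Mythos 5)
Your proof is correct and is precisely the transfinite induction that the paper leaves implicit: its own proof of this lemma reads in full ``It is an immediate corollary of the definitions.'' You have correctly identified the three ingredients --- pushouts preserve $\ch$-monomorphisms, the successor step uses closure of $\ch$ under extensions via the exact sequence relating the three cokernels, and the limit step is exactly where the hypothesis on directed colimits in $K\downarrow\ck$ is invoked --- so nothing further is needed.
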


\begin{proof}
 It is an immediate corollary of the definitions.
\end{proof}

\begin{propo} \label{et-sc-dc}
Let $\ck$ be a locally presentable abelian category and $(\cf,\cc)$
the cotorsion theory generated by a set of objects $\cs$ in~$\ck$.
Assume that the set $\cs$ is contained in a class of objects $\ch$
closed under extensions and strongly closed under directed colimits
in~$\ck$.
Then any object of $\ck$ has a special $\cc$-preenvelope.
\end{propo}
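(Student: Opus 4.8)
The plan is to run the small object argument exactly as in the proof of Theorem~\ref{precise-et-loc-pres}, producing a two-term factorization of the morphism $K\to 0$, and then to invoke the hypothesis on $\ch$ only at the very end in order to check that the left-hand map of that factorization is actually a monomorphism. Recall that a special $\cc$-preenvelope of an object $K$ is an $\cf$-monomorphism $K\to C$ with $C\in\cc$, i.e., an exact sequence $0\to K\to C\to F'\to 0$ with $C\in\cc$ and $F'\in\cf$, so it suffices to produce one such sequence for each $K$.

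First I would fix a regular cardinal $\lambda$ such that $\ck$ is locally $\lambda$-presentable and every object of $\cs$ is $\lambda$-presentable, and let $\ci$ be the set of all $\cs$-monomorphisms with $\lambda$-presentable codomains in $\ck$. By the small object argument (\cite[Proposition~1.3]{Be} or \cite[Theorem~2.5]{R1}), the morphism $K\to 0$ factors as $K\xrightarrow{\,i\,}C\to 0$ with $i\in\cell(\cs\lMono)$ and $(C\to 0)\in\cs\lMono^\square$. Now I would read off the two halves of the desired sequence. Since $C\to 0$ lies in $\cs\lMono^\square$, its kernel, which is $C$ itself, belongs to $\cs^\perp=\cc$ by Lemma~\ref{perp-and-square}(b). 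Since $i\in\cell(\cs\lMono)$, its cokernel $F'$ belongs to $\filt(\cs)$ by Lemma~\ref{filt(s)-mono}(b), hence to ${}^\perp\cc=\cf$ by Lemma~\ref{eklof-lemma} (equivalently, Theorem~\ref{precise-et-loc-pres}(a)). Thus, as soon as $i$ is shown to be a monomorphism, the exact sequence $0\to K\xrightarrow{\,i\,}C\to F'\to 0$ exhibits $i$ as a special $\cc$-preenvelope of $K$.

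The one remaining point — that $i$ is a monomorphism — is the heart of the matter, and it is precisely here that the hypothesis on $\ch$ must be used, since as Examples~\ref{vanishing-t.i.e} show, a transfinite composition of $\cs$-monomorphisms can fail to be a monomorphism for a poorly chosen generating set $\cs$. Since $\cs\subseteq\ch$ we have $\cell(\cs\lMono)\subseteq\cell(\ch\lMono)$, so $i\in\cell(\ch\lMono)$. By assumption $\ch$ is closed under extensions, and since $\ch$ is strongly closed under directed colimits, the class of all $\ch$-monomorphisms $K\to X$ is closed under directed colimits in the comma-category $K\downarrow\ck$; therefore Lemma~\ref{cell-sc-dc} applies and shows that $i$ is an $\ch$-monomorphism, in particular a monomorphism. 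This completes the argument.

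Formally this last step is a one-line appeal to Lemma~\ref{cell-sc-dc}, but that is where essentially all of the genuine content sits: in the cellular tower built by the small object argument, successor steps are handled by closure of $\ch$ under extensions, while the limit (directed-colimit) steps are exactly what the ``strongly closed under directed colimits'' hypothesis is there to control. Everything else is the now-standard small object argument bookkeeping already carried out in the proofs of Proposition~\ref{prop3.4} and Theorem~\ref{precise-et-loc-pres}, so I expect the write-up to be short.
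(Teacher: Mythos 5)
Your argument is correct and follows essentially the same route as the paper's own proof: both run the small object argument factorization of $K\to 0$ from the proof of Theorem~\ref{precise-et-loc-pres} and then use Lemma~\ref{cell-sc-dc} (via the extension-closure and strong closure under directed colimits of $\ch$) to see that the left-hand morphism is a monomorphism. The only difference is cosmetic — you verify the monomorphism condition at the end rather than up front — and your identification of the two halves of the short exact sequence via Lemmas~\ref{perp-and-square}(b), \ref{filt(s)-mono}(b), and \ref{eklof-lemma} matches the paper exactly.
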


\begin{proof}
 To be more precise, let $K$ be an object in $\ck$ such that the set
$\cs$ is contained in a class of objects $\ch$ closed under extensions
in $\ck$ for which the class of all $\ch$-mono\-morphisms $K\to X$ is
closed under directed colimits in $K\downarrow\ck$.
 Then any morphism $K\to X$ belonging to $\cell(\cs\lMono)$ is
a monomorphism by Lemma~\ref{cell-sc-dc}.
 Following the proof of Theorem~\ref{precise-et-loc-pres},
the morphism $K\to0$ can be decomposed as $K\to C\to0$, where
the morphism $K\to C$ belongs to $\cell(\cs\lMono)$ and
the object $C$ belongs to the class~$\cc$.
 Furthermore, the cokernel of any morphism from $\cell(\cs\lMono)$
belongs to $\filt(\cs)$.
 We have constructed a short exact sequence $0\to K\to C\to F'\to0$
with $C\in\cc$ and $F'\in\filt(\cs)$.
 Finally, $\filt(\cs)\subseteq\cf$ by
Theorem~\ref{precise-et-loc-pres}(a).
\end{proof}

To finish this section, let us formulate the somewhat more precise
versions of Corollaries~\ref{cor3.6} and~\ref{cor3.9} that
the techniques of their proofs allow to obtain.

\begin{coro}
 Let $\ck$ be a locally presentable category and $\ca$ a class of
objects in $\ck$ closed under directed colimits.
 Then whenever an object of $\ck$ that has an $\ca$-precover, it
also has an $\ca$-cover. \qed
\end{coro}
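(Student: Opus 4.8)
The plan is to recognize this as the local, object-by-object form of Theorem~\ref{th2.6} and to obtain it by rerunning the argument of that theorem verbatim, after observing that the argument never leaves the fixed object: it invokes the weak coreflectivity of $\ca$ only through the existence of a \emph{single} $\ca$-precover of the object under consideration, and otherwise uses only that $\ca$ is closed under directed colimits.

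Concretely, let $K$ be an object of $\ck$ admitting an $\ca$-precover $c_K:K^\ast\to K$, and suppose for contradiction that $K$ has no $\ca$-cover; equivalently, every $\ca$-precover $c:K'\to K$ admits an endomorphism $h$ of $K'$ with $ch=c$ that is not an isomorphism. Fix a regular cardinal $\kappa$ as in the proof of Theorem~\ref{th2.6} (so that $\kappa\vartriangleright\lambda$, $\kappa\ge\mu$, $\kappa>\gamma$ in the notation there, and $K^\ast$ is $\kappa$-presentable), and build the smooth chain $(t_{ij}:T_i\to T_j)_{i<j\le\kappa}$ together with the compatible cocone $(c_i:T_i\to K)$, $T_0=K^\ast$, $c_0=c_K$, exactly as in that proof. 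Two bookkeeping observations keep the construction alive. First, every $T_i$ again lies in $\ca$: on successor steps $T_{i+1}=T_i$, and on limit steps $T_j=\colim_{i<j}T_i$ lies in $\ca$ because $\ca$ is closed under directed colimits and, inductively, all earlier $T_i$ do (starting from $T_0=K^\ast\in\ca$). Second, every $c_i$ is again an $\ca$-precover of $K$: any morphism $A\to K$ with $A\in\ca$ factors through $c_K=c_0=c_i\,t_{0i}$, hence through~$c_i$. Therefore, by the contradiction hypothesis, no $c_i$ is an $\ca$-cover, so for each $i$ there is an endomorphism $f$ of $T_i$ with $c_if=c_i$ which is not an isomorphism --- and this is precisely what the successor step of the construction needs (set $t_{i,i+1}=f$, taking $f$ to be a non-monomorphism whenever one exists).

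The remainder of the proof of Theorem~\ref{th2.6} is internal to this chain and to $\ck$: using that $\kappa$-filtered colimits commute with finite limits and that a $\kappa$-presentable object has no increasing chain of subobjects of length~$\kappa$ (Lemma~\ref{lq1.1}), one shows that the ordinals $i<\kappa$ for which $t_{i\kappa}$ is a monomorphism are cofinal in~$\kappa$ and that $T_\kappa$ then carries an increasing chain of subobjects of length~$\kappa$; then, since $T_\kappa\in\ca$ (again by closure under directed colimits), the cocone map $c_\kappa:T_\kappa\to K$ factors through the $\ca$-precover $c_0=c_K$ as $c_\kappa=c_0h$, and one checks that $h:T_\kappa\to T_0$ is a monomorphism, producing an increasing chain of subobjects of $T_0=K^\ast$ of length~$\kappa$. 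As $K^\ast$ is $\kappa$-presentable, this contradicts Lemma~\ref{lq1.1}; hence $K$ has an $\ca$-cover.

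The only mildly delicate point --- and it is a bookkeeping one rather than a genuine obstacle --- is verifying that the two invariants persist along the transfinite chain: that each $T_i$ ($i\le\kappa$) remains in $\ca$, and that each $c_i$ remains an $\ca$-precover of $K$. Both follow immediately from closure of $\ca$ under directed colimits together with the cocone identity $c_i\,t_{0i}=c_K$; with these in hand, the proof of Theorem~\ref{th2.6} applies word for word, which is really the whole point.
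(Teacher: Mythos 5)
Your proposal is correct and is exactly the argument the paper intends: the corollary is stated with \qed precisely because the proof of Theorem~\ref{th2.6} is already local to the fixed object $K$, using only the single precover $c_K:K^\ast\to K$ and the closure of $\ca$ under directed colimits (to keep the $T_i$, and hence the precover property of the $c_i$, alive along the chain). Your two bookkeeping observations are the right ones and the rest goes through word for word.
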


\begin{coro} \label{precise-envelope}
 Let $\ck$ be a locally presentable abelian category and $(\cf,\cc)$
a cotorsion theory in~$\ck$.
 Let $K$ be an object of $\ck$ such that the class of all
$\cf$-monomorphisms $K\to X$ is closed under directed colimits
in the comma-category $K\downarrow\ck$.
 Assume that the object $K$ has a special $\cc$-preenvelope.
 Then $K$ also has a $\cc$-envelope. \qed
\end{coro}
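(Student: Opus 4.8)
The plan is to follow the proof of Corollary~\ref{cor3.9}, but localized to the single object~$K$ and with the global weak factorization system replaced by a direct $\Ext^1$-computation. Write $\cm$ for the full subcategory of the comma-category $K\downarrow\ck$ whose objects are the morphisms $K\to X$ that are $\cf$-monomorphisms. The category $K\downarrow\ck$ is locally presentable (a coslice of a locally presentable category), and $\cm$ is closed under directed colimits in $K\downarrow\ck$ by hypothesis. The goal is to produce a \emph{stable} weak coreflection of the object $(K\to0)$ to~$\cm$ and then read off a $\cc$-envelope of~$K$.

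First I would verify that a special $\cc$-preenvelope $j\colon K\to C$ (which exists by assumption) \emph{is} a weak coreflection of $(K\to0)$ to $\cm$ in $K\downarrow\ck$. Such a $j$ lies in $\cm$, and since every object of $K\downarrow\ck$ has a unique morphism to $(K\to0)$, it remains to check that for every $\cf$-monomorphism $m\colon K\to Y$ there is $g\colon Y\to C$ with $gm=j$. Writing $0\to K\xrightarrow{m}Y\to F\to0$ with $F\in\cf$ and applying $\ck(-,C)$ gives an exact sequence $\ck(Y,C)\to\ck(K,C)\to\Ext^1_\ck(F,C)$; the last group vanishes because $C\in\cc=\cf^\perp$, so $\ck(Y,C)\to\ck(K,C)$ is surjective and $g$ exists. (This is the argument of Lemma~\ref{wfs-coreflection}, carried out without invoking a weak factorization system.)

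Next I would invoke the argument in the proof of Theorem~\ref{th2.6}, which is really carried out one object at a time: starting from a weak coreflection $c_0\colon T_0\to K$, it builds a smooth chain $(t_{ij}\colon T_i\to T_j)$ with a compatible cocone to $K$, using only that the ambient category is locally presentable, that $\ca$ is closed under directed colimits, and that each $c_i$ is again a weak coreflection; the latter follows by transfinite induction from $c_0$ being one, since at successor steps $c_{i+1}=c_i$, while at limit steps $T_i\in\ca$ (closure under directed colimits) and $c_j=\colim_{i<j}c_i$ factors any map from an object of $\ca$ through $c_0$ and then through $t_{0j}$. Applying this in $K\downarrow\ck$ with $\ca=\cm$ and with the object $(K\to0)$, whose weak coreflection $j$ was produced above, yields a stable weak coreflection $j'\colon K\to C'$ of $(K\to0)$ to~$\cm$.

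Finally I would translate back, as in Remark~\ref{re3.10} and \cite[Proposition~3.5]{R}. Since $j$ and $j'$ are both weak coreflections of $(K\to0)$ to $\cm$, there are morphisms $u\colon C\to C'$ and $v\colon C'\to C$ over $K$ with $uj=j'$ and $vj'=j$; then $uv$ is an endomorphism of $(K\xrightarrow{j'}C')$ in $K\downarrow\ck$, hence an isomorphism by stability, so $C'$ is a retract of $C\in\cc$ and therefore $C'\in\cc$. For the same reason any $K\to D$ with $D\in\cc$ factors through $j$, hence through $j'$ via $v$, so $j'$ is a $\cc$-preenvelope; and stability says exactly that any $h\colon C'\to C'$ with $hj'=j'$ is an isomorphism. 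Thus $j'$ is a $\cc$-envelope. The main obstacle, and the step needing the most care, is precisely this localization of Theorem~\ref{th2.6} to a single object: one must confirm that each $c_i$ in its proof remains a weak coreflection, so that the construction of $t_{i,i+1}$ from the failure of stability still goes through, even though $\cm$ need not be weakly coreflective at other objects of $K\downarrow\ck$.
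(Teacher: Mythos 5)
Your proof is correct and is essentially the argument the paper intends: Corollary~\ref{precise-envelope} is stated as following from ``the techniques of the proofs'' of Corollaries~\ref{cor3.6} and~\ref{cor3.9}, and you carry out exactly that localization, correctly identifying the two points that need checking --- that the direct $\Ext^1_\ck(F,C)=0$ lifting argument replaces Lemma~\ref{wfs-coreflection} when no global weak factorization system is available, and that the construction in Theorem~\ref{th2.6} only ever uses a weak coreflection of the one object in question (preserved at successor steps trivially and at limit steps by closure of $\cm$ under directed colimits). No gaps.
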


\begin{rem}
 Weakening the assumptions of Corollary~\ref{precise-envelope} a bit
further, let $\cf$ be a class of objects closed under extensions
in a locally presentable abelian category~$\ck$.
 Let $K\in\ck$ be an object such that the class of all
$\cf$-monomorphisms $K\to X$ is closed under directed colimits
in $K\downarrow\ck$.
 Set $\cc=\cf^\perp$, and assume that there exists an $\cf$-monomorphism
$K\to C$ with $C\in\cc$.
 Then the object $K$ has a $\cc$-envelope.
\end{rem}

\begin{coro}
Let $\ck$ be a locally presentable abelian category and $(\cf,\cc)$
a cotorsion theory generated by a set of objects in~$\ck$.
Let $K$ be an object of $\ck$ such that the class of all
$\cf$-monomorphisms $K\to X$ is closed under directed colimits in
the comma-category $K\downarrow\ck$.
Then $K$ has a $\cc$-envelope.
\end{coro}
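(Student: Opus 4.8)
The plan is to reduce the statement to the existence of a special $\cc$-preenvelope of $K$ and then quote Corollary~\ref{precise-envelope}, whose hypothesis on $K$ --- that the class of all $\cf$-monomorphisms $K\to X$ be closed under directed colimits in $K\downarrow\ck$ --- is exactly the one assumed here. So the whole task reduces to producing, for this particular object $K$, an $\cf$-monomorphism $K\to C$ with $C\in\cc$.

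For that I would run the argument in the proof of Proposition~\ref{et-sc-dc}, but with the auxiliary class $\ch$ taken to be the left-hand class $\cf$ of the given cotorsion theory. The ingredients that proof needs are then all in place: the cotorsion theory is generated by a set $\cs$; the class $\cf={}^\perp\cc$ is closed under extensions (indeed under transfinitely iterated extensions, by Lemma~\ref{eklof-lemma}); the set $\cs$ is contained in $\cf$, since $\cs\subseteq\filt(\cs)\subseteq\cf$ by Theorem~\ref{precise-et-loc-pres}(a); and the class of all $\cf$-monomorphisms $K\to X$ is closed under directed colimits in $K\downarrow\ck$, which is precisely our hypothesis. Concretely: by the small object argument, as in the proof of Theorem~\ref{precise-et-loc-pres}, one factors the morphism $K\to 0$ as $K\to C\to 0$ with $K\to C$ belonging to $\cell(\cs\lMono)$ and $C$ belonging to $\cc$; since $\cs\subseteq\cf$ gives $\cell(\cs\lMono)\subseteq\cell(\cf\lMono)$, Lemma~\ref{cell-sc-dc} applied with $\ch=\cf$ shows that $K\to C$ is an $\cf$-monomorphism (in particular it is a monomorphism, and its cokernel also lies in $\filt(\cs)\subseteq\cf$ by Lemma~\ref{filt(s)-mono}(b)). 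Thus $K\to C$ is the desired special $\cc$-preenvelope, and Corollary~\ref{precise-envelope} then supplies a $\cc$-envelope of $K$.

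I do not expect a genuine obstacle here; the corollary is a recombination of results already established. The one point that wants a sentence of care is that Proposition~\ref{et-sc-dc} is phrased globally, with an auxiliary class $\ch$ that is \emph{strongly} closed under directed colimits (i.e.\ in every comma-category), whereas here the directed-colimit condition is only granted at the single object $K$; the resolution is that the proof of Proposition~\ref{et-sc-dc} is in fact written for exactly this localized, single-object hypothesis, and choosing $\ch=\cf$ turns that hypothesis into the assumption made on $K$. Secondarily, one should check that ``generated by a set'' is being used in the right place --- it is what licenses the small object argument and hence the extraction of $C\in\cc$ from the factorization of $K\to 0$.
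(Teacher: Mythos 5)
Your proposal is correct and follows the paper's own route exactly: the paper proves this corollary by combining Corollary~\ref{precise-envelope} with (the proof of) Proposition~\ref{et-sc-dc}, and your instantiation $\ch=\cf$ together with the observation that the latter proof is already written for the single-object hypothesis is precisely the intended reading. The detail you spell out --- extracting $C\in\cc$ from the small-object-argument factorization of $K\to0$ and using Lemma~\ref{cell-sc-dc} to see that $K\to C$ is an $\cf$-monomorphism --- matches the paper's argument.
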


\begin{proof}
This follows from Corollary~\ref{precise-envelope} and (the proof of)
Proposition~\ref{et-sc-dc}.
\end{proof}

\section{Contramodules over topological rings}
\label{contramodule-basics}

Let $\fr$ be an associative, unital topological ring where open right
ideals $\fii\subseteq\fr$ form a base of neighborhoods of zero.
We will always assume that $\fr$ is complete and separated, that is
the natural map $\fr\to\lim_{\fii}\fr/\fii$ from $\fr$ to the limit of
its quotients by its open right ideals is an isomorphism (of abelian
groups, or of right $\fr$-modules).

For any topological abelian group $\fa$ where open subgroups
$\fu\subseteq\fa$ form a base of neighborhoods of zero and any set $X$,
we denote by $\fa[[X]]\subseteq\fa^X$ the set of all infinite formal
linear combinations $\sum_x a_x x$ of elements $x$ of $X$ with families
of coefficients $a_x\in\fa$ converging to zero in the topology of~$\fa$.
The latter condition means that for any open subgroup $\fu\subseteq\fa$
the set of all $x\in X$ for which $a_x\notin\fu$ must be finite.
When the topological group $\fa$ is complete and separated, that is
$\fa=\lim_{\fu}\fa/\fu$, one has $\fa[[X]]=\lim_{\fu}\,\fa/\fu[X]$,
where $A[X]=A^{(X)}$ denotes the set of all finite linear combinations
of elements of $X$ with coefficients in a group~$A$.
Given a map of sets $f:X\to Y$, there is the induced ``direct image''
map $\fa[[f]]:\fa[[X]]\to\fa[[Y]]$ taking $\sum_x a_x x$ to
$\sum_y(\sum_{f(x)=y}a_x)y$, where the sum of the coefficients
$a_x\in\fa$ over all the preimages $x\in X$ of a given element $y\in Y$
is defined as the limit of finite partial sums converging
in the topology of~$\fa$.

For any associative, unital ring $R$ and any set $X$ there is a natural
map of ``opening of parentheses'' $\phi_R(X):R[R[X]]\to R[X]$ defining,
together with the ``point measure'' map $\epsilon_R(X):X\to R[X]$,
the structure of a monad on the category of sets on the functor
$X\mapsto R[X]$.
The category of left $R$-modules is equivalent to the category of
algebras/modules over this monad on the category of sets.

Similarly, for any topological ring $\fr$ with the topology of
the kind described above and for any set $X$ there is an ``opening of
parentheses'' map $\phi_{\fr}(X):\fr[[\fr[[X]]]]\to\fr[[X]]$ involving
infinite summation of coefficients understood as the limit of finite
partial sums in~$\fr$ (our conditions on the topology of $\fr$ are
designed to ensure the convergence).
More specifically, in order to obtain the map $\phi_{\fr}(X)$ it suffices
to construct a morphism of diagrams $\fr/\fii[\fr[[X]]]\to\fr/\fii[X]$
indexed by the partially ordered set of all open right ideals
$\fii\subseteq\fr$.
This is easily done using the fact that for any element $r\in\fr$ and
any open right ideal $\fii$ in $\fr$ there exists an open right ideal
$\fii'$ in $\fr$ such that $r\fii'\subseteq\fii$.
The natural transformation $\phi_{\fr}(X)$, together with the ``point
measure'' map $\epsilon_{\fr}(X):X\to\fr[[X]]$, define the structure of
a monad on the category of sets on the functor $X\mapsto\fr[[X]]$.

\begin{defi}
 The category of \textit{left\/ $\fr$-contramodules} $\fr\lContra$
is defined as the category of algebras/modules over the monad
$\fr[[{-}]]:\Set\to\Set$.
 In other words, a left $\fr$-contramodule $\fp$ is a set endowed with
a \textit{left contraaction} map $\pi_{\fp}:\fr[[\fp]]\to\fp$ satisfying
the equations of \textit{contraassociativity}
$\pi_{\fp}\circ\phi_{\fr}(\fp)=\pi_{\fp}\circ\fr[[\pi_{\fp}]]$
$$
 \fr[[\fr[[\fp]]]]\rightrightarrows\fr[[\fp]]\longrightarrow\fp
$$
and
\textit{contraunitality} $\pi_{\fp}\circ\epsilon_{\fr}(\fp)=\id_{\fp}$
$$
 \fp\longrightarrow\fr[[\fp]]\longrightarrow\fp.
$$
\end{defi}

One can view an $\fr$-contramodule $\fp$ as a set endowed with
\textit{infinite summation operations} assigning to any set $X$,
an $X$-indexed family of elements $r_x\in\fr$ converging to zero
in the topology of $\fr$, and an $X$-indexed family of elements 
$p_x\in\fp$ an element $\pi_{\fp}(\sum_x r_x p_x)\in\fp$.
Here $\sum_x r_x p_x\in\fr[[\fp]]$ is a notation for the image of
the element $\sum_x r_x x\in\fr[[X]]$ with respect to the ``direct
image'' map $\fr[[X]]\to \fr[[\fp]]$ induced by the map
$X\to\fp$ taking $x$ to~$p_x$.
Composing the contraaction map $\pi_\fp$ with the inclusion
$\fr[\fp]\to\fr[[\fp]]$ (i.~e., restricting oneself to summation
operations with finite coefficient families), one constructs
the underlying left $\fr$-module structure on
a left $\fr$-contramodule~$\fp$.
This provides the forgetful functor $\fr\lContra\to\fr\lMod$.

The \textit{free\/ $\fr$-contramodule} generated by a set $X$ is
the free algebra/module over the monad $\fr[[{-}]]$ generated by $X$,
that is the left $\fr$-contramodule $\fr[[X]]$ with the contraaction
map $\pi_{\fr[[X]]}=\phi_{\fr}(X)$.
The free $\fr$-contramodule $\fr[[X]]$ is the coproduct of $X$ copies
of the free $\fr$-contramodule with one generator $\fr=\fr[[\{*\}]]$
in the category of left $\fr$-contramodules.
The forgetful functor $\fr\lContra\to\fr\lMod$ can be also
constructed as the corepresentable functor $\fr\lContra(\fr,-)$.

Let $\lambda$ be the successor cardinal of the cardinality of a base
of neighborhoods of zero in~$\fr$.
Then the category of left $\fr$-contramodules $\fr\lContra$ is
a locally $\lambda$-presentable abelian category (because
the cardinality of the support $\{x\mid r_x\ne0\}$ of any family of
coefficients $r_x\in\fr$ converging to zero in the topology of $\fr$
is smaller than~$\lambda$).
The free $\fr$-contramodule with one generator $\fr$ is
a $\lambda$-presentable projective generator of $\fr\lContra$.
A left $\fr$-contramodule is projective if and only if it is
a direct summand of the free $\fr$-contramodule $\fr[[X]]$
for some set~$X$.
The forgetful functor $\fr\lContra\to\fr\lMod$ is exact and
preserves arbitrary products (i.~e., preserves finite colimits and
all limits).
It also preserves $\lambda$-filtered colimits.

\begin{exam} \label{noetherian-adic-contra}
 Let $R$ be a Noetherian commutative ring and $I\subset R$ an ideal.
 Consider the $I$-adic completion $\fr=\lim_m R/I^m$ of the ring $R$,
viewed as a topological ring in the $I$-adic topology (or, which is
the same, the topology of limit of discrete rings $R/I^m$).
 Then the forgetful functor $\fr\lContra\to R\lMod$ is fully
faithful.
 Its image is the full subcategory $\cp\subseteq R\lMod$ of
$I$-contramodule $R$-modules described in
Example~\ref{ext-0-1-orthogonal-category}(3)
(see~\cite[Theorem~B.1.1]{P2}).
\end{exam}

 Classical counterexamples~\cite[Section~1.5]{P4} show that
the contramodule infinite summation cannot be understood as any kind
of limit of finite partial sums, as the image of an infinite formal
linear combination $\sum_x r_x p_x\in\fr[[\fp]]$ under the contraaction
map $\pi_{\fp}:\fr[[\fp]]\to\fp$ may well be nonzero while all
the products $r_x\cdot p_x\in\fp$, taken in the underlying $\fr$-module
structure of the $\fr$-contramodule $\fp$, vanish.
 In fact, the $I$-adic topology on an $I$-contramodule $R$-module
$Q\in\cp\subseteq R\lMod$ does not need to be separated, i.~e.,
the intersection $\cap_m I^mQ\subseteq Q$ may be nonzero (even though
$IQ=Q$ implies $Q=0$ \cite[part~(b) of Lemma in Section~2.1]{P4}).

 Let, as above, $\fr$ be an associative and unital, complete and
separated topological ring where open right ideals form a base of
neighborhoods of zero.

\begin{defi}
 A right $\fr$-module $N$ is called \textit{discrete} if
the multiplication map $N\times\fr\to N$ is continuous
in the given topology of $\fr$ and the discrete topology of~$N$.
 Equivalently, this means that for any element $b\in N$ there exists
an open right ideal $\fii\subseteq\fr$ for which $b\fii=0$ in~$N$.
 We denote the full subcategory of discrete right $\fr$-modules in
the category of right $\fr$-modules by
$\Discrd\fr\subseteq\Modr\fr$.
\end{defi}

 The full subcategory of discrete right $\fr$-modules $\Discrd\fr$
is closed under all colimits, finite limits, and extensions in
$\Modr\fr$.
 The category $\Discrd\fr$ is a Grothendieck abelian category and
a coreflective subcategory in $\Modr\fr$.

 For any abelian group $A$ and discrete right $\fr$-module $N$,
there is a natural structure of left $\fr$-contramodule on
the set of all abelian group homomorphisms $\Ab(N,A)$.
 Given a set $X$, an $X$-indexed family of coefficients $r_x\in\fr$
converging to zero in the topology of $\fr$, and an $X$-indexed
family of homomorphisms $f_x\in\Ab(N,A)$, one defines
the homomorphism $f=\pi_{\Ab(N,A)}(\sum_x r_xf_x)$ by the rule
$f(b)=\sum_xf_x(br_x)\in A$ for all $b\in N$.
 Since the right $\fr$-module $N$ is discrete, one has $br_x=0$
for all but a finite subset of indices $x\in X$, so the sum is
actually finite.

\begin{defi}
 The \textit{contratensor product} $N\odot_{\fr}\fp$ of a discrete
right $\fr$-module $N$ and a left $\fr$-contramodule $\fp$ is
an abelian group constructed as the quotient group of the tensor product
group over the ring of integers $N\otimes_{\integers}\fp$ by the subgroup
generated by all elements of the form
$b\otimes\pi_{\fp}(\sum_x r_xp_x)-\sum_x br_x\otimes p_x$, where $X$ is
a set, $r_x\in\fr$ is an $X$-indexed family of elements converging to
zero in $\fr$, and $p_x$ is an $X$-indexed family of elements in~$\fp$.
 One has $br_x=0$ for all but a finite subset of indices~$x$, so
the expression in the right-hand side is well-defined.
\end{defi}

For any discrete right $\fr$-module $N$, left $\fr$-contramodule $\fp$,
and abelian group $A$, there is a natural isomorphism of abelian groups
$$
 \Ab(N\odot_{\fr}\fp,\,A)\simeq\fr\lContra(\fp,\Ab(N,A)).
$$
In other words, the functor of contratensor product
$N\odot_{\fr}{-}:\fr\lContra\to\Ab$ is left adjoint to
the functor of homomorphisms $\Ab(N,-):\Ab\to\fr\lContra$.

Given an open right ideal $\fii\subseteq\fr$ and a left
$\fr$-contramodule $\fp$, we denote by $\fii\tim\fp$ the image of
the subgroup $\fii[[\fp]]\subseteq\fr[[\fp]]$ under the contraaction
map $\fr[[\fp]]\to\fp$.  As usually, given a left $\fr$-module $M$,
the notation $\fii M$ stands for the subgroup of $M$ generated by
elements of the form $rm$, where $r\in\fii$ and $m\in M$.
So for any left $\fr$-contramodule $\fp$ one has $\fii\fp\subseteq
\fii\tim\fp\subseteq\fp$.
The quotient $\fr/\fii$ is a discrete right $\fr$-module, and there
are natural isomorphisms of abelian groups
$(\fr/\fii)\otimes_\fr M = M/\fii M$ and
$(\fr/\fii)\odot_\fr\fp = \fp/(\fii\tim\fp)$.

\begin{propo} \label{lambda-pure-contratensor}
 Let $\lambda$ be the successor cardinal of the cardinality of a base
of neighborhoods of zero in\/~$\fr$.
 Let\/ $f:\fq\to\fp$ be a $\lambda$-pure monomorphism in the category
of left\/ $\fr$-contramodules.
 Then for any discrete right\/ $\fr$-module $N$ the induced morphism
of the contratensor products
$$
 N\odot_{\fr}f: N\odot_{\fr}\fq\longrightarrow N\odot_{\fr}\fp
$$
is a monomorphism of abelian groups.
\end{propo}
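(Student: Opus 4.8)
The plan is to combine two facts. First, $N\odot_{\fr}{-}\colon\fr\lContra\to\Ab$ is a left adjoint (to $\Ab(N,-)$), hence preserves all colimits. Second, in a locally $\lambda$-presentable category a $\lambda$-pure monomorphism, viewed as an object of the arrow category, is a $\lambda$-filtered colimit of split monomorphisms (see~\cite{AR}). Since $\fr\lContra$ is locally $\lambda$-presentable for the given~$\lambda$, I would begin by writing $f$ as such a colimit $f=\colim_i f_i$ in $(\fr\lContra)^{\to}$, with each $f_i$ a split monomorphism.

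Next I would apply the functor $N\odot_{\fr}{-}$ to the whole diagram. As it preserves colimits, the induced functor $(\fr\lContra)^{\to}\to\Ab^{\to}$ carries the colimit cocone to a $\lambda$-filtered colimit cocone, so $N\odot_{\fr}f=\colim_i(N\odot_{\fr}f_i)$ in $\Ab^{\to}$. Any functor preserves split monomorphisms, so each $N\odot_{\fr}f_i$ is a split, hence injective, homomorphism of abelian groups. Because filtered colimits are exact in $\Ab$ (kernels commute with filtered colimits), the $\lambda$-filtered colimit $N\odot_{\fr}f$ of these monomorphisms is itself a monomorphism, which is the assertion.

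A more hands-on route, which makes visible where the particular value of $\lambda$ enters, runs as follows. Using that $N\odot_{\fr}{-}$ also preserves colimits in the first variable (a Yoneda computation from the same adjunction) and that filtered colimits are exact in $\Ab$, one reduces to $N$ finitely generated; such an $N$ is annihilated by some open right ideal $\fii$, i.e.\ is a module over the discrete ring $\fr/\fii$, and a short computation splitting a zero-convergent family of coefficients into the finite subfamily whose coefficients lie outside $\fii$ and the remaining subfamily, whose coefficients lie in $\fii$, identifies $N\odot_{\fr}\fp$ with $N\otimes_{\fr/\fii}\bigl(\fp/(\fii\tim\fp)\bigr)$. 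It then suffices to show that the map $\fq/(\fii\tim\fq)\to\fp/(\fii\tim\fp)$ induced by $f$ is a \emph{pure} monomorphism of $\fr/\fii$-modules; one verifies the solvability-of-finite-linear-systems criterion for purity by feeding into the $\lambda$-purity square of $f$ the free $\fr$-contramodule on a set of generators indexing the data of the system together with the elements of $\fp$ occurring in the witnessing families — a set of cardinality $<\lambda$ precisely because a family of coefficients in $\fr$ converging to zero has support of size smaller than the successor cardinal $\lambda$ of a base of neighbourhoods of zero, so that this free contramodule is $\lambda$-presentable.

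The only substantive input is the structural description of $\lambda$-pure monomorphisms as $\lambda$-filtered colimits of split monomorphisms; granting it, the first argument is purely formal. If one prefers to avoid invoking it, the main obstacle shifts to the explicit construction in the second route: arranging the free contramodule and the commutative square so that a diagonal lift provided by $\lambda$-purity genuinely yields a solution of the given linear system inside $\fq/(\fii\tim\fq)$, and verifying the identification of the contratensor product with an ordinary tensor product over $\fr/\fii$.
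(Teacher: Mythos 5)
Your first argument is correct, but it is genuinely different from the paper's proof. The paper argues entirely by hand: given an element of $N\otimes_{\integers}\fq$ whose image dies in $N\odot_{\fr}\fp$, it writes out the witnessing relations, packages the (small, since zero-convergent families have support of cardinality $<\lambda$) data into free contramodules, builds an explicit commutative square $u f' = f v$ with $\lambda$-presentable $\fu$ and $\fv=\ft/s(\fs)$, and uses the purity lifting to push the relation back into $\fq$. You instead invoke the structural characterization of $\lambda$-pure monomorphisms in a $\lambda$-accessible category as $\lambda$-filtered colimits of split monomorphisms in the arrow category (\cite[Proposition~2.30]{AR}), note that $N\odot_{\fr}{-}$ is a left adjoint and hence preserves these colimits componentwise, and finish with exactness of filtered colimits in $\Ab$. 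Since $\fr\lContra$ is locally $\lambda$-presentable for exactly this $\lambda$, the hypotheses of that characterization are met, and each step is sound; your proof is shorter and cleaner, at the cost of outsourcing all the combinatorics of zero-convergent coefficient families to the proof of the accessibility of $\fr\lContra$ and to \cite{AR}. The paper's explicit route makes visible precisely where the bound on supports enters, which is the content your ``hands-on'' sketch tries to recover.

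That second route, however, has a real gap as stated: the base of neighborhoods of zero consists of open \emph{right} ideals, so $\fr/\fii$ is not a ring, a finitely generated discrete right $\fr$-module need not be annihilated by any open right ideal (only each generator is), and the identification $N\odot_{\fr}\fp\cong N\otimes_{\fr/\fii}(\fp/(\fii\tim\fp))$ is only available in the two-sided-ideal setting of Example~\ref{contratensor-examples}(2). If you want an elementary argument in the general right-ideal setting, you are forced back to something like the paper's construction with free contramodules rather than a reduction to discrete quotient rings. Since your primary argument does not depend on this sketch, the proposition is proved; just do not rely on the second route without the two-sidedness assumption.
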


\begin{proof}
 Let $a_1\otimes q_1+\dotsb+a_n\otimes q_n$ be an element of
$N\otimes_{\integers}\fq$ whose image in $N\odot_{\fr}\fq$ is
annihilated by the morphism $N\odot_{\fr}f$.
 Then there exist elements $b_1$,~\dots, $b_m\in N$, index sets
$X_1$,~\dots, $X_m$, families of elements $r_{k,x}\in\fr$, \ $x\in X_k$
converging to zero in the topology of $\fr$, and families of
elements $p_{k,x}\in\fp$, \ $x\in X_k$ such that
$$
 \sum_{j=1}^n a_j\otimes f(q_j) =
 \sum_{k=1}^m b_k\otimes\pi_{\fp}\Bigg(\sum_{x\in X_k}r_{k,x}p_{k,x}\Bigg)
 - \sum_{k=1}^m\sum_{x\in X_k} b_kr_{k,x}\otimes p_{k,x}
$$
in $N\otimes_{\integers}\fp$.
 Replacing the set $X_k$ with its subset consisting of all the indices
$x$ for which $r_{k,x}\ne0$, we may assume without loss of generality
that the cardinality of $X_k$ is smaller than~$\lambda$.
 Set $p_k=\pi_{\fp}(\sum_{x\in X_k}r_{k,x}p_{k,x})\in\fp$, denote by
$Z_k$ the (finite) set of all $z\in X_k$ for which $b_kr_{k,z}\ne0$,
and put $b_{k,z}=b_kr_{k,z}$ for every $z\in Z_k$.
 Then the following relation depending only on the additive group
structures of $N$ and $\fp$
$$
 \sum_{j=1}^n a_j\otimes f(q_j) =
 \sum_{k=1}^m b_k\otimes p_k
 - \sum_{k=1}^m\sum_{z\in Z_k} b_{k,z}\otimes p_{k,z}
$$
holds in $N\otimes_{\integers}\fp$.
 Hence there exist a free abelian group $T$ with a finite set of
generators $t_1$,~\dots, $t_l$, elements $q_j'$, $p_k'$, $p_{k,z}'$ in $T$,
where $z\in Z_k$, a subgroup $S\subset T$ generated by some elements
$s_1$,~\dots, $s_e$, and an abelian group homomorphism $t':T\to\fp$
such that $t'(q_j')=f(q_j)$, \ $t'(p_k')=p_k$, \ $t'(p_{k,z}')=p_{k,z}$,
\ $t'(S)=0$, and the relation
$$
 \sum_{j=1}^n a_j\otimes q_j' =
 \sum_{k=1}^m b_k\otimes p_k'
 - \sum_{k=1}^m\sum_{z\in Z_k} b_{k,z}\otimes p_{k,z}'
$$
holds in $N\otimes_{\integers}(T/S)$.

 Now let $\ft$ denote the free left $\fr$-contramodule generated by
the symbols $t_1''$,~\dots, $t_l''$ and $p''_{k,y}$, \
$y\in X_k\setminus Z_k$.
 Denote by $t'':T\to\ft$ the abelian group homomorphism taking
$t_i$ to $t_i''$, \ $1\le i\le l$, and set $q''_j=t''(q'_j)$, \
$p''_k=t''(p'_k)$, \ $s_i''=t''(s_i)$, and
$p''_{k,z}=t''(p'_{k,z})\in\ft$ for $z\in Z_k$.
 Let $\fs$ be the free left $\fr$-contramodule generated by
the symbols $s_1'$,~\dots, $s_e'$ and $w_1'$,~\dots, $w_m'$.
 Denote by $s:\fs\to\ft$ the $\fr$-contramodule morphism taking
$s_i'$ to $s_i''$, \ $1\le i\le e$, and $w_k'$ to $p_k''-
\pi_{\ft}(\sum_{x\in X_k}r_{k,x}p''_{k,x})$, \ $1\le k\le m$.
 Let $t:\ft\to\fp$ be the $\fr$-contramodule morphism taking
$t_i''$ to $t'(t_i)$, \ $1\le i\le l$, and $p''_{k,y}$ to $p_{k,y}$,
\ $y\in X_k\setminus Z_k$.
 Then one has $t(q_j'')=f(q_j)$, \ $t(p_k'')=p_k$, \ $t(s_i'')=
t'(s_i)=0$, and $t(p_{k,z}'')=p_{k,z}$, \ $z\in Z_k$.
 Hence the composition $ts:\fs\to\fp$ vanishes, so we have the induced
$\fr$-contramodule morphism $v:\fv\to\fp$, where $\fv=\ft/s(\fs)$.

 By construction, the relations
\begin{multline*}
 \sum_{j=1}^n a_j\otimes q_j'' =
 \sum_{k=1}^m b_k\otimes p_k''
 - \sum_{k=1}^m\sum_{z\in Z_k} b_{k,z}\otimes p_{k,z}'' \\ =
 \sum_{k=1}^m b_k\otimes\pi_{\fp}\Bigg(\sum_{x\in X_k}r_{k,x}p_{k,x}''\Bigg)
 - \sum_{k=1}^m\sum_{x\in X_k} b_kr_{k,x}\otimes p''_{k,x}
\end{multline*}
hold in $N\otimes_{\integers}\fv$.
 Hence the image of the element $a_1\otimes q_1''+\dotsb+
a_n\otimes q_n''\in N\otimes_{\integers}\fv$ vanishes in
$N\odot_{\fr}\fv$.

 Let $\fu$ denote the free left $\fr$-contramodule generated by
the symbols $u_j$, \ $1\le j\le n$.
 Denote by $u:\fu\to\fq$ the $\fr$-contramodule morphism taking $u_j$
to $q_j$, by $q:\fu\to\ft$ the $\fr$-contramodule morphism taking $u_j$
to $q''_j$, and by $g:\fu\to\fv$ the composition $\fu\to\ft\to\fv$.
 We have constructed a commutative diagram
$$
\xymatrix{
\fq\ar [r]^{f}& \fp\\
\fu\ar [u]^{u}\ar [r]_{g}& \fv\ar [u]_{v}
}
$$
in the category of left $\fr$-contramodules.
 The objects $\fu$, $\fv\in\fr\lContra$ are $\lambda$-presentable
as cokernels of morphisms between coproducts of less than $\lambda$
copies of the $\lambda$-presentable projective generator~$\fr$.
 By assumption, the morphism $u$ factorizes through $g$, and it follows
that the image of the element $a_1\otimes q_1+\dotsb+a_n\otimes q_n
\in N\otimes_{\integers}\fq$ vanishes in $N\odot_{\fr}\fq$.
\end{proof}

A left $\fr$-contramodule $\ff$ is called (\textit{contra})\textit{flat}
if the functor of contratensor product $N\mapsto N\odot_{\fr}\ff$ is
exact on the abelian category of discrete right $\fr$-modules~$N$.

\begin{lemma} \label{flats-colimit-closed}
 The class of flat\/ $\fr$-contramodules is closed under filtered
colimits in\/ $\fr\lContra$.
\end{lemma}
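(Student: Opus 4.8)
The plan is to reduce the statement to two facts already recorded in the paper: for every discrete right $\fr$-module $N$, the contratensor product functor $N\odot_{\fr}{-}\colon\fr\lContra\to\Ab$ is a left adjoint (namely, to $\Ab(N,-)$), hence preserves all colimits; and filtered colimits are exact in $\Ab$. Together these make the closure property essentially formal.

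First I would fix a filtered diagram $(\ff_i)_{i\in\ci}$ of flat left $\fr$-contramodules and form its colimit $\ff=\colim_i\ff_i$ in $\fr\lContra$, which exists since $\fr\lContra$ is (locally presentable, hence) cocomplete. Because $N\odot_{\fr}{-}$ is a left adjoint, for every discrete right $\fr$-module $N$ there is a natural isomorphism $N\odot_{\fr}\ff\simeq\colim_i\,(N\odot_{\fr}\ff_i)$ in $\Ab$. Now take an arbitrary short exact sequence $0\to N'\to N\to N''\to0$ in $\Discrd\fr$. For each index $i$, flatness of $\ff_i$ yields a short exact sequence $0\to N'\odot_{\fr}\ff_i\to N\odot_{\fr}\ff_i\to N''\odot_{\fr}\ff_i\to0$ in $\Ab$. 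Passing to the colimit over the filtered category $\ci$ and using that filtered colimits are exact in $\Ab$, we obtain a short exact sequence $0\to N'\odot_{\fr}\ff\to N\odot_{\fr}\ff\to N''\odot_{\fr}\ff\to0$. Thus $N\odot_{\fr}{-}$ is exact on $\Discrd\fr$, i.e., $\ff$ is flat.

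I do not expect a genuine obstacle here: the only point deserving care is the identification $N\odot_{\fr}\colim_i\ff_i\simeq\colim_i\,(N\odot_{\fr}\ff_i)$, which is exactly the assertion that the left adjoint $N\odot_{\fr}{-}$ commutes with the colimit, and which holds irrespective of how filtered colimits in $\fr\lContra$ happen to be computed. In particular, neither the forgetful functor $\fr\lContra\to\fr\lMod$ nor any restriction to $\lambda$-filtered colimits is needed; one could alternatively run a more hands-on argument in the $\lambda$-filtered case using explicit generators and relations for the contratensor product together with preservation of $\lambda$-filtered colimits by $\fr\lContra\to\fr\lMod$, but the left-adjoint route is cleaner and settles all filtered colimits at once.
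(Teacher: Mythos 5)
Your proof is correct and coincides with the paper's own argument: both use that $N\odot_{\fr}{-}$ is a left adjoint (to $\Ab(N,-)$) and hence preserves colimits, and then conclude by exactness of filtered colimits in $\Ab$. No gaps.
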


\begin{proof}
 Let $(\ff_i)_{i\in\cc}$ be a filtered diagram of flat left
$\fr$-contramodules.
 For any discrete right $\fr$-module $N$, the functor of contratensor
product $N\odot_{\fr}{-}:\fr\lContra\to\Ab$ is a left adjoint, so
it preserves colimits.
 Hence we have $N\odot_{\fr}\colim_i\ff_i=\colim_i(N\odot_{\fr}\ff_i)$.
 Now for any short exact sequence $0\to L\to M\to N\to0$ in
$\Discrd\fr$ and every $i\in\cc$ the short sequence of contratensor
products $0\to L\odot_{\fr}\ff_i\to M\odot_{\fr}\ff_i\to N\odot_{\fr}\ff_i
\to0$ is exact in $\Ab$, since the $\fr$-contramodule $\ff_i$ is flat.
 It remains to notice that the colimit of a filtered diagram of
short exact sequences of abelian groups is a short exact sequence
in order to conclude that $\colim_i\ff_i$ is a flat $\fr$-contramodule.
\end{proof}

\begin{exams} \label{contratensor-examples}
(1) Let $\fr$ be an arbitrary associative ring endowed with the discrete
topology.
 Then any right $\fr$-module is discrete, so $\Discrd\fr=
\Modr\fr$, and a left $\fr$-contramodule is the same thing as
a left $\fr$-module, so $\fr\lContra=\fr\lMod$.
 The contratensor product functor $\odot_{\fr}$ is isomorphic to
the tensor product functor $\otimes_{\fr}$, hence the flat left
$\fr$-contramodules are simply the flat $\fr$-modules.

\medskip

(2) Let $\fr$ be a complete and separated topological associative ring
where open two-sided ideals form a base of neighborhoods of zero.
For any open two-sided ideal $\fii\subseteq\fr$ and left
$\fr$-contramodule $\fp$, the quotient group $\fp/(\fii\tim\fp)$
has a natural left $\fr/\fii$-module structure.
The reduction functor $\fp\mapsto\fp/(\fii\tim\fp)$ is left adjoint
to the fully faithful inclusion $\fr/\fii\lMod\to\fr\lContra$.
For any discrete right $\fr$-module $N$, the subgroup $N_{\fii}
\subseteq N$ of all elements annihilated by $\fii$ in $N$ is
a right $\fr/\fii$-module.
The functor $N\mapsto N_{\fii}$ is right adjoint to the fully faithful
inclusion $\Modr\fr/\fii\to\Discrd\fr$.

 The contratensor product $N\odot_{\fr}\fp$ can be computed as
the filtered colimit of tensor products over the discrete quotient rings
$$
 N\odot_{\fr}\fp=\colim_{\fii}N_{\fii}\otimes_{\fr/\fii}(\fp/\fii\tim\fp).
$$
A left $\fr$-contramodule $\ff$ is flat if and only if its reduction
$\ff/(\fii\tim\ff)$ is a flat $\fr/\fii$-module for every open
two-sided ideal $\fii\subseteq\fr$.

\medskip

(3) Let $\fr=\lim_m R/I^m$ be the $I$-adic completion of a Noetherian
commutative ring $R$, as in Example~\ref{noetherian-adic-contra}.
 Then $\Discrd\fr$ is the category of \textit{$I$-torsion
$R$-modules}, i.~e., the full subcategory in $\Modr R$ formed by
all the $R$-modules $N$ such that for any $b\in N$ and $r\in I$
there exists an integer $m\ge1$ for which $br^m=0$.
 Furthermore, one has $\fj\tim\fp=\fj\fp$ for any $\fr$-contramodule
$\fp$ and open ideal $\fj\subseteq\fr$, so
$N\odot_{\fr}\fp=N\otimes_{\fr}\fp$ for any discrete $\fr$-module $N$
and $\fr$-contramodule~$\fp$.
 According to~\cite[Lemma~B.9.2]{P2}, an $\fr$-contramodule is flat
if and only if its underlying $R$-module is flat.
 (See~\cite[Section~C.5]{P3} for a generalization to adic completions
of noncommutative Noetherian rings by centrally generated ideals.)

 An $\fr$-contramodule $\fp$ is projective if and only if $\fp/I^m\fp$
is a projective $R/I^m$-module for every $m\ge1$ \
\cite[Corollary~B.8.2]{P2}.
 In particular, when $I$ is a maximal ideal in $R$,
an $\fr$-contramodule is flat if and only if it is projective
(and if and only if it is free~\cite[Lemma~1.3.2]{P2}).

\medskip

(4) When $R=k[x_1,\dotsc,x_n]$ is the ring of polynomials in several
variables over a field $k$ and $I$ is the ideal generated by
$x_1$,~\dots, $x_n$ in $R$, the category of $I$-torsion $R$-modules
$\Discrd\fr$ is equivalent to the category of comodules over
the coalgebra $C$ over $k$ whose dual topological algebra $C^*=\fr$
is the algebra of formal power series $\fr=k[[x_1,\dotsc,x_n]]$.
 The category of $I$-contramodule $R$-modules $\fr\lContra$ is
equivalent to the category of $C$-contramodules~\cite[Sections~10.1
and~A.1]{P}.

 More generally, for any coassociative coalgebra $C$ over $k$,
a right $C$-comodule is the same thing as a discrete right 
$C^*$-module, and a left $C$-contramodule is the same thing as
a left $C^*$-contramodule \cite[Sections~1.3, 1.4, and~2.3]{P4}.
 A $C^*$-contramodule is (contra)flat if and only if it is
projective~\cite[Lemma~A.3]{P}.
\end{exams}

\section{Flat Contramodules}
\label{flat-contramodules-secn}

Let $\kappa$ be a regular cardinal and $\cn$ a $\kappa$-Grothendieck
abelian category.
We fix the cardinal $\kappa$ and denote by $\Fun(\cn)\subseteq\Ab^\cn$
the category of all additive covariant functors $F:\cn\to\Ab$
preserving $\kappa$-filtered colimits.
Since such functors are determined by their restrictions to
the full subcategory of $\lambda$-presentable objects in $\cn$ for
a large enough cardinal $\lambda$, the category $\Fun(\cn)$
has small hom-sets (i.~e., is well-defined as a category in
the conventional sense of the word).

The category $\Fun(\cn)$ is abelian and cocomplete.
A short sequence of functors $0\to E\to F\to G\to 0$ is exact in
$\Fun(\cn)$ if and only if the short sequence of abelian groups
$0\to E(N)\to F(N)\to G(N)\to 0$ is exact in $\Ab$ for
every object $N$ in~$\cn$.
The functors of filtered colimit are exact in $\Fun(\cn)$.

Let $\Rex(\cn)\subseteq\Fun(\cn)$ be the full subcategory of functors
preserving also finite colimits, and $\Ex(\cn)\subseteq\Rex(\cn)$
the full subcategory of functors preserving both the finite colimits
and finite limits.
The subcategory $\Rex(\cn)$ is closed under all colimits in $\Fun(\cn)$,
while the subcategory $\Ex(\cn)$ is closed under filtered colimits.

\begin{lemma} \label{exact-seq-functors}
 Let\/ $0\to H\to G\to F\to 0$ be a short exact sequence in\/
$\Fun(\cn)$.  Then \par
\textup{(a)} $H\in\Rex(\cn)$ whenever $G\in\Rex(\cn)$
and $F\in\Ex(\cn)$; \par
\textup{(b)} $H\in\Ex(\cn)$ whenever $G$, $F\in\Ex(\cn)$; \par
\textup{(c)} $G\in\Rex(\cn)$ whenever $F$, $H\in\Rex(\cn)$; \par
\textup{(d)} $G\in\Ex(\cn)$ whenever $F$, $H\in\Ex(\cn)$.
\end{lemma}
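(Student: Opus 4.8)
The plan is to reduce all four assertions to a single application of the long exact homology sequence, after reformulating membership in $\Rex(\cn)$ and $\Ex(\cn)$ in terms of short exact sequences of~$\cn$.

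First I would recall the standard criterion: an additive functor $T\colon\cn\to\Ab$ lies in $\Rex(\cn)$ if and only if it sends every short exact sequence $0\to N'\to N\to N''\to 0$ in $\cn$ to a right exact sequence $T(N')\to T(N)\to T(N'')\to 0$ in $\Ab$ (this is just ``$T$ preserves cokernels'', additivity taking care of finite coproducts), and $T$ lies in $\Ex(\cn)$ if and only if it moreover sends every such sequence to a short exact sequence $0\to T(N')\to T(N)\to T(N'')\to0$. Then, fixing a short exact sequence $\sigma\colon 0\to N'\to N\to N''\to 0$ in $\cn$, I would view $T(\sigma)$ as the three-term chain complex $T(N')\to T(N)\to T(N'')$ sitting in homological degrees $2$, $1$, $0$ (the composite vanishes since $T$ is additive and $N'\to N''$ is zero). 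In these terms the criterion reads: $T\in\Rex(\cn)$ iff $\mathrm H_1(T(\sigma))=0=\mathrm H_0(T(\sigma))$ for all $\sigma$, and $T\in\Ex(\cn)$ iff in addition $\mathrm H_2(T(\sigma))=0$ for all $\sigma$, i.e.\ $T(\sigma)$ is acyclic.

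Since, as recorded above, exactness in $\Fun(\cn)$ is computed objectwise, evaluating $0\to H\to G\to F\to 0$ at $N'$, $N$, and $N''$ produces three short exact sequences of abelian groups, compatible with the differentials by naturality of the two transformations; hence $0\to H(\sigma)\to G(\sigma)\to F(\sigma)\to 0$ is a short exact sequence of three-term complexes. Its long exact homology sequence is nine-term, of the shape $0\to\mathrm H_2(H(\sigma))\to\mathrm H_2(G(\sigma))\to\mathrm H_2(F(\sigma))\to\mathrm H_1(H(\sigma))\to\cdots\to\mathrm H_0(G(\sigma))\to\mathrm H_0(F(\sigma))\to 0$, with connecting maps $\mathrm H_2(F(\sigma))\to\mathrm H_1(H(\sigma))$ and $\mathrm H_1(F(\sigma))\to\mathrm H_0(H(\sigma))$. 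Each part is now read off: in~(a), vanishing of $\mathrm H_1(G(\sigma))$, $\mathrm H_0(G(\sigma))$ and of all $\mathrm H_i(F(\sigma))$ sandwiches $\mathrm H_1(H(\sigma))$ and $\mathrm H_0(H(\sigma))$ between zeros; in~(b) the same argument plus $\mathrm H_2(G(\sigma))=0$ also kills $\mathrm H_2(H(\sigma))$; in~(c), vanishing of $\mathrm H_1,\mathrm H_0$ of both $H(\sigma)$ and $F(\sigma)$ sandwiches $\mathrm H_1(G(\sigma))$ and $\mathrm H_0(G(\sigma))$; and in~(d), acyclicity of $H(\sigma)$ and $F(\sigma)$ forces acyclicity of $G(\sigma)$ in every degree. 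As $\sigma$ was arbitrary, the desired memberships follow.

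I do not expect a real obstacle: the entire content sits in the reformulation of the first paragraph --- in particular the observation that $\Rex(\cn)$ corresponds to vanishing of $\mathrm H_0$ and $\mathrm H_1$ only, which is exactly what lets (a) and (c) go through with no hypothesis on $\mathrm H_2$ --- after which the four implications are routine bookkeeping with a nine-term exact sequence. If one wishes to avoid quoting the long exact sequence, each case is alternatively a short diagram chase in the $3\times 3$ grid with exact columns obtained by evaluating $0\to H\to G\to F\to 0$ at $\sigma$; for (b) and (d) this is literally the snake lemma applied to the evident morphism of short exact sequences, but the homological phrasing handles all four cases at once.
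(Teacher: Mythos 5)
Your proof is correct and is essentially the paper's own argument: the paper likewise reduces each part to a statement about short (right) exact sequences of abelian groups and notes that all four cases "can be easily established by considering the related $9$-term long exact sequence of cohomology groups." You have simply written out the translation of $\Rex(\cn)$ and $\Ex(\cn)$ into vanishing of $\mathrm H_0,\mathrm H_1$ (resp.\ $\mathrm H_0,\mathrm H_1,\mathrm H_2$) and the resulting bookkeeping in full detail.
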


\begin{proof}
 All the assertions hold due to related properties of short exact
sequences of short sequences of abelian groups.
 In particular, part~(a) holds because the kernel of a surjective
morphism from a short right exact sequence to a short exact sequence
of abelian groups is right exact; part~(b) holds because the kernel of
a surjective morphism of short exact sequences is exact; part~(c)
holds because an extension of short right exact sequences is
right exact, etc.
 All of these can be easily established by considering the related
$9$-term long exact sequence of cohomology groups.
\end{proof}

 Let $\fr$ be a complete, separated topological ring with a countable
base of neighborhoods of zero formed by open right ideals; so
$\fr\lContra$ is a locally $\aleph_1$-presentable abelian category
and $\Discrd\fr$ is a Grothendieck abelian category.
 Take $\cn=\Discrd\fr$ and set $\kappa=\omega$, so
$\Fun(\Discrd\fr)$ denotes the category of additive covariant
functors $\Discrd\fr\to\Ab$ preserving filtered colimits and
$\Rex(\Discrd\fr)$ is the category of functors 
$\Discrd\fr\to\Ab$ preserving all colimits.
 The results below in this section generalize those
of~\cite[Section~D.1]{P3}, where the case of a topological ring with
a countable base of neighborhoods of zero formed by open two-sided
ideals was considered.

 Given a left $\fr$-contramodule $\fp$, we will denote by
$\CT(\fp):\Discrd\fr\to\Ab$ the functor of contratensor product
$N\mapsto N\odot_{\fr}\fp$.
 Since the contravariant functor $N\mapsto\Ab(\CT(\fp)(N),A)=
\fr\lContra(\fp,\Ab(N,A))$ takes colimits in $\Discrd\fr$ to
limits in $\Ab$ for any abelian group $A$, the functor $\CT(\fp)$
preserves colimits.
 We have constructed an additive covariant functor
$\CT:\fr\lContra\to\Rex(\Discrd\fr)$.

 Conversely, let $D$ denote the partially ordered set of open right
ideals in $\fr$ with respect to the inclusion order.
 For any open right ideal $\fii\in D$, the quotient module
$Q(\fii)=\fr/\fii$ is a discrete right $\fr$-module.
 For any pair of embedded open right ideals
$\fii'\subseteq\fii\subseteq\fr$, there is a natural morphism
$\fr/\fii'\to\fr/\fii$ in $\Discrd\fr$ forming a commutative
triangle with the projection morphisms $\fr\to\fr/\fii'$ and
$\fr\to\fr/\fii$ in $\Modr\fr$.
 So we have a naturally defined diagram $Q:D\to\Discrd\fr$.
 Given any covariant functor $F:\Discrd\fr\to\Ab$, we set
$\PL(F)=\lim_D(F\circ Q)$.

 We will say that a left $\fr$-contramodule $\fp$ is \textit{separated}
if the intersection of its subgroups $\fii\tim\fp$, taken over all
the open right ideals $\fii$ in $\fr$, vanishes.
 Clearly, any subcontramodule of a separated left $\fr$-contramodule is
separated.
 Denote the full subcategory of separated left $\fr$-contramodules
by $\fr\lSepar\subseteq\fr\lContra$.

\begin{lemma} \label{pl-ct-adjunction}
\textup{(a)} For any additive covariant functor $F:\Discrd\fr\to\Ab$,
there is a natural structure of left\/ $\fr$-contramodule on the set\/
$\PL(F)$. \par
\textup{(b)} For any additive functor $F$,
the\/ $\fr$-contramodule\/ $\PL(F)$ is separated. \par
\textup{(c)} The functor\/ $\PL:\Fun(\Discrd\fr)\to\fr\lContra$
is right adjoint to the functor\/ $\CT:\fr\lContra\to
\Fun(\Discrd\fr)$. \par
\end{lemma}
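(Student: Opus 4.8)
The plan is to take the three parts in order, doing~(a) by an explicit construction, deriving~(b) at once from it, and treating~(c) — the substantial part — by a reduction to free contramodules. \emph{Part~(a).} Write $\PL(F)=\lim_{\fii}F(\fr/\fii)$, the limit over the codirected poset $D$ of open right ideals of~$\fr$. I would build the contraaction out of the ``left multiplication'' maps $m_{r,\fj,\fii}\colon F(\fr/\fj)\to F(\fr/\fii)$, defined for $r\in\fr$ and open right ideals with $r\fj\subseteq\fii$ by applying $F$ to the morphism $\fr/\fj\to\fr/\fii$, $\bar s\mapsto\overline{rs}$, of discrete right $\fr$-modules; such a $\fj$ always exists by the standing assumption on the topology of~$\fr$. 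Two bookkeeping facts are needed: $m_{r,\fj,\fii}\circ m_{s,\fk,\fj}=m_{rs,\fk,\fii}$, and — since $\fii$ is a right ideal — if $r\in\fii$ one may take $\fj=\fr$, whence $m_{r,\fr,\fii}$ factors through $F(\fr/\fr)=F(0)=0$. For $p\in\PL(F)$ the element $m_{r,\fj,\fii}(p(\fj))\in F(\fr/\fii)$ is independent of the auxiliary $\fj$ (by compatibility of the components of~$p$); call it $r\cdot_{\fii}p$. Given a family $(r_x)_x$ in $\fr$ converging to zero and a family $(p_x)_x$ in $\PL(F)$, let the $\fii$-component of $\pi_{\PL(F)}(\sum_x r_xp_x)$ be $\sum_x r_x\cdot_{\fii}p_x$; this sum is finite because almost all $r_x$ lie in~$\fii$ and the corresponding terms then vanish, and the $\fii$-components are mutually compatible by the composition formula, so they define an element of $\PL(F)$. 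Contraunitality is immediate (the maps $m_{1,\fj,\fii}$ are $F$ applied to quotient morphisms $\fr/\fj\to\fr/\fii$), and contraassociativity reduces to associativity of multiplication in $\fr$ together with the composition formula for the $m_{r,\fj,\fii}$ — the interaction of infinite summation of coefficients in $\fr$ with $r\mapsto m_{r,\fj,\fii}$ causing no trouble, again because $m_{r,\fr,\fii}=0$ for $r\in\fii$. \emph{Part~(b).} The projection $\PL(F)\to F(\fr/\fii)$ of the limit cone takes $r\cdot_{\fii}p$ to $0$ whenever $r\in\fii$, hence it annihilates $\fii\tim\PL(F)$ (the image of $\fii[[\PL(F)]]$ under the contraaction). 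So an element of $\PL(F)$ lying in $\fii\tim\PL(F)$ for every~$\fii$ has all its components zero, and is therefore zero. \emph{Part~(c).} Since $N\odot_\fr{-}$ is a left adjoint (to $\Ab(N,-)$) for every discrete right $\fr$-module~$N$, and colimits in $\Fun(\Discrd\fr)$ are computed pointwise, $\CT$ preserves colimits; hence both $\fp\mapsto\Fun(\Discrd\fr)(\CT(\fp),F)$ and $\fp\mapsto\fr\lContra(\fp,\PL(F))$ carry colimits of $\fr$-contramodules to limits of sets. I would construct a natural transformation $\Phi$ from the first functor to the second, bijective on free contramodules; the general case then follows by applying $\Phi$ to the canonical presentation $\fr[[\fr[[\fp]]]]\rightrightarrows\fr[[\fp]]\to\fp$, which is a coequalizer in $\fr\lContra$ of a pair of morphisms between free contramodules. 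The transformation sends $\theta\colon\CT(\fp)\Rightarrow F$ to the map $\widehat\theta\colon\fp\to\PL(F)$ whose $\fii$-component is the composite $\fp\twoheadrightarrow\fp/(\fii\tim\fp)=(\fr/\fii)\odot_\fr\fp\xrightarrow{\theta_{\fr/\fii}}F(\fr/\fii)$; naturality of $\theta$ makes these compatible over~$D$. That $\widehat\theta$ is a morphism of $\fr$-contramodules reduces, componentwise, to the identity that the composite of $\pi_\fp$ with the reduction $\fp\to\fp/(\fii\tim\fp)$ — which sends $\sum_x r_xp_x$ to the finite sum $\sum_x\overline{r_xp_x}$ — matches, after $\theta_{\fr/\fii}$, the $\fii$-component of $\pi_{\PL(F)}$ applied to $\sum_x r_x\widehat\theta(p_x)$; this comes down to the fact that left multiplication by $r$ on $\fr/\fii$ induces, under $(\fr/\fii)\odot_\fr{-}$, the underlying-module action of~$r$. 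Finally, for a free contramodule $\fr[[X]]=\coprod_X\fr$ one has $\CT(\fr[[X]])=\bigoplus_X U$ (as $N\odot_\fr\fr\cong N$ naturally and $\odot$ preserves coproducts), where $U\colon\Discrd\fr\to\Ab$ is the forgetful functor, so $\Fun(\Discrd\fr)(\CT(\fr[[X]]),F)=\prod_X\Fun(\Discrd\fr)(U,F)$; since $U=\colim_{\fii}\Discrd\fr(\fr/\fii,-)$ as a pointwise colimit over the filtered poset $D^{\op}$, ordinary Yoneda gives $\Fun(\Discrd\fr)(U,F)=\lim_\fii F(\fr/\fii)=\PL(F)$, while $\fr\lContra(\fr[[X]],\PL(F))=\PL(F)^X$ by the free--forgetful adjunction, and one checks that $\Phi$ realizes the resulting bijection $\prod_X\PL(F)\xrightarrow{\ \sim\ }\PL(F)^X$ as the identity. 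Naturality of $\Phi$ in $\fp$ and in $F$ is routine. The main obstacle is in part~(c): not the formal reduction to free objects, but verifying that $\Phi$ genuinely lands in $\fr\lContra(\fp,\PL(F))$ — i.e.\ the compatibility, established via the congruence $\pi_\fp(\sum_x r_xp_x)\equiv\sum_{x:\,r_x\notin\fii}r_xp_x\pmod{\fii\tim\fp}$, of the reductions $\fp\to(\fr/\fii)\odot_\fr\fp$ with the contraaction of part~(a) — together with the co-Yoneda identification $\Fun(\Discrd\fr)(U,F)\cong\PL(F)$, which tacitly uses that filtered colimits in $\Discrd\fr$ agree with those in $\Modr\fr$. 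The contraassociativity check in~(a) is routine but tedious for the same infinite-summation reason.
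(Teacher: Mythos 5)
Your parts (a) and (b) coincide with the paper's: the contraaction on $\PL(F)$ is defined in both by the rule $\psi_{\fii}\bigl(\sum_x r_xp_x\bigr)=\sum_x F(r_x')(\psi_{\fj_x}(p_x))$, with finiteness of the sum coming from the vanishing of $F(r')$ for $r\in\fii$, and separatedness read off from $\psi_{\fii}(\fii\tim\PL(F))=0$. The genuine divergence is in part (c). The paper writes down \emph{both} directions of the correspondence explicitly: from $f:\CT(\fp)\to F$ it builds $g:\fp\to\PL(F)$ out of the components $f(\fr/\fii):\fp/(\fii\tim\fp)\to F(\fr/\fii)$ (exactly your $\Phi$), and conversely from $g$ it builds $f$ by sending $b\odot p\in N\odot_{\fr}\fp$ to $F(b')(\psi_{\fii}g(p))$, where $b':\fr/\fii\to N$ is the morphism attached to an element $b$ with $b\fii=0$; bijectivity and naturality are then left as (implicit) direct verifications, including well-definedness of $f(N)$ on the contratensor product. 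You instead construct only the forward map, check it lands in $\fr\lContra(\fp,\PL(F))$ via the congruence $\pi_\fp(\sum_x r_xp_x)\equiv\sum_{r_x\notin\fii}r_xp_x \pmod{\fii\tim\fp}$, and establish bijectivity formally: both sides send colimits of contramodules to limits of sets, so it suffices to treat free contramodules, where $\CT(\fr[[X]])$ is an $X$-indexed coproduct of copies of the forgetful functor $U$ and the co-Yoneda identification $\Fun(\Discrd\fr)(U,F)\cong\lim_{\fii}F(\fr/\fii)=\PL(F)$ does the work, the general case following from the canonical coequalizer presentation $\fr[[\fr[[\fp]]]]\rightrightarrows\fr[[\fp]]\to\fp$. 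Both arguments are correct. The paper's version is more self-contained and produces the inverse map in a form that is reused later (e.g.\ in the proof of Lemma \ref{adjunction-isomorphisms}); yours trades the explicit inverse and its well-definedness check for a standard monadicity/Yoneda reduction, which localizes all the infinite-summation bookkeeping in the single congruence above and makes the bijectivity assertion essentially formal. The small points you flag as tacit -- that colimits in $\Fun(\Discrd\fr)$ are pointwise and that filtered colimits in $\Discrd\fr$ agree with those in $\Modr\fr$ -- are both stated in the surrounding text of the paper, so no gap results.
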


\begin{proof}
 Given an additive functor $F:\Discrd\fr\to\Ab$ and
an open right ideal $\fii\subseteq\fr$, denote by $\psi_{\fii}$
the projection map $\PL(F)\to F(\fr/\fii)$.
 Given an element $r\in\fr$, let $\fj_r\subseteq\fr$ be an open
right ideal for which $r\fj_r\subseteq\fii$; then the element $r$
acts by a natural morphism $r':\fr/\fj_r\to\fr/\fii$ in
the category $\Discrd\fr$.

 Part~(a): for any set $X$, an $X$-indexed family of elements
$r_x\in\fr$ converging to zero in the topology of $\fr$, and
an $X$-indexed family of elements $p_x\in\PL(F)$, one defines
the element $\sum_xr_xp_x\in\PL(F)$ by the rule
$$
 \psi_{\fii}\Big(\sum\nolimits_x r_xp_x\Big) =
 \sum\nolimits_x F(r_x')(\psi_{\fj_x}(p_x)),
$$
where $\psi_{\fj_x}(p_x)\in F(\fr/\fj_{r_x})$ and
$F(r_x'):F(\fr/\fj_{r_x})\to F(\fr/\fii)$.
 The sum in the right-hand side is finite, because the morphism
$r_x'$ vanishes whenever $r_x\in\fii$.
 Part~(b): one has $\psi_{\fii}(p)=0$ for any $p\in\fii\tim\PL(F)$,
so $p\in\bigcap_{\fii}(\fii\tim\PL(F))$ implies $p=0$.

 Part~(c): let $\fp$ be a left $\fr$-contramodule.
 Given a morphism $f:\CT(\fp)\to F$ in the category of functors
$\Fun(\Discrd\fr)$, consider the morphism of abelian groups
$f(\fr/\fii):\fp/(\fii\tim\fp)=\fr/\fii\odot_{\fr}\fp\to
F(\fr/\fii)$.
 The compositions $\fp\to\fp/(\fii\tim\fp)\to F(\fr/\fii)$ form
a compatible cone, defining a morphism to the projective limit
$g:\fp\to\PL(F)$.
 Conversely, let $g:\fp\to\PL(F)$ be an $\fr$-contramodule morphism.
 Let $N$ be a discrete right $\fr$-module, $b$ an element in~$N$,
and $\fii$ an open right ideal annihilating~$b$; then we have
a related morphism $b':\fr/\fii\to N$ in the category
$\Discrd\fr$.
 Composing the morphism of abelian groups $\psi_{\fii}g:\fp\to
F(\fr/\fii)$ with the morphism $F(b'):F(\fr/\fii)\to F(N)$, we
obtain a morphism of abelian groups $f'(b):\fp\to F(N)$.
 The desired morphism $f(N):N\odot_{\fr}\fp\to F(N)$ takes
an element $b\odot p\in N\odot_{\fr}\fp$ to the element
$f'(b)(p)\in F(N)$ for any $p\in\fp$.
\end{proof}

\begin{lemma}  \label{adjunction-isomorphisms}
\textup{(a)} For any functor $F\in\Rex(\Discrd\fr)$, the adjunction
morphism\/ $\CT(\PL(F))\to F$ is an isomorphism. \par
\textup{(b)} For any left\/ $\fr$-contramodule\/ $\fp$, the adjunction
morphism\/ $\fp\to\PL(\CT(\fp))$ is surjective. \par
\textup{(c)} A left\/ $\fr$-contramodule\/ $\fp$ is separated if and
only if the adjunction morphism\/ $\fp\to\PL(\CT(\fp))$ is
an isomorphism.
\end{lemma}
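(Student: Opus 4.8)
The plan is to establish (b) and (c) first and then deduce (a) with their help, using the triangle identities of the adjunction $\CT\dashv\PL$ (Lemma~\ref{pl-ct-adjunction}(c)). The key preliminary observation, read off from the proof of that lemma, is that the adjunction unit $\fp\to\PL(\CT(\fp))$ coincides with the canonical morphism $\fp\to\lim_{\fii}\fp/(\fii\tim\fp)$ induced by the projections $\fp\to\fp/(\fii\tim\fp)=(\fr/\fii)\odot_\fr\fp$, where $\fii$ runs over the open right ideals of $\fr$; its kernel is therefore $\bigcap_{\fii}(\fii\tim\fp)$.

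Parts (b) and (c) then reduce to the single assertion that $\fp\to\lim_{\fii}\fp/(\fii\tim\fp)$ is always an \emph{epimorphism}: the kernel computation already shows it is a monomorphism exactly when $\fp$ is separated. To prove surjectivity I would use the countable base of neighbourhoods of zero, which furnishes a cofinal chain $\fv_0\supseteq\fv_1\supseteq\dotsb$ of open right ideals, so that it suffices to lift a compatible family $(\bar p_n)\in\lim_n\fp/(\fv_n\tim\fp)$. Picking lifts $q_n\in\fp$ of $\bar p_n$, compatibility forces $q_{n+1}-q_n\in\fv_n\tim\fp$, say $q_{n+1}-q_n=\pi_\fp(\theta_n)$ with $\theta_n\in\fv_n[[\fp]]$. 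Amalgamating the $\theta_n$ into one formal linear combination $\theta$ indexed by the disjoint union of their index sets, one checks the amalgamated family of coefficients still converges to zero — the blocks of index $\ge N$ contribute coefficients in $\fv_N$, and the finitely many earlier blocks contribute only finitely many coefficients outside $\fv_N$ — so that $\theta\in\fr[[\fp]]$. Setting $p=q_0+\pi_\fp(\theta)$ and splitting $\theta=\theta_{<m}+\theta_{\ge m}$ in $\fr[[\fp]]$, additivity of $\pi_\fp$ gives $\pi_\fp(\theta_{<m})=q_m-q_0$ and $\pi_\fp(\theta_{\ge m})\in\fv_m\tim\fp$, whence $p\equiv q_m\equiv\bar p_m\pmod{\fv_m\tim\fp}$; reading this back through the cofinal chain, $p$ maps to the prescribed element of $\lim_{\fii}\fp/(\fii\tim\fp)$. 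This proves (b) and (c).

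For (a): both $\CT(\PL(F))$ and $F$ lie in $\Rex(\Discrd\fr)$ — the first because $\CT$ always does, the second by hypothesis — hence preserve all colimits. Since every discrete right $\fr$-module is a cokernel of a morphism between coproducts of modules $\fr/\fii$, it is enough to show the counit $\CT(\PL(F))(\fr/\fii)\to F(\fr/\fii)$ is an isomorphism for every open right ideal $\fii$; this counit is the morphism $\PL(F)/(\fii\tim\PL(F))\to F(\fr/\fii)$ induced by the projection $\psi_\fii\colon\PL(F)=\lim_\fj F(\fr/\fj)\to F(\fr/\fii)$. Surjectivity of $\psi_\fii$ holds because $F$, being right exact, carries the epimorphisms $\fr/\fj'\to\fr/\fj$ to epimorphisms, so the inverse system $(F(\fr/\fj))_\fj$ has surjective transition maps; restricting to a countable cofinal chain below $\fii$ and invoking the vanishing of $\lim^1$ over a countable chain of surjections gives the claim. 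Together with the inclusion $\fii\tim\PL(F)\subseteq\ker\psi_\fii$ (immediate from the formula for the contraaction on $\PL(F)$ in Lemma~\ref{pl-ct-adjunction}(a)), this shows the counit is a pointwise epimorphism.

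What remains — and what I expect to be the main obstacle — is the reverse inclusion $\ker\psi_\fii\subseteq\fii\tim\PL(F)$. I would fix a countable decreasing cofinal chain $\fii=\fu_0\supseteq\fu_1\supseteq\dotsb$, so that an element of $\ker\psi_\fii$ is a compatible family $(p_n)$ with $p_0=0$, and apply right exactness of $F$ to the sequences $0\to\fii/\fu_n\to\fr/\fu_n\to\fr/\fii\to 0$ to identify $\ker\psi_\fii$ with $\lim_n\operatorname{im}\bigl(F(\fii/\fu_n)\to F(\fr/\fu_n)\bigr)$. A successive-approximation argument along the chain then finishes the job: at each step the obstruction lies in the image of $F(\fu_n/\fu_{n+1})$ and vanishes by right exactness of $F$ applied to $0\to\fu_n/\fu_{n+1}\to\fii/\fu_{n+1}\to\fii/\fu_n\to 0$, while surjectivity of all the projections $\psi_\fj$ makes the natural maps $(\fii/\fu_n)[\PL(F)]\to F(\fii/\fu_n)$ surjective; this lifts the given family (through a compatible lift to $\lim_n F(\fii/\fu_n)$ and then to $\fii[[\PL(F)]]$) to an element whose contraaction is the original element of $\ker\psi_\fii$, using that along the chain the composite $\fii[[\PL(F)]]\to\lim_n F(\fii/\fu_n)\to\PL(F)$ agrees with the restriction of $\pi_{\PL(F)}$. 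The delicate part is precisely this matching of the ``compatible family over the chain'' description of $\ker\psi_\fii$ against the ``image of $\fii[[\PL(F)]]$ under the contraaction'' description of $\fii\tim\PL(F)$; it is here that both the countable-base hypothesis and the right exactness of $F$ are indispensable, and it generalizes the computation of~\cite[Section~D.1]{P3}.
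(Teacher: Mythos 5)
Your treatment of parts (b) and~(c) is exactly the paper's: you identify the unit with the canonical map $\fp\to\lim_{\fii}\fp/(\fii\tim\fp)$, its kernel with $\bigcap_{\fii}(\fii\tim\fp)$, and you prove surjectivity by choosing a countable cofinal chain, writing the successive differences of lifts as contraactions of elements $\theta_n\in\fv_n[[\fp]]$, and amalgamating these into a single convergent element of $\fr[[\fp]]$ --- this is word for word the argument in the paper. The reduction of~(a) to the objects $\fr/\fii$ and the surjectivity of $\psi_{\fii}$ via right exactness of $F$ and the vanishing of $\lim^1$ over a countable chain of surjections also coincide with the paper.

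The one place where your plan deviates, and where as written it does not obviously go through, is the inclusion $\ker\psi_{\fii}\subseteq\fii\tim\PL(F)$. You propose first to produce a \emph{compatible} lift of the given family to $\lim_n F(\fii/\fu_n)$ and only afterwards to lift to $\fii[[\PL(F)]]$. But the surjectivity of $\lim_n F(\fii/\fu_n)\to\lim_n\operatorname{im}\bigl(F(\fii/\fu_n)\to F(\fr/\fu_n)\bigr)$ is controlled by $\lim^1$ of the kernels $K_n=\ker\bigl(F(\fii/\fu_n)\to F(\fr/\fu_n)\bigr)$ --- which need not vanish, since $F$ is only right exact --- and the system $(K_n)$ has no visible Mittag--Leffler property; the same difficulty recurs when lifting from $\lim_n F(\fii/\fu_n)$ to $\fii[[\PL(F)]]$. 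The paper never forms either intermediate limit: starting from $p_0=p\in\ker\psi_{\fii}$ it works entirely inside $\PL(F)$, at stage~$n$ expressing $\psi_{\fu_{n+1}}(p_n)$, which by right exactness applied to the presentation $\bigoplus_{r\in\fu_n}\fr/\fj_r\to\fr/\fu_{n+1}\to\fr/\fu_n\to0$ is a finite sum $\sum_iF(r'_{n,i})(q'_{n,i})$ with $r_{n,i}\in\fu_n$, lifting the $q'_{n,i}$ to $q_{n,i}\in\PL(F)$ by the already established surjectivity of the projections, and replacing $p_n$ by $p_{n+1}=p_n-\sum_ir_{n,i}q_{n,i}$. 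The corrections then accumulate directly into a convergent formal linear combination $\sum_{n,i}r_{n,i}q_{n,i}\in\fii[[\PL(F)]]$ whose contraaction is~$p$. Your argument becomes correct once the two-step lifting is replaced by this direct successive approximation; the ingredients you single out (countable base, right exactness of $F$, surjectivity of the $\psi_{\fj}$) are precisely the ones it uses.
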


\begin{proof}
 Part~(a): as a colimit-preserving additive functor $\Discrd\fr
\to\Ab$ is determined by its restriction to the full subcategory
formed by the discrete right $\fr$-modules $\fr/\fii$, it suffices
to check that the natural morphism $\PL(F)/(\fii\tim\PL(F))\to
F(\fr/\fii)$ is an isomorphism.
 Since the diagram $Q$ has a countable cofinal subdiagram and
the map $F(\fr/\fii')\to F(\fr/\fii)$ is surjective for any
open right ideals $\fii'\subseteq\fii$ in $\fr$, the projection
map $\psi_{\fii}:\PL(F)\to F(\fr/\fii)$ is surjective.

 To compute its kernel, choose a chain of open right ideals
$\fii=\fii_0\supseteq\fii_1\supseteq\fii_2\supseteq\dotsb$ indexed
by the nonnegative integers and cofinal in~$D$.
 Let $p_n\in PL(F)$, $n\ge0$ be an element for which
$\psi_{\fii_n}(p_n)=0$.
 We have an exact sequence
$$
 \bigoplus\nolimits_{r\in\fii_n} \fr/\fj_r \xrightarrow{\ \ r' \ }
 \fr/\fii_{n+1} \xrightarrow{\ \ \ \ } \fr/\fii_n
 \xrightarrow{\ \ \ \ } 0
$$
in the category $\Discrd\fr$, where $\fj_r$ is an open right
ideal in $\fr$ for which $r\fj_r\subseteq\fii_{n+1}$.
 Applying the functor $F$, we obtain an exact sequence of
abelian groups
$$
 \bigoplus\nolimits_{r\in\fii_n} F(\fr/\fj_r) \xrightarrow{\ F(r') \ }
 F(\fr/\fii_{n+1}) \xrightarrow{\ \ \ \ } F(\fr/\fii_n)
 \xrightarrow{\ \ \ \ } 0.
$$
 Hence there exist finite families of elements $r_{n,1}$,~\dots,
$r_{n,k_n}\in\fii_{n+1}$ and $q'_{n,i}\in F(\fr/\fj_{r_{n,i}})$
such that $\psi_{\fii_{n+1}}(p_n)=F(r_{n,1}')(q'_{n,1})+\dotsb+
F(r_{n,k_n}')(q'_{n,k_n})$ in $F(\fr/\fii_{n+1})$.
 Lifting the elements $q'_{n,i}$ to $q_{n,i}\in\PL(F)$ with
$\psi_{\fj_{r_{n,i}}}(q_{n,i})=q'_{n,i}$, we set
$p_{n+1}=p_n-r_{n,1}q_{n,1}-\dotsb-r_{n,k_n}q_{n,k_n}$, so
$\psi_{\fii_{n+1}}(p_{n+1})=0$.
 Starting from an element $p_0=p\in\PL(F)$ such that $\psi_{\fii}(p)=0$
and proceeding by induction in this way, we produce a family of elements
$r_{n,i}\in\fii$ converging to zero in the topology of $\fr$ and
a family of elements $q_{n,i}\in\PL(F)$ for which
$p=\pi_{\PL(F)}(\sum_{n,i}r_{n,i}q_{n,i})$.
 Hence $p\in\fii\tim\PL(F)$.

 Parts~(b\+-c): by the definition, $\PL(\CT(\fp))=\lim_{\fii}
\fp/(\fii\tim\fp)$, so we only have to prove that the natural
map to the projective limit of quotients
$\fp\to\lim_{\fii}\fp/(\fii\tim\fp)$ is surjective for any left
$\fr$-contramodule~$\fp$.
 Let $p=(p_{\fii})_{\fii\subseteq\fr}$ be an element of the projective
limit.
 Lift the elements $p_{\fii}\in\fp/(\fii\tim\fp)$ to some 
elements $p'_{\fii}\in\fp$.
 Let $\fii_0\supseteq\fii_1\supseteq\fii_2\supseteq\dotsb$ be
a chain of open right ideals indexed by the nonnegative integers
and cofinal in~$D$.
 For any $n\ge0$, the difference $p'_{\fii_{n+1}}-p'_{\fii_n}$ belongs
to $\fii_n\tim\fp$.
 Hence there is a set $X_n$, an $X_n$-indexed family of elements
$r_{n,x}\in\fii_n$ converging to zero in the topology of $\fr$, and
an $X_n$-indexed family of elements $q_{n,x}\in\fp$ such that
$p'_{\fii_{n+1}}-p'_{\fii_n}=\pi_{\fp}(\sum_{x\in X_n} r_{n,x}q_{n,x})$.
 Let $X=\coprod_n X_n$ be the disjoint union of the sets $X_n$;
then the $X$-indexed family of elements $r_{n,x}\in\fr$ converges
to zero in the topology of~$\fr$.
 Set $q=p_{\fii_0}'+\pi_{\fp}(\sum_{(n,x)\in X} r_{n,x}q_{n,x})\in\fp$.
 Then the difference $q-p'_{\fii_n}$ belongs to $\fii_n\tim\fp$
for every $n\ge0$, so the element $q$ is a preimage of
the projective limit element~$p$ in the contramodule~$\fp$.
\end{proof}

\begin{coro} \label{rex-separ-equivalence}
The restrictions of the functors\/ $\PL$ and $\CT$ are mutually inverse
equivalences between the full subcategory of colimit-preserving
functors\/ $\Rex(\Discrd\fr)\subseteq\Fun(\Discrd\fr)$ and
the full subcategory of separated contramodules\/
$\fr\lSepar\subseteq\fr\lContra$. \qed
\end{coro}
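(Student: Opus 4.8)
The plan is to obtain this corollary by assembling Lemmas~\ref{pl-ct-adjunction} and~\ref{adjunction-isomorphisms}, using the standard principle that an adjunction whose unit and counit are isomorphisms along a pair of full subcategories restricts to an equivalence between those subcategories. First I would verify that the two functors take values where required: $\CT$ was already constructed as a functor $\fr\lContra\to\Rex(\Discrd\fr)$, so its restriction gives a functor $\fr\lSepar\to\Rex(\Discrd\fr)$; and since $\PL(F)$ is separated for every additive functor $F$ by Lemma~\ref{pl-ct-adjunction}(b), the restriction of $\PL$ gives a functor $\Rex(\Discrd\fr)\to\fr\lSepar$.

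Next I would invoke the adjunction $\CT\dashv\PL$ of Lemma~\ref{pl-ct-adjunction}(c). Because $\CT$ and $\PL$ factor through the full subcategories $\fr\lSepar$ and $\Rex(\Discrd\fr)$, restricting them yields again an adjoint pair with the same unit and counit. By Lemma~\ref{adjunction-isomorphisms}(a) the counit $\CT(\PL(F))\to F$ is an isomorphism for every $F\in\Rex(\Discrd\fr)$, and by Lemma~\ref{adjunction-isomorphisms}(c) the unit $\fp\to\PL(\CT(\fp))$ is an isomorphism for every $\fp\in\fr\lSepar$. Hence both the unit and the counit of the restricted adjunction are natural isomorphisms, which is exactly the assertion that the restricted functors $\CT$ and $\PL$ are mutually inverse equivalences of categories.

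I expect no genuine obstacle here: the substantive work --- the surjectivity and kernel computations behind Lemma~\ref{adjunction-isomorphisms} --- is already in place, and the only points that need to be spelled out are the routine verifications that $\PL(F)$ is separated and that $\CT(\fp)$ preserves colimits, ensuring that the two full subcategories are genuinely preserved by the functors and that the restriction of the adjunction makes sense.
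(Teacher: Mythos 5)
Your proposal is correct and is exactly the argument the paper intends: the corollary carries a \qed precisely because it is the routine assembly of Lemma~\ref{pl-ct-adjunction} (well-definedness of the restricted functors and the adjunction $\CT\dashv\PL$) with Lemma~\ref{adjunction-isomorphisms}(a) and~(c) (counit and unit being isomorphisms on the respective subcategories). Nothing is missing.
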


\begin{rem}
 Given a small category $\ca$ and a regular cardinal $\lambda$, denote
by $\Lex_\lambda(\ca)$ the category of all covariant functors
$\ca\to\Set$ preserving all $\lambda$-small limits existing in~$\ca$.
 The category $\Lex_\lambda(\ca)$ has all kinds of good properties;
in particular, it is a locally $\lambda$-presentable category and
a reflective subcategory of the category $\Set^\ca$ of all covariant
functors $\ca\to\Set$.
 Moreover, any locally $\lambda$-presentable category $\ck$ is
equivalent to $\Lex_\lambda(\ck_\lambda^{\op})$, where $\ck_\lambda$ is
the full subcategory of $\lambda$-presentable objects in~$\ck$
\cite[Theorem~1.46]{AR}.
 The construction of the reflector $\Set^\ca\to\Lex_\lambda(\ca)$
involves directed colimits over chains of length $\lambda$ in
the category $\Set$ \cite[Construction~1.37]{AR}.

 Furthermore, suppose that $\ca$ is an additive category.
 Then any functor $F:\ca\to\Set$ preserving finite products lifts
to an additive functor $F':\ca\to\Ab$ in a unique way.
 Limits in $\Ab$ agree with those in $\Set$, so the category
$\Lex_\lambda(\ca)$ can be equivalently defined as the category of
all covariant additive functors $\ca\to\Ab$ preserving
$\lambda$-small limits.
 When the category $\ca$ is abelian, the category $\Lex_\omega(\ca)$
of all left exact functors $\ca\to\Ab$ is abelian, too, and is
a localization of the abelian category $\Fadd(\ca)$ of all additive
functors $\ca\to\Ab$.
 So the inclusion $\Lex_\omega(\ca)\to\Fadd(\ca)$ is a left exact
functor, and the functor $\Fadd(\ca)\to\Lex_\omega(\ca)$ left adjoint
to the inclusion is exact.
 The construction of the latter functor involves filtered colimits
in the category~$\Ab$.
 This can be viewed as a kind of ``additive sheaf theory'':
$\Lex_\omega(\ca)$ is interpreted as the category of additive sheaves
on the category $\ca^{\op}$ endowed with the Grothendieck topology
with epimorphisms in $\ca^{\op}$ playing the role of generating
coverings.
 $\Fadd(\ca)$ is the category of additive presheaves, and
$\Fadd(\ca)\to\Lex_\omega(\ca)$ is the sheafification functor
\cite[Appendix~A]{Bu}.

 Unlike the limits, the colimits are not preserved by the forgetful
functor $\Ab\to\Set$, so colimit-preserving functors $\ca\to\Set$
are entirely unrelated to colimit-preserving functors $\ca\to\Ab$.
 Still, for an additive category $\ca$, any functor $\ca\to\Ab$
preserving finite coproducts is additive.
 For any small category $\ca$ one can consider the category
$\Rex_\lambda(\ca)$ of covariant functors $\ca\to\Ab$ preserving
$\lambda$-small colimits.
 However, the category $\Rex_\omega(\ca)$ does not have good properties.
 To be sure, let $R$ be an associative ring and $\cn=\Modr R$
the category of right $R$-modules.
 Then the category $\Rex(\cn)$ of colimit-preserving functors
$\cn\to\Ab$ is equivalent to the category of left $R$-modules
$R\lMod$.
 Furthermore, if $\ca\subset\cn$ is the category of finitely presented
right $R$-modules, then one has $\Rex_\omega(\ca)=\Rex(\cn)=R\lMod$,
so the category $\Rex_\omega(\ca)$ is abelian.
 But in general it is not, even for an abelian category~$\ca$.
 The explanation is that $\Rex_\omega(\ca)$ is a kind of ``additive
cosheaf category'', and cosheaves are known to be problematic.
 In particular, an attempt to dualize the sheaf theory would presume
constructing cosheafification in terms of filtered limits in $\Ab$,
and these are not exact functors (see the discussion
in~\cite[Section~2.5]{C} and the introduction to~\cite{P3}).

 Specifically, let $R$ be a Noetherian commutative ring, $I=(r)$
a principal ideal, and $\fr=\lim_m R/I^m$ the adic completion.
 Let $\cn=\Discrd\fr$ be the category of $(r)$-torsion $R$-modules
(see Example~\ref{contratensor-examples}(3)) and $\ca\subset\cn$
the full subcategory of finitely generated $(r)$-torsion $R$-modules.
 Then $\cn$ is a locally Noetherian abelian category and $\ca$ is
a Noetherian abelian category, but the category $\Rex_\omega(\ca)=
\Rex(\cn)$ is not abelian.
 Indeed, by Corollary~\ref{rex-separ-equivalence}, \ $\Rex(\cn)$
is equivalent to the category of separated $\fr$-contramodules
$\fr\lSepar$.
 A well-known counterexample demonstrates that the category $\fr\lSepar$
is not well-behaved.
 As a full subcategory in $\fr\lContra$, it is closed neither under
the cokernels of monomorphisms nor under extensions
(see~\cite[Example~2.5]{Si} or~\cite[Section~1.5]{P4}).
 All limits and colimits exist in $\fr\lSepar$, but the same
counterexample shows that the kernel of cokernel may differ from
the cokernel of kernel, so $\fr\lSepar$ is not abelian.
 The abelian category $\fr\lContra$ may be viewed as the ``corrected''
or ``right'' version of the cosheaf category $\Rex(\cn)$.

 Still, we have to consider colimit-preserving functors
$\Discrd\fr\to\Ab$ and separated contramodules in this section
for technical purposes.
\end{rem}

\begin{lemma} \label{derived-limit}
Let\/ $0\to H\to G\to F\to 0$ be a short exact sequence in\/
$\Fun(\Discrd\fr)$.
Assume that $H\in\Rex(\Discrd\fr)$.
Then the short sequence\/ $0\to\PL(H)\to\PL(G)\to\PL(F)\to0$ is
exact in\/ $\fr\lContra$.
\end{lemma}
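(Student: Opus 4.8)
The plan is to evaluate the short exact sequence of functors on the discrete modules $\fr/\fii$, pass to the inverse limit, and dispose of the derived limit by a Mittag--Leffler argument. First I would reduce everything to abelian groups: the forgetful functor $\fr\lContra\to\Ab$ is exact and faithful, hence it reflects exactness of short sequences, and by the very definition of $\PL$ (together with the contramodule structure of Lemma~\ref{pl-ct-adjunction}(a)) the underlying abelian group of $\PL(F')$ is the inverse limit $\lim_D\bigl(F'\circ Q\bigr)$, computed in $\Ab$, for any additive functor $F'\colon\Discrd\fr\to\Ab$. So it is enough to prove that
$$0\longrightarrow\lim_D H(\fr/\fii)\longrightarrow\lim_D G(\fr/\fii)\longrightarrow\lim_D F(\fr/\fii)\longrightarrow0$$
is exact in $\Ab$.

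Since $\fr$ has a countable base of neighborhoods of zero formed by open right ideals, I would pick (replacing such a base by the chain of its finite intersections) a chain $\fii_0\supseteq\fii_1\supseteq\fii_2\supseteq\dotsb$ of open right ideals cofinal in $D$. Then $\lim_D\bigl(F'\circ Q\bigr)=\lim_n F'(\fr/\fii_n)$ for every additive $F'$, so the claim becomes the exactness of $0\to\lim_n H(\fr/\fii_n)\to\lim_n G(\fr/\fii_n)\to\lim_n F(\fr/\fii_n)\to0$, which is a short exact sequence of towers of abelian groups: for each $n$ the sequence $0\to H(\fr/\fii_n)\to G(\fr/\fii_n)\to F(\fr/\fii_n)\to0$ is exact in $\Ab$ because the given sequence is exact in $\Fun(\Discrd\fr)$. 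By the six-term $\lim$-$\lim^1$ exact sequence, the whole assertion reduces to the vanishing of $\lim^1_n H(\fr/\fii_n)$.

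This vanishing is the only place where the hypothesis $H\in\Rex(\Discrd\fr)$ enters, and it is immediate: for $\fii_{n+1}\subseteq\fii_n$ the quotient map $\fr/\fii_{n+1}\to\fr/\fii_n$ is an epimorphism in $\Discrd\fr$, and $H$, preserving all colimits, carries it to an epimorphism $H(\fr/\fii_{n+1})\to H(\fr/\fii_n)$ of abelian groups; hence the tower $\bigl(H(\fr/\fii_n)\bigr)_n$ has surjective transition maps, is Mittag--Leffler, and $\lim^1_n H(\fr/\fii_n)=0$. I do not anticipate a serious obstacle: the one substantive observation is that right-exact functors preserve epimorphisms, and the only point requiring care is that one must pass to a countable cofinal chain before invoking the Mittag--Leffler criterion, since over an arbitrary directed poset surjectivity of the transition maps does not by itself force $\lim^1$ to vanish.
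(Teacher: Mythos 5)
Your proposal is correct and follows essentially the same route as the paper's proof: evaluate on the discrete modules $\fr/\fii$, use that $H\in\Rex(\Discrd\fr)$ makes the transition maps of the tower $H(\fr/\fii_n)$ surjective, and kill $\lim^1$ by Mittag--Leffler over a countable cofinal chain. The only (correct and harmless) extra step you spell out is the reduction to underlying abelian groups via the exact faithful forgetful functor, which the paper leaves implicit.
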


\begin{proof}
 For any open right ideal $\fii$ in $\fr$, we have a short exact
sequence of abelian groups $0\to H(\fr/\fii)\to G(\fr/\fii)\to
F(\fr/\fii)\to0$.
 For any pair of embedded open right ideals $\fii'\subseteq\fii$,
the morphism $\fr/\fii'\to\fr/\fii$ is an epimorphism in
$\Discrd\fr$, so the induced morphism $H(\fr/\fii')\to H(\fr/\fii)$
is an epimorphism in~$\Ab$.
 Since the diagram $Q$ has a countable cofinal subdiagram, it follows
that the derived functor $\lim^1_{\fii} H(\fr/\fii)$ vanishes.
 Passing to the limit, we obtain a short exact sequence of abelian
groups $0\to\lim_{\fii}H(\fr/\fii)\to\lim_{\fii}G(\fr/\fii)\to
\lim_{\fii}F(\fr/\fii)\to0$, which means the desired short exact
sequence of left $\fr$-contramodules.
\end{proof}

By the definition, a left $\fr$-contramodule $\ff$ is flat if and only
if the functor $\CT(\ff)\in\Rex(\Discrd\fr)$ belongs to the full
subcategory $\Ex(\Discrd\fr)\subseteq\Rex(\Discrd\fr)$.
We will see below in Corollary~\ref{flat-is-separated} that any flat
$\fr$-contramodule is separated.

\begin{lemma} \label{main-lemma}
Let\/ $0\to \fq\to\fp\to\ff\to 0$ be a short exact sequence of left\/
$\fr$-contramodules.
Assume that the contramodule\/ $\fp$ is separated, while
the contramodule\/ $\ff$ is flat and separated.
Then for any discrete right\/ $\fr$-module $N$ the short sequence
of abelian groups\/ $0\to N\odot_{\fr}\fq\to N\odot_{\fr}\fp\to
N\odot_{\fr}\ff\to0$ is exact.
\end{lemma}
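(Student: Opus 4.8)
The plan is to transport the whole short exact sequence through the functor $\CT$ into $\Fun(\Discrd\fr)$, reduce the desired exactness to the single assertion that $\CT(\fq)\to\CT(\fp)$ is a monomorphism, and then verify that assertion by identifying this morphism with an honest subobject inclusion, using the adjunction $\CT\dashv\PL$ (Lemma~\ref{pl-ct-adjunction}(c)) together with the separatedness hypotheses.

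First I would apply $\CT$ to $0\to\fq\to\fp\to\ff\to0$. Since $\CT$ is a left adjoint it preserves cokernels, so in the abelian category $\Fun(\Discrd\fr)$ the morphism $\CT(\fp)\to\CT(\ff)$ is the cokernel of $\CT(\fq)\to\CT(\fp)$; writing $\ch$ for the kernel of $\CT(\fp)\to\CT(\ff)$, we thus obtain a short exact sequence $0\to\ch\to\CT(\fp)\to\CT(\ff)\to0$ in $\Fun(\Discrd\fr)$, and the whole problem reduces to proving that $\CT(\fq)\to\CT(\fp)$ is a monomorphism (equivalently, that the induced epimorphism $\CT(\fq)\to\ch$ is an isomorphism): once this is known, right-exactness of $\CT$ makes the sequence $0\to\CT(\fq)\to\CT(\fp)\to\CT(\ff)\to0$ exact in $\Fun(\Discrd\fr)$, hence pointwise exact, which is exactly the claim. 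Because $\CT$ lands in $\Rex(\Discrd\fr)$ we have $\CT(\fp)\in\Rex(\Discrd\fr)$, and because $\ff$ is flat we have $\CT(\ff)\in\Ex(\Discrd\fr)$; hence $\ch\in\Rex(\Discrd\fr)$ by Lemma~\ref{exact-seq-functors}(a). Applying Lemma~\ref{derived-limit} to the sequence $0\to\ch\to\CT(\fp)\to\CT(\ff)\to0$ (legitimate since $\ch\in\Rex(\Discrd\fr)$) then produces a short exact sequence $0\to\PL(\ch)\to\PL(\CT(\fp))\to\PL(\CT(\ff))\to0$ in $\fr\lContra$.

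Now the separatedness of $\fp$ and $\ff$ enters. By Lemma~\ref{adjunction-isomorphisms}(c) the adjunction units $\fp\to\PL(\CT(\fp))$ and $\ff\to\PL(\CT(\ff))$ are isomorphisms, and by naturality of the unit they identify $\PL(\CT(\fp))\to\PL(\CT(\ff))$ with the original epimorphism $\fp\to\ff$. Consequently the exact sequence just obtained reads $0\to\PL(\ch)\to\fp\to\ff\to0$, so there is an isomorphism $\theta\colon\fq\xrightarrow{\ \sim\ }\PL(\ch)$ for which the inclusion $j\colon\fq\hookrightarrow\fp$ factors as $\fq\xrightarrow{\ \theta\ }\PL(\ch)\xrightarrow{\ \PL(\iota)\ }\PL(\CT(\fp))\xrightarrow{\ \sim\ }\fp$, where $\iota\colon\ch\hookrightarrow\CT(\fp)$ is the inclusion and the last arrow is the inverse of the unit. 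Applying $\CT$ to this factorization and using naturality of the counit $\CT\PL\to\Id$ against $\iota$, one triangle identity of the adjunction, and the fact that the counit is an isomorphism on $\ch$ and on $\CT(\fp)$ (Lemma~\ref{adjunction-isomorphisms}(a), both objects lying in $\Rex(\Discrd\fr)$), I expect to get $\CT(j)=\iota\circ(\text{isomorphism})$. Since $\iota$ is a monomorphism in $\Fun(\Discrd\fr)$, so is $\CT(j)$; that is, $N\odot_{\fr}\fq\to N\odot_{\fr}\fp$ is injective for every discrete right $\fr$-module $N$, which with the right-exactness of $N\odot_{\fr}-$ gives the asserted short exact sequences.

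The delicate point is exactly the bookkeeping in the last paragraph: carrying the isomorphism $\PL(\ch)\cong\fq$ back through $\CT$ and matching the resulting map into $\CT(\fp)$ with the genuine inclusion $\iota$ by means of the triangle and naturality identities for $\CT\dashv\PL$. Everything else is a direct application of Lemmas~\ref{exact-seq-functors}, \ref{derived-limit}, and~\ref{adjunction-isomorphisms}. (Incidentally, since $\fq\cong\PL(\ch)$, Lemma~\ref{pl-ct-adjunction}(b) shows in passing that $\fq$ is separated as well, although this is not needed for the statement.)
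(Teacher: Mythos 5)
Your argument is correct and follows essentially the same route as the paper's proof: both reduce the claim to showing that $\CT(\fq)\to\CT(\fp)$ is a monomorphism by comparing $\CT(\fq)$ with the kernel of $\CT(\fp)\to\CT(\ff)$, using Lemma~\ref{exact-seq-functors}(a) to place that kernel in $\Rex(\Discrd\fr)$, Lemma~\ref{derived-limit} to get exactness after applying $\PL$, and Lemma~\ref{adjunction-isomorphisms}(c) for $\fp$ and~$\ff$. The only cosmetic difference is the final step, where the paper observes that $\fq$ is separated (being a subcontramodule of the separated $\fp$), concludes that $\PL(\CT(\fq))$ maps isomorphically onto $\PL$ of that kernel, and invokes the equivalence of Corollary~\ref{rex-separ-equivalence} between $\Rex(\Discrd\fr)$ and $\fr\lSepar$ in place of your explicit unit/counit and triangle-identity bookkeeping, which amounts to the same thing.
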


\begin{proof}
 Set $H=\CT(\fq)$, \ $G=\CT(\fp)$, and $F=\CT(\ff)$.
 The functor of contratensor product $N\odot_{\fr}{-}:\fr\lContra
\to\Ab$ is a left adjoint, so it preserves colimits.
 Hence the sequence of functors $H\to G\to F\to 0$ is exact in
$\Fun(\Discrd\fr)$.
 Denote by $H'$ the kernel of the morphism $G\to F$ in
$\Fun(\Discrd\fr)$; then the morphism $H\to G$ decomposes as
$H\to H'\to G$.
 The functor $F$ belongs to the subcategory $\Ex(\Discrd\fr)
\subseteq\Fun(\Discrd\fr)$ and the functor $G$ belongs to
$\Rex(\Discrd\fr)$, so by Lemma~\ref{exact-seq-functors}(a)
the functor $H'$ belongs to $\Rex(\Discrd\fr)$.

 Following Lemma~\ref{derived-limit}, we have a short exact sequence
of left $\fr$-contramodules $0\to\PL(H')\to\PL(G)\to\PL(F)\to0$.
 The $\fr$-contramodule $\fq$ is separated as a subcontramodule of
a separated $\fr$-contramodule $\fp$, so all the three adjunction
morphisms $\fq\to\PL(H)$, \ $\fp\to\PL(G)$, and $\ff\to\PL(F)$ are
isomorphisms of $\fr$-contramodules by
Lemma~\ref{adjunction-isomorphisms}(c).
 We can conclude that the morphism $\PL(H)\to\PL(H')$ is
an isomorphism of $\fr$-contramodules.
 Since both $H$ and $H'$ belong to $\Rex(\Discrd\fr)$, by
Corollary~\ref{rex-separ-equivalence} it follows that the morphism
of functors $H\to H'$ is an isomorphism and the short sequence
$0\to H\to G\to F\to 0$ is exact.
\end{proof}

\begin{coro} \label{flat-contra-kernels}
 The kernel of any surjective morphism of flat and separated left\/
$\fr$-contramodules is flat and separated.
\end{coro}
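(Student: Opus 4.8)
The plan is to reduce everything to Lemma~\ref{main-lemma} together with the closure property Lemma~\ref{exact-seq-functors}(b). Let $0\to\fq\to\fp\to\ff\to0$ be a short exact sequence of left $\fr$-contramodules with both $\fp$ and $\ff$ flat and separated. The separatedness of $\fq$ is immediate and does not use flatness at all: $\fq$ is a subcontramodule of the separated contramodule $\fp$, and we have already noted that every subcontramodule of a separated left $\fr$-contramodule is separated.

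For flatness, set $H=\CT(\fq)$, $G=\CT(\fp)$, and $F=\CT(\ff)$, which are functors in $\Rex(\Discrd\fr)\subseteq\Fun(\Discrd\fr)$. Since the contratensor product $N\odot_{\fr}{-}$ is a left adjoint, the sequence $H\to G\to F\to0$ is right exact, and Lemma~\ref{main-lemma}---applied to the present $\fq$, $\fp$, $\ff$, which satisfy its hypotheses precisely because $\fp$ is separated while $\ff$ is flat and separated, whence $\fq$ is separated---upgrades this to a short exact sequence $0\to H\to G\to F\to0$ in $\Fun(\Discrd\fr)$; equivalently, for every discrete right $\fr$-module $N$ the short sequence $0\to N\odot_{\fr}\fq\to N\odot_{\fr}\fp\to N\odot_{\fr}\ff\to0$ is exact, which is the same as exactness in the functor category.

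Now, because $\fp$ and $\ff$ are flat, by definition $G=\CT(\fp)$ and $F=\CT(\ff)$ lie in the full subcategory $\Ex(\Discrd\fr)$. Applying Lemma~\ref{exact-seq-functors}(b) to the short exact sequence $0\to H\to G\to F\to0$, we conclude that $H=\CT(\fq)\in\Ex(\Discrd\fr)$, i.e.\ $\fq$ is flat. The main point is that the genuine work was already done in Lemma~\ref{main-lemma}; the only additional ingredient is the ``kernel of a morphism of exact functors is exact''-type statement Lemma~\ref{exact-seq-functors}(b), so I do not anticipate any serious obstacle beyond assembling the pieces carefully---in particular, being sure to invoke exactness (not merely right exactness) of the sequence $0\to H\to G\to F\to0$ in $\Fun(\Discrd\fr)$, which is exactly what the proof of Lemma~\ref{main-lemma} delivers.
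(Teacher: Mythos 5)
Your argument is correct and is exactly the paper's proof, which simply cites Lemma~\ref{main-lemma} together with Lemma~\ref{exact-seq-functors}(b); you have merely spelled out the details (separatedness of $\fq$ as a subcontramodule of a separated contramodule, exactness of $0\to\CT(\fq)\to\CT(\fp)\to\CT(\ff)\to0$ from Lemma~\ref{main-lemma}, and then closure of $\Ex(\Discrd\fr)$ under kernels of epimorphisms between its objects). No issues.
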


\begin{proof}
 Follows from Lemmas~\ref{main-lemma}
and~\ref{exact-seq-functors}(b).
\end{proof}

\begin{lemma} \label{proj-contramodules}
 Any projective left\/ $\fr$-contramodule is flat and separated.
\end{lemma}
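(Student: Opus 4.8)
The plan is to reduce the statement to the case of free contramodules and treat freeness by hand. Recall that a projective left $\fr$-contramodule is a retract (direct summand) of a free one $\fr[[X]]$, so it suffices to show (i) that the class of flat and separated left $\fr$-contramodules is closed under retracts, and (ii) that every free $\fr$-contramodule $\fr[[X]]$ is flat and separated. For~(i), suppose $\ff$ is a retract of $\fg$ via $\fr$-contramodule morphisms $\ff\xrightarrow{\,i\,}\fg\xrightarrow{\,p\,}\ff$ with $pi=\id_{\ff}$. Since $i$ is a monomorphism, $\ff$ is a subcontramodule of $\fg$, and a subcontramodule of a separated contramodule is separated; so $\ff$ is separated whenever $\fg$ is. For flatness, for any discrete right $\fr$-module $N$ the morphisms $N\odot_{\fr}i$ and $N\odot_{\fr}p$ exhibit $N\odot_{\fr}\ff$ as a retract of $N\odot_{\fr}\fg$, naturally in~$N$; hence applying $N\odot_{\fr}({-})$ to a short exact sequence $0\to L\to M\to K\to 0$ in $\Discrd\fr$ yields a three-term complex of abelian groups which is a retract, as a complex, of the corresponding complex for $\fg$, and the latter is exact because $\fg$ is flat. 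A retract of an exact sequence of abelian groups is exact, so $\ff$ is flat.

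For~(ii), I would first observe that the functor $N\mapsto N\odot_{\fr}\fr$ is naturally isomorphic to the forgetful functor $\Discrd\fr\to\Ab$. This follows from the adjunction isomorphism $\Ab(N\odot_{\fr}\fr,A)\cong\fr\lContra(\fr,\Ab(N,A))$ together with the fact that $\fr$ is the free $\fr$-contramodule on one generator, so that $\fr\lContra(\fr,\Ab(N,A))$ is just the underlying abelian group of $\Ab(N,A)$; Yoneda then gives $N\odot_{\fr}\fr\cong N$. Since $N\odot_{\fr}({-})$ is a left adjoint it preserves coproducts, and $\fr[[X]]=\coprod_{X}\fr$ in $\fr\lContra$, so $N\odot_{\fr}\fr[[X]]\cong N^{(X)}$. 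The functor $N\mapsto N^{(X)}$ is a coproduct of copies of the exact identity functor, and coproducts of short exact sequences of abelian groups are exact; hence $\fr[[X]]$ is flat. (Alternatively, one could invoke closure of flat contramodules under filtered colimits and write $\fr[[X]]$ as a filtered colimit of finite coproducts of copies of the flat object~$\fr$.)

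It remains to show that $\fr[[X]]$ is separated. Using the identification $(\fr/\fii)\odot_{\fr}\fp\cong\fp/(\fii\tim\fp)$ together with the computation above, the quotient map $\fr[[X]]\to\fr[[X]]/(\fii\tim\fr[[X]])\cong(\fr/\fii)^{(X)}$ is, by naturality, coefficientwise reduction modulo the open right ideal~$\fii$; its kernel is the subgroup $\fii[[X]]\subseteq\fr[[X]]$ of formal linear combinations all of whose coefficients lie in~$\fii$. (The equality $\fii\tim\fr[[X]]=\fii[[X]]$ can also be read off directly from the definitions of $\tim$ and of the opening-of-parentheses map $\phi_{\fr}(X)$, using that $\fii$ is a right ideal.) Therefore $\bigcap_{\fii}(\fii\tim\fr[[X]])=\bigcap_{\fii}\fii[[X]]=(\bigcap_{\fii}\fii)[[X]]=0$, since $\fr$ is separated and hence $\bigcap_{\fii}\fii=0$. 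This shows $\fr[[X]]$ is separated and completes the argument. Every step here is routine; the only point requiring care is bookkeeping — keeping the underlying-set (contramodule) structure separate from the additive structure when identifying $N\odot_{\fr}\fr\cong N$ and when pinning down $\fii\tim\fr[[X]]=\fii[[X]]$ and the reduction map — so I do not anticipate a genuine obstacle.
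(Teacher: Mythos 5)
Your proof is correct and follows essentially the same route as the paper's: reduce to free contramodules, identify $N\odot_{\fr}\fr[[X]]\cong N^{(X)}$ via the adjunction and preservation of coproducts to get flatness, and compute $\fii\tim\fr[[X]]=\fii[[X]]$ with $\bigcap_{\fii}\fii[[X]]=0$ for separatedness. The only difference is that you spell out the (routine) closure of flat and separated contramodules under retracts, which the paper leaves implicit.
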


\begin{proof}
It suffices to show that free left $\fr$-contramodules $\fr[[X]]$ are
flat and separated.
Indeed, one has $N\odot_{\fr}\fr=N$ for any discrete right $\fr$-module
$N$, so $\CT(\fr)$ is simply the forgetful functor $\Discrd\fr
\to\Ab$.
The functor $\CT$ is a left adjoint, so it preserves coproducts, hence
$N\odot_{\fr}\fr[[X]]=N^{(X)}$ is the coproduct of $X$ copies of
the underlying abelian group of a discrete right $\fr$-module~$N$.
This functor preserves all colimits and finite limits, thus
the left $\fr$-contramodule $\fr[[X]]$ is flat.
To prove that it is separated, one computes that $\fii\tim\fr[[X]]=
\fii[[X]]\subseteq\fr[[X]]$ for any open right ideal $\fii$ in $\fr$,
and $\bigcap_{\fii}\fii[[X]]=0$ because $\bigcap_{\fii}\fii=0$ in~$\fr$
(since $\fr$ is a separated topological ring).
\end{proof}

The next lemma only differs from Lemma~\ref{main-lemma} in that
the separatedness condition on the middle term of the exact sequence
of contramodules is dropped.

\begin{lemma} \label{gen-main-lemma}
Let\/ $0\to \fq\to\fp\to\ff\to 0$ be a short exact sequence of left\/
$\fr$-contramodules.
Assume the $\fr$-contramodule\/ $\ff$ is flat and separated.
Then the short sequence of functors\/ $0\to\CT(\fq)\to\CT(\fp)\to
\CT(\ff)\to0$ is exact in\/ $\Fun(\Discrd\fr)$.
\end{lemma}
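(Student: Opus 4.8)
The plan is to reduce the statement to Lemma~\ref{main-lemma}, which differs from it only by also requiring the middle term $\fp$ to be separated. First, note that the functor $\CT\colon\fr\lContra\to\Fun(\Discrd\fr)$ is a left adjoint (Lemma~\ref{pl-ct-adjunction}(c)), hence preserves cokernels, i.e., is right exact as a functor between abelian categories; applying it to $0\to\fq\to\fp\to\ff\to0$ therefore already produces an exact sequence $\CT(\fq)\to\CT(\fp)\to\CT(\ff)\to0$ in $\Fun(\Discrd\fr)$. So the whole content of the lemma is the claim that the morphism $\CT(\fq)\to\CT(\fp)$ is a monomorphism. The obstruction to quoting Lemma~\ref{main-lemma} outright is that $\fp$ is not assumed separated; the idea is to bypass this by presenting $\fp$ as a quotient of a free left $\fr$-contramodule, which by Lemma~\ref{proj-contramodules} is separated.

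Concretely, I would choose an epimorphism $g\colon\fg\to\fp$ with $\fg$ free, write $\pi\colon\fp\to\ff$ for the given surjection, and set $\fk=\ker g$ and $\fk'=\ker(\pi g)$. Then $\fk\subseteq\fk'$, and since $g$ is an epimorphism it restricts to an epimorphism $\fk'\to\fq$ with kernel $\fk$; so there are short exact sequences $0\to\fk\to\fk'\to\fq\to0$, \ $0\to\fk'\to\fg\to\ff\to0$, and $0\to\fk\to\fg\to\fp\to0$. Since $\fg$ is separated and $\ff$ is flat and separated, Lemma~\ref{main-lemma} applies to the middle sequence, giving a short exact sequence $0\to\CT(\fk')\to\CT(\fg)\to\CT(\ff)\to0$; in particular $\CT(\fk')\to\CT(\fg)$ is a monomorphism. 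Applying the right exact functor $\CT$ to the other two sequences yields exact sequences $\CT(\fk)\to\CT(\fk')\to\CT(\fq)\to0$ and $\CT(\fk)\to\CT(\fg)\to\CT(\fp)\to0$.

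What remains is a diagram chase in the abelian category $\Fun(\Discrd\fr)$ (or, if one prefers, objectwise with the abelian groups $N\odot_{\fr}(-)$), which I expect to be routine. The composite $\fk'\hookrightarrow\fg\xrightarrow{g}\fp$ equals $\fk'\twoheadrightarrow\fq\hookrightarrow\fp$; hence, after applying $\CT$, the morphism $\phi\colon\CT(\fk')\to\CT(\fp)$ factors both through the monomorphism $\CT(\fk')\to\CT(\fg)$ and as $\CT(\fk')\xrightarrow{\alpha}\CT(\fq)\xrightarrow{\iota}\CT(\fp)$, where $\alpha$ is an epimorphism with $\ker\alpha=\operatorname{im}\bigl(\CT(\fk)\to\CT(\fk')\bigr)$. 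Using that $\CT(\fk')\to\CT(\fg)$ is a monomorphism through which $\CT(\fk)\to\CT(\fg)$ factors, together with $\ker\bigl(\CT(\fg)\to\CT(\fp)\bigr)=\operatorname{im}\bigl(\CT(\fk)\to\CT(\fg)\bigr)$, one computes $\ker\phi=\operatorname{im}\bigl(\CT(\fk)\to\CT(\fk')\bigr)=\ker\alpha$. Since $\phi=\iota\alpha$ with $\alpha$ an epimorphism, this forces $\ker\iota=0$, i.e., $\CT(\fq)\to\CT(\fp)$ is a monomorphism; combined with the right exactness of $\CT$, the sequence $0\to\CT(\fq)\to\CT(\fp)\to\CT(\ff)\to0$ is exact in $\Fun(\Discrd\fr)$. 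The only substantive ingredient is thus Lemma~\ref{main-lemma}; the difficulty here is merely to set up the free cover so that its hypotheses apply, after which everything propagates formally.
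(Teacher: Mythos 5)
Your proof is correct and is in substance the same as the paper's: both arguments hinge on applying Lemma~\ref{main-lemma} to a presentation of the sequence with projective (hence, by Lemma~\ref{proj-contramodules}, flat and separated) middle term, and then transferring the injectivity back to the given sequence using right exactness of $\CT$. The paper packages the transfer step as the vanishing of $\Ctrtor^{\fr}_1(N,\ff)$ in the long exact sequence of derived functors of $N\odot_{\fr}{-}$, whereas you unwind exactly that computation into an explicit (and correct) diagram chase with the free cover $\fg\to\fp$; the mathematical content is identical.
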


\begin{proof}
Given a discrete right $\fr$-module $N$, the left derived functors 
of contratensor product $\Ctrtor^{\fr}_n(N,{-}):\fr\lContra\to\Ab$,
\ $n\ge0$ are defined as the homology groups
$$
 \Ctrtor^{\fr}_n(N,\fq)=H_n(N\odot_{\fr}\fp_\bullet)
$$
of the contratensor product of $N$ with a left resolution
$$
 \fp_\bullet=(\dotsb\longrightarrow\fp_2\longrightarrow\fp_1
 \longrightarrow\fp_0)\longrightarrow\fq
$$
of an $\fr$-contramodule $\fq$ by projective
$\fr$-contramodules~$\fp_n$.
Since the functor $\fq\mapsto N\odot_{\fr}\fq$ is right exact, one has
$\Ctrtor^{\fr}_0(N,\fq)=N\odot_{\fr}\fq$.
This definition of a left derived functor provides for any short exact
sequence of left $\fr$-contramodules $0\to\fq\to\fp\to\ff\to0$
a long exact sequence of abelian groups
\begin{multline*}
\dotsb\longrightarrow\Ctrtor^{\fr}_1(N,\fp)\longrightarrow
\Ctrtor^{\fr}_1(N,\ff) \\ \longrightarrow N\odot_{\fr}\fq
\longrightarrow N\odot_{\fr}\fp\longrightarrow N\odot_{\fr}\ff
\longrightarrow 0.
\end{multline*}
In particular, let $\ff$ be a flat and separated $\fr$-contramodule.
Taking $\fp$ to be a projective $\fr$-contramodule, we have
$\Ctrtor^{\fr}_1(N,\fp)=0$ by the definition.
By Lemmas~\ref{proj-contramodules} and~\ref{main-lemma}, the morphism
$N\odot_{\fr}\fq\to N\odot_{\fr}\fp$ is injective.
Hence $\Ctrtor^{\fr}_1(N,\ff)=0$, and it follows from the same long
exact sequence that the short sequence $0\to N\odot_{\fr}\fq\to
N\odot_{\fr}\fp\to N\odot_{\fr}\ff\to 0$ is exact for an arbitrary
$\fr$-contramodule~$\fp$.
\end{proof}

\begin{rem} \label{higher-ctrtor-vanishing}
Applying Lemma~\ref{main-lemma} together with
Corollary~\ref{flat-contra-kernels} and Lemma~\ref{proj-contramodules},
one shows easily that $\Ctrtor^{\fr}_n(N,\ff)=0$ for any flat and
separated left $\fr$-contramodule $\ff$, any discrete right
$\fr$-module $N$, and all $n\ge1$.
\end{rem}

\begin{rem}
The conventional derived functor $\Tor^R_*({-},{-})$ over an associative
ring $R$ can be computed by taking a flat projective resolution
of either of its arguments; the derived functors obtained by resolving
the first and the second argument of the tensor product functor
$\otimes_R$ are naturally isomorphic.
This important feature of the classical theory is absent in the case of
the $\Ctrtor$, as there may be no flat or projective objects in
the category of discrete right $\fr$-modules, in general.
So the functor $\Ctrtor^{\fr}_*({-},{-})$ can be only computed by
resolving the second (contramodule) argument of the functor of
contratensor product $\odot_{\fr}$, generally speaking.
This is the reason why the proofs of Lemmas~\ref{main-lemma}
and~\ref{gen-main-lemma}, as well as of
Remark~\ref{higher-ctrtor-vanishing}, are so much more indirect and
intricate than in the case of a discrete ring~$R$
(cf.~\cite[first Question in Section~5.3]{P}).
\end{rem}

\begin{coro}  \label{flat-contra-extensions}
The class of flat and separated\/ $\fr$-contramodules is closed under
extensions in the abelian category\/ $\fr\lContra$.
\end{coro}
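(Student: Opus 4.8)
The plan is to split the statement into its two halves --- flatness and separatedness of the middle term --- and handle flatness entirely at the level of functors on $\Discrd\fr$, then deduce separatedness by a five-lemma argument through the adjunction $\CT\dashv\PL$. So fix a short exact sequence $0\to\fq\to\fp\to\ff\to0$ in $\fr\lContra$ with $\fq$ and $\ff$ both flat and separated, and put $H=\CT(\fq)$, $G=\CT(\fp)$, $F=\CT(\ff)$ in $\Fun(\Discrd\fr)$. The point is that the hypotheses on $\ff$ are exactly what is needed to make $\CT$ behave exactly on this sequence.

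For flatness: since $\ff$ is flat and separated, Lemma~\ref{gen-main-lemma} applies (this is why we need \emph{that} lemma rather than Lemma~\ref{main-lemma}, as we do not yet know $\fp$ to be separated) and yields a short exact sequence $0\to H\to G\to F\to0$ in $\Fun(\Discrd\fr)$. Because $\fq$ and $\ff$ are flat, $H=\CT(\fq)$ and $F=\CT(\ff)$ lie in $\Ex(\Discrd\fr)$, so by Lemma~\ref{exact-seq-functors}(d) the functor $G$ lies in $\Ex(\Discrd\fr)$ as well. Since $G=\CT(\fp)$ always belongs to $\Rex(\Discrd\fr)$, this is precisely the statement that $\fp$ is a flat $\fr$-contramodule.

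For separatedness: because $H\in\Rex(\Discrd\fr)$, Lemma~\ref{derived-limit} turns the functor sequence $0\to H\to G\to F\to0$ into a short exact sequence $0\to\PL(H)\to\PL(G)\to\PL(F)\to0$ in $\fr\lContra$. The adjunction unit of $\CT\dashv\PL$ is natural, so the unit morphisms $\fq\to\PL(H)$, $\fp\to\PL(G)$, $\ff\to\PL(F)$ form a morphism from the exact sequence $0\to\fq\to\fp\to\ff\to0$ to the exact sequence $0\to\PL(H)\to\PL(G)\to\PL(F)\to0$. By Lemma~\ref{adjunction-isomorphisms}(c) the two outer unit morphisms are isomorphisms, since $\fq$ and $\ff$ are separated; hence the five lemma in the abelian category $\fr\lContra$ shows that the middle unit morphism $\fp\to\PL(\CT(\fp))$ is an isomorphism, and therefore $\fp$ is separated, once more by Lemma~\ref{adjunction-isomorphisms}(c). (Alternatively, one may avoid the five lemma entirely and run only the injectivity half of the diagram chase, since $\fp\to\PL(\CT(\fp))$ is automatically surjective by Lemma~\ref{adjunction-isomorphisms}(b).)

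I do not expect a genuine obstacle here, as all the hard work is already contained in Lemmas~\ref{gen-main-lemma}, \ref{exact-seq-functors}(d), \ref{derived-limit}, and~\ref{adjunction-isomorphisms}; the only things to be careful about are the choice of Lemma~\ref{gen-main-lemma} over Lemma~\ref{main-lemma}, and the verification that applying $\PL$ to $H\to G\to F$ together with naturality of the unit really does produce a commutative morphism of short exact sequences.
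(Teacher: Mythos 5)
Your proposal is correct and follows essentially the same route as the paper's own proof: Lemma~\ref{gen-main-lemma} plus Lemma~\ref{exact-seq-functors}(d) for flatness, then Lemma~\ref{derived-limit} and Lemma~\ref{adjunction-isomorphisms}(c) with a five-lemma (or diagram-chase) step for separatedness. The only difference is that you make explicit two points the paper leaves implicit, namely that $H=\CT(\fq)\in\Rex(\Discrd\fr)$ is what licenses the use of Lemma~\ref{derived-limit} and that the naturality of the adjunction unit is what produces the commutative morphism of short exact sequences.
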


\begin{proof}
Let $0\to\fq\to\fp\to\ff\to0$ be a short exact sequence of left
$\fr$-contramodules.
Assume that the contramodules $\fq$ and $\ff$ are flat and separated.
According to Lemma~\ref{gen-main-lemma}, the short sequence of
functors $0\to\CT(\fq)\to\CT(\fp)\to\CT(\ff)\to0$ is exact in
$\Fun(\Discrd\fr)$.
Applying Lemma~\ref{exact-seq-functors}(d), we can conclude that
$\fp$ is a flat $\fr$-contramodule.
To prove that $\fp$ is separated, consider the short sequence of
$\fr$-contramodules $0\to\PL(\CT(\fq))\to\PL(\CT(\fp))\to
\PL(\CT(\ff))\to0$, which is exact by Lemma~\ref{derived-limit}.
According to Lemma~\ref{adjunction-isomorphisms}(c), both the adjunction
morphisms $\fq\to\PL(\CT(\fq))$ and $\ff\to\PL(\CT(\ff))$ are
isomorphisms.
It follows that the adjunction morphism $\fp\to\PL(\CT(\fp))$ is
an isomorphism, too.
\end{proof}

The following result is a version of Nakayama's lemma for contramodules
(see~\cite[Lemma~1.3.1]{P2} for a formulation more similar to
the classical Nakayama's lemma).

\begin{lemma} \label{nakayama}
If\/ $\fp$ is a left\/ $\fr$-contramodule, and\/ $\CT(\fp)=0$,
then\/ $\fp=0$.
\end{lemma}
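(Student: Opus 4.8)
The plan is to first rewrite the hypothesis concretely and then derive a contradiction from a nonzero element by encoding an infinitely iterated contraaction in a tree and telescoping it. First I would note that $\CT(\fp)=0$ means $N\odot_{\fr}\fp=0$ for every discrete right $\fr$-module $N$; taking $N=\fr/\fii$ and using the identity $(\fr/\fii)\odot_{\fr}\fp=\fp/(\fii\tim\fp)$ gives $\fii\tim\fp=\fp$ for every open right ideal $\fii\subseteq\fr$. Fix a descending chain $\fr=\fii_0\supseteq\fii_1\supseteq\fii_2\supseteq\cdots$ of open right ideals cofinal among the neighborhoods of zero (available since the base is countable). It now suffices to show that an arbitrary element $p\in\fp$ is zero.

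To this end I would build a rooted tree as follows. The root $v_0$ carries the label $q_{v_0}=p\in\fp$ and the coefficient $\rho_{v_0}=1\in\fr$. Given a node $v$ at level $n$ with label $q_v\in\fp$ and \emph{path product} $\rho_v\in\fr$ (the product, in $\fr$ and in root-to-$v$ order, of the edge coefficients along the branch to $v$), I use the standing fact that for $\rho_v\in\fr$ there is an open right ideal $\fk_v\subseteq\fii_{n+1}$ with $\rho_v\fk_v\subseteq\fii_{n+1}$; since $\fk_v\tim\fp=\fp$, I may write $q_v=\pi_{\fp}(\sum_{x\in X_v}r^v_x q_{v.x})$ with $r^v_x\in\fk_v$ a family converging to zero in $\fr$ and $q_{v.x}\in\fp$. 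I declare the children of $v$ to be the $v.x$ with label $q_{v.x}$ and edge coefficient $r^v_x$, so that $\rho_{v.x}=\rho_v r^v_x\in\fii_{n+1}$; inductively $\rho_v\in\fii_{n}$ for every node $v$ at level $n\ge 1$.

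The key point is that $(\rho_v)$, indexed by all nodes of the tree, converges to zero in $\fr$. Given an open right ideal $\fj$, choose $N$ with $\fii_N\subseteq\fj$; then every node of level $\ge N$ satisfies $\rho_v\in\fii_N\subseteq\fj$, a child of a node with $\rho_v\in\fj$ again has its path product in $\fj$ because $\fj$ is a right ideal, and at any node only finitely many children fail, since $\{s\in\fr:\rho_v s\in\fj\}$ contains an open right ideal and is therefore open. Hence the nodes with $\rho_v\notin\fj$ form a finitely branching tree of depth less than $N$, so are finite in number. Consequently $A=\sum_{v}\rho_v q_v$ and $B=\sum_{v\ne v_0}\rho_v q_v$ are well-defined elements of $\fr[[\fp]]$, and $\xi=\sum_v\rho_v\,\eta_v$ with $\eta_v=\sum_{x\in X_v}r^v_x q_{v.x}\in\fr[[\fp]]$ is a well-defined element of $\fr[[\fr[[\fp]]]]$.

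Now I would finish by comparing two computations. Comparing coefficients of each formal symbol of an element of $\fp$, in $A-B$ only the root term survives, so $A-B$ equals the one-term sum $p$; applying the additive map $\pi_{\fp}$ gives $\pi_{\fp}(A)-\pi_{\fp}(B)=p$. On the other hand, contraassociativity $\pi_{\fp}\circ\fr[[\pi_{\fp}]]=\pi_{\fp}\circ\phi_{\fr}(\fp)$ evaluated on $\xi$ gives $\pi_{\fp}(A)=\pi_{\fp}(B)$: the left-hand side is $\pi_{\fp}(\sum_v\rho_v\,\pi_{\fp}(\eta_v))=\pi_{\fp}(A)$ because $\pi_{\fp}(\eta_v)=q_v$ by construction, while the right-hand side is $\pi_{\fp}(\sum_{v}\sum_{x\in X_v}\rho_v r^v_x q_{v.x})=\pi_{\fp}(B)$ after reindexing the opened-up sum by the non-root nodes $v.x$. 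Therefore $p=\pi_{\fp}(A)-\pi_{\fp}(B)=0$, and since $p$ was arbitrary, $\fp=0$. I expect the main obstacle to be the interface between the last two paragraphs: one must arrange the recursion so that the path products genuinely converge to zero — which is what forces the choice of $\fk_v$ with $\rho_v\fk_v\subseteq\fii_{n+1}$ rather than merely $r^v_x\in\fii_{n+1}$ — while keeping the bookkeeping transparent enough to see that opening the parentheses once via $\phi_{\fr}$ reorganizes $A$ into $B$. The underlying difficulty is the familiar fact that contramodule summation is not a limit of finite partial sums, so one cannot argue directly that $p\in\bigcap_n\fii_n\tim\fp$ forces $p=0$.
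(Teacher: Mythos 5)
Your proof is correct and is essentially the paper's argument in different bookkeeping: both reduce the hypothesis to $\fii\tim\fp=\fp$ for every open right ideal $\fii$, iterate this surjectivity along a countable cofinal chain $\fii_1\supseteq\fii_2\supseteq\dotsb$ while shrinking the ideal used at each stage so that the accumulated products of coefficients land in~$\fii_n$ (your choice of $\fk_v$ with $\rho_v\fk_v\subseteq\fii_{n+1}$ is the paper's choice of $n_{k+1}$ with $r\fii_{n_{k+1}}\subseteq\fii_{k+1}$ for the finitely many exceptional coefficients~$r$), and then apply the contraassociativity identity $\pi_\fp\circ\fr[[\pi_\fp]]=\pi_\fp\circ\phi_\fr(\fp)$ to a convergent element of $\fr[[\fr[[\fp]]]]$ and telescope down to $p=0$. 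The only difference is organizational: you encode the iteration as a tree with path products $\rho_v$ and form the single element $\xi=\sum_v\rho_v\eta_v$, whereas the paper builds nested elements $p_k\in\fii_{n_1}[[\dotsb[[\fii_{n_k}[[\fp]]]]\dotsb]]$ and partially flattened elements $q_k\in\fii_{k-1}[[\fr[[\fp]]]]$ whose sum $\sum_{k\ge2}q_k$ plays exactly the role of your~$\xi$, the leftover term $\fr[[\pi_\fp]](q_2)=p_1$ corresponding to your $A-B=\epsilon_{\fr}(\fp)(p)$.
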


\begin{proof}
 We have to show that a left $\fr$-contramodule $\fp$ vanishes whenever
$\fp=\fii\tim\fp$ for every open right ideal $\fii$ in~$\fr$.
 Let $p\in\fp$ be an element, and let $\fii_1\supseteq\fii_2\supseteq
\dotsb$ be a chain of open right ideals, indexed by the integers,
forming a base of neighborhoods of zero in~$\fr$.
 For every $n\ge1$, choose a section $s_n$ of the surjective map
$\pi_{\fp}|_{\fii_n[[\fp]]}:\fii_n[[\fp]]\to\fp$; so
$s_n:\fp\to\fii_n[[\fp]]$ is a map of sets for which
$\pi_{\fp}s_n=\id_{\fp}$.

 Set $n_1=1$ and $p_1=s_{n_1}(p)\in\fii_{n_1}[[\fp]]$.
 Among the coefficients $r\in\fii_1$ appearing in the infinite formal
linear combination~$p_1$, there is only a finite set $R_2$ of those
which do not belong to~$\fii_2$.
 Let $n_2\ge2$ be an integer such that $r\fii_{n_2}\subseteq\fii_2$
for every $r\in R_2$.
 Put $p_2=\fii_{n_1}[[s_{n_2}]](p_1)$.
 Then one has $p_2\in\fii_{n_1}[[\fii_{n_2}[[\fp]]]]$, \
$p_1=\fii_{n_1}[[\pi_{\fp}]](p_2)$, and
$\phi_{\fp}(p_2)\in\fii_2[[\fp]]$.
 Put $q_2=p_2$.

 Among the coefficients $r\in\fii_2$ appearing in the infinite formal
linear combination $\phi_{\fp}(p_2)$, there is only a finite set $R_3$
of those which do not belong to~$\fii_3$.
 Let $n_3\ge3$ be an integer such that $r\fii_{n_3}\subseteq\fii_3$
for every $r\in R_3$.
 Put $p_3=\fii_{n_1}[[\fii_{n_2}[[s_{n_3}]]]](p_2)$.
 Then one has $p_3\in\fii_{n_1}[[\fii_{n_2}[[\fii_{n_3}[[\fp]]]]]]$,
\ $p_2=\fii_{n_1}[[\fii_{n_2}[[\pi_{\fp}]]]](p_3)$, \
$\phi_{\fii_{n_3}[[\fp]]}(p_3)\in\fii_2[[\fii_{n_3}[[\fp]]]]$, and
$\phi_{\fp}\phi_{\fii_{n_3}[[\fp]]}(p_3)\in\fii_3[[\fp]]$.
 Put $q_3=\phi_{\fii_{n_3}[[\fp]]}(p_3)$.
 
 Among the coefficients $r\in\fii_3$ appearing in the infinite formal
linear combination $\phi_{\fp}(q_3)\in\fii_3[[\fp]]$, there is only
a finite set $R_4$ of those which do not belong to~$\fii_4$.
 Let $n_4\ge4$ be an integer such that $r{\fii_{n_4}}\subseteq\fii_4$
for every $r\in R_4$.
 Put $p_4=\fii_{n_1}[[\fii_{n_2}[[\fii_{n_3}[[s_{n_4}]]]]]](p_3)$ and
$q_4=\phi_{\fii_{n_4}[[\fp]]}\phi_{\fii_{n_3}[[\fii_{n_4}[[\fp]]]]}(p_4)\in
\fii_3[[\fii_{n_4}[[\fp]]]]$, etc.

 For any set $X$, define inductively $\fr^{(0)}[[X]]=X$ and
$\fr^{(k)}[[X]]=\fr^{(k-1)}[[\fr[[X]]]]$ for $k\ge1$.
 Let $\phi^{(k)}_X:\fr^{(k)}[[X]]\to \fr[[X]]$ denote the iterated
monad multiplication map (opening the outer $k-1$ pairs of
parentheses or all but the innermost one pair, i.~e., all
the parentheses that can be opened for an arbitrary set~$X$).
 Proceeding as above, we construct a sequence of integers $n_k\ge k$
and a sequence of elements
$p_k\in\fii_{n_1}[[\dotsb[[\fii_{n_k}[[\fp]]]]\dotsb]]$ such that
$p_k=\fii_{n_1}[[\dotsb[[\fii_{n_{k-1}}[[s_{n_k}]]]]\dotsb]](p_{k-1})$
and the image of $p_k$ under the map $\phi^{(k)}_{\fp}:
\fr^{(k)}[[\fp]]\to\fr[[\fp]]$ belongs to $\fii_k[[\fp]]$.

 Then one has
$p_{k-1}=\fii_{n_1}[[\dotsb[[\fii_{n_{k-1}}[[\pi_{\fp}]]]]\dotsb]](p_k)$
and the image $q_k=\phi^{(k-1)}_{\fii_{n_k}[[\fp]]}(p_k)$ of the element
$p_k$, $k\ge2$ under the iterated monad multiplication map
$$
\phi_{\fii_{n_k}[[\fp]]}\phi_{\fii_{n_{k-1}}[[\fii_{n_k}[[\fp]]]]}
\dotsm\phi_{\fii_{n_3}[[\dotsb[[\fii_{n_k}[[\fp]]]]\dotsb]]}:
\fii_{n_1}[[\dotsb[[\fii_{n_k}[[\fp]]]]\dotsb]]\longrightarrow
\fr[[\fii_{n_k}[[X]]]]
$$
opening the outer $k-2$ pairs of parentheses (i.~e., all but
the innermost two pairs) belongs to $\fii_{k-1}[[\fii_{n_k}[[\fp]]]]$.
 The elements $q_k$ satisfy the equations 
$$
 \phi_{\fp}(q_k)=\phi^{(k)}_{\fp}(p_k)=\fii_k[[\pi_{\fp}]](q_{k+1}),
 \qquad k\ge2.
$$

 For any set $X$, we endow the abelian group $\fr[[X]]$ with
the topology of the limit of discrete groups $\lim_{\fii}\fr/\fii[X]$.
 For any map of sets $f:X\to Y$, the induced map $\fr[[f]]:
\fr[[X]]\to\fr[[Y]]$ is a continuous homomorphism of topological
groups.
 In particular, the set $\fr[[\fr[[\fp]]]]$ is endowed with
the topological group structure of $\fr[[X]]$, where $X=\fr[[\fp]]$
is viewed as an abstract set, while the set $\fr[[\fp]]$ is endowed with
the topological group structure of $\fr[[Y]]$ with the set $Y=\fp$.
 Then both the maps $\fr[[\pi_{\fp}]]$, $\phi_{\fp}:\fr[[\fr[[\fp]]]]
\rightrightarrows\fr[[\fp]]$ are continuous group homomorphisms.

 The sum $\sum_{k=2}^\infty q_k$ converges in the topology of
$\fr[[\fr[[\fp]]]]$, since $q_k\in\fii_{k-1}[[\fr[[\fp]]]]$.
 Therefore, we have
$$
 \fr[[\pi_{\fp}]]\Big(\sum\nolimits_{k=2}^\infty q_k\Big)
 - \phi_{\fp}\Big(\sum\nolimits_{k=2}^\infty q_k\Big) =
 \fr[[\pi_{\fp}]](q_2) = p_1,
$$
hence $p=\pi_{\fp}(p_1)=0$ by the contraassociativity equation.
\end{proof}

\begin{coro} \label{flat-is-separated}
 Any flat left\/ $\fr$-contramodule is separated.
\end{coro}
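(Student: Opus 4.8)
The plan is to combine Nakayama's lemma for contramodules (Lemma~\ref{nakayama}) with the generalized main lemma (Lemma~\ref{gen-main-lemma}), using the adjunction $\CT\dashv\PL$ of Lemma~\ref{pl-ct-adjunction}(c). Let $\ff$ be a flat left $\fr$-contramodule, let $\eta\colon\ff\to\PL(\CT(\ff))$ be the adjunction unit, and set $\fp'=\PL(\CT(\ff))$ and $\fq=\ker(\eta)$. By Lemma~\ref{adjunction-isomorphisms}(b) the morphism $\eta$ is surjective, so there is a short exact sequence $0\to\fq\to\ff\xrightarrow{\ \eta\ }\fp'\to0$ in $\fr\lContra$, and by Lemma~\ref{adjunction-isomorphisms}(c) it suffices to prove that $\fq=0$, for then $\eta$ is an isomorphism and $\ff$ is separated.

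First I would record that $\fp'$ is flat and separated. Separatedness is Lemma~\ref{pl-ct-adjunction}(b). For flatness, note that $\CT(\ff)$ always lies in $\Rex(\Discrd\fr)$, so Lemma~\ref{adjunction-isomorphisms}(a) says the adjunction counit $\CT(\fp')=\CT(\PL(\CT(\ff)))\to\CT(\ff)$ is an isomorphism; since $\ff$ is flat we have $\CT(\ff)\in\Ex(\Discrd\fr)$, hence $\CT(\fp')\in\Ex(\Discrd\fr)$, i.e.\ $\fp'$ is flat. Moreover, by the triangle identity for the adjunction the composite of $\CT(\eta)$ with this counit is the identity of $\CT(\ff)$, so $\CT(\eta)\colon\CT(\ff)\to\CT(\fp')$ is itself an isomorphism of functors.

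Next I would apply Lemma~\ref{gen-main-lemma} to the short exact sequence $0\to\fq\to\ff\to\fp'\to0$, whose rightmost term $\fp'$ is flat and separated: this yields a short exact sequence of functors $0\to\CT(\fq)\to\CT(\ff)\xrightarrow{\ \CT(\eta)\ }\CT(\fp')\to0$ in $\Fun(\Discrd\fr)$. Since $\CT(\eta)$ is an isomorphism, its kernel $\CT(\fq)$ vanishes, and Nakayama's lemma for contramodules (Lemma~\ref{nakayama}) gives $\fq=0$, completing the argument.

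The step I expect to be the main obstacle --- or rather the one requiring the key idea --- is getting around the fact that the functor $\CT\colon\fr\lContra\to\Fun(\Discrd\fr)$ is only right exact: applying it naively to $0\to\fq\to\ff\to\fp'\to0$ would merely show that $\CT(\fq)\to\CT(\ff)$ is the zero morphism, which says nothing about $\CT(\fq)$ itself. It is precisely because $\fp'$ is both flat and separated that Lemma~\ref{gen-main-lemma} upgrades this to a genuinely short exact sequence of functors, and this is exactly where the hypothesis that $\ff$ is flat enters the proof (it is needed to make $\fp'$ flat).
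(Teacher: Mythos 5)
Your proof is correct and follows essentially the same route as the paper: surject $\ff$ onto $\PL(\CT(\ff))$, observe that the target is flat and separated, apply Lemma~\ref{gen-main-lemma} to get the short exact sequence of functors, conclude $\CT(\fq)=0$, and finish with Nakayama's Lemma~\ref{nakayama}. The only (welcome) difference is that you justify via the triangle identity why $\CT(\eta)$ is an isomorphism, a point the paper leaves implicit.
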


\begin{proof}
 Let $\fp$ be a flat left $\fr$-contramodule.
 Set $\ff=\PL(\CT(\fp))$.
 Then $\CT(\ff)=\CT(\fp)$ by Lemma~\ref{adjunction-isomorphisms}(a),
so the $\fr$-contramodule $\ff$ is flat.
 Following Lemma~\ref{pl-ct-adjunction}(b), $\ff$ is also separated.
 According to Lemma~\ref{adjunction-isomorphisms}(b), the adjunction
morphism $\fp\to\ff$ is surjective.
 Let $\fq$ denote its kernel.
 Following Lemma~\ref{gen-main-lemma}, we have a short exact sequence
$0\to\CT(\fq)\to\CT(\fp)\to\CT(\ff)\to0$ in $\Fun(\Discrd\fr)$.
 Hence $\CT(\fq)=0$, and by Lemma~\ref{nakayama} we can conclude
that $\fq=0$.
 Thus $\fp=\ff$ and $\fp$ is separated.
\end{proof}

\begin{lemma} \label{colim-exact-on-flats}
Let\/ $(0\to\fq_i\to\fp_i\to\ff_i\to0)_{i\in\cc}$ be a filtered diagram
of short exact sequences in\/ $\fr\lContra$.
Assume that every\/ $\fr$-contramodule\/ $\fq_i$, $\fp_i$, and\/
$\ff_i$ is flat.
Then the short sequence of colimits\/ $0\to\colim_i\fq_i\to
\colim_i\fp_i\to\colim_i\ff_i\to0$ is exact in\/ $\fr\lContra$.
\end{lemma}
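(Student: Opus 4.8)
The plan is to push the whole problem into the functor category $\Fun(\Discrd\fr)$, where filtered colimits \emph{are} exact, and then to transport the conclusion back to $\fr\lContra$ using Nakayama's lemma for contramodules (Lemma~\ref{nakayama}). The point is that a filtered colimit of short exact sequences need not be exact in $\fr\lContra$, so the flatness hypothesis must be exploited to make the contratensor functor $\CT$ behave like an exact functor on the sequences involved.

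First I would note that each $\ff_i$, being flat, is separated by Corollary~\ref{flat-is-separated}; hence Lemma~\ref{gen-main-lemma} converts each given short exact sequence into a short exact sequence $0\to\CT(\fq_i)\to\CT(\fp_i)\to\CT(\ff_i)\to0$ in $\Fun(\Discrd\fr)$. Passing to the filtered colimit over $\cc$ and using that filtered colimits are exact in $\Fun(\Discrd\fr)$, together with the fact that $\CT$ preserves colimits (it is a left adjoint, Lemma~\ref{pl-ct-adjunction}(c)), I obtain a short exact sequence $0\to\CT(\colim_i\fq_i)\to\CT(\colim_i\fp_i)\to\CT(\colim_i\ff_i)\to0$.

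Next, since cokernels commute with colimits, $\colim_i\fp_i\to\colim_i\ff_i$ is an epimorphism; let $\fk$ denote its kernel. By Lemma~\ref{flats-colimit-closed} the contramodules $\colim_i\fp_i$ and $\colim_i\ff_i$ are flat, hence separated, so $\fk$ is flat and separated by Corollary~\ref{flat-contra-kernels}, and a second application of Lemma~\ref{gen-main-lemma} yields exactness of $0\to\CT(\fk)\to\CT(\colim_i\fp_i)\to\CT(\colim_i\ff_i)\to0$. The composite $\colim_i\fq_i\to\colim_i\fp_i\to\colim_i\ff_i$ is the colimit of the zero morphisms $\fq_i\to\ff_i$, hence zero, so the morphism $\colim_i\fq_i\to\colim_i\fp_i$ factors through a canonical morphism $\theta\colon\colim_i\fq_i\to\fk$. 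Comparing the two displayed exact sequences then identifies both $\CT(\colim_i\fq_i)$ and $\CT(\fk)$ with the kernel of $\CT(\colim_i\fp_i)\to\CT(\colim_i\ff_i)$, compatibly with $\CT(\theta)$, so $\CT(\theta)$ is an isomorphism.

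Finally I would upgrade ``$\CT(\theta)$ is an isomorphism'' to ``$\theta$ is an isomorphism''. Right-exactness of $\CT$ gives $\CT(\coker\theta)=\coker\CT(\theta)=0$, hence $\coker\theta=0$ by Lemma~\ref{nakayama}, so $\theta$ is an epimorphism. Setting $\fa=\ker\theta$, one has a short exact sequence $0\to\fa\to\colim_i\fq_i\to\fk\to0$ with $\fk$ flat and separated, so Lemma~\ref{gen-main-lemma} applies and gives exactness of $0\to\CT(\fa)\to\CT(\colim_i\fq_i)\to\CT(\fk)\to0$; since $\CT(\theta)$ is a monomorphism, $\CT(\fa)=0$, whence $\fa=0$ by Lemma~\ref{nakayama}. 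Thus $\theta$ is an isomorphism and the colimit sequence $0\to\colim_i\fq_i\to\colim_i\fp_i\to\colim_i\ff_i\to0$ is exact. I expect the real obstacle to be exactly this detour: because filtered colimits are inexact in $\fr\lContra$, one is forced to compare $\colim_i\fq_i$ with the honest kernel $\fk$, and this comparison only works after passing through $\Fun(\Discrd\fr)$ and invoking the flat-and-separated machinery (Lemmas~\ref{gen-main-lemma} and~\ref{flat-contra-kernels} and Nakayama's lemma).
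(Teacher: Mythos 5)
Your proposal is correct, and its first half coincides with the paper's proof: both use Corollary~\ref{flat-is-separated} and Lemma~\ref{gen-main-lemma} to turn each given sequence into a short exact sequence of functors, then exploit exactness of filtered colimits in $\Fun(\Discrd\fr)$ and the colimit-preservation of $\CT$ to get the exact sequence $0\to\CT(\colim_i\fq_i)\to\CT(\colim_i\fp_i)\to\CT(\colim_i\ff_i)\to0$. Where you diverge is the descent back to $\fr\lContra$. The paper applies $\PL$ to this sequence, invokes Lemma~\ref{derived-limit} (the $\lim^1$-vanishing for the countable cofinal system of open right ideals) to keep it exact, and then uses Lemma~\ref{adjunction-isomorphisms}(c) together with the flatness and separatedness of all three colimits to identify $\PL(\CT(-))$ with the colimits themselves. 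You instead stay inside $\fr\lContra$: you form the honest kernel $\fk$ of $\colim_i\fp_i\to\colim_i\ff_i$, use Corollary~\ref{flat-contra-kernels} and a second application of Lemma~\ref{gen-main-lemma} to show $\CT(\theta)$ is an isomorphism for the comparison map $\theta:\colim_i\fq_i\to\fk$, and then upgrade this to an isomorphism of contramodules by two applications of Nakayama's Lemma~\ref{nakayama} (one for the cokernel via right-exactness of $\CT$, one for the kernel via Lemma~\ref{gen-main-lemma} applied to $0\to\ker\theta\to\colim_i\fq_i\to\fk\to0$). Both descents are legitimate and rely on previously established results, so there is no circularity; your route avoids Lemma~\ref{derived-limit} and the $\PL$ machinery entirely at the cost of invoking the faithfulness-type statement of Nakayama's lemma twice, and as a minor bonus it makes visible that the flatness of the $\fq_i$ is never actually needed (only that of the $\fp_i$ and $\ff_i$).
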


\begin{proof}
 Following Corollary~\ref{flat-is-separated}, the $\fr$-contramodules
$\ff_i$ are separated.
 Hence the short sequence of functors $0\to\CT(\fq_i)\to\CT(\fp_i)\to
\CT(\ff_i)\to0$ is exact in $\Fun(\Discrd\fr)$ for every~$i$ by
Lemma~\ref{gen-main-lemma}.
 Passing to the colimit, we obtain a short exact sequence of
functors
$$
0\longrightarrow\CT(\colim_i\fq_i)\longrightarrow\CT(\colim_i\fp_i)
\longrightarrow\CT(\colim_i\ff_i)\longrightarrow0.
$$
 By Lemma~\ref{derived-limit}, we have a short exact sequence of
left $\fr$-contramodules
$$
0\longrightarrow\PL(\CT(\colim_i\fq_i))\longrightarrow
\PL(\CT(\colim_i\fp_i))\longrightarrow\PL(\CT(\colim_i\ff_i))
\longrightarrow0.
$$
 According to Lemma~\ref{flats-colimit-closed}, the $\fr$-contramodules
$\colim_i\fq_i$, $\colim_i\fp_i$, and $\colim_i\ff_i$ are flat, and
following Corollary~\ref{flat-is-separated} again, they are also
separated.
 It remains to apply Lemma~\ref{adjunction-isomorphisms}(c) in order
to obtain the desired short exact sequence.
\end{proof}

\begin{propo} \label{colim1-vanishes-on-flats}
Let\/ $(0\to\fq_i\to\fp_i\to\ff_i\to0)_{i\in\cc}$ be a filtered diagram
of short exact sequences in\/ $\fr\lContra$.
Assume that every\/ $\fr$-contramodule\/ $\ff_i$ is flat.
Then the short sequence of colimits
$$
0\longrightarrow\colim_i\fq_i\longrightarrow\colim_i\fp_i
\longrightarrow\colim_i\ff_i\longrightarrow0
$$
is exact in\/ $\fr\lContra$.
\end{propo}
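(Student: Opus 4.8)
The strategy mirrors the proof of Lemma~\ref{colim-exact-on-flats}: push everything through the colimit-preserving functor $\CT$, where filtered colimits are exact, and pull the conclusion back with $\PL$; the new feature compared with Lemma~\ref{colim-exact-on-flats}, namely that $\fq_i$ and $\fp_i$ are no longer assumed flat or even separated, will be dealt with at the end by Nakayama's lemma (Lemma~\ref{nakayama}). Since $\colim$ is a left adjoint it is right exact, so $\colim_i\fp_i\to\colim_i\ff_i\to0$ is exact and $\colim_i\ff_i=\coker(\colim_i\fq_i\to\colim_i\fp_i)$; hence the kernel of $\colim_i\fq_i\to\colim_i\fp_i$ equals the image $\fk$ of that morphism, and the statement reduces to showing that the canonical epimorphism $\colim_i\fq_i\to\fk$ is a monomorphism, equivalently that $\colim_i\fq_i\to\colim_i\fp_i$ is injective.

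To this end I would apply $\CT$. Each $\ff_i$ is flat by hypothesis and separated by Corollary~\ref{flat-is-separated}, so by Lemma~\ref{gen-main-lemma} the sequences $0\to\CT(\fq_i)\to\CT(\fp_i)\to\CT(\ff_i)\to0$ are exact in $\Fun(\Discrd\fr)$; since $\CT$ preserves colimits and filtered colimits are exact in $\Fun(\Discrd\fr)$, the colimit sequence $0\to\CT(\colim_i\fq_i)\to\CT(\colim_i\fp_i)\to\CT(\colim_i\ff_i)\to0$ is exact. As $\colim_i\ff_i$ is flat (Lemma~\ref{flats-colimit-closed}) and separated (Corollary~\ref{flat-is-separated}), Lemma~\ref{gen-main-lemma} applied to $0\to\fk\to\colim_i\fp_i\to\colim_i\ff_i\to0$ shows that $0\to\CT(\fk)\to\CT(\colim_i\fp_i)\to\CT(\colim_i\ff_i)\to0$ is exact as well; comparing the two sequences, the natural map $\CT(\colim_i\fq_i)\to\CT(\fk)$ is an isomorphism. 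All the functors above lie in $\Rex(\Discrd\fr)$, so Lemma~\ref{derived-limit} applies, and, using naturality of the adjunction morphism together with the fact that $\colim_i\ff_i\to\PL\CT(\colim_i\ff_i)$ is an isomorphism (Lemma~\ref{adjunction-isomorphisms}(c)), a diagram chase across the square
$$
\xymatrix{
\colim_i\fq_i \ar[r]\ar[d] & \colim_i\fp_i\ar[d] \\
\PL\CT(\colim_i\fq_i)\ar[r] & \PL\CT(\colim_i\fp_i)
}
$$
(whose bottom arrow is a monomorphism by Lemma~\ref{derived-limit}) shows that the kernel $\fn$ of $\colim_i\fq_i\to\colim_i\fp_i$ is contained in the kernel of the adjunction morphism $\colim_i\fq_i\to\PL\CT(\colim_i\fq_i)$.

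The remaining task is to prove $\fn=0$, and by Lemma~\ref{nakayama} it suffices to prove $\CT(\fn)=0$. Applying $\CT$ to $0\to\fn\to\colim_i\fq_i\to\fk\to0$ and using that $\CT(\colim_i\fq_i)\to\CT(\fk)$ is an isomorphism shows that $\CT(\fn)\to\CT(\colim_i\fq_i)$ is the zero morphism; what must still be shown is that this morphism is also injective, which amounts to the vanishing of a first derived contratensor group, and this is the step I expect to be the main obstacle. I would attack it by combining the long exact sequences of $\Ctrtor^{\fr}_*(N,-)$ for the two short exact sequences above with the vanishing of the higher $\Ctrtor^{\fr}$ against the flat and separated contramodule $\colim_i\ff_i$ (Remark~\ref{higher-ctrtor-vanishing}), thereby pinning the whole question on the behaviour of $\CT$ on the (possibly non-separated) colimit contramodules, where the fine structure provided by Lemmas~\ref{main-lemma} and~\ref{gen-main-lemma} and Corollary~\ref{flat-is-separated} has to be invoked. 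An alternative and more elementary route is available at least when the indexing category $\cc$ is $\aleph_1$-filtered, so that morphisms out of the $\aleph_1$-presentable generator $\fr$ factor through a single stage: one tests monomorphy of $\colim_i\fq_i\to\colim_i\fp_i$ directly against $\fr$, noting that a map $\fr\to\colim_i\fq_i$ dying in $\colim_i\fp_i$ factors through some $\fq_{i_0}$, dies already in some $\fp_{i_1}$, and hence — because $\fq_{i_1}\to\fp_{i_1}$ is a monomorphism — is already zero in $\fq_{i_1}$, so the original map vanishes. A general filtered colimit would then be reduced to a sequential one, for which the homological argument above is the one that is really needed.
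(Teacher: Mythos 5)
Your reduction to the injectivity of $\colim_i\fq_i\to\colim_i\fp_i$ and the first two applications of $\CT$ are fine (the isomorphism $\CT(\colim_i\fq_i)\to\CT(\fk)$ and the inclusion of $\fn$ into the kernel of the adjunction morphism are both correctly established). But the step you yourself flag as the main obstacle is a genuine gap, and the tools you propose for it do not close it. To get $\CT(\fn)=0$ from the sequence $0\to\fn\to\colim_i\fq_i\to\fk\to0$ you need the connecting map $\Ctrtor^{\fr}_1(N,\fk)\to N\odot_{\fr}\fn$ to vanish, i.e.\ essentially $\Ctrtor^{\fr}_1(N,\fk)$ to be killed by $\Ctrtor^{\fr}_1(N,\colim_i\fq_i)$; but none of $\colim_i\fp_i$, $\fk$, $\colim_i\fq_i$ is known to be flat or separated, so Remark~\ref{higher-ctrtor-vanishing} and Lemmas~\ref{main-lemma}--\ref{gen-main-lemma} give you no control over these $\Ctrtor_1$ groups, and knowing that $\fn$ lies in $\bigcap_{\fii}(\fii\tim\colim_i\fq_i)$ does not yield the hypothesis $\fn=\fii\tim\fn$ of Nakayama's Lemma~\ref{nakayama}. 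Your ``elementary route'' also fails where it matters: a morphism from the generator $\fr$ into a filtered colimit factors through a single stage only for $\aleph_1$-filtered diagrams (where exactness is automatic anyway, $\fr\lContra$ being locally $\aleph_1$-presentable), and ``reducing to a sequential colimit'' lands you precisely in the countable-chain case, which is the whole difficulty.

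The paper takes a different and decisive route: it introduces the left derived functors ${}_n\!\colim_{\cc}$ of the colimit on $\fr\lContra^{\cc}$, computed via projective resolutions in the diagram category. Resolving the given diagram $(\ff_i)$ by a projective diagram $(\fp'_i)$ produces an auxiliary short exact sequence of diagrams in which \emph{all three} terms are flat (the $\fp'_i$ by Lemma~\ref{proj-contramodules}, their kernels by Corollary~\ref{flat-contra-kernels}), so Lemma~\ref{colim-exact-on-flats} applies and the long exact sequence of derived colimits yields ${}_1\!\colim_i\ff_i=0$; the general statement is then immediate from that same long exact sequence. The lesson is that one must arrange flatness of the middle and kernel terms \emph{before} passing to the colimit, rather than trying to repair the non-separated colimit contramodules afterwards. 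If you want to rescue your argument, you would need to interpolate such a flat resolution of the diagram, which is exactly the paper's proof.
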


\begin{proof}
 For any small category $\cc$ and any cocomplete abelian category $\ck$
with enough projective objects, the category of functors $\ck^\cc$ has
enough projective objects.
 In fact, these are the direct summands of coproducts of functors of
the form $P^{(\cc(A,-))}$, where $P$ is a projective object in $\ck$
and $A$ an object in~$\cc$. 
 Furthermore, for any projective object $Q$ in $\ck^\cc$ and any object
$C$ in $\cc$ the object $Q(C)$ is projective in~$\ck$.

 One constructs the left derived functors of colimit
${}_n\!\colim_{\cc}:\ck^\cc\to\ck$, $n\ge0$ by choosing
a left projective resolution
$$
 Q_\bullet=(\dotsb\longrightarrow Q_2\longrightarrow Q_1
 \longrightarrow Q_0)\longrightarrow F
$$
in the abelian category of functors $\ck^\cc$ for a given functor $F:
\cc\to\ck$, computing the colimit $\colim_{\cc}Q_n$ of every
functor $Q_n:\cc\to\ck$, and passing to the homology objects of
the complex formed by such colimits in the category~$\ck$,
$$
 {}_n\!\colim_{\cc}(F)=H_n(\colim_{\cc}Q_\bullet).
$$
 Since the functor $\colim_{\cc}:\ck^\cc\to\ck$ is right exact, one
has ${}_0\!\colim_{\cc}F=\colim_{\cc}F$.
 This definition of a left derived functor provides for any short
exact sequence of functors $0\to H\to G\to F\to 0$ in $\ck^\cc$
a long exact sequence
$$
 \dotsb\longrightarrow{}_1\!\colim_{\cc}G\longrightarrow
 {}_1\!\colim_{\cc}F\longrightarrow\colim_{\cc}H\longrightarrow
 \colim_{\cc}G\longrightarrow\colim_{\cc}F\longrightarrow0
$$
in the category~$\ck$.

 Now we specialize to the case of a filtered category $\cc$ of indices
$i$ and the category of contramodules $\ck=\fr\lContra$.
 Let $(\ff_i)_{i\in\cc}$ be a diagram of flat $\fr$-contramodules.
 First we choose a diagram $(\fp_i)_{i\in\cc}$ in such a way that
it is a projective object in $\fr\lContra^\cc$ and there is
an epimorphism $(\fp_i)\to(\ff_i)$ in $\fr\lContra^\cc$ with
a kernel $(\fq_i)_{i\in\cc}$.
 Then, by the definition, ${}_1\!\colim_i\fp_i=0$.
 According to the above discussion, every $\fr$-contramodule $\fp_i$
is projective; hence, by Lemma~\ref{proj-contramodules}, \ $\fp_i$
is flat.
 Due to Corollaries~\ref{flat-contra-kernels}
and~\ref{flat-is-separated} we know that $\fq_i$ is flat for
every $i$, too.
 Following Lemma~\ref{colim-exact-on-flats}, the short sequence
$0\to\colim_i\fq_i\to\colim_i\fp_i\to\colim_i\ff_i\to0$ is exact.
 Hence we can conclude from the long exact sequence 
$$
 \dotsb\longrightarrow{}_1\!\colim_i\fp_i\longrightarrow
 {}_1\!\colim_i\ff_i\longrightarrow\colim_i\fq_i\longrightarrow
 \colim_i\fp_i\longrightarrow\colim_i\ff_i\longrightarrow0
$$
that ${}_1\!\colim_i\ff_i=0$.
 Finally, for an arbitrary diagram $(\fp_i)\in\fr\lContra^\cc$
and a short exact sequence of diagrams
$(0\to\fq_i\to\fp_i\to\ff_i\to0)_{i\in\cc}$ the same long exact
sequence of derived colimits shows that the short sequence
$0\to\colim_i\fq_i\to\colim_i\fp_i\to\colim_i\ff_i\to0$ is exact.
\end{proof}

\begin{rem}
Similarly one can show that ${}_n\!\colim_i\ff_i=0$ for any filtered
diagram of flat left $\fr$-contramodules $(\ff_i)_{i\in\cc}$ and all
$n\ge1$.
\end{rem}

\section{Cotorsion theories in Contramodule categories}
\label{cotorsion-in-contramodules}

 We keep the assumptions of Section~\ref{flat-contramodules-secn},
i.~e., $\fr$ is a complete and separated, associative and unital
topological ring with a countable base of neighborhoods of zero
formed by open right ideals.
The following corollary summarizes the results of
Section~\ref{flat-contramodules-secn}.

\begin{coro} \label{flat-contramodules-summary}
The full subcategory of flat\/ $\fr$-contramodules in\/
$\fr\lContra$ is \par
\textup{(a)} closed under the kernels of epimorphisms; \par
\textup{(b)} closed under extensions; \par
\textup{(c)} strongly closed under directed colimits.
\end{coro}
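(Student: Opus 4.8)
The plan is to derive all three assertions directly from the results of Section~\ref{flat-contramodules-secn} already established, the key simplifying input being Corollary~\ref{flat-is-separated} (a flat $\fr$-contramodule is automatically separated), which allows us to discard the separatedness hypotheses present in the intermediate statements.

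For part~(a), let $\fp\to\ff$ be a surjective morphism of flat left $\fr$-contramodules with kernel $\fq$. By Corollary~\ref{flat-is-separated} the contramodules $\fp$ and $\ff$ are also separated, so Corollary~\ref{flat-contra-kernels} applies and shows that $\fq$ is flat (indeed flat and separated). For part~(b), let $0\to\fq\to\fp\to\ff\to0$ be a short exact sequence with $\fq$ and $\ff$ flat; again Corollary~\ref{flat-is-separated} upgrades $\fq$ and $\ff$ to flat and separated $\fr$-contramodules, whence Corollary~\ref{flat-contra-extensions} gives that $\fp$ is flat and separated, in particular flat.

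For part~(c), recall that, by definition, the class $\cf$ of flat $\fr$-contramodules is strongly closed under directed colimits in $\fr\lContra$ precisely when, for every object $K$, the $\cf$-monomorphisms with domain $K$ form a subcategory of the comma-category $K\downarrow\fr\lContra$ that is closed under directed colimits. So fix $K$ and a directed diagram $(m_i:K\to X_i)_{i\in\cc}$ of $\cf$-monomorphisms, the transition maps being morphisms in $K\downarrow\fr\lContra$; each $m_i$ is a monomorphism with a flat cokernel $\ff_i$, so we have a directed diagram of short exact sequences $0\to K\to X_i\to\ff_i\to0$ in which the left-hand term is the constant diagram on $K$. By Proposition~\ref{colim1-vanishes-on-flats}, the induced sequence $0\to\colim_i K\to\colim_i X_i\to\colim_i\ff_i\to0$ is exact; since $\cc$ is filtered, $\colim_i K=K$, and by Lemma~\ref{flats-colimit-closed} the object $\colim_i\ff_i$ is again flat. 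Hence the colimit morphism $K\to\colim_i X_i$, which is the colimit of the $m_i$ in $K\downarrow\fr\lContra$ (connected colimits in a coslice being computed in $\fr\lContra$), is once more an $\cf$-monomorphism, as required. That $\cf$ itself is closed under directed colimits, which is implied by strong closure, is also contained in Lemma~\ref{flats-colimit-closed}.

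The only place that calls for any attention is part~(c): one must observe that the colimit of the constant diagram on $K$ is $K$, and that a directed diagram of $\cf$-monomorphisms out of $K$ is literally a directed diagram of short exact sequences with flat cokernels, so that Proposition~\ref{colim1-vanishes-on-flats} is directly applicable. All of the genuinely substantive work — the vanishing of the first derived colimit functor on diagrams of flat contramodules, and the separatedness of flat contramodules — has already been carried out earlier in the section, so this corollary amounts to assembling those statements.
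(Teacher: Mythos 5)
Your proof is correct and follows exactly the route the paper takes: parts (a) and (b) are obtained by combining Corollary~\ref{flat-is-separated} with Corollaries~\ref{flat-contra-kernels} and~\ref{flat-contra-extensions} respectively, and part~(c) by applying Proposition~\ref{colim1-vanishes-on-flats} and Lemma~\ref{flats-colimit-closed} to the directed diagram of short exact sequences with constant subobject~$K$. Your write-up merely spells out the unpacking of ``strongly closed under directed colimits'' that the paper leaves implicit, and that unpacking is accurate.
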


\begin{proof}
Taking into account Corollary~\ref{flat-is-separated},
part~(a) is provided by Corollary~\ref{flat-contra-kernels}
and part~(b) by Corollary~\ref{flat-contra-extensions}.
Part~(c) follows from Proposition~\ref{colim1-vanishes-on-flats}
and Lemma~\ref{flats-colimit-closed}.
\end{proof}

\begin{propo} \label{flat-pure-closed}
 The full subcategory of flat\/ $\fr$-contramodules is closed under\/
$\aleph_1$-pure subobjects and\/ $\aleph_1$-pure quotients in\/
$\fr\lContra$.
\end{propo}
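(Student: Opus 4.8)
The plan is to obtain both closure properties from Proposition~\ref{lambda-pure-contratensor}, which in the present setting (with $\lambda=\aleph_1$, the base of neighborhoods of zero in $\fr$ being countable) says that contratensoring an $\aleph_1$-pure monomorphism of left $\fr$-contramodules with an arbitrary discrete right $\fr$-module gives a monomorphism of abelian groups. I will use repeatedly that a left $\fr$-contramodule $\fg$ is flat exactly when $\CT(\fg)=({-})\odot_\fr\fg:\Discrd\fr\to\Ab$ is exact, that $\CT(\fg)$ always preserves all colimits (so in particular is right exact), and the elementary fact that a right exact additive functor between abelian categories which preserves monomorphisms is exact. Thus flatness of $\fg$ amounts to $\CT(\fg)$ preserving monomorphisms.

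\emph{$\aleph_1$-pure subobjects.} Let $f:\fq\to\fp$ be an $\aleph_1$-pure monomorphism with $\fp$ flat. For any monomorphism $\iota:L\to M$ in $\Discrd\fr$, the composite
$$
L\odot_\fr\fq\xrightarrow{\ L\odot_\fr f\ }L\odot_\fr\fp\xrightarrow{\ \iota\odot_\fr\fp\ }M\odot_\fr\fp
$$
is a monomorphism: the first arrow by Proposition~\ref{lambda-pure-contratensor}, the second because $\fp$ is flat. By bifunctoriality of $\odot_\fr$ this composite equals $L\odot_\fr\fq\xrightarrow{\iota\odot_\fr\fq}M\odot_\fr\fq\xrightarrow{M\odot_\fr f}M\odot_\fr\fp$, so $\iota\odot_\fr\fq$ is a monomorphism. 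Hence $\CT(\fq)$ preserves monomorphisms and $\fq$ is flat.

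\emph{$\aleph_1$-pure quotients.} Let $q:\fp\to\ff$ be an $\aleph_1$-pure epimorphism with $\fp$ flat. Since $\fr\lContra$ is a locally $\aleph_1$-presentable abelian category, $q$ is the cokernel of an $\aleph_1$-pure monomorphism $f:\fq\to\fp$, giving a short exact sequence $0\to\fq\xrightarrow{f}\fp\xrightarrow{q}\ff\to0$; by the previous paragraph $\fq$ is flat. Fix a monomorphism $\iota:L\to M$ in $\Discrd\fr$ with cokernel $K$, and apply $({-})\odot_\fr\fq$, $({-})\odot_\fr\fp$, $({-})\odot_\fr\ff$ to $\iota$. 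This produces a commutative diagram
$$
\xymatrix{
0\ar[r]&L\odot_\fr\fq\ar[r]\ar[d]&L\odot_\fr\fp\ar[r]\ar[d]&L\odot_\fr\ff\ar[r]\ar[d]&0\\
0\ar[r]&M\odot_\fr\fq\ar[r]&M\odot_\fr\fp\ar[r]&M\odot_\fr\ff\ar[r]&0
}
$$
with short exact rows: right exactness of $L\odot_\fr({-})$ and $M\odot_\fr({-})$ gives everything except injectivity of the two leftmost horizontal maps, and that is Proposition~\ref{lambda-pure-contratensor} applied to $f$ with $N=L$ and with $N=M$. The two leftmost vertical maps are monomorphisms because $\fq$ and $\fp$ are flat; their cokernels are $K\odot_\fr\fq$ and $K\odot_\fr\fp$ by right exactness, and the induced map between these cokernels is $K\odot_\fr f$, again a monomorphism by Proposition~\ref{lambda-pure-contratensor} (with $N=K$). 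The snake lemma now embeds $\ker(L\odot_\fr\ff\to M\odot_\fr\ff)$ into $\ker(K\odot_\fr\fq\to K\odot_\fr\fp)=0$, so $L\odot_\fr\ff\to M\odot_\fr\ff$ is a monomorphism; hence $\CT(\ff)$ preserves monomorphisms and $\ff$ is flat.

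The one subtle point is the pure-quotient case. Flat $\fr$-contramodules are not known a priori to be closed under cokernels of monomorphisms between flat objects, so one cannot simply invoke Corollary~\ref{flat-contramodules-summary}; the key is that $\aleph_1$-purity forces the contratensor-product sequences attached to $0\to\fq\to\fp\to\ff\to0$ to remain exact for every discrete $N$, after which the snake lemma finishes the job. I do not anticipate any further obstacles: everything else is routine manipulation of the bifunctor $\odot_\fr$ and the characterization of flatness through $\CT$.
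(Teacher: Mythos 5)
Your proof is correct, but it takes a genuinely different and more elementary route than the paper's. The paper argues through the derived functor $\Ctrtor^{\fr}_*$: it first invokes $\Ctrtor^{\fr}_1(N,\fp)=0$ for flat $\fp$ (which rests on Lemma~\ref{gen-main-lemma} and hence on the whole $\PL$/$\CT$ and separatedness machinery of Section~\ref{flat-contramodules-secn}), deduces $\Ctrtor^{\fr}_1(N,\ff)=0$ from the long exact sequence in the contramodule argument together with Proposition~\ref{lambda-pure-contratensor}, concludes that $\ff$ is flat via the long exact sequence in the discrete-module argument (available because projective contramodules are flat), and only then obtains flatness of the pure subobject $\fq$ from closure of flats under kernels of epimorphisms, Corollary~\ref{flat-contramodules-summary}(a). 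You instead settle the subobject case by a two-line factorization of $\iota\odot_{\fr}\fq$ through a composite of monomorphisms, and the quotient case by a snake-lemma chase whose only nontrivial inputs are Proposition~\ref{lambda-pure-contratensor} and the right exactness of $\odot_{\fr}$ in each variable separately; both of these are established in the paper and your diagram manipulations (exact rows from purity plus right exactness, cokernels of the columns computed as $K\odot_{\fr}({-})$, injectivity of the connecting map into $\ker(K\odot_{\fr}f)=0$) all check out. What your version buys is self-containedness and, in principle, generality: it uses none of the $\Ctrtor$ formalism and would apply to $\lambda$-pure subobjects and quotients for a base of neighborhoods of zero of arbitrary cardinality, whereas the paper's route passes through results tied to a countable base; what the paper's route buys is the reusable intermediate vanishing $\Ctrtor^{\fr}_1(N,\ff)=0$ and the two long exact sequences, which it exploits elsewhere. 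Your closing remark also correctly isolates the one point where purity is indispensable, namely that flats are not known to be closed under cokernels of monomorphisms.
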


\begin{proof}
The argument is based on the construction of the derived functor
$\Ctrtor^{\fr}_*({-},{-})$ from the proof of Lemma~\ref{gen-main-lemma}.
Let $\fp$ be a flat left $\fr$-contramodule, $\fq\to\fp$ 
an $\aleph_1$-pure monomorphism, and $\fp\to\ff=\fp/\fq$
the corresponding $\aleph_1$-pure epimorphism.
According to Proposition~\ref{lambda-pure-contratensor}, the morphism
of abelian groups $N\odot_{\fr}\fq\to N\odot_{\fr}\fp$ is
a monomorphism for any discrete right $\fr$-module~$N$.
Following the proof of Lemma~\ref{gen-main-lemma}, we have
$\Ctrtor^{\fr}_1(N,\fp)=0$ for a flat $\fr$-contramodule $\fp$, thus
we can conclude from the long exact sequence that
$\Ctrtor^{\fr}_1(N,\ff)=0$.

Since projective left $\fr$-contramodules are flat by
Lemma~\ref{proj-contramodules}, taking the contratensor product of
a short exact sequence of discrete right $\fr$-modules
$0\to L\to M\to N\to 0$ with a complex of projective left
$\fr$-contramodules $\fp_\bullet$ produces a short exact sequence of
complexes of abelian groups $0\to L\odot_{\fr}\fp_\bullet\to
M\odot_{\fr}\fp_\bullet\to N\odot_{\fr}\fp_\bullet\to0$.
Hence for any left $\fr$-contramodule $\ff$ there is a long exact
sequence
$$
\dotsb\longrightarrow\Ctrtor^{\fr}_1(N,\ff)\longrightarrow
L\odot_{\fr}\ff\longrightarrow M\odot_{\fr}\ff\longrightarrow
N\odot_{\fr}\ff\longrightarrow 0,
$$
and it follows that $\ff$ is flat whenever one has
$\Ctrtor^{\fr}_1(N,\ff)=0$ for every discrete right $\fr$-module $N$.
Finally, the $\fr$-contramodule $\fq$ is flat by 
Corollary~\ref{flat-contramodules-summary}(a).
\end{proof}

\begin{defi}
 A left\/ $\fr$-contramodule\/ $\fg$ is called \textit{cotorsion} if
$\Ext^1_{\fr\lContra}(\ff,\fg)=0$ for any flat left
$\fr$-contramodule~$\ff$.
\end{defi}

We will denote the class of all flat left $\fr$-contramodules by
$\fr\lFlat$ and the class of all cotorsion left $\fr$-contramodules
by $\fr\lCotors\subseteq\fr\lContra$.
By the definition, the class $\fr\lCotors$ is closed under extensions
and products in $\fr\lContra$.
Since the class of $\fr\lFlat$ contains projective
$\fr$-contramodules and is closed under the kernels of epimorphisms
(Lemma~\ref{proj-contramodules} and
Corollary~\ref{flat-contramodules-summary}(a)), one has
$\Ext_{\fr\lContra}^n(\ff,\fg)$ for any flat $\fr$-contramodule
$\ff$, any cotorsion $\fr$-contramodule $\fg$, and all $n\ge1$.
It follows that the class of cotorsion $\fr$-contramodules is closed
under the cokernels of monomorphisms (see, e.~g.,
\cite[Lemma~6.17]{S0}).

\begin{exam}
For any discrete right $\fr$-module $N$ and left $\fr$-contramodule
$\fp$, there is a natural isomorphism of abelian groups
$$
 \Ext^n_{\fr\lContra}(\fp,\Ab(N,\rationals/\integers))=
 \Ab(\Ctrtor_n^{\fr}(N,\fp),\rationals/\integers),
 \qquad n\ge0,
$$
as one can see by computing both the $\Ext$ and the $\Ctrtor$ in terms
of the same left projective resolution of an $\fr$-contramodule~$\fp$.
Consequently, the left $\fr$-contramodule
$\Ab(N,\rationals/\integers)$ is cotorsion for any discrete
right $\fr$-module~$N$.
\end{exam}

\begin{coro} \label{flat-accessible}
 The category of flat left\/ $\fr$-contramodules is accessible.
\end{coro}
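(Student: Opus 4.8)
The plan is to show first that the class $\fr\lFlat$ of flat left $\fr$-contramodules is deconstructible, then to recognize it as the left-hand class of a cotorsion theory generated by a set, and finally to invoke Corollary~\ref{accessible-if-preserved-by-kappa-directed}. Concretely, I would apply Lemma~\ref{deconstructible} to the class $\cf=\fr\lFlat$ inside the locally $\aleph_1$-presentable abelian category $\fr\lContra$, with $\kappa=\aleph_1$; this yields a set $\cs$ of $\fr$-contramodules with $\fr\lFlat=\filt(\cs)$.

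To run Lemma~\ref{deconstructible} I would verify its three hypotheses. Closure of $\fr\lFlat$ under transfinitely iterated extensions follows by transfinite induction along a smooth chain: the zero object is flat, successor steps are handled by Corollary~\ref{flat-contramodules-summary}(b), and limit steps by Lemma~\ref{flats-colimit-closed}. Closure under $\aleph_1$-pure subobjects and $\aleph_1$-pure quotients is exactly Proposition~\ref{flat-pure-closed}. Finally, for the condition that, for every $\ff\in\fr\lFlat$, the class of all $\fr\lFlat$-monomorphisms $\fq\to\ff$ is closed under directed colimits in the comma-category $\fr\lContra\downarrow\ff$ of objects \emph{over} $\ff$: given such a directed diagram $(\fq_i\to\ff)_i$ with flat cokernels $\ff_i=\ff/\fq_i$, I would apply Proposition~\ref{colim1-vanishes-on-flats} to the directed diagram of short exact sequences $0\to\fq_i\to\ff\to\ff_i\to0$, whose middle term is the constant value $\ff$ over the connected indexing category, obtaining an exact sequence $0\to\colim_i\fq_i\to\ff\to\colim_i\ff_i\to0$ with $\colim_i\ff_i$ flat by Lemma~\ref{flats-colimit-closed}; this is in effect the content of Corollary~\ref{flat-contramodules-summary}(c), and one should note that it gives the precise ``over $\ff$'' form demanded by Lemma~\ref{deconstructible}.

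It then remains to identify the cotorsion theory and conclude. Every object of $\fr\lContra$ is a quotient of a free, hence projective, hence (by Lemma~\ref{proj-contramodules}) flat $\fr$-contramodule, so every object is a quotient of an object from $\filt(\cs)=\fr\lFlat$. Letting $(\cf,\cc)$ denote the cotorsion theory generated by $\cs$, Theorem~\ref{precise-et-loc-pres}(a) gives $\filt(\cs)\subseteq\cf$, while Theorem~\ref{precise-et-loc-pres}(d) forces every object of $\cf$ to be a direct summand of an object from $\filt(\cs)=\fr\lFlat$; since $\fr\lFlat$ is closed under direct summands (the contratensor product functor $N\odot_\fr{-}$ preserves biproducts and a direct summand of an exact functor is exact), this gives $\cf=\fr\lFlat$, and therefore $\cc=\cs^\perp=\fr\lFlat^\perp=\fr\lCotors$. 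Now Corollary~\ref{accessible-if-preserved-by-kappa-directed} applies: $(\fr\lFlat,\fr\lCotors)$ is a cotorsion theory generated by a set, the class $\fr\lFlat$ is closed under directed colimits by Lemma~\ref{flats-colimit-closed}, and every object of $\fr\lContra$ is a quotient of an object of $\fr\lFlat$, whence $\fr\lFlat$ is accessible. The proof is largely an assembly of results already in place — the substantive work, above all the vanishing of the first derived colimit on flat contramodules, lives in Proposition~\ref{colim1-vanishes-on-flats} and the surrounding lemmas — and the one step I expect to require care is this last identification of $\cf$ with $\fr\lFlat$ rather than with some strictly larger class: it is Theorem~\ref{precise-et-loc-pres}(d), together with the harmless observation that free contramodules are flat, that closes this gap.
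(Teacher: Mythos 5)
Your argument is correct and nothing in it is circular, but it takes a substantially longer route than the paper. The paper's proof is two lines: $\fr\lFlat$ is closed under filtered colimits in $\fr\lContra$ (Lemma~\ref{flats-colimit-closed}), hence accessibly embedded, and it is closed under $\aleph_1$-pure subobjects (Proposition~\ref{flat-pure-closed}); by \cite[Corollary~2.36]{AR}, an accessibly embedded full subcategory of an accessible category that is closed under pure subobjects is itself accessible. You instead first establish deconstructibility of $\fr\lFlat$ via Lemma~\ref{deconstructible} --- which requires the full strength of Proposition~\ref{colim1-vanishes-on-flats} and both halves of Proposition~\ref{flat-pure-closed} --- then identify the cotorsion theory generated by the resulting set with $(\fr\lFlat,\fr\lCotors)$ via Theorem~\ref{precise-et-loc-pres}(a,d), and finally invoke Corollary~\ref{accessible-if-preserved-by-kappa-directed}, which rests on the fat small object argument of \cite[Corollary~5.1]{MRV}. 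All the individual steps check out: your verification of the hypotheses of Lemma~\ref{deconstructible} (in particular the ``over $F$'' closure of $\fr\lFlat$-monomorphisms under directed colimits, obtained by feeding the constant-middle-term diagram of short exact sequences into Proposition~\ref{colim1-vanishes-on-flats}) is exactly how the paper later proves Corollary~\ref{flat-deconstructible}, and your identification $\cf=\fr\lFlat$ via summand-closure of flatness is sound. What your route buys is that it proves Corollary~\ref{flat-deconstructible} and the generation of the flat cotorsion theory by a set along the way --- results the paper establishes separately immediately afterwards anyway; what it costs is that for the accessibility statement alone you deploy the derived-colimit vanishing and the small object argument where a standard purity criterion suffices.
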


\begin{proof}
 By Lemma~\ref{flats-colimit-closed}, \ $\fr\lFlat$ is accessibly
embedded into $\fr\lContra$, and by
Proposition~\ref{flat-pure-closed}, \ $\fr\lFlat$ is closed under
$\aleph_1$-pure subobjects.
 According to~\cite[Corollary~2.36]{AR}, it follows that
$\fr\lFlat$ is accessible.
\end{proof}

\begin{coro} \label{flat-deconstructible}
 The class of flat\/ $\fr$-contramodules is deconstructible in\/
$\fr\lContra$.
\end{coro}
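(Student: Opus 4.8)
The plan is to obtain Corollary~\ref{flat-deconstructible} as an immediate application of Lemma~\ref{deconstructible} to the class $\cf=\fr\lFlat$ inside the locally $\aleph_1$-presentable abelian category $\ck=\fr\lContra$ (recall that under the standing hypothesis of this section $\fr$ has a countable base of neighborhoods of zero, so $\fr\lContra$ is locally $\aleph_1$-presentable). Thus the whole proof reduces to verifying, with $\kappa=\aleph_1$, the hypotheses of Lemma~\ref{deconstructible}: that $\fr\lFlat$ is closed under transfinitely iterated extensions, under $\aleph_1$-pure subobjects, and under $\aleph_1$-pure quotients, and that for every flat $\fr$-contramodule $F$ the class of $\fr\lFlat$-monomorphisms with codomain $F$ is closed under directed colimits in $\fr\lContra\downarrow F$. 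All of these have in fact been prepared in Sections~\ref{flat-contramodules-secn} and~\ref{cotorsion-in-contramodules}.

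First I would note that $\fr\lFlat$ is closed under transfinitely iterated extensions. This follows by transfinite induction along a smooth chain $(F_i\to F_j)_{i<j\le\alpha}$ as in Definition~\ref{t.i.e}: $F_0=0$ is flat; at a successor step one has a short exact sequence $0\to F_i\to F_{i+1}\to S_i\to0$ with $F_i$ and $S_i$ flat, so $F_{i+1}$ is flat by closure of $\fr\lFlat$ under extensions (Corollary~\ref{flat-contramodules-summary}(b)); and at a limit step $F_j=\colim_{i<j}F_i$ is flat by closure under filtered colimits (Lemma~\ref{flats-colimit-closed}). No exactness property of filtered colimits is needed here. Closure of $\fr\lFlat$ under $\aleph_1$-pure subobjects and $\aleph_1$-pure quotients is precisely Proposition~\ref{flat-pure-closed}.

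It remains to check the comma-category condition. Let $F$ be a flat $\fr$-contramodule and $(X_i\to F)$ a directed diagram in $\fr\lContra\downarrow F$ in which every $X_i\to F$ is a monomorphism with flat cokernel $\ff_i$. Viewing this as the filtered diagram of short exact sequences $0\to X_i\to F\to\ff_i\to0$ with constant middle term $F$, Proposition~\ref{colim1-vanishes-on-flats} yields an exact sequence $0\to\colim_i X_i\to F\to\colim_i\ff_i\to0$, and $\colim_i\ff_i$ is flat by Lemma~\ref{flats-colimit-closed}; hence $\colim_i X_i\to F$ is again an $\fr\lFlat$-monomorphism. (This is the content of Corollary~\ref{flat-contramodules-summary}(c).) With all the hypotheses of Lemma~\ref{deconstructible} in place, we conclude that $\fr\lFlat=\filt(\cs)$ for a suitable set $\cs$ of $\fr$-contramodules (namely, a representative set of the $\alpha$-presentable flat contramodules, where $\alpha$ is the regular cardinal furnished by the proof of Lemma~\ref{deconstructible}), which is exactly the assertion that $\fr\lFlat$ is deconstructible.

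As for the main difficulty: there is essentially none left at this stage, because all the substantial work --- the vanishing of $\Ctrtor^{\fr}_1$ on flats, the $\PL$--$\CT$ adjunction together with separatedness of flat contramodules, preservation of monomorphisms by the contratensor product along $\aleph_1$-pure monomorphisms, and the vanishing of the first derived colimit on flats --- has already been carried out in Section~\ref{flat-contramodules-secn}. The only point requiring a moment's care is to match the ``comma-category $\ck\downarrow F$'' formulation appearing in Lemma~\ref{deconstructible} with the ``strongly closed under directed colimits'' statement of Corollary~\ref{flat-contramodules-summary}(c), and this is precisely what the constant-middle-term reduction in the previous paragraph accomplishes.
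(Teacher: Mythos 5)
Your proposal is correct and follows essentially the same route as the paper: both reduce the statement to Lemma~\ref{deconstructible}, verifying closure under transfinitely iterated extensions via Corollary~\ref{flat-contramodules-summary}(b) and Lemma~\ref{flats-colimit-closed}, closure under $\aleph_1$-pure subobjects and quotients via Proposition~\ref{flat-pure-closed}, and the directed-colimit condition on $\cf$-monomorphisms via Proposition~\ref{colim1-vanishes-on-flats}. The only cosmetic remark is that your parenthetical identification of the $\ck\downarrow F$ condition with ``strongly closed under directed colimits'' (which the paper defines via the comma-categories $K\downarrow\ck$, with fixed domain rather than fixed codomain) is slightly loose, but this is harmless since your constant-middle-term argument establishes the needed condition directly, exactly as the paper does.
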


\begin{proof} 
The class $\fr\lFlat$ is closed under transfinitely iterated
extensions in $\fr\lContra$, since it is closed under extensions
and filtered colimits (Lemmas~\ref{flat-contramodules-summary}(b)
and~\ref{flats-colimit-closed}).
It is closed under $\aleph_1$-pure subobjects and $\aleph_1$-pure
quotients by Proposition~\ref{flat-pure-closed}.
The class of all monomorphisms with flat cokernels is closed under
filtered colimits in the category of all morphisms of
$\fr$-contramodules $\fr\lContra^{\to}$ by
Proposition~\ref{colim1-vanishes-on-flats}.
Hence the desired assertion is provided by
Lemma~\ref{deconstructible}.
\end{proof}

\begin{coro} \label{by-set-of-contramodules}
Let $(\cf,\cc)$ be the cotorsion theory generated by a set of objects
$\cs$ in\/ $\fr\lContra$.
Then any left\/ $\fr$-contramodule has a special $\cf$-precover, and
any left\/ $\fr$-contramodule that can be embedded as a subcontramodule
into an\/ $\fr$-contramodule from $\cc$ has a special $\cc$-preenvelope.

Furthermore, an\/ $\fr$-contramodule belongs to $\cf$ if and only if it
is a direct summand of an\/ $\fr$-contramodule from $\filt(\cs')$, where
the set $\cs'=\cs\cup\{\fr\}$ consists of the contramodules from $\cs$
and the free left\/ $\fr$-contramodule\/~$\fr$.
\end{coro}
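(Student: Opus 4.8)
The plan is to reduce the statement to Theorem~\ref{precise-et-loc-pres} applied not to $\cs$ itself but to the enlarged set $\cs'=\cs\cup\{\fr\}$. The reason for adjoining the free contramodule~$\fr$ is that it is a projective generator of $\fr\lContra$, so that every object of $\fr\lContra$ becomes a quotient of an object from $\filt(\cs')$; this is precisely the hypothesis under which parts~(c) and~(d) of that theorem fire unconditionally, and it is the contramodule-theoretic substitute for the presence of projectives in the classical Eklof--Trlifaj argument.

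First I would record two preliminary observations. Since $\fr$ is projective, $\Ext^1_{\fr\lContra}(\fr,-)=0$, hence $(\cs')^\perp=\cs^\perp=\cc$, so $(\cf,\cc)$ is also the cotorsion theory generated by~$\cs'$. Next, for any set $X$ the free contramodule $\fr[[X]]=\fr^{(X)}$ belongs to $\filt(\{\fr\})\subseteq\filt(\cs')$: well-ordering $X$ and setting $F_i=\fr^{(X_{<i})}$ with $f_{ij}$ the coproduct inclusions gives a smooth chain in which each $f_{i,i+1}$ is a (split) monomorphism with cokernel~$\fr$, the smoothness at limit ordinals holding because coproducts commute with filtered colimits. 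As $\fr$ is a generator, every $\fr$-contramodule is a quotient of some $\fr[[X]]$, hence a quotient of an object from $\filt(\cs')$.

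Then I would simply invoke Theorem~\ref{precise-et-loc-pres} for the set~$\cs'$. Part~(a) gives $\filt(\cs')\subseteq\cf$. Given an arbitrary $\fr$-contramodule $Y$, it is a quotient of an object from $\filt(\cs')$, so part~(c) yields a short exact sequence $0\to C'\to F\to Y\to0$ with $C'\in\cc$ and $F\in\filt(\cs')\subseteq\cf$; this $\cc$-epimorphism is the desired special $\cf$-precover. If $X$ embeds into some $B\in\cc$, part~(b) yields $0\to X\to C\to F'\to0$ with $C\in\cc$ and $F'\in\filt(\cs')\subseteq\cf$, a special $\cc$-preenvelope. For the final assertion, any $G\in\cf$ is a quotient of an object from $\filt(\cs')$, so by part~(d) it is a direct summand of an object from $\filt(\cs')$; conversely, $\filt(\cs')\subseteq\cf$ by part~(a) and $\cf={}^\perp\cc$ is closed under direct summands (as $\Ext^1$ carries finite direct sums to finite direct sums), so every direct summand of an object from $\filt(\cs')$ is in $\cf$. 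Hence $\cf$ is exactly the class of direct summands of objects from $\filt(\cs')$.

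There is no substantive obstacle here: the corollary is a specialization of the general machinery of Section~\ref{cotorsion-further}, the only ingredient peculiar to contramodules being the existence of the projective generator~$\fr$. The only points that warrant a line of justification are that a coproduct of copies of~$\fr$ is a transfinitely iterated extension of copies of~$\fr$ (the smooth-chain condition at limit ordinals) and the equality $(\cs\cup\{\fr\})^\perp=\cs^\perp$.
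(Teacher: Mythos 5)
Your proposal is correct and follows essentially the same route as the paper: pass to $\cs'=\cs\cup\{\fr\}$, observe $(\cs')^\perp=\cs^\perp$ and that every contramodule is a quotient of a free one $\fr[[X]]\in\filt(\cs')$, and then read off all assertions from Theorem~\ref{precise-et-loc-pres}(a--d). The extra details you supply (the smooth chain exhibiting $\fr[[X]]$ as a transfinitely iterated extension of copies of $\fr$, and the closure of $\cf={}^\perp\cc$ under direct summands) are correct and only make explicit what the paper leaves implicit.
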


\begin{proof}
Clearly, one has $\cs^\perp=\cs'{}^\perp$, so the cotorsion theory
generated by $\cs$ in $\fr\lContra$ coincides with the one
generated by~$\cs'$.
The class $\filt(\cs')$ contains all the free left $\fr$-contramodules
$\fr[[X]]$, and any left $\fr$-contramodule $\fp$ is a quotient
contramodule of $\fr[[X]]$ for a certain set~$X$.
All the assertions of the corollary now follow from the related
assertions of Theorem~\ref{precise-et-loc-pres}.
\end{proof}

\begin{coro} \label{flat-cotorsion-theory}
The pair of full subcategories\/ $(\fr\lFlat$, $\fr\lCotors)$
forms a complete cotorsion theory in\/ $\fr\lContra$.
\end{coro}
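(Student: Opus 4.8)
The plan is to recognize $(\fr\lFlat,\fr\lCotors)$ as the cotorsion theory generated by a \emph{set} and then assemble the machinery already built in the preceding sections. First I would invoke Corollary~\ref{flat-deconstructible}: the class $\fr\lFlat$ is deconstructible, so fix a set $\cs$ of flat $\fr$-contramodules with $\fr\lFlat=\filt(\cs)$, and let $(\cf,\cc)$ be the cotorsion theory generated by $\cs$, i.e.\ $\cc=\cs^\perp$ and $\cf={}^\perp\cc$. There are then two tasks: identify $(\cf,\cc)$ with $(\fr\lFlat,\fr\lCotors)$, and prove completeness.

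For the identification I would argue as follows. By Theorem~\ref{precise-et-loc-pres}(a) (Eklof's lemma) one has $\fr\lFlat=\filt(\cs)\subseteq\cf$. Since $\cs\subseteq\fr\lFlat$ gives $\fr\lFlat^\perp\subseteq\cs^\perp=\cc$, while $\fr\lFlat\subseteq\cf$ gives $\cc=\cf^\perp\subseteq\fr\lFlat^\perp$, we get $\cc=\fr\lFlat^\perp=\fr\lCotors$. For the reverse inclusion $\cf\subseteq\fr\lFlat$: the free $\fr$-contramodules $\fr[[X]]$ are flat by Lemma~\ref{proj-contramodules}, hence lie in $\filt(\cs)$, and every $\fr$-contramodule is a quotient of such a free one; so Theorem~\ref{precise-et-loc-pres}(d) shows every object of $\cf$ is a direct summand of an object of $\filt(\cs)=\fr\lFlat$. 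As the contratensor product preserves finite direct sums and a direct summand of an exact functor is exact, $\fr\lFlat$ is closed under direct summands, hence $\cf\subseteq\fr\lFlat$. This gives $(\cf,\cc)=(\fr\lFlat,\fr\lCotors)$, a cotorsion theory generated by a set.

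Completeness then splits into two halves. For \emph{special $\fr\lFlat$-precovers}, every $\fr$-contramodule $Y$ is a quotient of the free contramodule $\fr[[Y]]\in\filt(\cs)$, so Theorem~\ref{precise-et-loc-pres}(c) (equivalently, the first assertion of Corollary~\ref{by-set-of-contramodules}) yields a short exact sequence $0\to C'\to F\to Y\to0$ with $C'\in\fr\lCotors$ and $F\in\filt(\cs)\subseteq\fr\lFlat$. For \emph{special $\fr\lCotors$-preenvelopes} I would apply Proposition~\ref{et-sc-dc} with $\ch=\fr\lFlat$: this is legitimate because $\cs\subseteq\fr\lFlat$, and $\fr\lFlat$ is closed under extensions and \emph{strongly} closed under directed colimits by Corollary~\ref{flat-contramodules-summary}(b,c). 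This last step is where all the weight sits, and is the main obstacle in substance rather than in form: in a general locally presentable abelian category the preenvelope side genuinely fails (cf.\ the counterexamples in Section~\ref{cotorsion-further}), so what rescues it here is precisely the homological analysis of Section~\ref{flat-contramodules-secn} — the vanishing of $\Ctrtor^{\fr}_n(N,\ff)$ for $n\ge1$ on flat-and-separated $\ff$, and the vanishing of the first derived colimit on filtered diagrams of flats — which is exactly what powers the strong directed-colimit closure needed by Proposition~\ref{et-sc-dc}. Granting that input, the corollary is a bookkeeping assembly of Corollaries~\ref{flat-deconstructible} and~\ref{flat-contramodules-summary} with Theorem~\ref{precise-et-loc-pres} and Proposition~\ref{et-sc-dc}.
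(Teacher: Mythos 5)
Your proposal is correct and follows essentially the same route as the paper's second proof: identify $(\fr\lFlat,\fr\lCotors)$ as the cotorsion theory generated by a set via Corollary~\ref{flat-deconstructible} and Theorem~\ref{precise-et-loc-pres}, get special flat precovers from the quotient-of-a-free-contramodule observation, and obtain special cotorsion preenvelopes from Proposition~\ref{et-sc-dc} using Corollary~\ref{flat-contramodules-summary}(b\+-c). The only cosmetic difference is that the paper enlarges the generating set to $\cs\cup\{\fr\}$ while you instead note that the free contramodules already lie in $\filt(\cs)=\fr\lFlat$; these are interchangeable.
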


\begin{proof}[First proof]
 By Corollary~\ref{flat-deconstructible}, there is a set of flat
$\fr$-contramodules $\cs$ such that $\fr\lFlat=\filt(\cs)$.
 Replacing if needed $\cs$ by $\cs'=\cs\cup\{\fr\}$, we may assume
that the set $\cs$ contains the free $\fr$-contramodule~$\fr$.
 By Lemma~\ref{eklof-lemma}, we have $\fr\lCotors=\cs^\perp$.
 Denoting by $(\cf,\cc)$ the cotorsion theory generated by $\cs$
in $\fr\lContra$, we see that $\cc=\fr\lCotors$ and,
following Corollary~\ref{by-set-of-contramodules}, the class
$\cf$ consists precisely of the direct summands of objects from
$\filt(\cs)$.
 Hence $\cf=\fr\lFlat$.
 Now we know from Corollary~\ref{by-set-of-contramodules} that
any left $\fr$-contramodule has a special flat precover, and it
only remains to prove that it has a special cotorsion preenvelope.

 Any left $\fr$-contramodule $\fq$ is endowed with a natural
$\fr$-contramodule morphism into a cotorsion $\fr$-contramodule
$$
 c_{\fq}:\fq\longrightarrow\prod\nolimits_{\fii,f}
 \Ab(\fr/\fii,\rationals/\integers),
$$
where $\fii$ runs over all the open right ideals $\fii\subseteq\fr$
and $f\in\Ab(\fq/(\fii\tim\fq),\rationals/\integers)$.
The morphism $c_{\fq}$ assigns to any elements $q\in\fq$ and
$r\in\fr/\fii$, and an abelian group homomorphism $f$, the element
$f(rq)\in\rationals/\integers$.
The kernel of $c_{\fq}$ is precisely the $\fr$-subcontramodule
$\cap_{\fii}(\fii\tim\fq)\subseteq\fq$, i.~e., the kernel of
the adjunction morphism $\fq\to\PL(\CT(\fq))$.
Applying Corollary~\ref{by-set-of-contramodules} again, we can conclude
that any separated left $\fr$-contramodule has a special cotorsion
preenvelope.

Given an arbitrary left $\fr$-contramodule $\fq$, let $\ff\to\fq$ be
one of its special flat precovers.
The $\fr$-contramodule $\ff$ is separated by
Corollary~\ref{flat-is-separated}, hence it has a special cotorsion
preenvelope $\ff\to\fg$.
Let $\fk$ be the kernel of the epimorphism of
$\fr$-contramodules $\ff\to\fq$.
Then the composition $\fk\to\ff\to\fg$ is a monomorphism of
cotorsion $\fr$-contramodules, so its cokernel $\fg/\fk$ is
also cotorsion.
Now the natural morphism $\fq\to\fg/\fk$ is a special cotorsion
preenvelope of~$\fq$. (This is the argument dual
to~\cite[Lemma~3.1.3]{Xu};
cf.~\cite[proof of Lemma~2 in Section~9.1]{P}.)
\end{proof}

\begin{proof}[Second proof]
 We have already seen in the first paragraph of the first proof that
$(\fr\lFlat$, $\fr\lCotors)$ is a cotorsion theory generated
by a set of objects $\cs$ in $\fr\lContra$ and that any left
$\fr$-contramodule has a special flat precover.
 To show that it has a special cotorsion preenvelope, we apply
Proposition~\ref{et-sc-dc}.
 Indeed, the set $\cs$ is contained in the class of all flat left
$\fr$-contramodules, which is closed under extensions and strongly
closed under directed colimits in $\fr\lContra$ by
Corollary~\ref{flat-contramodules-summary}(b\+-c).
\end{proof}

\begin{proof}[Third proof]
 Deduce existence of special flat precovers of $\fr$-contramodules
from existence of their flat covers, provable by the Bican--El~Bashir
approach as in the first proof of the next Corollary~\ref{flat-covers}.
 To prove existence of special cotorsion preenvelopes of
subcontramodules of cotorsion contramodules, argue as in the proof of
Salce's lemma~\cite[Lemma~2.3]{Sa} (cf.\ the dual version in
the second half of the proof of~\cite[Theorem~10]{ET}).
 Special cotorsion preenvelopes of arbitrary $\fr$-contramodules
can be then obtained as in the second and third paragraphs of
the first proof above.
\end{proof}

\begin{coro} \label{flat-covers}
Any left\/ $\fr$-contramodule has a flat cover.
\end{coro}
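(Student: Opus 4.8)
The plan is to deduce the statement from the general theory of Section~\ref{precovers-secn} applied to the subcategory $\fr\lFlat\subseteq\fr\lContra$. Recall from Section~\ref{contramodule-basics} that $\fr\lContra$ is a locally $\aleph_1$-presentable abelian category, so it is in particular locally presentable. Thus, by Corollary~\ref{cor2.7}, it suffices to check that $\fr\lFlat$ is an accessible full subcategory of $\fr\lContra$ that is closed under coproducts and under directed colimits; this will show that $\fr\lFlat$ is stably weakly coreflective in $\fr\lContra$, which, as explained after Proposition~\ref{prop2.5}, is precisely the assertion that every left $\fr$-contramodule admits a flat cover.

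The three properties are all at hand. Accessibility of $\fr\lFlat$ is exactly Corollary~\ref{flat-accessible}. Closure under directed (indeed, arbitrary filtered) colimits is Lemma~\ref{flats-colimit-closed}, which is also recorded as part~(c) of Corollary~\ref{flat-contramodules-summary}. For closure under coproducts, I would argue that the functor $\CT$ is a left adjoint (Lemma~\ref{pl-ct-adjunction}(c)), hence preserves coproducts; since colimits in $\Fun(\Discrd\fr)$ are computed pointwise and arbitrary direct sums of abelian groups commute with finite limits and all colimits, a coproduct of functors from $\Ex(\Discrd\fr)$ again lies in $\Ex(\Discrd\fr)$, so $\CT(\coprod_i\ff_i)=\coprod_i\CT(\ff_i)$ is exact whenever each $\CT(\ff_i)$ is. Alternatively, a coproduct is the directed colimit of its finite sub-coproducts, a finite coproduct of flat contramodules is flat by Corollary~\ref{flat-contramodules-summary}(b) (extensions of flats are flat), and directed colimits of flats are flat by Lemma~\ref{flats-colimit-closed}. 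Either way, $\coprod_i\ff_i\in\fr\lFlat$, and Corollary~\ref{cor2.7} then yields the conclusion.

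A second, essentially equivalent route is to note that by Corollary~\ref{flat-cotorsion-theory} the pair $(\fr\lFlat,\fr\lCotors)$ is a complete cotorsion theory in the locally presentable abelian category $\fr\lContra$, while $\fr\lFlat$ is closed under directed colimits by Corollary~\ref{flat-contramodules-summary}(c); Corollary~\ref{cor3.6} then gives that $\fr\lFlat$ is stably weakly coreflective, and by Remark~\ref{re3.7} the resulting flat cover is even special. I expect there to be no genuine obstacle remaining at this stage: all of the substantial work — establishing accessibility of $\fr\lFlat$, closure under $\aleph_1$-pure subobjects (Proposition~\ref{flat-pure-closed}), and the vanishing of the first derived functor of filtered colimit on flat contramodules (Proposition~\ref{colim1-vanishes-on-flats}) — has already been carried out, and the passage to flat covers is only a formal application of the framework of Sections~\ref{precovers-secn} and~\ref{cotorsion-and-envelopes}.
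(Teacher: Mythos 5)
Your proposal is correct and follows essentially the same route as the paper, which in fact gives both of your arguments: a first proof via accessibility, closure under coproducts and directed colimits, and Corollary~\ref{cor2.7} (with closure under coproducts obtained, as in your alternative argument, from additivity plus closure under directed colimits), and a second proof via completeness of the flat cotorsion theory and Corollary~\ref{cor3.6}. No gaps.
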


\begin{proof}[First proof (Bican--El~Bashir approach to flat covers)]
Following Lemma~\ref{flats-colimit-closed}, the full subcategory
$\fr\lFlat$ is closed under directed colimits in
$\fr\lContra$.
Being an additive subcategory, it is consequently also closed
under coproducts.
By Corollary~\ref{flat-accessible}, \ $\fr\lFlat$ is accessible.
Thus Corollary~\ref{cor2.7} allows to conclude that $\fr\lFlat$
is stably weakly coreflective in $\fr\lContra$.
\end{proof}

\begin{proof}[Second proof (Eklof--Trlifaj approach to flat covers)]
Since $(\fr\lFlat$, $\fr\lCotors)$ is a complete cotorsion
theory by Corollary~\ref{flat-cotorsion-theory} and
$\fr\lFlat$ is closed under directed colimits by
Lemma~\ref{flats-colimit-closed}, we can conclude that $\fr\lFlat$
is stably weakly coreflective by Corollary~\ref{cor3.6}.
\end{proof}

\begin{coro}
Any left\/ $\fr$-contramodule has a cotorsion envelope.
\end{coro}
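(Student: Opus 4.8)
The plan is to obtain this as an immediate application of the envelope machinery of Section~\ref{cotorsion-and-envelopes} to the flat cotorsion theory in $\fr\lContra$. First I would note that $\fr\lContra$ is a locally $\aleph_1$-presentable, hence locally presentable, abelian category, so the results of that section apply. By Corollary~\ref{flat-cotorsion-theory}, the pair $(\fr\lFlat,\fr\lCotors)$ is a complete cotorsion theory in $\fr\lContra$; this supplies the first hypothesis needed for Corollary~\ref{cor3.9}.

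The second hypothesis of Corollary~\ref{cor3.9} is that $\fr\lFlat$ is strongly closed under directed colimits, i.e.\ that $\fr\lFlat$-monomorphisms are closed under directed colimits in every comma-category $K\downarrow\fr\lContra$. This is exactly Corollary~\ref{flat-contramodules-summary}(c), which rests in turn on Proposition~\ref{colim1-vanishes-on-flats} (vanishing of the derived functor of filtered colimit on diagrams of flat contramodules) together with Lemma~\ref{flats-colimit-closed}; so all the substantive work has already been carried out in Section~\ref{flat-contramodules-secn}, and nothing new is required at this stage.

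With both hypotheses verified, Corollary~\ref{cor3.9} yields that $\fr\lCotors$ is stably weakly reflective in $\fr\lContra$, which by definition says precisely that every left $\fr$-contramodule admits a $\fr\lCotors$-envelope, i.e.\ a cotorsion envelope. I would close by recording, as in Remark~\ref{re3.10}, that such a cotorsion envelope is automatically special: any contramodule already has a special cotorsion preenvelope by Corollary~\ref{flat-cotorsion-theory}, and an envelope is a retract of any preenvelope. There is essentially no obstacle here; the statement is a direct corollary, and the only point requiring care is to confirm that each of its three inputs---local presentability of $\fr\lContra$, completeness of the flat cotorsion theory, and strong closure of $\fr\lFlat$ under directed colimits---has genuinely been established earlier, which it has. (Alternatively, one could cite the unnumbered corollary following Corollary~\ref{precise-envelope}, applied to the flat cotorsion theory in $\fr\lContra$, which amounts to the same argument.)
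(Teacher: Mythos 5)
Your argument coincides with the paper's own proof: both invoke Corollary~\ref{cor3.9} (or alternatively Corollary~\ref{precise-envelope}) using exactly the two inputs you name, namely completeness of $(\fr\lFlat,\fr\lCotors)$ from Corollary~\ref{flat-cotorsion-theory} and strong closure of $\fr\lFlat$ under directed colimits from Corollary~\ref{flat-contramodules-summary}(c). The proposal is correct and requires no changes.
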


\begin{proof}
Since $\fr\lFlat$ is strongly closed under directed colimits in
$\fr\lContra$ by Corollary~\ref{flat-contramodules-summary}(c)
and $(\fr\lFlat$, $\fr\lCotors)$ is a complete cotorsion theory
by Corollary~\ref{flat-cotorsion-theory}, \ $\fr\lCotors$ is stably
weakly reflective in $\fr\lContra$ by
Corollary~\ref{cor3.9} or~\ref{precise-envelope}.
\end{proof}

\begin{coro} \label{set-of-flat}
Any cotorsion theory generated by a set of flat\/ $\fr$-contramodules
is complete in\/ $\fr\lContra$.
\end{coro}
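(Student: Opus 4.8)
The plan is to verify the two defining conditions of completeness --- existence of special $\cf$-precovers and of special $\cc$-preenvelopes --- for the cotorsion theory $(\cf,\cc)$ with $\cc=\cs^\perp$ and $\cf={}^\perp\cc$, where $\cs$ is the given set of flat left $\fr$-contramodules. Essentially all the real work has already been done by this point; the point is to feed the structural facts about flat contramodules from Section~\ref{flat-contramodules-secn} into the general Eklof--Trlifaj results of Sections~\ref{cotorsion-and-envelopes} and~\ref{cotorsion-further}.

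First I would dispose of the precover half. Since each free $\fr$-contramodule $\fr[[X]]$ is projective and therefore lies in $\cf={}^\perp\cc$, every left $\fr$-contramodule is a quotient of an object of $\cf$; this is exactly the situation addressed by Corollary~\ref{by-set-of-contramodules} (applied to $\cs$, or equivalently to $\cs\cup\{\fr\}$), which already yields a special $\cf$-precover of any left $\fr$-contramodule. For the preenvelope half I would invoke Proposition~\ref{et-sc-dc}: its hypothesis asks for the generating set $\cs$ to sit inside a class $\ch$ that is closed under extensions and strongly closed under directed colimits, and the natural candidate is $\ch=\fr\lFlat$. Indeed, by Corollary~\ref{flat-contramodules-summary}(b) the flat $\fr$-contramodules are closed under extensions, and by Corollary~\ref{flat-contramodules-summary}(c) they are strongly closed under directed colimits in $\fr\lContra$. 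Thus Proposition~\ref{et-sc-dc} applies and produces a special $\cc$-preenvelope of every left $\fr$-contramodule, which together with the previous step finishes the proof.

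The subtlety worth flagging --- and the reason the argument cannot be compressed into a direct appeal to Corollary~\ref{cor3.5} --- is that Corollary~\ref{cor3.5} would require every $\fr$-contramodule to be a subcontramodule of an object of $\cc$, and this genuinely fails: the canonical morphism from an $\fr$-contramodule $\fp$ to the cotorsion object $\PL(\CT(\fp))$ (equivalently, to a product of copies of $\Ab(\fr/\fii,\rationals/\integers)$) has kernel $\bigcap_{\fii}(\fii\tim\fp)$, which is nonzero whenever $\fp$ is not separated. Proposition~\ref{et-sc-dc} is precisely the device that bridges this gap, trading the absent ``enough injectives'' for the good behaviour of flat contramodules under directed colimits; so the real difficulty lies not in the present corollary but in the homological analysis of Section~\ref{flat-contramodules-secn}, above all in the vanishing of the first derived functor of filtered colimit on flat contramodules (Proposition~\ref{colim1-vanishes-on-flats}), on which Corollary~\ref{flat-contramodules-summary}(c) rests. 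Once those ingredients are in hand, the present corollary is a one-line deduction.
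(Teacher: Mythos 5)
Your argument is correct and coincides with the paper's second proof of this corollary: special $\cf$-precovers come from Corollary~\ref{by-set-of-contramodules} (every contramodule being a quotient of a free, hence projective, contramodule lying in $\cf$), and special $\cc$-preenvelopes from Proposition~\ref{et-sc-dc} applied with $\ch=\fr\lFlat$, using parts~(b) and~(c) of Corollary~\ref{flat-contramodules-summary}. One correction to your closing remark: it is not true that some contramodule fails to embed into an object of $\cc$ --- since $\cs\subseteq\fr\lFlat$ gives $\cc\supseteq\fr\lCotors$, and the flat cotorsion theory is complete (Corollary~\ref{flat-cotorsion-theory}), every contramodule does embed into a cotorsion contramodule, and the paper's first proof of this corollary runs exactly along that route via Corollary~\ref{by-set-of-contramodules}; what fails is only the injectivity of the canonical map $\fp\to\PL(\CT(\fp))$, not the existence of some embedding.
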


\begin{proof}[First proof]
 Let $(\cf,\cc)$ be the cotorsion theory generated by a set of flat
$\fr$-contramodules~$\cs$.
 Following Corollary~\ref{by-set-of-contramodules}, any left
$\fr$-contramodule has a special $\cf$-precover, and any left
$\fr$-contramodule embeddable into an object of $\cc$ has a special
$\cc$-preenvelope.
 Now we have $\cs\subseteq\fr\lFlat$, hence $\cc\supseteq
\fr\lContra$, and it remains to recall that any left
$\fr$-contramodule is a subcontramodule of a cotorsion
$\fr$-contramodule by Corollary~\ref{flat-cotorsion-theory}.
\end{proof}

\begin{proof}[Second proof]
 To prove that any left $\fr$-contramodule has a special
$\cc$-preenvelope, notice that $\cs\subseteq\fr\lFlat$ and
the class $\fr\lFlat$ is strongly closed under directed colimits
by Corollary~\ref{flat-contramodules-summary}(c), so
Proposition~\ref{et-sc-dc} applies.
\end{proof}

\begin{exams}
(1) Let $\fr$ be a complete and separated topological associative ring
with a countable base of neighborhoods of zero $B$ formed by open
two-sided ideals (cf.\ Example~\ref{contratensor-examples}(2)).
 Let $\kappa$ be an uncountable regular cardinal.
 Since the fully faithful inclusion $\fr/\fii\lMod\to\fr\lContra$
preserves $\kappa$-filtered colimits, the reduction functor
$\fp\mapsto\fp/(\fii\tim\fp)$ takes $\kappa$-presentable
$\fr$-contramodules to $\kappa$-presentable $\fr/\fii$-modules.
 Conversely, using Nakayama's lemma~\ref{nakayama} one easily shows
that any $\fr$-contramodule $\fp$ whose reductions $\fp/(\fii\tim\fp)$
are $\kappa$-generated $\fr/\fii$-modules for all $\fii\in B$ is
$\kappa$-generated.
 Applying Lemma~\ref{main-lemma}, one deduces the claim that any flat
$\fr$-contramodule $\ff$ whose reductions $\ff/(\fii\tim\ff)$ are
$\kappa$-presentable for all $\fii\in B$ is $\kappa$-presentable.
 So a flat $\fr$-contramodule is $\kappa$-presentable if and only if
its reductions by open ideals $\fii\subseteq\fr$ are
$\kappa$-presentable $\fr/\fii$-modules for all $\fii\in B$.

\medskip

(2) Suppose that for every two-sided open ideal $\fii\subseteq\fr$,
\ $\fii\in B$ we are given a deconstructible class of flat left
$\fr/\fii$-modules $\cf_{\fii}\subseteq\fr/\fii\lFlat$ such that
for every pair of embedded ideals $\fii\subseteq\fii'\subseteq\fr$,
\ $\fii$, $\fii'\in B$, and any module $F$ from $\cf_{\fii'}$
the $\fr/\fii$-module $\fr/\fii\otimes_{\fr/\fii'}F$ belongs to
$\cf_{\fii}$.
 Then the class $\cf\subseteq\fr\lFlat$ of all left
$\fr$-contramodules $\ff$ such that $\ff/(\fii\tim\ff)\in\cf_{\fii}$
for every $\fii\in B$ is deconstructible.

 Specifically, let $\kappa$ be an uncountable regular cardinal such that
$\cf_{\fii}=\filt(\cs_{\fii})$ and $\cs_{\fii}$ is a set of
$\kappa$-presentable $\fr/\fii$-modules for every $\fii\in B$.
 Let $\cs$ consist of all left $\fr$-contramodules $\fs\in\cf$ whose
reductions $\fs/(\fii\tim\fs)$ are $\kappa$-presentable
for all $\fii\in B$ (i.~e., of all $\kappa$-presentable
contramodules from~$\cf$).
 We claim that $\cf=\filt(\cs)$.

 Indeed, let $\ff$ be a contramodule from $\cf$, and let $\fii_0
\supseteq\fii_1\supseteq\fii_2\supseteq\dotsb\in B$ be a chain of
ideals indexed by the nonnegative integers and cofinal in~$B$.
 According to Hill's lemma~\cite{ST}, for every $n\ge0$ there exists
a family of submodules $\cm_n$ in the $R/\fii_n$-module $F_n=
\ff/(\fii_n\tim\ff)$ closed under arbitrary sums and intersections
in $F_n$ and having the properties (H3) and (H4)
from~\cite[Theorem~6]{ST}.
 These state that (H3) for any $N$ and $P\in\cm_n$ with $N\subseteq P$
one has $P/N\in\cf_{\fii_n}$, and (H4) for any subset $X\subseteq F_n$
of cardinality less than $\kappa$ and $N\in\cm_n$ there exists
$P\in\cm_n$ such that $N\cup X\subseteq P$ and $P/N$ is
$\kappa$-presentable.

 Let $\lambda$ be the cardinality of~$\ff$.
 For any nonnegative integers $m\le n$, we have the reduction map
$f_{n,m}:F_n\to\fr/\fii_m\otimes_{\fr/\fii_n}F_n=F_m$; these maps form
a commutative diagram and the reduction maps $f_n:\ff\to F_n$ form
a limit cone (since flat contramodules are separated).
 We construct smooth chains of submodules $(F_{n,i})_{i\leq\lambda}$ in
the $R/\fii_n$-modules $F_n$ such that $F_{n,i}\in\cm_n$, \
$F_{n,i+1}/F_{n,i}$ is $\kappa$-presentable, and
$F_{m,i}=f_{n,m}(F_{n,i})$ for $m<n$.
 Set $F_{n,0}=0$.

 On a successor step $i+1$, if $F_{n,i}=F_n$ for all $n$, we set
$F_{n,i+1}=F_{n,i}$.
 Otherwise, let $m$ be the minimal nonnegative integer such that
$F_{m,i}\varsubsetneq F_m$.
 Choose $G_{m,1}\in\cm_m$ such that $F_{m,i}\varsubsetneq G_{m,1}$
and $G_{m,1}/F_{m,i}$ is $\kappa$-presentable.
 Proceeding by induction in $n\ge m$, choose $G_{n,1}\in\cm_n$
such that $G_{n,1}/F_{n,i}$ is $\kappa$-presentable and
$f_{n+1,n}(G_{n+1,1})\supseteq G_{n,1}$.
 Choose $G_{m,2}\in\cm_m$ such that $f_{p,m}(G_{p,1})\subseteq G_{m,2}$
for all $p\ge m$ and $G_{m,2}/F_{m,i}$ is $\kappa$-presentable.
 Proceeding by induction in $k\ge1$, we choose submodules
$G_{n,k}\in\cm_n$, \ $n\ge m$ such that $f_{n+1,n}(G_{n+1,k})\supseteq
G_{n,k}$, \ $f_{p,n}(G_{p,k})\subseteq G_{n,k+1}$ for all $p\ge n\ge m$,
and $G_{n,k}/F_{n,i}$ is $\kappa$-presentable.
 Now we can set $F_{n,i+1}=F_{n,i}$ for $n<m$ and
$F_{n,i+1}=\bigcup_k G_{n,k}$ for all $n\ge m$.
 Clearly $F_{n,\lambda}=F_n$ for all $n\ge0$.

 Setting $\ff_i=\lim_n F_{n,i}$, we obtain a smooth chain of left
$\fr$-contramodules $(\ff_i\to\ff_j)_{i<j\le\lambda}$ such that
$\ff_0=0$, \ $\ff_\lambda=\ff$, and the morphisms $\ff_i\to\ff_{i+1}$
are $\cs$-monomorphisms.
 We have shown that $\cf=\filt(\cs)$, hence according to
Eklof's lemma~\ref{eklof-lemma} and Corollary~\ref{set-of-flat}
the cotorsion theory $({}^\perp(\cf^\perp)$, $\cf^\perp)$ is complete
in $\fr\lContra$.
 Following Corollary~\ref{by-set-of-contramodules}, \
${}^\perp(\cf^\perp)$ is the class of all direct summands of
the contramodules from $\filt(\cs\cup\{\fr\})$.

\medskip

(3) Now let $\fr$ be a complete and separated topological commutative
ring with a countable base of neighborhoods of zero.
 For any commutative ring $R$, we denote by $(R\lVfl$, $R\lCtadj)$
the cotorsion theory generated by the set of $R$-modules $R[s^{-1}]$,
\ $s\in R$.
 Modules from $R\lVfl$ are called \textit{very flat} and modules
from $R\lCtadj$ are called \textit{contraadjusted}; so
$R\lVfl\subseteq R\lFlat$ and $R\lCtadj\supseteq R\lCotors$
(see~\cite[Section~1.1]{P3}).

 An $\fr$-contramodule $\ff$ is called \textit{very flat}
(cf.~\cite[Sections~C.3 and~D.4]{P3}) if the $\fr/\fii$-module
$\ff/(\fii\tim\ff)$ is very flat for every open ideal
$\fii\subseteq\fr$.
 An $\fr$-contramodule $\fg$ is \textit{contraadjusted} if
$\Ext^1_{\fr\lContra}(\ff,\fg)=0$ for any very flat
$\fr$-contramodule~$\ff$.
 The class of very flat $\fr$-contramodules is denoted by $\fr\lVfl$
and the class of contraadjusted $\fr$-contramodules by $\fr\lCtadj$.

 Let $\cs_R$ denote the (representative) set of all
$\aleph_1$-presentable very flat $R$-modules.
 Following~\cite[Lemma~9]{ST}, we have $R\lVfl=\filt(\cs_R)$.
 Hence, according to~(2), \ $\fr\lVfl=\filt(\cs_{\fr})$, where
$\cs_\fr$ consists of all $\fr$-contramodules $\ff$ such that
$\ff/(\fii\tim\ff)\in\cs_{\fr/\fii}$ for all open ideals
$\fii\subseteq\fr$.
 Since the class $\fr\lVfl$ contains the free $\fr$-contramodules
and is closed under direct summands, we can conclude that
$(\fr\lVfl$, $\fr\lCtadj)$ is a complete cotorsion theory in
the category $\fr\lContra$.
\end{exams}

\end{document}